\documentclass[11pt]{article}
\usepackage[utf8]{inputenc}
\usepackage[english]{babel}
\usepackage{amsmath, amssymb, theorem, latexsym, epsfig, eufrak}
\usepackage{subfigure}
\numberwithin{equation}{section}

\theoremstyle{plain}
\theorembodyfont{\itshape}
\newtheorem{theorem}{Theorem}[section]
\newtheorem{proposition}[theorem]{Proposition}
\newtheorem{lemma}[theorem]{Lemma}
\newtheorem{corollary}[theorem]{Corollary}
                                                                                
\theorembodyfont{\rmfamily}
\newtheorem{definition}[theorem]{Definition}

\newtheorem{remark}[theorem]{Remark}

\newtheorem{conjecture}[theorem]{Conjecture}
\newenvironment{proof}{{\noindent \textbf{Proof}\,\,}}{\hspace*{\fill}$\Box$\medskip}

\usepackage{graphicx}
\usepackage{hyperref}
\usepackage{pdfpages}
\hypersetup{
    colorlinks,
    linkcolor={black!50!black},
    citecolor={green!50!black},
    urlcolor={blue!80!black}
}
\usepackage{upgreek}
\usepackage{MnSymbol}
\usepackage{mathrsfs}
\usepackage{xcolor}
\usepackage{listings}
\usepackage{caption}
\usepackage{subfigure}
\usepackage{xypic}
\usepackage[all]{xy}

\definecolor{string}{HTML}{B40000} 
\definecolor{comment}{HTML}{008000} 
\definecolor{keyword}{HTML}{1A00FF} 
\definecolor{morecomment}{HTML}{8000FF}
\definecolor{bk}{HTML}{FFFFFF} 
\definecolor{frame}{HTML}{999999} 
\definecolor{brackets}{HTML}{B40000} 
 
\lstset{
	language=C++, 
	morekeywords={*,...}, 
	keywordstyle=\color{keyword}\ttfamily\bfseries,
	stringstyle=\ttfamily\color{red!50!brown},
	commentstyle=\color{comment}\ttfamily\itshape,
	morecomment=[l][\color{morecomment}]{\#},    
	breaklines=true, 
	basicstyle=\ttfamily\footnotesize, 
	backgroundcolor=\color{bk}, 
	frame=lrb,xleftmargin=\fboxsep,xrightmargin=-\fboxsep, 
	rulecolor=\color{frame}, 
	tabsize=3, 
	extendedchars=true,
	literate={??}{{\"O}}1
	{??}{{\"A}}1
	{??}{{\"U}}1
	{??}{{\ss}}1
	{??}{{\"u}}1
	{?Û}{{\"a}}1
	{?¦}{{\"o}}1
	{~}{{\textasciitilde}}1
	{?¡}{{\selectfont\char224}}1
	{?±}{{\selectfont\char225}}1
	{??}{{\selectfont\char226}}1
	{??}{{\selectfont\char227}}1
	{??}{{\selectfont\char228}}1
	{?µ}{{\selectfont\char229}}1
	{??}{{\"e}}1
	{?¦}{{\selectfont\char230}}1
	{?á}{{\selectfont\char231}}1
	{??}{{\selectfont\char232}}1
	{??}{{\selectfont\char233}}1
	{??}{{\selectfont\char234}}1
	{?È}{{\selectfont\char235}}1
	{??}{{\selectfont\char236}}1
	{??}{{\selectfont\char237}}1
	{??}{{\selectfont\char238}}1
	{??}{{\selectfont\char239}}1
	{??}{{\selectfont\char240}}1
	{??}{{\selectfont\char241}}1
	{??}{{\selectfont\char242}}1
	{??}{{\selectfont\char243}}1
	{??}{{\selectfont\char244}}1
	{??}{{\selectfont\char245}}1
	{??}{{\selectfont\char246}}1
	{??}{{\selectfont\char247}}1
	{??}{{\selectfont\char248}}1
	{??}{{\selectfont\char249}}1
	{??}{{\selectfont\char250}}1
	{??}{{\selectfont\char251}}1
	{??}{{\selectfont\char252}}1
	{??}{{\selectfont\char253}}1
	{??}{{\selectfont\char254}}1
	{??}{{\selectfont\char255}}1
	{??}{{\selectfont\char192}}1
	{??}{{\selectfont\char193}}1
	{??}{{\selectfont\char194}}1
	{??}{{\selectfont\char195}}1
	{??}{{\selectfont\char196}}1
	{??}{{\selectfont\char197}}1
	{??}{{\"E}}1
	{??}{{\selectfont\char198}}1
	{??}{{\selectfont\char199}}1
	{??}{{\selectfont\char200}}1
	{??}{{\selectfont\char201}}1
	{??}{{\selectfont\char202}}1
	{??}{{\selectfont\char203}}1
	{??}{{\selectfont\char204}}1
	{??}{{\selectfont\char205}}1
	{??}{{\selectfont\char206}}1
	{??}{{\selectfont\char207}}1
	{?Ê}{{\selectfont\char208}}1
	{??}{{\selectfont\char209}}1
	{??}{{\selectfont\char210}}1
	{??}{{\selectfont\char211}}1
	{?Û}{{\selectfont\char212}}1
	{??}{{\selectfont\char213}}1
	{??}{{\selectfont\char214}}1
	{?¤}{{\selectfont\char215}}1
	{??}{{\selectfont\char216}}1
	{?©}{{\selectfont\char217}}1
	{??}{{\selectfont\char218}}1
	{?Ç}{{\selectfont\char219}}1
	{?Â}{{\selectfont\char220}}1
	{??}{{\selectfont\char221}}1
	{?¨}{{\selectfont\char222}}1
	{??}{{\selectfont\char223}}1
	{??}{{\selectfont\char105}}1
	{??}{{\selectfont\char168}}1
	{??}{{\selectfont\char185}}1
	{??}{{\selectfont\char160}}1
	{??}{{\selectfont\char73}}1
	{??}{{\selectfont\char136}}1
	{??}{{\selectfont\char153}}1
	{??}{{\selectfont\char128}}1
	{\{}{{{\color{brackets}\{}}}1 
	{\}}{{{\color{brackets}\}}}}1 
}

\DeclareCaptionFont{white}{\color{??aptiontext}}
\DeclareCaptionFormat{listing}{\parbox{\linewidth}{\colorbox{??aptionbk}{\parbox{\linewidth}{#1#2#3}}\vskip-4pt}}
\captionsetup[lstlisting]{format=listing,labelfont=white,textfont=white}

\lstdefinelanguage{XML}
{
	morestring=[s]{"}{"},
	morecomment=[s]{?}{?},
	morecomment=[s]{!--}{--},
	commentstyle=\color{comment},
	moredelim=[s][\color{black}]{>}{<},
	moredelim=[s][\color{red}]{\ }{=},
	stringstyle=\color{string},
	identifierstyle=\color{keyword}
}

\sloppy

\title{On spectral curves and complexified boundaries of the phase-lock areas in  
a model of Josephson junction}
\author{
A.\,A.\,Glutsyuk
\thanks{ CNRS, France (UMR 5669 (UMPA, ENS de Lyon) and UMI 2615 (Lab. J.-V.Poncelet)), Lyon, France. 
Email: aglutsyu@ens-lyon.fr}
\thanks{National Research University Higher School of Economics (HSE), Moscow, Russia}
\thanks{The results of Sections 1.1, 1.5-1.7, 2, 4, 5 are obtained by A.\,A.\,Glutsyuk. His research was supported by RSF grant 18-41-05003.}, 
I.\,V.\,Netay
\thanks{Institute for Information Transmision Problems, Russian Academy of Sciences}
\thanks{Stock Research and production company Kryptonite}
\thanks{
The study has been funded within the framework of the HSE University Basic Research Program and the Russian Academic Excellence Project '5-100.
}
\thanks{The results of Sections 1.2, 1.3,  3 are obtained by I.\,V.\,Netay}
}

\begin{document}
\maketitle
\def\la{\lambda}
\def\La{\Lambda}
\def\cc{\mathbb C}
\def\oc{\overline\cc}
\def\eps{\varepsilon}
\def\rr{\mathbb R}
\def\zz{\mathbb Z}
\def\cp{\mathbb{CP}}
\def\diag{\operatorname{diag}}
\def\mcl{\mathcal L}
\def\nn{\mathbb N}
\def\var{\varepsilon}
\def\mcr{\mathcal R}
\def\Simple{\operatorname{SI}}
\def\si{\Simple}
\def\hsi{\widehat{\Simple}}
\def\hmcl{\widehat{\mcl}}
\def\wt#1{\widetilde#1}
\def\sign{\operatorname{sign}}
\def\mca{\mathcal A}
\def\rp{\mathbb{RP}}
\def\mcp{\mathcal P}
\def\modulo{\operatorname{mod}}

\begin{abstract}
    The paper deals with a three-parameter 
    family of special double confluent Heun equations that was introduced and studied by 
    V.\,M.\,Buchstaber and S.\,I.\,Tertychnyi  as an equivalent presentation of  
    a model of overdamped Josephson junction in superconductivity. The 
    parameters are $l,\la,\mu\in\rr$. Buchstaber and Tertychnyi described those parameter values, for which the corresponding equation has 
    a polynomial solution. They have shown that for $\mu\neq0$ this happens exactly when 
    $l\in\nn$ and the parameters $(\la,\mu)$ lie on an algebraic curve $\Gamma_l\subset\cc^2_{(\la,\mu)}$ called 
    the {\it $l$-spectral curve} and defined as zero locus of determinant of 
    a remarkable three-diagonal $l\times l$-matrix. They studied the real part of the 
    spectral curve and obtained important results with applications to model of 
    Josephson junction, which is a family of dynamical systems on 2-torus depending on 
    real parameters $(B,A;\omega)$; the parameter $\omega$, called the frequency, 
    is fixed. One of main problems on the above-mentioned model 
    is to study the geometry of boundaries of its phase-lock areas in $\rr^2_{(B,A)}$ 
     and their evolution, as $\omega$ decreases to 0. An approach 
    to this problem suggested in the present paper is to study the complexified boundaries. 
    We prove irreducibility of the complex spectral curve $\Gamma_l$ 
    for every $l\in\nn$. We also calculate its genus for $l\leqslant20$ and present a conjecture on 
    general genus formula. 
    We apply the  irreducibility result to the complexified boundaries of the phase-lock areas of model of Josephson junction. 
    The family of  real boundaries  taken for all $\omega>0$ yields a {\it countable} union 
    of two-dimensional analytic surfaces in $\rr^3_{(B,A,\omega^{-1})}$. 
    We show that, unexpectedly, its complexification 
    is a complex analytic subset  consisting of just {\it four} two-dimensional 
    irreducible components, and we describe them. This is done by using the representation 
    of some special points of the boundaries (the so-called generalized simple intersections) as points of the real spectral curves and the above irreducibility 
    result.     We also prove that the spectral curve 
    has no real ovals. We present a Monotonicity  Conjecture on the evolution 
    of the phase-lock area portraits, as $\omega$ decreases, and a partial positive 
    result towards its confirmation. 
\end{abstract}

\tableofcontents

\section{Introduction and main results}

The paper deals with the family of special double confluent Heun equations 
\begin{equation} 
    z^2E''+((-l+1)z+\mu(1-z^2))E'+(\lambda+\mu(l-1)z)E=0; \ 
    l,\la,\mu\in\cc.
    \label{heun}
\end{equation}
The above family, which belong to the well-known class of Heun equations, 
see \cite{sl}, was studied by V.\,M.\,Buchstaber and S.\,I.\,Tertychnyi in \cite{tert2, 
bt0, bt1, bt3}. They have shown in \cite{bt0, bt1} that its restriction to real parameters satisfying the inequality $\la+\mu^2>0$ is equivalent to 
a model of overdamped Josephson junction in superconductivity. 
In \cite{bt0} they have described those complex parameters $(l,\la,\mu)$ with $\mu\neq0$ 
for which equation (\ref{heun}) has a polynomial solution: this holds exactly, when 
$l\in\nn$ and the point $(\la,\mu)$ lies on an algebraic curve $\Gamma_l\subset\cc^2_{(\la,\mu)}$ 
called the {\it spectral curve}. Namely, $\Gamma_l$ is the zero locus of the 
determinant of a remarkable 3-diagonal $l\times l$-matrix, see \cite[formula (21)]{bt0} 
and Theorem \ref{tpol} below. 

In the present paper we show that for every $l\in\nn$ the spectral curve 
$\Gamma_l$ is irreducible 
(Theorems \ref{irr}, \ref{cirr} stated in Subsection 1.1 and proved in Subsection 2.1). 

In Subsection 1.2 we state a  conjecture on formula for genera of curves $\Gamma_l$ 
and confirm it for $l\leqslant20$ via a computer-assisted proof. In Section 3 we 
discuss this  genus formula conjecture in more detail, with some more pictures for the real parts of 
the spectral curves. We prove that the conjectured genus formula provides 
an  upper estimate for the genus. We show that the genus formula conjecture 
is equivalent to  regularity of appropriate curve in $\mathbb P^1\times\mathbb P^1$ 
birationally equivalent to $\Gamma_l$.  

In Subsection 1.3 we study the real parts of the spectral curves. We prove 
that the upper half-plane $\{\mu>0\}$ intersects $\Gamma_l$ by $l$ non-intersecting 
smooth curves without vertical tangent lines. Then we deduce absence of real ovals 
in $\Gamma_l$. 

In Subsections 1.4 and 1.5 we present background material on model of Josephson junction 
and its relation to special double confluent Heun equations. This model 
is given by a family of dynamical systems on two-torus depending on three real parameters: 
one of them, the frequency $\omega>0$ is fixed; two other parameters $(B,A)$ are 
variable. The rotation number of dynamical system is a function $\rho=\rho(B,A;\omega)$. 
The phase-lock areas are those level sets $\{\rho=r\}$ 
that have non-empty interiors. They exist only for $r\in\zz$, 
see \cite{buch2}. 
It is interesting to study dependence of the rotation number on the parameters, 
the phase-lock area portrait in $\rr^2_{(B,A)}$ and its evolution, as $\omega$ changes.

{\bf Conjecturally} for every $r\in\nn$ the upper half  $L_r^+=L_r\cap\{ A\geq0\}$  of each phase-lock area $L_r$ satisfies the following statements (confirmed numerically, 
see Figures 3 and 4 taken from \cite{bg2}): 

C1) The upper area $L_r^+$  intersects the line $\La_r=\{ B=\omega r\}$ 
(which is called its axis) by a ray $Sr$ going upwards and bounded from below 
by a point $\mcp_r$; $Sr=\{\omega r\}\times[A(\mcp_r),+\infty)$ (see a partial 
result and a survey in \cite[theorem 1.12 and section 4]{g18}). 

C2) The complement 
\begin{equation} Ir:=\La_r^+\setminus Sr\,=\{\omega r\}\times[0,A(\mcp_r))\label{ir}
\end{equation}
lies on the left from the area $L_r^+$. 

C3) As $\omega>0$ decreases, in the renormalized coordinates 
$(l=\frac B\omega, \mu=\frac A{2\omega})$ the point $\mcp_r$ moves up 
and the lower part 
$L_r\cap\{0<A<A(\mcp_r)\}$ of the upper area $L_r^+$ 
  "moves from the left to the right". 

  In Subsection 1.7 we discuss a Monotonicity Conjecture on evolution of 
the family of phase-lock area portraits, as $\omega>0$ decreases. It deals with 
 the intersection points of their  boundaries  with a segment $Ir$, see (\ref{ir}), $r\in\nn$. 
Roughly speaking, the conjecture states that topologically the intersection points appear (disappear) in the same way, as if the above statement C3)  were true. More 
precisely, it states that as $\omega>0$ decreases, no new intersection point may 
be born from a part of  a boundary curve moving from the right to the left in the 
renormalized coordinates. We present a partial result towards its confirmation:  
a proof for some special points of intersection $\partial L_{s}\cap Ir$ 
with $s\equiv r(\modulo 2)$, $s\neq r$; the so called {\it generalized simple intersections} introduced in Subsection 1.5 (except for  those 
with $s=l$). Namely, for every given $l\in\nn$ and $\omega>0$ consider 
 the intersection points of the real part of the $l$-th spectral 
curve $\Gamma_l$ with the curve $\{\la+\mu^2=\frac1{4\omega^2}\}$. The generalized 
simple intersections are exactly those points in the line $\La_l$, whose $\mu=\frac A{2\omega}$-coordinates coincide with the $\mu$-coordinates of the above intersection 
points in the $l$-th spectral curve. The Monotonicity Conjecture 
will be proved in Section 4 for all the generalized simple 
intersections in  $\La_l$, except for those lying in $\La_l\cap\partial L_l$. 

In  Subsection 1.5 we state new results on generalized simple intersections (Theorem \ref{doubint} and Corollary \ref{cdoubint}).  
Corollary \ref{cdoubint} states that for every $l\in\nn$ 
and every $\omega>0$ small enough (dependently on $l$) 
there are exactly $l$ distinct generalized simple intersections 
in $\La_l$, and they depend analytically on $\omega>0$; they 
obviously tend to the $A$-axis, 
as $\omega\to0$.

The boundaries of the phase-lock areas form a countable union of analytic curves 
in $\rr^2_{(B,A)}$. Their families depending on the frequency parameter $\omega$ 
form a countable union of two-dimensional analytic surfaces in 
$\rr^3_{(B,A,\omega^{-1})}$. In Subsection 1.6 we present the following 
{\bf unexpected result} (Theorem \ref{irred2}): {\it the complexification} of 
the above {\it countable} union  of surfaces (families of boundaries)
is a two-dimensional analytic subset in $\cc^3$ consisting of just {\it four 
irreducible components!} The proof of Theorem \ref{irred2} given in 
Subsection 2.3 is based on the irreducibility of spectral curves $\Gamma_l$ 
and a result on the mentioned above simple intersections: on irreducible 
components of their complexified families (Theorem~\ref{irred1} stated in 
Subsection 1.5 and proved in Subsection 2.2).

At the end of the paper we discuss some open questions on the complexified (unions of) boundaries 
of the phase-lock areas. And also about  the complex family of double confluent 
Heun equations (\ref{heun}): on the locus  of those parameter values in $\cc^3$, 
for which the corresponding monodromy operator has a multiple eigenvalue. We provide 
a partial result on irreducible components of the latter locus, 
which is an immediate corollary of main results.

\subsection{
    Irreducibility of loci of special double confluent Heun equations 
    with polynomial solutions (A.\,A.\,Glutsyuk)
}

Let us recall the description of the parameters corresponding to equations~\eqref{heun}
with polynomial solutions.
To do this, consider the three-diagonal 
$l\times l$-matrix 

\begin{equation}
    H_l=\left(
    \begin{matrix}  0 & \mu & 0 & 0  & 0 & 0 \dots & 0\\
        \mu(l-1) & 1-l & 2\mu & 0 & 0 & \dots & 0\\
        0 & \mu(l-2) & -2(l-2) & 3\mu & 0 & \dots & 0\\
        \dots & \dots & \dots & \dots & \dots & \dots & \dots\\
        0 &\dots & 0 & 0 & 2\mu & -2(l-2) & (l-1)\mu\\
        0 & \dots & 0 & 0 & 0 & \mu & 1-l 
    \end{matrix}\right):
    \label{defh}
\end{equation}
\begin{align*}
    H_{l;ij}=0 \text{ if } |i-j|\geqslant2; & \quad
    H_{l;jj}=(1-j)(l-j+1); \\
    H_{l;j,j+1}=\mu j; & \quad
    H_{l;j,j-1}=\mu(l-j+1). \\
\end{align*}
The matrix $H_l$ belongs to the class of the so called Jacobi matrices that arise in 
different questions of mathematics and mathematical physics \cite{ilyin}. 

\begin{theorem}
    \label{tpol}\cite[section 3]{bt0}
    A special double confluent Heun equation (\ref{heun})  with 
    $\mu\neq0$  has a polynomial solution, if  and only if $l\in\nn$ and the 
    three-diagonal matrix $H_l+\la\operatorname{Id}$ has zero determinant. 
    For every $l\in\nn$ the determinant $\det(H_l+\la\operatorname{Id})$ 
    is a polynomial in $(u,v)=(\la,\mu^2)$ of degree $l$: 
    \begin{equation}
        \det(H_l+\la\operatorname{Id})=P_l(\la,\mu^2).
    \label{plmu}\end{equation}
\end{theorem}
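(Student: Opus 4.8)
The plan is to look for solutions of (\ref{heun}) as formal power series $E(z)=\sum_{k\geq0}a_kz^k$ and to extract the recurrence satisfied by the coefficients. Substituting $E,E',E''$ into (\ref{heun}) and collecting the coefficient of $z^j$ (with the convention $a_{-1}=0$) yields, after a direct computation, the three-term recurrence
\begin{equation}
\mu(l-j)\,a_{j-1}+\bigl(j(j-l)+\la\bigr)a_j+\mu(j+1)\,a_{j+1}=0,\qquad j\geq0.
\label{eq:recsketch}
\end{equation}
Here the $a_{j-1}$-coefficient comes from the terms $-\mu z^2E'$ and $\mu(l-1)zE$, the $a_j$-coefficient from $z^2E''$, $(1-l)zE'$ and $\la E$, and the $a_{j+1}$-coefficient from $\mu E'$. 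Since $\mu\neq0$, the coefficient $\mu(j+1)$ of $a_{j+1}$ never vanishes, so the whole sequence is determined by $a_0$.

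I would then analyse the highest-order coefficient. If $E$ is a nonzero polynomial solution of degree $n$, then the equation (\ref{eq:recsketch}) at $j=n+1$ reduces to $\mu(l-1-n)a_n=0$; as $\mu\neq0$ and $a_n\neq0$ this forces $n=l-1$, so a polynomial solution can exist only for $l\in\nn$. The decisive structural point is that the $a_{j-1}$-coefficient $\mu(l-j)$ in (\ref{eq:recsketch}) vanishes \emph{exactly} at $j=l$. Consequently, truncating by $a_l=a_{l+1}=\dots=0$ is consistent: the equation (\ref{eq:recsketch}) at $j=l$ loses its dependence on $a_{l-1}$ and reduces to $\la a_l+\mu(l+1)a_{l+1}=0$, while all equations with $j\geq l$ involve only the vanishing coefficients. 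Hence the genuine constraints are precisely the $l$ equations with indices $j=0,\dots,l-1$.

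It then remains to recognise this finite system. Putting the matrix row index $i=j+1$, one checks entrywise that the diagonal $j(j-l)+\la=(1-i)(l-i+1)+\la$, the super-diagonal $\mu(j+1)=\mu i$ and the sub-diagonal $\mu(l-j)=\mu(l-i+1)$ of (\ref{eq:recsketch}) are exactly the entries of $H_l+\la\operatorname{Id}$. Thus the constraints read $(H_l+\la\operatorname{Id})\,a=0$ with $a=(a_0,\dots,a_{l-1})$, and a nonzero polynomial solution exists if and only if this homogeneous system has a nonzero solution, i.e. if and only if $\det(H_l+\la\operatorname{Id})=0$. A nonzero kernel vector cannot have $a_{l-1}=0$, for otherwise it would produce a polynomial solution of degree below $l-1$, contradicting the degree computation; so the associated polynomial genuinely has degree $l-1$.

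For the last assertion I would expand the determinant by the continuant recurrence for tridiagonal matrices, $D_i=\bigl((1-i)(l-i+1)+\la\bigr)D_{i-1}-(H_l)_{i-1,i}(H_l)_{i,i-1}\,D_{i-2}$. Each off-diagonal product $(H_l)_{i-1,i}(H_l)_{i,i-1}=\mu^2(i-1)(l-i+1)$ is an integer multiple of $\mu^2$, and each diagonal factor is affine in $\la$ with integer constant term, so the determinant is a polynomial $P_l(\la,\mu^2)$ in $(u,v)=(\la,\mu^2)$. Assigning weight one to each of $u$ and $v$, a diagonal factor contributes degree one while lowering the size by one and an off-diagonal product contributes degree one while lowering the size by two; hence every monomial has total degree at most $l$, with the top term $\la^l$ coming from the product of all diagonal entries, so the degree is exactly $l$. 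The step I expect to demand the most care is the bookkeeping of the second paragraph: simultaneously establishing that the top-degree relation forces $n=l-1$ and that the vanishing of the sub-diagonal at $j=l$ renders the truncation consistent, so that polynomial solutions correspond bijectively to kernel vectors of $H_l+\la\operatorname{Id}$ with no spurious lower-degree solutions.
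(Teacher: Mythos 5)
The paper does not contain its own proof of Theorem \ref{tpol}: it is quoted from \cite[section 3]{bt0}, so there is no internal argument to compare against. Your proof is correct and reconstructs what is essentially the original Buchstaber--Tertychnyi argument: the three-term recurrence coefficients $\mu(l-j)$, $j(j-l)+\la$, $\mu(j+1)$ are exactly right; the relation at $j=n+1$ correctly forces the degree $n=l-1$ (hence $l\in\nn$); the vanishing of the sub-diagonal coefficient precisely at $j=l$ makes the truncation $a_l=a_{l+1}=\dots=0$ consistent, so that nonzero polynomial solutions correspond bijectively, under the index shift $i=j+1$, to nonzero kernel vectors of $H_l+\la\operatorname{Id}$; and the continuant recurrence, in which $\mu$ enters only through the products $\mu^2(i-1)(l-i+1)$, yields both that the determinant is a polynomial in $(u,v)=(\la,\mu^2)$ and that its total degree is exactly $l$, since it is monic of degree $l$ in $\la$.
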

See also \cite[remark 4.13]{bg2} for non-existence of polynomial solutions for 
$l\notin\nn$ and $\mu\neq0$. 

\begin{theorem} 
    \label{irr}
        The polynomial $P_l(u,v)$ from (\ref{plmu}) is irreducible. 
\end{theorem}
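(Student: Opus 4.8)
\section*{Proof proposal}

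The plan is to reduce the statement to a connectivity (transitive monodromy) assertion and then to extract that connectivity from the spectral geometry of the Jacobi matrix $H_l(\mu)$, $\mu^2=v$. First observe that $\det(H_l+\la\operatorname{Id})$ is \emph{monic of degree $l$ in $u=\la$}: the product of the diagonal entries $\prod_j(\la+(1-j)(l-j+1))$ is the only source of $\la^l$, while every matching contributing a factor $v\,j(l-j)$ lowers the $\la$-degree. Hence $P_l$ is primitive in $\cc[v][u]$, and by Gauss's lemma it is irreducible in $\cc[u,v]$ if and only if it is irreducible over the field $\cc(v)$. Equivalently, writing the $l$ roots of $P_l(u,v)=0$ as $u=-\beta$ with $\beta$ an eigenvalue of $H_l(\mu)$, I must show that the $l$-sheeted branched cover $\pi\colon\{P_l=0\}\to\cc_v$ is connected, i.e. that the monodromy of $\pi_1(\cc_v\setminus\Delta)$ acts transitively on the $l$ sheets, where $\Delta$ is the discriminant locus.

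To organize the monodromy I would record three structural facts. (i) For real $v>0$ the adjacent off-diagonal products of $H_l(\mu)$, $\mu=\sqrt v$, equal $\mu^2 j(l-j)>0$, so $H_l(\mu)$ is conjugate by a positive diagonal matrix to a real symmetric tridiagonal matrix with nonzero off-diagonal entries; it therefore has $l$ \emph{distinct real} eigenvalues. Thus $\{v>0\}$ is branch-free and carries $l$ globally defined, linearly ordered real sheets $u_1(v)<\dots<u_l(v)$. (ii) At $v=\infty$ one factors $H_l=\mu M+\diag\big((1-j)(l-j+1)\big)$ with $M$ tridiagonal, $M_{j,j+1}=j$, $M_{j+1,j}=l-j$; a direct computation (e.g. the recursion for $\det(M-\nu\operatorname{Id})$) gives the simple spectrum $\{\,l-1,l-3,\dots,-(l-1)\,\}$, so the sheets behave like $u\sim-\nu_k\sqrt v$ with distinct slopes $\nu_k=l-1-2k$, and the monodromy at infinity is the involution pairing each slope $\nu$ with $-\nu$. (iii) At $v=0$ the spectrum degenerates only through the coincidences $c_j=c_{l+2-j}$, $c_j=(1-j)(l-j+1)$; a degenerate-perturbation analysis shows that a coincident pair $(j,l+2-j)$ yields a genuine $\sqrt v$ branch point precisely when the indices are adjacent, which happens only for $l$ odd, so $v=0$ contributes at most a single transposition.

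The point I want to stress is that facts (ii)–(iii) are by themselves insufficient: they produce only the slope-reversal involution and at most one extra transposition, and such a subgroup leaves proper sheet-subsets invariant (for instance, for $l=4$ the naive pairing of slopes $\pm1$ and $\pm3$ would give a factor). Hence the decisive ramification must live on $\{v<0\}$ and at the complex branch points. There $\mu$ is imaginary and $H_l(\mu)$ is pseudo-Hermitian, $H_l(\mu)^*=S\,H_l(\mu)\,S^{-1}$ with $S=\diag((-1)^j)$, so its eigenvalues are real or come in complex-conjugate pairs; as $v$ decreases through $0$ the real eigenvalues must collide successively and leave the real axis (they end up purely imaginary as $v\to-\infty$), and each such ``exceptional point'' is a simple branch point interchanging two sheets that are neighbours in the real order. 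The strategy is to show that these transpositions, read off from the ordered real spectrum $u_1<\dots<u_l$ as $v$ runs down the negative axis, already generate a transitive (indeed the full symmetric) group, so that no proper sheet-subset — and hence no nontrivial factor of $P_l$ — can be monodromy-invariant.

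The main obstacle is exactly this last step: controlling the \emph{pattern} of real-eigenvalue collisions on $\{v<0\}$, i.e. proving that the induced transpositions are not confined to a reversal-invariant block but link consecutive sheets richly enough to be transitive. I expect this to be the crux because it is the one place where the coarse asymptotic data at $0$ and $\infty$ give no information and a genuine analysis of the discriminant (or of the exceptional-point structure via pseudo-Hermiticity) is required; a Newton-polygon shortcut is ruled out, since one checks that the Newton polygon of $P_l$ is Minkowski-\emph{decomposable}, so indecomposability cannot be invoked. I would attempt to close this step either by tracking the ordered eigenvalues monotonically and showing directly that any subset of sheets closed under all nearest-neighbour collisions must be everything, or, failing an explicit count, by an inductive/degeneration argument relating the collision combinatorics for successive $l$.
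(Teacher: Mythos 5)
Your proposal does not close: the step that you yourself flag as the crux --- proving that the eigenvalue collisions over $\{v<0\}$ and at complex branch points generate a transitive permutation group on the $l$ sheets --- is left entirely open, and nothing in the proposal indicates how to control the collision pattern. The preparatory material is sound: monicity in $u$ plus Gauss's lemma correctly reduces irreducibility to transitivity of the monodromy of the cover $\{P_l=0\}\to\cc_v$, and your facts (i) and (ii) are correct (they are the paper's Theorem \ref{tsimple} and Proposition \ref{poval}, respectively). But, as you note, these data alone generate an intransitive group for even $l$, so they prove nothing by themselves. Your fact (iii) is also asserted rather than proved: for $l$ odd \emph{every} coincident pair $(j,l+2-j)$ has odd index distance $d$, so ruling out branching for $d\geqslant 3$ requires showing that the analytic (integer-power-of-$v$) corrections separate the two levels at order $v^1$, before the coupling of order $v^{d/2}$ can act; that is a genuine computation, not a remark. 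What you have is a plausible research program whose decisive step is missing.

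Moreover, the premise by which you discarded the route that actually works is mistaken. You checked that the Newton polygon of $P_l$ in the coordinates $(u,v)$ is Minkowski-decomposable and concluded that ``a Newton-polygon shortcut is ruled out.'' The paper's proof is nonetheless exactly a Newton-diagram argument: it first performs the affine substitution $v=R-u$ (equivalently $R=\lambda+\mu^2$; concretely, one conjugates $H_l$ by $\operatorname{diag}(1,\mu^{-1},\dots,\mu^{1-l})$ so that all the $\mu$'s sit above the diagonal as $\mu^2$, and then substitutes $\mu^2=R-\lambda$). In the coordinates $(\lambda,R)$ the determinant $Q(\lambda,R)$ contains the monomials $\lambda^l$ and $R$ with nonzero coefficients, contains no other pure power $\lambda^k$, and has total degree $l$; hence its Newton diagram at the origin is the single edge $E$ joining $(l,0)$ and $(0,1)$. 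Since the only lattice segment parallel to $E$ lying in the first quadrant and not above $E$ is $E$ itself, in any factorization exactly one factor can vanish at the origin, that factor's diagram must be all of $E$ (so it already contains $\lambda^l$ and has degree $l$), and the complementary factor must then be constant by the degree count. Your decomposability computation does not contradict this, because the substitution $v\mapsto R-u$ is not monomial and genuinely changes the polygon; the lesson is that a Newton-polygon obstruction must be tested in all affine coordinate systems, not only the given one.
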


\begin{theorem}
    \label{cirr}
    For every $l$ the {\bf $l$-th spectral curve} 
    \[
        \Gamma_l:=\{ P_l(\la,\mu^2)=0\}\subset\cc^2_{(\la,\mu)}
    \]
    is irreducible.
\end{theorem}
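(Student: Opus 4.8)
The plan is to derive the irreducibility of $\Gamma_l$ from that of the two–variable polynomial $P_l(u,v)$ (Theorem \ref{irr}) by analysing the degree–two cover $\phi\colon\cc^2_{(\la,\mu)}\to\cc^2_{(u,v)}$, $\phi(\la,\mu)=(\la,\mu^2)$. Writing $C=\{P_l(u,v)=0\}$, Theorem \ref{irr} says that $C$ is an irreducible plane curve, and $\Gamma_l=\phi^{-1}(C)$. Since $\phi$ is finite of degree $2$, the preimage $\Gamma_l$ has at most two irreducible components, interchanged by the deck transformation $\sigma\colon(\la,\mu)\mapsto(\la,-\mu)$. The map $\phi$ is \'etale away from the branch line $\{v=0\}$, so over $C^{*}:=C\setminus\{v=0\}$ it restricts to an unramified double cover $\phi^{-1}(C^{*})\to C^{*}$. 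I would then invoke the standard fact that such a cover is connected — equivalently $\Gamma_l$ is irreducible — if and only if $v$, regarded as a rational function on $C$, is \emph{not} a square in the function field $\cc(C)$. Thus the theorem reduces to showing that $v|_C$ is a non-square.

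To produce a non-square I would pull $v$ back to the normalization $\wt C$ of $C$ and exhibit a place at which $v$ vanishes to odd order; an odd-order zero is incompatible with $\operatorname{div}(v)$ being twice a divisor, which is necessary for $v$ to be a square. The natural places to inspect are the intersections of $C$ with the branch line $\{v=0\}$, governed by the restriction $P_l(u,0)$. Here the key simplification is that the matrix $H_l$ is \emph{diagonal} at $\mu=0$, since every off-diagonal entry carries a factor $\mu$. Hence
\[
  P_l(u,0)=\det(H_l+u\operatorname{Id})\big|_{\mu=0}=\prod_{j=1}^{l}\bigl(u+(1-j)(l-j+1)\bigr).
\]
Among $j=1,\dots,l$ the $j$-th factor vanishes at $u=0$ only for $j=1$ (for $2\le j\le l$ one has $(1-j)(l-j+1)\neq0$), so $u=0$ is a \emph{simple} root of $P_l(u,0)$.

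Consequently the intersection multiplicity of $C$ with $\{v=0\}$ at the origin is
\[
  I_{(0,0)}(C,\{v=0\})=\dim_{\cc}\mathcal O_{\cc^2,0}/(P_l,v)=\operatorname{ord}_{u=0}P_l(u,0)=1.
\]
A total intersection multiplicity equal to $1$ forces $C$ to be smooth at $(0,0)$ and to meet $\{v=0\}$ transversally, so $(0,0)$ has a single preimage in $\wt C$ and $v|_C$ vanishes there to order exactly $1$. This odd-order zero shows that $v$ is not a square in $\cc(C)$, whence $\Gamma_l$ is irreducible.

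The only genuinely delicate point is the reduction step: justifying that connectedness of the \'etale cover $\phi^{-1}(C^{*})$ is equivalent to $v$ being a non-square in $\cc(C)$, and that irreducibility of this cover propagates to its closure $\Gamma_l$ (no spurious components appear over the finitely many branch points, and $\Gamma_l$ is reduced, since $\mu\nmid P_l(\la,\mu^2)$ because $P_l(\la,0)\not\equiv0$). Once this descent is in place, the remaining input is the elementary diagonalization at $\mu=0$ carried out above, so the main obstacle is careful bookkeeping for the double cover rather than any substantial analytic difficulty.
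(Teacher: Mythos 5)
Your proposal is correct and takes essentially the same route as the paper's proof: both view $\Gamma_l$ as the degree-two preimage of the irreducible curve $C=\{P_l(u,v)=0\}$ (Theorem \ref{irr}) under $(\lambda,\mu)\mapsto(\lambda,\mu^2)$, so that $\Gamma_l$ has at most two irreducible components, and both glue the two sheets by showing that $C$ is smooth at the origin and crosses the branch line $\{v=0\}$ transversally there. The differences are only in packaging: the paper obtains this local fact from the Newton-diagram Corollary \ref{cnewton} (the linear part of $P_l$ at the origin is a nonzero multiple of $u+v$) and concludes by lifting a small loop around the origin to a path joining the two preimage points, while you obtain it from the simple root at $u=0$ of $P_l(u,0)=\prod_{j=1}^{l}\bigl(u+(1-j)(l-j+1)\bigr)$ (intersection multiplicity one) and conclude via the equivalent function-field criterion that the cover is irreducible precisely when $v$ is not a square in $\mathbb{C}(C)$.
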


Theorems~\ref{irr} and~\ref{cirr} will be proved in Subsection~2.1.
One can see examples of real parts of some curves~$\Gamma_l$ on Fig.\,\ref{gamma_pics}.

\begin{remark} Equation (\ref{plmu})  
defining the curve $\Gamma_l$ belongs to a remarkable 
 class of determinantal representations of plane curves: equations 
 $$\det(x_1L_1+x_2L_2+x_3L_3)=0,$$ 
 where $(x_1:x_2:x_3)\in\cp^2$ and $L_1$, $L_2$, $L_3$ are $l\times l$-matrices. Determinantal representations of curves  
 arise in algebraic geometry and integrable systems,  see 
 \cite{vin0, vin} and references therein. Complete description of 
 determinantal representations of smooth complex irreducible projective 
  curves was obtained in \cite{vin0}. Self-adjoint determinantal 
  representations of real smooth plane curves were described in 
  \cite{vin}.
\end{remark}
\begin{figure}[ht]
    \centering
    \begin{subfigure}[$\Gamma_2$]
        \centering
        \includegraphics[width=0.3\textwidth]{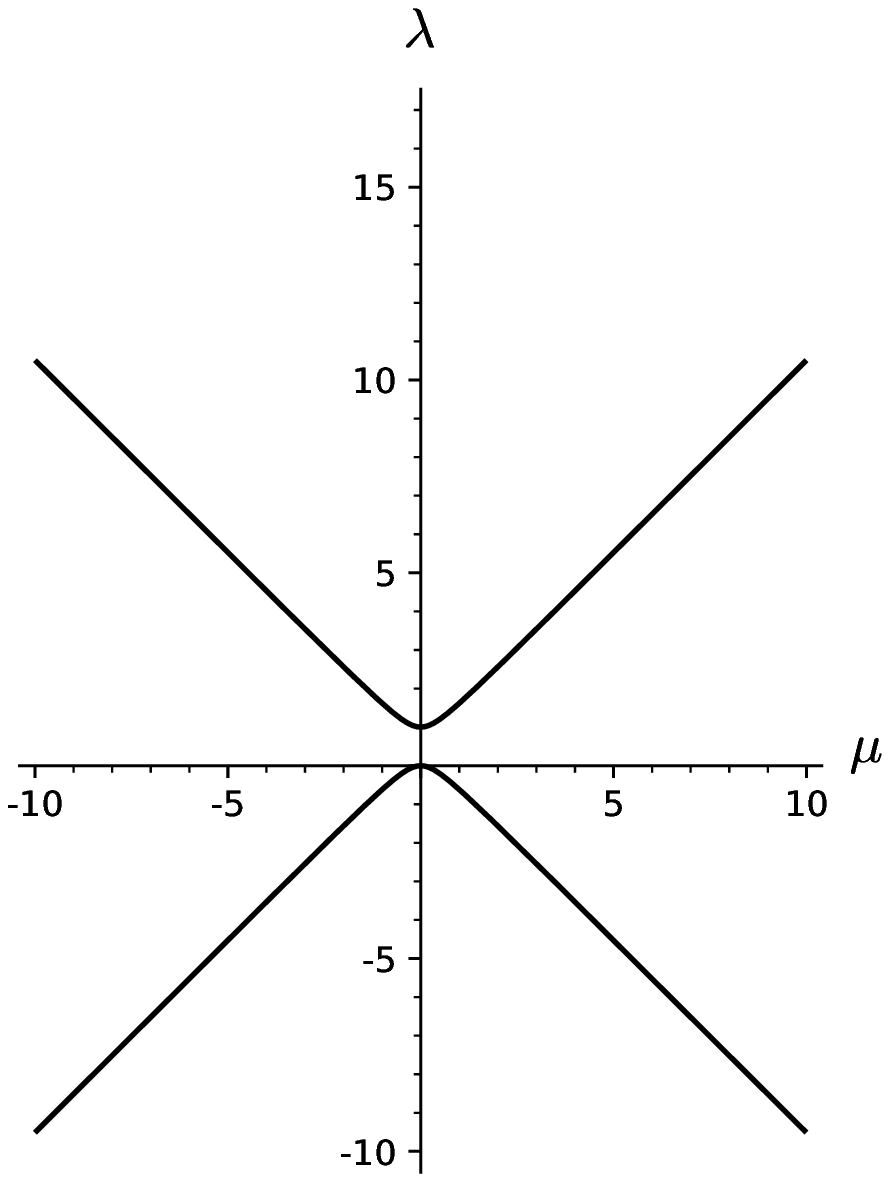}
    \end{subfigure}
    \begin{subfigure}[$\Gamma_3$]
        \centering
        \includegraphics[width=0.3\textwidth]{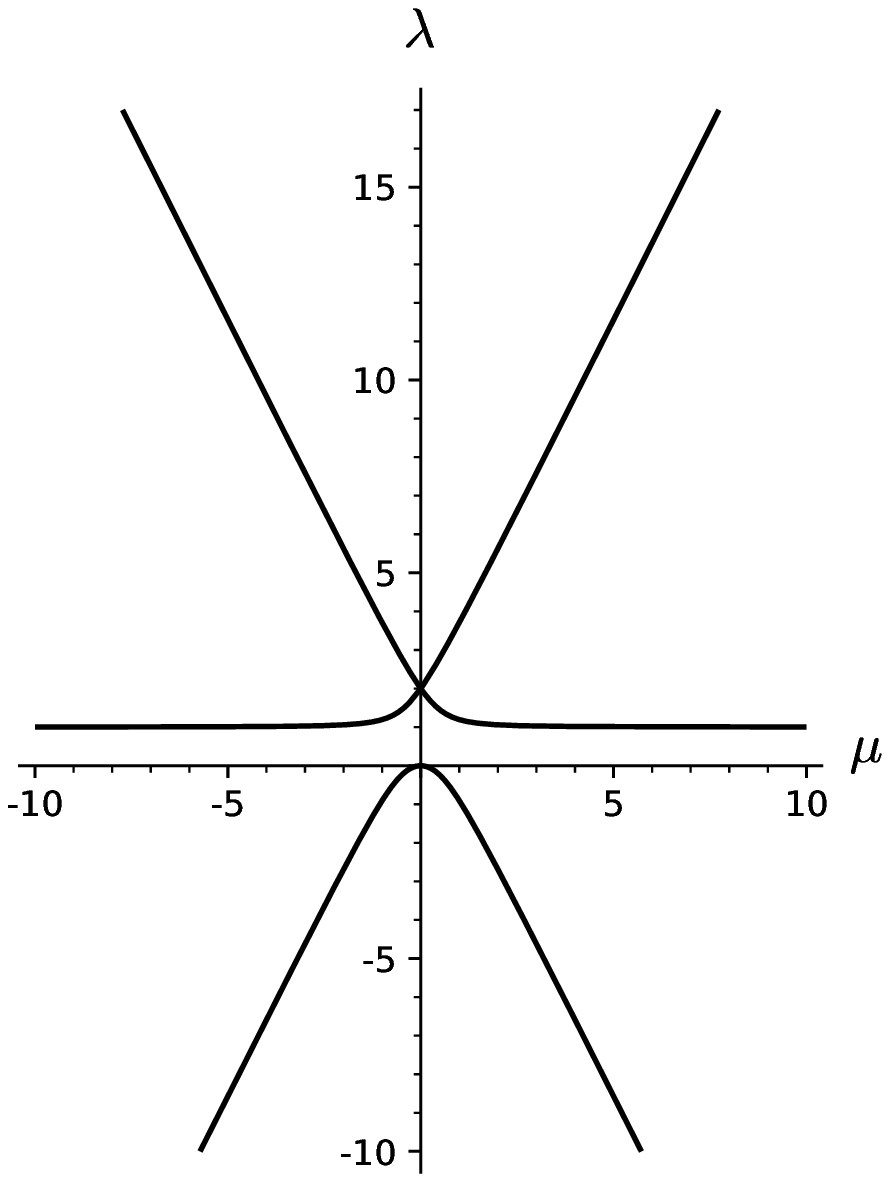}
    \end{subfigure}
    \begin{subfigure}[$\Gamma_4$]
        \centering
        \includegraphics[width=0.3\textwidth]{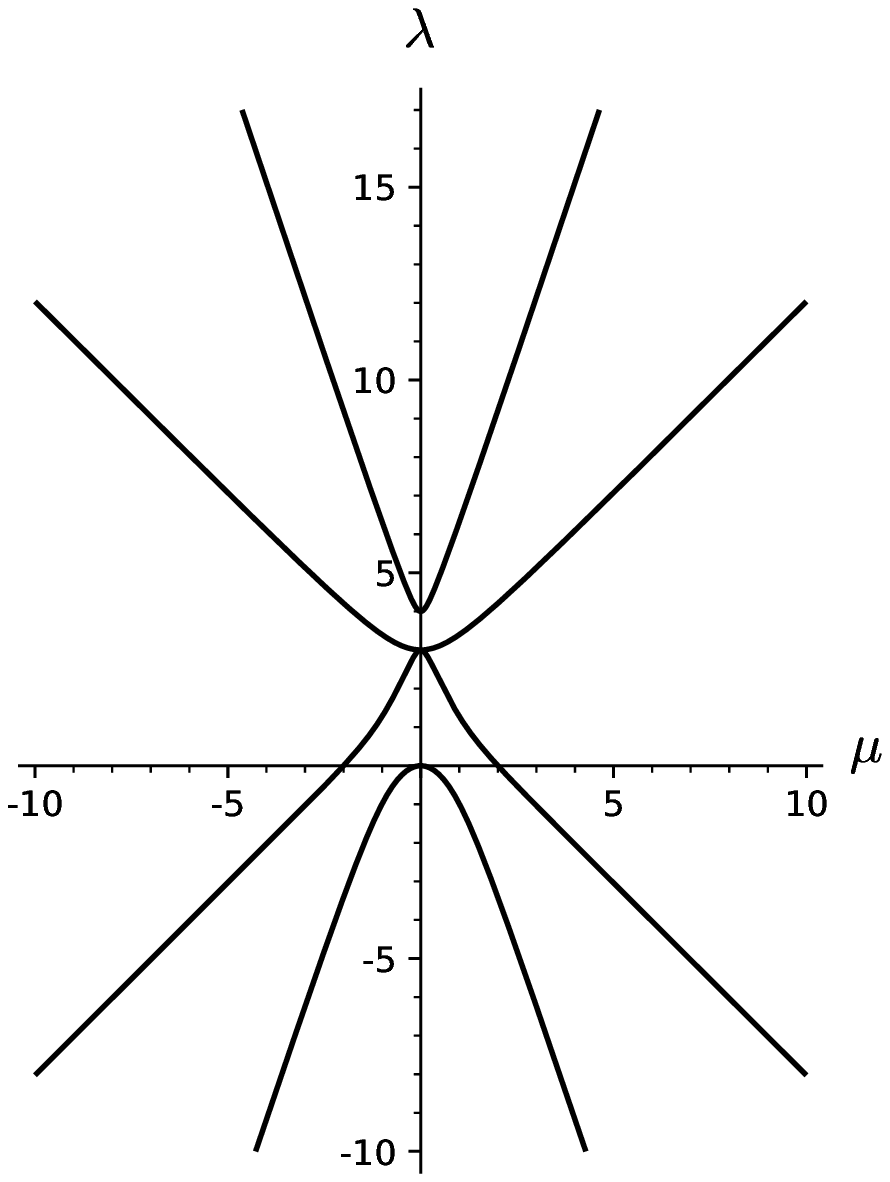}
    \end{subfigure}
    \begin{subfigure}[$\Gamma_5$]
        \centering
        \includegraphics[width=0.3\textwidth]{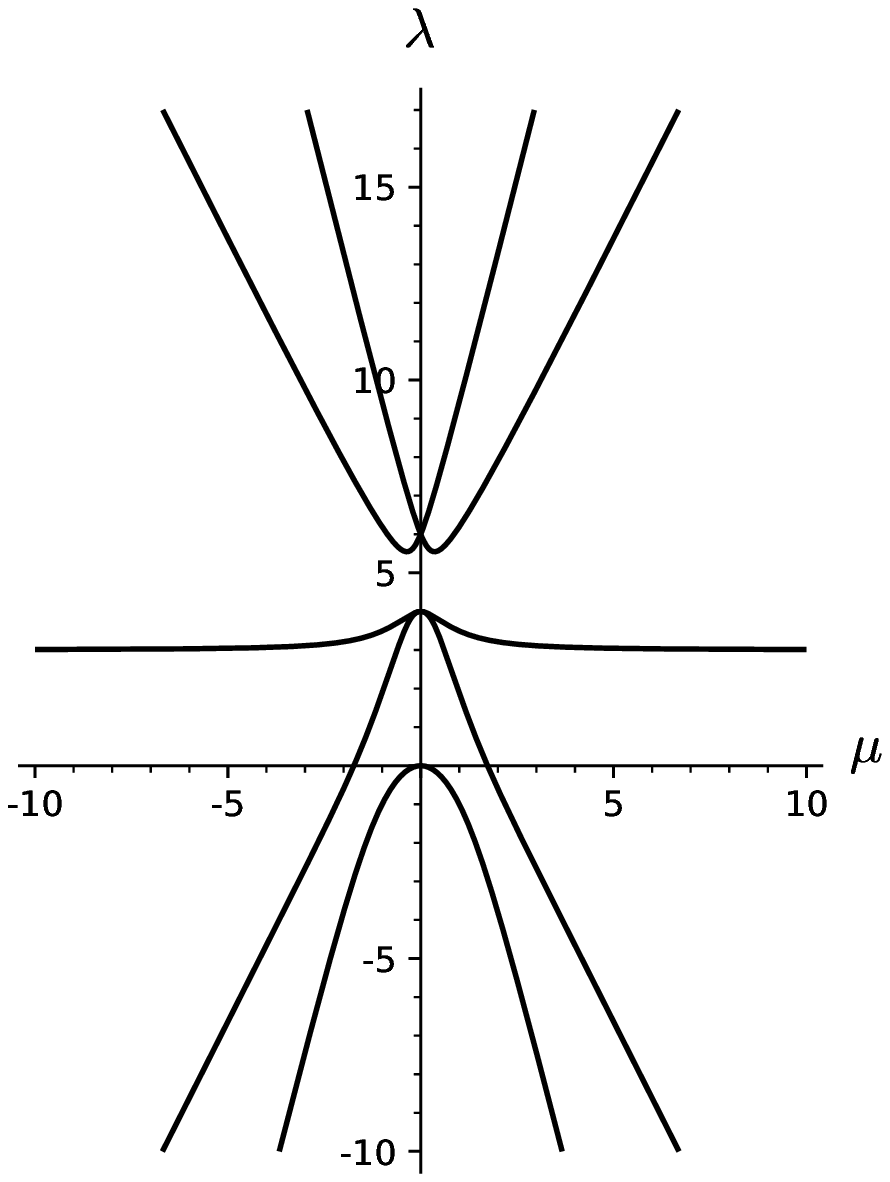}
    \end{subfigure}
    \begin{subfigure}[$\Gamma_6$]
        \centering
        \includegraphics[width=0.3\textwidth]{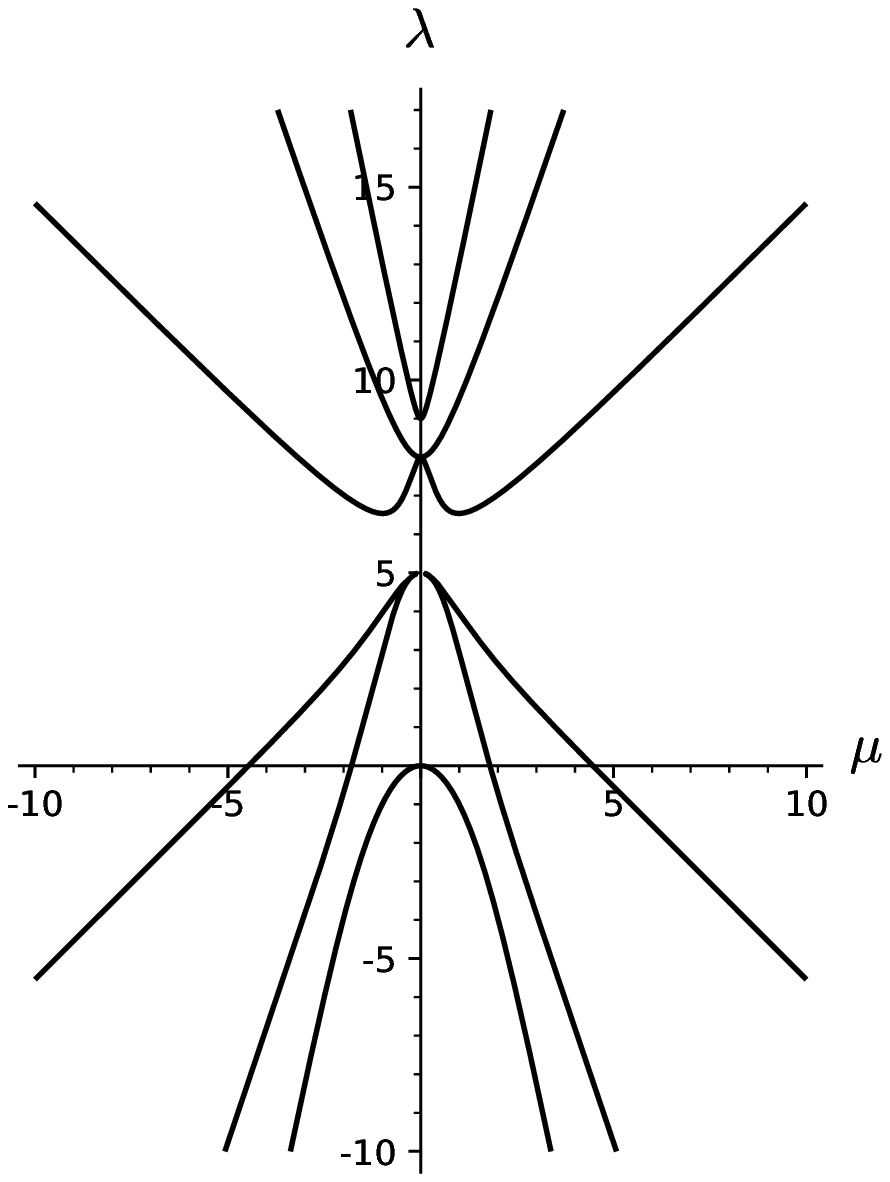}
    \end{subfigure}
    \begin{subfigure}[$\Gamma_7$]
        \centering
        \includegraphics[width=0.3\textwidth]{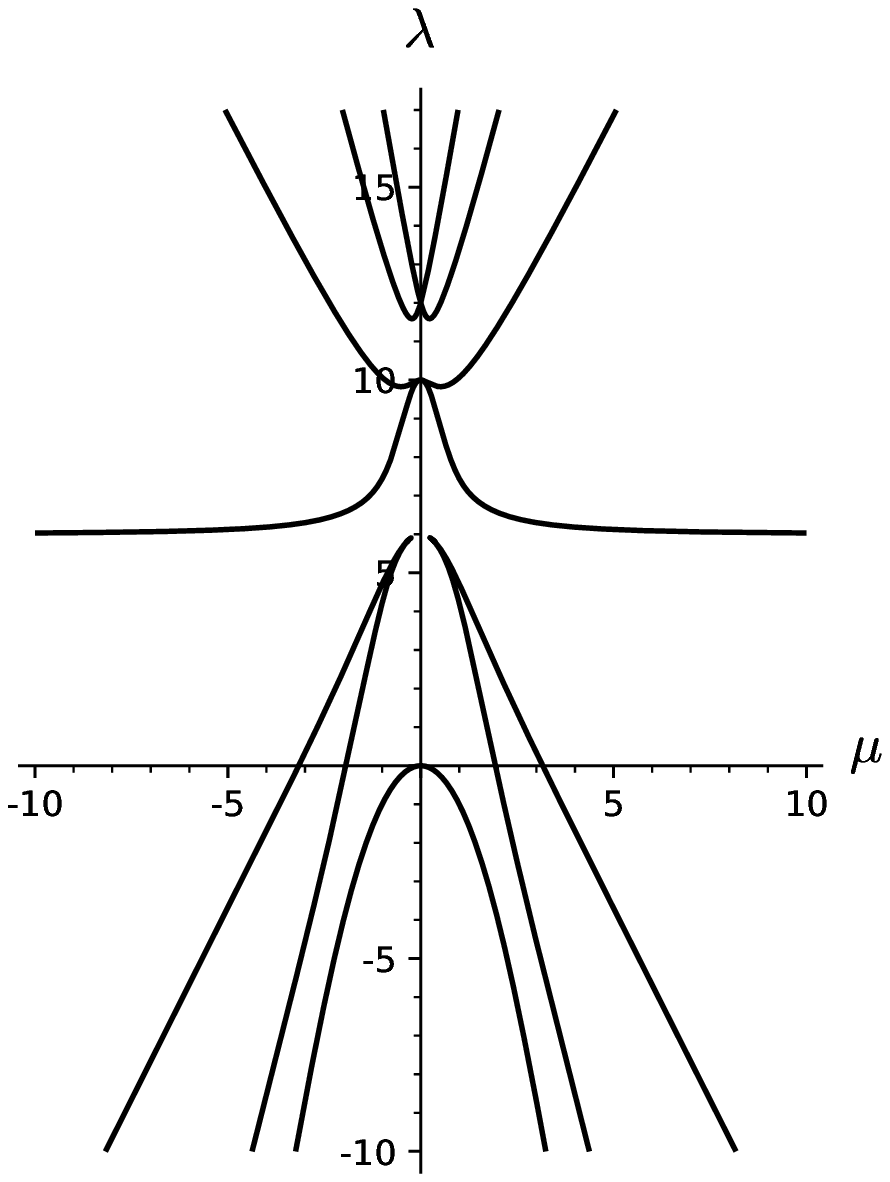}
    \end{subfigure}
    \caption{Some examples of curves~$\Gamma_l$ (I.V.Netay).}
    \label{gamma_pics}
\end{figure}

\subsection{
    Genera of spectral curves of low degrees and a genus formula conjecture (I.\,V.\,Netay)
}

Recall that the {\it geometric genus} of an irreducible algebraic curve is the 
genus of the Riemann surface parametrizing it bijectively (except for possible 
singularities), i.\,e., the genus of its normalization.
\begin{conjecture} \label{congenu}
    {\it The geometric genus of the curve $\Gamma_l$ equals}
    \[
        \begin{cases}
            \left(\dfrac{l-2}{2}\right)^2, & l\text{ even;} \\
            \\
            \left(\dfrac{l-1}{2}\right)\left(\dfrac{l-3}2\right), & l\text{ odd.} \\
        \end{cases}
    \]
\end{conjecture}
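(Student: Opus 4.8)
The plan is to read the genus off the Newton polygon of the determinant $P_l$ and then to reduce the equality in Conjecture~\ref{congenu} to the smoothness of an explicit birational model. From the three–term recursion $D_k=(u-\beta_k)D_{k-1}-v\beta_k D_{k-2}$, $\beta_k=(k-1)(l-k+1)$, defining $P_l(u,v)=\det(H_l+\la\operatorname{Id})$ (with $u=\la,\ v=\mu^2$), one sees that the support of $P_l$ lies in the triangle $T=\{(i,j):i,j\ge0,\ i+2j\le l\}$, and that the vertices $(l,0)$ and $(0,\lfloor l/2\rfloor)$ carry nonzero coefficients (the pure power $u^l$, and the unique maximal dimer term $v^{\lfloor l/2\rfloor}$). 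A direct lattice count then gives $\#\mathrm{int}(T)=\big(\tfrac{l-2}{2}\big)^2$ for even $l$ and $\tfrac{(l-1)(l-3)}{4}$ for odd $l$: indeed $\#\mathrm{int}(T)=\sum_{j\ge1}\#\{i:1\le i\le l-1-2j\}=\sum_{j=1}^{\lfloor (l-2)/2\rfloor}(l-1-2j)$, which evaluates to exactly the two conjectured expressions. Thus the first step recasts the conjecture as the single identity $g(\Gamma_l)=\#\mathrm{int}(T)$.

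Next I would control the behaviour at infinity. The degree–$l$ leading form of $P_l(\la,\mu^2)$ in $(\la,\mu)$ is $\Phi_l(\la,\mu)=\det(\la\operatorname{Id}+\mu J_l)=\prod_i(\la+\gamma_i\mu)$, where $J_l$ is the tridiagonal matrix with off–diagonals $(J_l)_{j,j+1}=j$, $(J_l)_{j+1,j}=l-j$. This $J_l$ is conjugate to $2J_x$ in the spin $s=(l-1)/2$ representation of $\mathfrak{sl}_2$, so its spectrum is $\{\gamma_i\}=\{\,l-1-2k:0\le k\le l-1\,\}$, which is \emph{simple}. Hence $\Phi_l$ has $l$ distinct linear factors, the projective closure $\overline{\Gamma_l}\subset\cp^2$ meets the line at infinity transversally in $l$ distinct smooth points, and all singularities of $\Gamma_l$ are affine. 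Writing the plane–curve genus formula in the full triangle $T'=\{i+2\,(\deg_\mu)\le l\}$ (arithmetic genus $\binom{l-1}{2}=\#\mathrm{int}(T')$) the conjecture becomes the statement that the total $\delta$–invariant of the affine singularities equals $\#\mathrm{int}(T')-\#\mathrm{int}(T)$.

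The heart of the argument is the passage to a model adapted to $T$. The edge of $T$ on $\{v=0\}$ has edge polynomial $P_l(u,0)=\prod_j(u-\beta_j)$, which is \emph{degenerate}: the symmetry $\beta_j=\beta_{l+2-j}$ forces double roots, and writing $u=\tfrac{l^2}{4}-r^2$ turns $u-\beta_j$ into $(r-\rho_j)(r+\rho_j)$ with $\rho_j=j-\tfrac{l+2}{2}$, so the two colliding roots separate into the distinct values $r=\pm\rho_j$. Resolving the resulting toric surface (the Newton polygon $T$ gives a copy of $\cp(1,1,2)$; its unique $A_1$ point, sitting over the vertex $(0,l/2)$, is blown up) produces a smooth rational surface birational to $\cp^1\times\cp^1$, in which the closure $\Sigma_l$ of $\Gamma_l$ has bidegree $(\lceil l/2\rceil,\lfloor l/2\rfloor)$; its arithmetic genus is therefore $(\lceil l/2\rceil-1)(\lfloor l/2\rfloor-1)=\#\mathrm{int}(T)$. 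Since $\Gamma_l$ is irreducible (Theorem~\ref{cirr}) and geometric genus never exceeds arithmetic genus, this already yields the \emph{upper bound} $g(\Gamma_l)=p_g(\Sigma_l)\le p_a(\Sigma_l)=\#\mathrm{int}(T)$, i.e.\ the conjectured value bounds the genus from above. The boundary tangencies of $\Gamma_l$ along $\{\mu=0\}$ are absorbed into the smooth toric boundary of $\Sigma_l$, and the hypotenuse edge is automatically nondegenerate because $J_l$ has simple spectrum.

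Consequently $g(\Gamma_l)=\#\mathrm{int}(T)$ if and only if $\Sigma_l$ is smooth: its arithmetic and geometric genera coincide exactly when it has no singular points. After the construction above, the only places where $\Sigma_l$ can fail to be regular are the open torus $\{\la\mu\ne0\}$ and the two short toric edges coming from $\{u=0\}$ and from the blow–up; equivalently, the obstruction is a genuine singular point of $\Gamma_l$ with $\la\mu\ne0$, i.e.\ a common solution of $P_l=\partial_uP_l=\partial_vP_l=0$ with $u,v\ne0$, together with the simplicity of the remaining edge polynomials. This is precisely a single resultant/discriminant $\mathcal R_l$ not vanishing. \textbf{The main obstacle is exactly this last step}: proving $\mathcal R_l\ne0$ for \emph{all} $l$. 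One expects the required nonvanishing to follow from the rigid spin/$\mathfrak{sl}_2$ structure of $H_l$ (the factorisation $\la-\beta_j=(\rho_j-r)(\rho_j+r)$ and the Jacobi form of the off–diagonal part), but turning this into a closed–form nonvanishing statement is open; it is this determinantal identity that the computer verifies for $l\le20$, establishing the conjecture in that range while the general case rests on the regularity of $\Sigma_l$.
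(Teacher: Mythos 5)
Your opening and closing steps are sound: the conjectured value does equal the number of interior lattice points of the triangle $T=\{(i,j):i,j\geqslant0,\ i+2j\leqslant l\}$, the $\mathfrak{sl}(2)$ analysis at infinity is exactly the paper's Proposition~\ref{poval}, and your final reduction (upper bound plus ``equality iff a birational model is smooth'', with computer verification for $l\leqslant20$) is precisely the logical structure of the paper, which also does \emph{not} prove the conjecture (it is stated as a conjecture; Proposition~\ref{GeneraMax} gives the bound and the smoothness criterion). The genuine gap is in your ``heart of the argument'', and it is twofold. First, $T$ is the Newton polygon of $P_l(u,v)$, i.e.\ of the quotient curve $W_l=\{P_l(u,v)=0\}$, not of $\Gamma_l=\{P_l(\la,\mu^2)=0\}$: the map $\Gamma_l\to W_l$, $(\la,\mu)\mapsto(\la,\mu^2)$, has degree two, and in general $g(\Gamma_l)>g(W_l)$ (for $l=4$ one checks $g(W_4)=0$ while $g(\Gamma_4)=1$), so the toric surface built from $T$ compactifies $W_l$ and carries no direct information about $g(\Gamma_l)$; ``the closure $\Sigma_l$ of $\Gamma_l$'' inside that surface is not defined. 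Second, your separation mechanism is false: since $\beta_j$ and $\beta_{l+2-j}$ produce one and the same quadratic factor $\rho_j^2-r^2$ under $u=\tfrac{l^2}{4}-r^2$, the pulled-back edge polynomial is $\prod_j(\rho_j^2-r^2)$, whose roots $r=\pm\rho_j$ are again \emph{double} --- a substitution unramified at these points cannot lower multiplicities. Worse, the curve obtained from this substitution is a genuine irreducible double cover rather than a birational model: already for $l=2$, on $\Gamma_2=\{\la^2-\la-\mu^2=0\}$ the function $\tfrac{l^2}{4}-\la=1-\la$ has a double zero at $(1,0)$ and simple poles at the two points at infinity, hence is not a square in the function field, so the pullback does not split. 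Consequently neither the bidegree claim nor the upper bound follows from your construction.

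The square root that does work --- and this is what the paper uses --- is $r^2=\la+\mu^2$, not $r^2=\tfrac{l^2}{4}-\la$. It comes from the Buchstaber--Tertychnyi factorization $H_l+(r^2-\mu^2)\operatorname{Id}=-(\mathcal{G}_l+r\operatorname{Id})(\mathcal{G}_l-r\operatorname{Id})$: under $(\mu,r)\mapsto(\la,\mu)=(r^2-\mu^2,\mu)$ the preimage of $\Gamma_l$ splits into the two curves $\Xi_l^\pm=\{\det(\mathcal{G}_l\pm r\operatorname{Id})=0\}$, each birational to $\Gamma_l$ (this identification uses the irreducibility results, Theorems~\ref{cirr} and~\ref{irred1}), and on these curves the singular points of $\Gamma_l$ on $\{\mu=0\}$ genuinely separate: by formula~(\ref{gnr}), $\det(\mathcal{G}_l\pm r\operatorname{Id})|_{\mu=0}$ has only simple roots $r=0,\pm\sqrt{k(l-k)}$ (and $\pm\tfrac l2$ for even $l$). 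The paper then computes the multiplicities $\lfloor l/2\rfloor$ and $l-\lfloor l/2\rfloor$ of $\overline{\Xi_l^\pm}$ at the two points $(1:\pm1:0)$ at infinity and shows they are pairwise transversal self-intersections with branches transversal to the infinity line (Proposition~\ref{trpp}); after blowing up these two points and blowing down the infinity line one lands in $\mathbb{P}^1\times\mathbb{P}^1$ with a curve of bidegree $(\lceil l/2\rceil,\lfloor l/2\rfloor)$, whence the bound $(\lceil l/2\rceil-1)(\lfloor l/2\rfloor-1)$ with equality iff the model is smooth. (Two smaller inaccuracies in your text: for odd $l$ the toric surface of $T$ is not $\cp(1,1,2)$, and resolving the $A_1$ point of $\cp(1,1,2)$ gives the Hirzebruch surface $\mathbb{F}_2$, not $\mathbb{P}^1\times\mathbb{P}^1$ itself.) So your overall scheme is the right one, but the core construction must be replaced by the factorization argument; as written, even the upper bound is not established.
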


We will prove in Section 3 that the estimate from above holds, i.\,e. the genera of
curves~$\Gamma_l$ do not exceed these values (see~Proposition~\ref{GeneraMax}).

For small $l$ one can get the above formulas  directly 
(see Section 3). 
For some next values (for $l \leqslant 20$; $g(\Gamma_{20})=81$) 
the Conjecture holds and can be verified by direct
computation (the author of the present subsection 
computed this in the {\tt SageMath} open source 
computer algebra system).

\begin{figure}[ht]
\begin{center}
\begin{lstlisting}[language=Python]
K = PolynomialRing(QQ, 3, names='theta,mu,r')
theta, mu, r = K.gens()
P.<theta,mu,r> = ProjectiveSpace(QQ, 2)

d = lambda i, j: 1 if i == j else 0

def h(n,i,j):
    if i == j:
        return -i*(n - i + 1)*r
    elif i + 1 == j:
        return (i + 1)*mu
    elif i == j + 1:
        return (n - i + 1)*mu
    else:
        return 0
H = lambda n: matrix(
    [[h(n,i,j) + theta*d(i,j) 
      for j in range(n+1)] 
     for i in range(n+1)]
)

for i in range(1, 20)
    C = P.curve([K(det(H(i)))])
    print(C.geometric_genus())
\end{lstlisting}
    \caption{Listing of the code computing genera of some curves~$\Gamma_l$.}
\end{center}
\end{figure}

The function~$\tt geometric\_genus$ refers to the computation in the 
package {\tt Singular} (free available computer algebra system for polynomial 
computations with special emphasis on the needs of commutative algebra, 
algebraic geometry, and singularity theory.)
It calculates the {\it Hilbert polynomial} (see more details and examples 
in~\cite{Eisenbud}) of the normalization of the curve.

Let~$C\subset\cp^n$ be a projective curve. Then its {\it Hilbert polynomial} is
\[
    h_C(t) = d\cdot t -p_a(C) + 1,
\]
where~$d$ is the degree of the curve~$C$,~$p_a(C)$ is its arithmetic 
genus\footnote{See https://www.win.tue.nl/$\sim$aeb/2WF02/hilbert.pdf, p.5}. 
The geometric genus is the arithmetic genus of the normalization~$C_n$
of~$C$. If we are able to compute the normalization, we can compute the 
geometric genus. The most time consuming operation here is normalization.
In the case of~$\Gamma_{20}$ it takes about a few hours. See the code for computing 
genera at Fig. 2.

\subsection{
    Real spectral curve $\{ P_l(\la,\mu^2)=0\}$: topology and absence of ovals  (I.\,V.\,Netay)
}

Here we prove some results on topology of the real spectral 
curves $\Gamma_l$. 

\begin{lemma} \label{lemtop}
        The intersection of~$\Gamma_l$ with open half-plane~$\{\mu > 0\}$
        consists of~$l$ non-intersecting smooth curves without horizontal 
        tangents $\{\mu=\operatorname{const}\}$.
\end{lemma}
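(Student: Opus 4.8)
The plan is to slice $\Gamma_l$ by the lines $\{\mu=\mu_0\}$ and reduce everything to a spectral statement about the Jacobi matrix $H_l$. For a fixed $\mu=\mu_0>0$ the polynomial $\la\mapsto P_l(\la,\mu_0^2)=\det(H_l+\la\operatorname{Id})$ is exactly the characteristic polynomial $\det(\la\operatorname{Id}+H_l)$ of $-H_l$, so the points of $\Gamma_l$ over $\mu_0$ are the numbers $\la=-\nu$ with $\nu$ an eigenvalue of $H_l$. Hence I would aim to prove that for every $\mu_0>0$ the matrix $H_l$ has $l$ pairwise distinct real eigenvalues, and then obtain smoothness, non-intersection and the absence of horizontal tangents as formal consequences of simplicity.

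The key step, on which the whole lemma rests, is that $H_l$ is tridiagonal with positive products of symmetric off-diagonal pairs. From the entry formulas one gets
\[
H_{l;j,j+1}\,H_{l;j+1,j} = (\mu j)\bigl(\mu(l-j)\bigr) = \mu^2\,j\,(l-j) > 0
\qquad (1\leq j\leq l-1,\ \mu\neq 0).
\]
I would use this to symmetrize: choosing the diagonal matrix $D=\diag(d_1,\dots,d_l)$ with $d_j/d_{j+1}=\sqrt{H_{l;j+1,j}/H_{l;j,j+1}}=\sqrt{(l-j)/j}$ makes $DH_lD^{-1}$ a real symmetric tridiagonal matrix whose off-diagonal entries $\mu\sqrt{j(l-j)}$ are all nonzero. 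A real symmetric Jacobi matrix with nonvanishing off-diagonals has a real spectrum, and its eigenvalues are automatically simple, since the three-term recurrence determines any eigenvector up to scale from its first coordinate, so every eigenspace is one-dimensional. This yields exactly $l$ distinct real eigenvalues of $H_l$, i.e.\ exactly $l$ points of $\Gamma_l$ over each $\mu_0>0$.

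To globalize in $\mu$ I would extract from simplicity the inequality $\partial_\la P_l(\la,\mu^2)\neq 0$ along $\Gamma_l\cap\{\mu>0\}$: writing $P_l(\la,\mu_0^2)=\prod_k(\la+\nu_k)$, the $\la$-derivative at a root $\la=-\nu_m$ equals $\prod_{k\neq m}(\nu_k-\nu_m)$, which is nonzero precisely because the $\nu_k$ are distinct. The implicit function theorem then presents each local branch as a real-analytic graph $\la=\la_k(\mu)$ over the $\mu$-axis; its tangent vector $(\la_k'(\mu),1)$ has nonzero second component, so no branch admits a horizontal tangent $\{\mu=\operatorname{const}\}$. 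Non-intersection is immediate as well: a common point of two branches, or a self-intersection, would force a repeated eigenvalue of $H_l$ and contradict simplicity.

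I expect the symmetrization/simplicity step to be the only real obstacle — the crux is recognizing $H_l$ as a symmetrizable Jacobi matrix with nonzero off-diagonals, which simultaneously delivers reality and simplicity of the spectrum; the remaining assertions are routine consequences via the implicit function theorem. The degenerate case $l=1$, where $H_1=(0)$ and $\Gamma_1=\{\la=0\}$ is a single vertical ray in $\{\mu>0\}$, should be noted separately since the off-diagonal argument is then vacuous.
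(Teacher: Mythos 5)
Your proof is correct, and its skeleton is the paper's: slice $\Gamma_l$ by the lines $\{\mu=\mu_0\}$, reduce everything to simplicity of the spectrum of $H_l$, and derive smoothness, disjointness and non-tangency from it. The genuine difference is that you \emph{prove} the spectral input, while the paper \emph{cites} it: reality and simplicity of the eigenvalues of $H_l$ for $\mu\neq0$ is exactly Theorem~\ref{tsimple} (Buchstaber--Tertychnyi, quoted from \cite{bt0} without proof). Your symmetrization --- conjugating by $D$ with $d_j/d_{j+1}=\sqrt{(l-j)/j}$ to obtain a real symmetric tridiagonal matrix with nonzero off-diagonal entries $\mu\sqrt{j(l-j)}$, then using the three-term recurrence to bound every eigenspace dimension by one --- is a correct self-contained substitute, and is in fact the same device the paper itself uses later for a different matrix (proof of Proposition~\ref{trpp}, which invokes \cite[\S\,1.3.10]{ilyin}). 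The second divergence is minor: where you compute $\partial_\lambda P_l$ at a root $\lambda=-\nu_m$ as $\prod_{k\neq m}(\nu_k-\nu_m)\neq0$ and apply the implicit function theorem, the paper argues via B\'ezout that the $l$ distinct real points must each have intersection multiplicity one with the complex line $\{\mu=\mu_0\}$, hence are regular points of $\Gamma_l$ at which that line is not tangent. The two are interchangeable here: your route makes the global structure explicit ($l$ disjoint analytic graphs $\lambda=\lambda_k(\mu)$ over the whole ray $\mu>0$), while the B\'ezout argument additionally shows that the $l$ real points exhaust all complex intersection points with the line. Your separate remark on the degenerate case $l=1$ is harmless and compatible with both arguments.
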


\begin{definition}
    An {\it oval}  of a real  planar projective algebraic curve $\Gamma$ is its 
    subset analytically parametrized by circle; the parametrization 
    should be bijective except for possible singularities of the curve. 
    (An oval may contain singularities, 
    and the parametrization may have zero derivative at some 
    singularities.) 
\end{definition}

\begin{theorem} \label{toval}
    The real curve~$\Gamma_l = \{P_l(\lambda,\mu^2) = 0\}$ has no ovals in the affine plane $\rr^2_{(\la,\mu)}$ and no ovals in 
   closure of the half-plane $\{\mu>0\}$ in the ambient 
   projective plane $\rp^2\supset\rr^2_{(\la,\mu)}$.
\end{theorem}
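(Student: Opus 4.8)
The plan is to exploit two structural features of the curve: the symmetry $\mu\mapsto-\mu$ (since $P_l$ depends only on $\mu^2$) together with Lemma~\ref{lemtop}. By the lemma and this symmetry, every affine singular point of $\Gamma_l$ lies on the axis $\{\mu=0\}$, and off that axis $\Gamma_l$ is smooth with no horizontal tangent. Since for each $\mu\neq0$ the polynomial $\la\mapsto P_l(\la,\mu^2)$ has exactly $l$ simple real roots (the $l$ non-intersecting branches of the lemma), the absence of horizontal tangents means that the projection $\varpi\colon(\la,\mu)\mapsto\mu$ is a submersion there; hence in $\{\mu>0\}$ the curve is a union of $l$ disjoint graphs $\la=f_1(\mu)<\dots<f_l(\mu)$, each defined for all $\mu\in(0,+\infty)$. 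The case $l=1$ is trivial ($\Gamma_1=\{\la=0\}$ is a line), so I assume $l\geqslant2$; by Theorem~\ref{cirr} neither $\{\mu=0\}$ nor the line at infinity is a component, so each meets $\overline{\Gamma_l}$ in finitely many points.

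For the affine statement, let $O$ be an oval. The continuous function $\mu|_O$ attains a maximum and a minimum on the circle parametrizing $O$. If the maximum value is positive, it is attained at a point $p$ with $\mu(p)>0$, which is a smooth point of $\Gamma_l$; there $O$ coincides locally with the $1$-manifold $\Gamma_l$, so $\mu|_{\Gamma_l}$ has a critical point at $p$, i.e.\ $\Gamma_l$ has a horizontal tangent at $p$, contradicting Lemma~\ref{lemtop}. Thus $\max_O\mu\leqslant0$, and symmetrically $\min_O\mu\geqslant0$, forcing $\mu\equiv0$ on $O$. But then $O$ lies in the finite set $\{\mu=0\}\cap\Gamma_l$, which is impossible for an oval.

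For the projective statement I work in $\rp^2$ and project from the point $[1:0:0]$ (the point at infinity of the axis), $\pi\colon[\la:\mu:w]\mapsto[\mu:w]$; write $t=\mu/w$. The center $[1:0:0]$ does not lie on $\overline{\Gamma_l}$, since the leading (degree $l$) part of $P_l$ does not vanish there. Over $t\in(0,+\infty)$ the map $\pi$ is an unramified $l$-sheeted cover, the sheets being the $f_i$, because ramification would again mean a horizontal tangent. Now let $O$ be an oval inside the closure of $\{\mu>0\}$. It must meet the open half-plane (otherwise it lies in the finite set $\overline{\Gamma_l}\cap(\{\mu=0\}\cup\ell_\infty)$), so $\max_O t>0$; an interior finite maximum would produce a horizontal tangent as above, so $O$ necessarily reaches $t=+\infty$, i.e.\ a point of $\overline{\Gamma_l}\cap\ell_\infty$. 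The decisive computation is that this degree-$l$ leading part equals $\det(\la\operatorname{Id}+\mu N)$, where $N$ is the tridiagonal matrix with zero diagonal, $N_{j,j+1}=j$ and $N_{j+1,j}=l-j$. Its off-diagonal products $j(l-j)$ are strictly positive, so $N$ is conjugate to a symmetric Jacobi matrix and has $l$ \emph{distinct} real eigenvalues; consequently $\overline{\Gamma_l}$ has $l$ distinct points at infinity $[s_i:1:0]$, each the limit of exactly one upper branch $f_i$ (its mirror branch, which also limits there, lies in $\{\mu<0\}$ and thus leaves the closed upper half-plane). Hence every point at infinity is a dead end for a path staying in $\overline{\{\mu>0\}}\cap\overline{\Gamma_l}$: the simple closed curve $O$ arrives along $f_i$ and finds no second local branch in the closed upper half-plane along which to continue, a contradiction.

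The main obstacle is exactly this behaviour at infinity: a priori two distinct upper branches could tend to a common point at infinity and bound a bigon, which would be an oval. The identification of the leading form with $\det(\la\operatorname{Id}+\mu N)$ and the Jacobi-type simplicity of the spectrum of $N$ is what excludes this. Note that the genuine mergings of consecutive branches on the axis, occurring at the multiple roots of $P_l(\la,0)=\prod_{j=1}^{l}\bigl(\la+(1-j)(l-j+1)\bigr)$, are harmless precisely because the analogous merging cannot recur at infinity; so no cycle of branches can close up, and no oval can exist.
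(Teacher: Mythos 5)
Your proposal is correct, and its overall skeleton coincides with the paper's: the affine statement is deduced from Lemma \ref{lemtop} by an extremum (Rolle-type) argument, and the projective statement is reduced to showing that $\overline{\Gamma_l}$ meets the infinity line in $l$ distinct regular points, none equal to $[1:0:0]$, at each of which it crosses transversally from $\{\mu>0\}$ into $\{\mu<0\}$, so that an oval in the closed upper half-plane reaching infinity hits a dead end. The genuine difference is how this key fact (the paper's Proposition \ref{poval}) is established. The paper identifies the degree-$l$ homogeneous part of $\det(H_l+\la\operatorname{Id})$ with the determinant of $\la+\mu S$, $S=y\frac{\partial}{\partial x}+x\frac{\partial}{\partial y}$, acting on binary forms of degree $l-1$, and computes the asymptotic directions exactly, $\la/\mu=l-1,l-3,\dots,-(l-1)$, via $\mathfrak{sl}(2)$ representation theory. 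You instead observe that the leading form is $\det(\la\operatorname{Id}+\mu N)$ with $N$ tridiagonal, zero diagonal, and off-diagonal products $j(l-j)>0$, so that $N$ is diagonally conjugate to a symmetric Jacobi matrix and has $l$ simple real eigenvalues. This is more elementary---it is the same tridiagonal-matrix mechanism that underlies Theorem \ref{tsimple} and that the paper itself invokes in Proposition \ref{trpp}---though it buys less: only realness and distinctness of the asymptotic directions, not their explicit values; that, together with your check that the leading form does not vanish at $[1:0:0]$, is exactly what the oval exclusion needs. One small inaccuracy: at a point at infinity $[s_i:1:0]$, the local side lying in $\{\mu<0\}$ is not the mirror of the same branch $f_i$, since the symmetry $\mu\mapsto-\mu$ sends $[s_i:1:0]$ to $[-s_i:1:0]$; that side is the mirror of the upper branch ending at $[-s_i:1:0]$. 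This is harmless: once the $l$ intersection points with the infinity line are known to be distinct, B\'ezout (applied to the degree-$l$ curve $\overline{\Gamma_l}$) forces each intersection to be simple, hence a regular point with transversal crossing, and transversality alone places the second local side in $\{\mu<0\}$, which is all your ``dead end'' argument actually uses.
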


The above lemma and theorem are basically implied by  the following key result 
of V.\,M.\,Buchstaber and S.\,I.\,Tertychnyi and Proposition~\ref{poval} stated below.
 \begin{theorem} \label{tsimple}~\cite[p.974, theorem 1]{bt0}. 
  For $\mu\ne 0$ all the eigenvalues of the matrix $H_l$ are real and simple; that is, 
  the polynomial $P_l(\la,\mu^2)$ with fixed $\mu$ considered as a 
  polynomial in one variable $\la$ has $l$ real and simple roots.
  \end{theorem}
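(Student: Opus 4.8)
The plan is to realize $H_l$ as a matrix conjugate, through a real diagonal similarity, to a symmetric Jacobi matrix, and then to read off reality and simplicity from the standard spectral theory of such matrices. First I would record the only structural feature of $H_l$ that matters, namely the product of each pair of opposite off-diagonal entries. A direct computation from the definitions gives, for $1\le j\le l-1$,
\[
  H_{l;j,j+1}\,H_{l;j+1,j}=(\mu j)\big(\mu(l-j)\big)=\mu^2\,j(l-j),
\]
which is strictly positive whenever $\mu\neq0$, since $1\le j$ and $1\le l-j$. This positivity is the crux: it is exactly the condition under which a real tridiagonal matrix can be symmetrized by a diagonal conjugation.

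Next I would carry out the symmetrization explicitly. Seeking a diagonal matrix $D=\diag(d_1,\dots,d_l)$ with all $d_j>0$, one checks that $D^{-1}H_lD$ is again tridiagonal, with diagonal unchanged (the conjugation fixes $H_{l;jj}$) and with off-diagonal entries $H_{l;j,j+1}\,d_{j+1}/d_j$ and $H_{l;j+1,j}\,d_j/d_{j+1}$. Equating these two forces $(d_{j+1}/d_j)^2=H_{l;j+1,j}/H_{l;j,j+1}=(l-j)/j>0$, so one may take $d_{j+1}/d_j=\sqrt{(l-j)/j}$ and solve the resulting recurrence for positive $d_j$. The common off-diagonal value is then the nonzero real number $\mu\sqrt{j(l-j)}$. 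Thus $J:=D^{-1}H_lD$ is a real symmetric tridiagonal (Jacobi) matrix with the same characteristic polynomial as $H_l$, and with all its off-diagonal entries nonzero because $\mu\neq0$ and $j(l-j)\geq1$.

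Reality of the eigenvalues is then immediate: $J$ is a real symmetric matrix, so its spectrum is real, and since $\det(H_l+\la\,\mathrm{Id})$ vanishes precisely when $-\la$ is an eigenvalue of $J$, the polynomial $P_l(\la,\mu^2)$ has only real roots in $\la$. Simplicity follows from the fact that $J$ is an unreduced Jacobi matrix. Concretely, if $Jv=\nu v$ then the three-term recurrence coming from the rows of $J$ expresses each $v_{j+1}$ through $v_j$ and $v_{j-1}$ using the nonvanishing off-diagonal entries; the first row shows that $v_1=0$ forces $v_2=0$, and induction then gives $v\equiv0$. Hence every eigenvector has $v_1\neq0$ and is determined up to scale by its first coordinate, so each eigenspace is one-dimensional. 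For a symmetric matrix geometric and algebraic multiplicities coincide, whence every eigenvalue is simple; equivalently, the roots of $P_l(\la,\mu^2)$ in $\la$ are all distinct.

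The genuinely nontrivial step is the symmetrization in the second paragraph, together with the sign computation $H_{l;j,j+1}H_{l;j+1,j}=\mu^2j(l-j)>0$ that makes it possible; everything downstream is classical. I would expect no obstacle beyond bookkeeping, the one point deserving care being that the positivity of the off-diagonal products — and hence the existence of the real diagonal conjugation — degenerates exactly when $\mu=0$, which is consistent with, and explains the necessity of, the hypothesis $\mu\neq0$ in the statement.
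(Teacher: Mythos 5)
Your proof is correct. Note, however, that the paper itself does not prove Theorem \ref{tsimple} at all: it is quoted as an external result of Buchstaber and Tertychnyi \cite[p.974, theorem 1]{bt0}, so there is no internal proof to compare against. Your argument — checking that the opposite off-diagonal products $H_{l;j,j+1}H_{l;j+1,j}=\mu^2 j(l-j)$ are positive for real $\mu\neq0$, conjugating by a positive diagonal matrix to obtain a real symmetric unreduced Jacobi matrix with off-diagonal entries $\mu\sqrt{j(l-j)}$, and then invoking reality of the spectrum plus the rank-one-deficiency argument for simplicity — is the classical symmetrization route, and it is exactly the technique this paper uses in an analogous situation: in the proof of Proposition \ref{trpp} the authors conjugate a tridiagonal matrix with positive symmetric-pair products to a symmetric one and cite \cite[\S\,1.3.10]{ilyin} for realness and simplicity of its eigenvalues. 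The one hypothesis you use implicitly and should state is that $\mu$ is real (as it is in the context of the theorem); for non-real $\mu$ the conjugated matrix is symmetric but not real, and the conclusion fails.
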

\begin{proof} {\bf of Lemma \ref{lemtop}.} 
        For any fixed~$\mu' > 0$ the intersection $\Gamma_l \cap \{\mu = \mu'\}$ consists $l$ of
        distinct points, by Theorem \ref{tsimple}. At the same time by  B\'{e}zout Theorem
        the intersection of a complex curve of degree~$l$ with any complex line consists of
        $l$ points if any point is counted with its multiplicity.

        For real $\mu'$ we already have~$l$ distinct  real intersection points. Also Theorem \ref{tsimple} 
        implies that all these intersections are simple. This implies that they are regular 
        points of the curve $\Gamma_l$ and at each of these points 
        the line $\{\mu=\mu'\}$ is not  tangent to $\Gamma_l$. This proves Lemma \ref{lemtop}.     
    \end{proof}

\begin{proof} {\bf of Theorem \ref{toval}.} Lemma \ref{lemtop} immediately 
implies absence of ovals in the affine plane 
$\rr^2_{(\la,\mu)}$ and in the upper half-plane $\{\mu>0\}
\subset\rr^2$. Moreover, Lemma \ref{lemtop} together with Roll  
Theorem imply that  ovals lying in its closure in $\rp^2$ (if any) 
should intersect both its boundary lines: 
the axis $\{\mu=0\}$ and the infinity line. To 
prove absence of the latter ovals, we use the following proposition.

    \begin{proposition} \label{poval}
        \label{Gamma:aymptotics}
        The asymptotic directions   of branches of the complex  curve 
        $\Gamma_{l}$ at infinity correspond to
        ratios~$\lambda/\mu$ equal $l-1, l-3,\ldots, -(l-1)$. In particular, 
        $\Gamma_l$ does not contain 
        the point of intersection of the $\lambda$-axis with the infinity 
        line in $\cp^2\supset\cc^2_{(\la,\mu)}$.  
    \end{proposition}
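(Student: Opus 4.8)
The plan is to reduce the statement to a computation of the top-degree homogeneous part of the defining polynomial and then to identify the resulting tridiagonal matrix representation-theoretically. The asymptotic directions of $\Gamma_l$ at infinity are exactly the points $[\la:\mu:0]$ on the infinity line that lie on the projective closure of $\Gamma_l$; equivalently, they are the zeros of the leading homogeneous form $\Phi_l(\la,\mu)$ of $P_l(\la,\mu^2)$, regarded as a polynomial in the two variables $(\la,\mu)$. First I would determine $\Phi_l$. The matrix $H_l+\la\operatorname{Id}$ has diagonal entries $\la+(1-j)(l-j+1)$ and off-diagonal entries of first degree in $\mu$; in the permutation expansion of the determinant only products of disjoint adjacent transpositions survive (tridiagonality), and each such term has total degree at most $l$ in $(\la,\mu)$. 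The degree-$l$ part is obtained by discarding the constants $(1-j)(l-j+1)$ on the diagonal, so that
\[
\Phi_l(\la,\mu)=\det(\la\operatorname{Id}+\mu B_l),
\]
where $B_l$ is the tridiagonal $l\times l$-matrix with zero diagonal, superdiagonal entries $(B_l)_{j,j+1}=j$ and subdiagonal entries $(B_l)_{j+1,j}=l-j$. Hence $\Phi_l(\la,\mu)=\mu^l\det(t\operatorname{Id}+B_l)$ with $t=\la/\mu$, and the asymptotic directions correspond to those $t$ for which $-t$ is an eigenvalue of $B_l$.

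The key step is to compute the spectrum of the (non-symmetric) matrix $B_l$. I would recognize $B_l$ as the operator $E+F$ acting in the $(l-1)$-st symmetric power of the standard representation of $\mathfrak{sl}_2(\cc)$, i.e.\ on the space of homogeneous polynomials of degree $l-1$ in two variables $x,y$, with $E=x\partial_y$, $F=y\partial_x$, $H=x\partial_x-y\partial_y$. In the monomial basis $v_j=x^{l-j}y^{j-1}$, $j=1,\dots,l$, one checks $Ev_j=(j-1)v_{j-1}$ and $Fv_j=(l-j)v_{j+1}$, so the matrix of $E+F$ has exactly the superdiagonal entries $j$ and subdiagonal entries $l-j$ of $B_l$. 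Since in $\mathfrak{sl}_2$ the element $E+F$ is conjugate (inside $SL_2(\cc)$) to $H$, the operators $E+F$ and $H$ have the same spectrum in every representation; and $Hv_j=(l-2j+1)v_j$ gives $\operatorname{spec}(B_l)=\{l-1,l-3,\dots,-(l-1)\}$. (Alternatively, since all off-diagonal products $j(l-j)$ are positive, a diagonal conjugation turns $B_l$ into the symmetric matrix $2J_x$ of the spin-$\frac{l-1}2$ representation of $\mathfrak{su}(2)$, whose spectrum is standard.) Consequently $-t\in\operatorname{spec}(B_l)$ forces $t=\la/\mu\in\{l-1,l-3,\dots,-(l-1)\}$, which are the asserted $l$ distinct asymptotic directions.

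Finally, the point at infinity of the $\la$-axis $\{\mu=0\}$ is $[1:0:0]$, and it lies on $\Gamma_l$ if and only if $\Phi_l(1,0)=0$; but $\Phi_l(1,0)=\det(\operatorname{Id})=1\neq0$, which gives the "in particular" claim (equivalently, all $l$ directions above have finite $t$, so none is the vertical direction $\mu=0$). I expect the main obstacle to be the identification of the spectrum of the non-symmetric matrix $B_l$: the representation-theoretic realization $B_l=\rho(E+F)$ together with the conjugacy of $E+F$ and $H$ is the one nontrivial idea, whereas confirming $\Phi_l=\det(\la\operatorname{Id}+\mu B_l)$ and the index bookkeeping in the monomial basis are routine. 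I would also note that this argument simultaneously shows that the $l$ directions are pairwise distinct, so $\Gamma_l$ has exactly $l$ branches at infinity, one along each line $\la=(l-2k+1)\mu$.
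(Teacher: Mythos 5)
Your proof is correct and follows essentially the same route as the paper: both reduce the question to the top homogeneous form of $\det(H_l+\la\operatorname{Id})$, identify the resulting tridiagonal matrix with the operator $\la+\mu(x\partial_y+y\partial_x)$ acting on homogeneous polynomials of degree $l-1$ (the $(l-1)$-st symmetric power of the standard $\mathfrak{sl}_2$-representation), and read off the spectrum $l-1,l-3,\dots,-(l-1)$. The only cosmetic difference is how you get that spectrum (conjugacy of $E+F$ to $H$ inside $SL_2(\cc)$, versus the paper's diagonalization of $S$ on linear forms with eigenvalues $\pm1$ and passage to the symmetric power), which is the same underlying idea.
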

    \begin{proof}
        Asymptotic directions correspond to the intersection points 
        of the curve $\Gamma_{l}$ with
        the infinity line. That is, to zeros of the higher homogeneous part of the polynomial 
        $\det(H_{l}+\lambda\operatorname{Id})$, which is equal to 
         the determinant of the $l\times l$-matrix 
        \begin{equation}
            \begin{pmatrix}
                \lambda & \mu &&& \\
                (l-1)\mu & \lambda & 2\mu && \\
                & (l-2)\mu & \lambda & \ddots & \\
                && \ddots & \ddots & (l-1)\mu \\
                &&& \mu & \lambda \\
            \end{pmatrix}.
        \label{matsl}\end{equation}
        The matrix (\ref{matsl}) coincides with the matrix of the operator
        \begin{equation}
        L_{\la,\mu}:=\lambda + \mu S, \ \ S:=
      y\frac{\partial}{\partial x} +
            x\frac{\partial}{\partial y} \label{lsmu}\end{equation}
        acting on the space of homogeneous polynomials of degree~$l-1$ in~$x$ and~$y$.
        The vector fields $y\frac{\partial}{\partial x}$ and~$x\frac{\partial}{\partial y}$ 
    generate the Lie algebra $\mathfrak{sl}(2)$ 
    of divergence free linear vector fields. 
        The space of homogeneous polynomials of degree~$l-1$ as a representation
        of~$\mathfrak{sl}(2)$ is the $(l-1)$-th symmetric power of its representation
        acting on linear forms. In the space of linear forms the eigenvalues of the operator 
      $S$ in (\ref{lsmu}) are $\{-1,1\}$. Therefore the eigenvalues of its action on 
      the  $(l-1)$-th symmetric power are $l-1,l-3,\ldots,-(l-1)$. Hence, the determinant 
      of the matrix (\ref{matsl}) of the operator $L_{\la,\mu}$ is equal to 
      $(\la-(l-1)\mu)(\la-(l-3)\mu)\dots(\la+(l-1)\mu)$. This proves Proposition \ref{poval}.
    \end{proof}

The ratios $\la\slash\mu=l-1, l-3,\dots,-(l-1)$ from Proposition \ref{poval} 
corresponging to the intersection 
of the curve $\Gamma_l$ with the infinity line form $l$ distinct numbers. Therefore, 
the curve $\Gamma_l$ intersects the infinity line transversely at its 
$l$ distinct regular points  (being a curve of degree $l$). The latter 
intersection points are distinct from 
the point of intersection of the infinity line with the $\la$-axis 
(Proposition \ref{poval}). 
Hence, each local branch of the real curve $\Gamma_l$ 
at any of the latter points crosses the infinity line from the half-plane 
$\{\mu>0\}$ to the opposite half-plane $\{\mu<0\}$. This implies that $\Gamma_l$ 
cannot have ovals intersecting the infinity line and lying in the closure of the upper 
half-plane. This finishes the proof of  Theorem \ref{toval}.
\end{proof}
\begin{proposition} 
The curve~$\Gamma_l$ intersects the line $\{\mu=0\}\subset\rp^2$ at points with 
$\la$-abscissas $0$, $1\cdot(l-1)$, $2(l-2)$, \ldots, $(l-1)\cdot1$. 
All these intersection points 
are singular points of the curve $\Gamma_l$, except for the points with abscissas 
$\frac{l^2}4$ (if $l$ is even) and $0$, which are regular points of orthogonal intersection. 
 \end{proposition}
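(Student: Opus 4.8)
The plan is to evaluate the polynomial $P_l(\la,\mu^2)=\det(H_l+\la\operatorname{Id})$ along the line $\{\mu=0\}$ and then examine the local structure of $\Gamma_l$ at each resulting point. Setting $\mu=0$ makes the matrix $H_l$ diagonal, with diagonal entries $H_{l;jj}=(1-j)(l-j+1)$ for $j=1,\dots,l$. Hence $P_l(\la,0)=\prod_{j=1}^{l}\bigl(\la+(1-j)(l-j+1)\bigr)=\prod_{j=1}^{l}\bigl(\la-(j-1)(l-j+1)\bigr)$, so the $\la$-abscissas of the intersection points are $a_j:=(j-1)(l-j+1)$ for $j=1,\dots,l$. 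These are exactly $0,\,1\cdot(l-1),\,2(l-2),\dots,(l-1)\cdot1$, as claimed. The first thing to notice is that this list has repetitions: the map $j\mapsto(j-1)(l-j+1)$ is symmetric under $j\mapsto l+2-j$, so $a_j=a_{l+2-j}$, and the distinct values are generically attained twice. This coincidence of diagonal eigenvalues is the source of the singularities.

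The heart of the argument is a local analysis at each abscissa $a:=a_j$. First I would treat $\mu$ as a small parameter and study how the $l$ roots $\la(\mu)$ of $P_l(\la,\mu^2)=0$ split off from $\la=a$ as $\mu$ moves away from $0$. By Lemma~\ref{lemtop} (or directly from Theorem~\ref{tsimple}) the roots are real and simple for $\mu\neq0$, so no two branches can coincide once $\mu\neq0$; the question is purely about the order of contact with the line $\{\mu=0\}$. When $a$ is attained by a single index $j$ (i.e. $j=l+2-j$, which forces $l$ even and $j=\frac{l+2}{2}$, giving $a=\frac{l^2}{4}$), the corresponding eigenvalue is simple already at $\mu=0$, so by the implicit function theorem there is a single smooth branch $\la=\la(\mu^2)$ through $(a,0)$; this is a regular point, and the branch is tangent to neither axis in a way that produces a singularity — in fact since $P_l$ is a polynomial in $\mu^2$, the branch is symmetric under $\mu\mapsto-\mu$ and meets $\{\mu=0\}$ orthogonally. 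The abscissa $a=0$ corresponds to $j=1$ and $j=l+1$, but $j=l+1$ is out of range, so $a=0$ is also attained by a single index and the same argument gives a regular point of orthogonal intersection. For every other abscissa, $a$ is attained by two distinct indices $j\neq l+2-j$, so $\la=a$ is a double root of $P_l(\la,0)$; the plan is to show that the two branches emanating from $(a,0)$ actually cross there, producing a genuine node (or higher singularity) rather than a smooth point.

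To make the double-root case precise I would localize: write $\la=a+\xi$ and expand $P_l(a+\xi,\mu^2)$ for small $(\xi,\mu)$. Since $\la=a$ is a double root at $\mu=0$, the lowest-order terms form a quadratic $\alpha\xi^2+\beta\mu^2$ (the coefficient of the mixed term $\xi\mu$ vanishes because $P_l$ depends on $\mu$ only through $\mu^2$). The key computation is to determine the signs of $\alpha$ and $\beta$: the point is singular precisely because both pure-quadratic terms are present, and it is a node with two distinct real tangent lines exactly when $\alpha\beta<0$. Here Theorem~\ref{tsimple} does the work for us — it guarantees that for every real $\mu\neq0$ the two roots near $a$ are real and distinct, which forces the quadratic cone $\alpha\xi^2+\beta\mu^2=0$ to have two distinct real lines, i.e. $\alpha\beta<0$. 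I expect the main obstacle to be the bookkeeping in the perturbation expansion: one must confirm that the Hessian of $P_l$ at $(a,0)$ is genuinely nondegenerate (so the singularity is an ordinary node and the point is singular in the sense that $\nabla P_l$ vanishes), which amounts to checking that $\alpha=\tfrac12\partial_\la^2 P_l(a,0)\neq0$ and $\beta=\partial_{(\mu^2)}P_l(a,0)\neq0$. The first is immediate since $a$ is exactly a double (not triple) root of $P_l(\cdot,0)$, because the diagonal values $a_j$ take each nonzero repeated value exactly twice. The second, $\beta\neq0$, follows from the reality–and–simplicity of Theorem~\ref{tsimple}: if $\beta$ vanished the two branches would be tangent to $\{\mu=0\}$ and could not separate into two real simple roots for small real $\mu\neq0$. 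Assembling these local pictures over all $l$ abscissas completes the proof.
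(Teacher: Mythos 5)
Your computation of the intersection abscissas and your treatment of the simple-root points ($\la=0$, and $\la=\frac{l^2}4$ for $l$ even) are correct and essentially identical to the paper's (simple eigenvalue, implicit function theorem, evenness in $\mu$ forces orthogonal crossing). The genuine problem is in the double-root case, at exactly the step you call the ``key computation''. You claim that Theorem~\ref{tsimple} forces the Hessian of $F(\la,\mu):=P_l(\la,\mu^2)$ at $(a,0)$ to be nondegenerate with $\alpha\beta<0$, so that every double point is an ordinary node. This is false. For $l=4$ the tridiagonal determinant is
\[
P_4(\la,v)=\la(\la-3)^2(\la-4)-v(\la-3)(10\la-12)+9v^2,
\]
so at the double root $a=3$ one gets $\beta=\partial_vP_4(3,0)=0$: the Hessian is degenerate. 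Setting $\la=3+\xi$, the local equation is $-3\xi^2-18\xi\mu^2+9\mu^4+\text{h.o.t.}$, whose branches are $\xi=(-3\pm2\sqrt3)\,\mu^2+O(\mu^4)$: two smooth real branches, each \emph{orthogonal} to $\{\mu=0\}$ and tangent to one another --- a tacnode, not a node (consistently, its $\delta$-invariant $2$ gives $g(\Gamma_4)=3-2=1$ as in Conjecture~\ref{congenu}, whereas a node would give genus $2$). Your inference breaks at the sentence ``if $\beta$ vanished the two branches would be tangent to $\{\mu=0\}$'': when $\beta=0$ the tangent cone is $\alpha\xi^2=0$, so the branches are tangent to the vertical line $\{\la=a\}$, and they separate perfectly well into two real simple roots $\xi=c_\pm\mu^2$ for every real $\mu\neq0$. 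Theorem~\ref{tsimple} only excludes the definite case $\alpha\beta>0$ (an isolated real point); it cannot exclude $\beta=0$. In fact $\beta=0$ whenever the two indices $j$ and $l+2-j$ realizing $a$ are non-adjacent, since every term of $\partial_v\det(H_l+\la\operatorname{Id})|_{v=0}$ then retains a vanishing diagonal factor; so for every even $l\geqslant4$ \emph{all} double points, and for odd $l$ all but one, violate your nondegeneracy claim.

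The conclusion you need survives, because singularity never required nodality: your own expansion shows that the linear part of $F$ at $(a,0)$ vanishes ($\partial_\la F(a,0)=0$ since $a$ is a double root, $\partial_\mu F(a,0)=0$ by evenness in $\mu$), hence $\nabla F(a,0)=0$ and the point is singular --- provided you add the remark that $F$ is square-free, so that vanishing of the gradient detects genuine singularities of the zero set; square-freeness itself follows from Theorem~\ref{tsimple} (for real $\mu\neq0$ the monic degree-$l$ polynomial $F(\cdot,\mu)$ has $l$ simple roots, so $F$ has no repeated factor). The paper avoids both the Hessian and the reducedness issue by a purely geometric argument: if the point were regular, its germ would be a single regular branch meeting $\{\mu=0\}$ with intersection multiplicity two, hence tangent to it; but a regular branch tangent to, and distinct from, $\{\mu=0\}$ cannot be invariant under the symmetry $(\la,\mu)\mapsto(\la,-\mu)$, a contradiction. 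Finally, you never address the projective part of the statement: the proposition concerns the line $\{\mu=0\}\subset\rp^2$, so one must also check that the point where this line meets the infinity line does not lie on $\Gamma_l$; the paper gets this from Proposition~\ref{poval}, which shows all points of $\Gamma_l$ at infinity have finite ratio $\la/\mu$.
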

\begin{proof} The first statement of the proposition follows from definition and 
Proposition \ref{poval} (which implies that the point of the intersection of the line 
$\{\mu=0\}$ with the infinity line does not lie in $\Gamma_l$). Note that all the 
above abscissas are roots of the polynomial $P_l(\la,0)$, and the multiplicity of 
each of them equals two (except for possible roots 
$0$ and $\frac{l^2}4$, where the multiplicity is equal to 1). 
Recall that the curve $\Gamma_l$ is invariant under the symmetry 
$(\la,\mu)\mapsto(\la,-\mu)$, and hence, 
so are its germs at all the above intersection points. This implies that 
the germ  
of the curve $\Gamma_l$ at any of the points  $(\frac{l^2}4,0)$ (if $l$ is even) and 
$(0,0)$ consists of just one local branch orthogonal to the line $\{\mu=0\}$. 
The germ of the curve $\Gamma_l$ at any other intersection point 
contains at least two local branches, and hence, is singular. 
Indeed,  
otherwise if it consisted of just one regular branch, then this branch would be tangent to the line $\{\mu=0\}$ (the intersection 
point being of multiplicity two). The latter branch  
obviously cannot coincide with the line and thus, cannot 
be invariant under the above symmetry. The proposition 
is proved.
\end{proof}

\subsection{
    Model of the overdamped Josephson effect, phase-lock areas and 
    associated family of Heun equations
}

Our results are motivated by applications to the family 
\begin{equation}
    \frac{d\varphi}{dt}=-\sin \varphi + B + A \cos\omega t; \ \ \omega>0, \ B,A\in\rr, 
    \label{josbeg}
\end{equation}
of  nonlinear equations, which arises in several models in physics, mechanics and 
geometry: in a model  of the Josephson junction  in superconductivity (our main motivation), 
see~\cite{josephson, stewart, mcc, bar, schmidt, schon}, \cite{bg}-\cite{bt1}; 
in  planimeters, see~\cite{Foote, foott}. 
Here $\omega$ is a fixed constant, and $(B,A)$ are the parameters; $B$ is called 
{\it abscissa} and $A$ {\it ordinate}. Set 
\[
    \tau=\omega t, \ l=\frac B\omega, \ \mu=\frac A{2\omega}.
\]
The variable change $t\mapsto \tau$ transforms (\ref{josbeg}) to a 
non-autonomous ordinary differential equation on the two-torus 
$\mathbb T^2=S^1\times S^1=\rr^2\slash2\pi\zz^2$ with coordinates $(\varphi,\tau)$: 
\begin{equation}
    \dot \varphi=\frac{d\varphi}{d\tau}=-\frac{\sin \varphi}{\omega} + l + 2\mu \cos \tau.
    \label{jostor}
\end{equation}
The graphs of its solutions are the orbits of the vector field 
\begin{equation}
    \begin{cases} 
        & \dot\varphi=-\frac{\sin \varphi}{\omega} + l + 2\mu \cos \tau \\
        & \dot \tau=1
    \end{cases}
    \label{josvec}
\end{equation}
on $\mathbb T^2$. The {\it rotation number} of its flow, see \cite[p. 104]{arn}, is a function $\rho(B,A)=\rho(B,A;\omega)$ 
of the parameters of the vector field. It is given by the formula
$$\rho(B,A;\omega)=\lim_{k\to+\infty}\frac{\varphi(2\pi k)}{2\pi k},$$
where $\varphi(\tau)$ is an arbitrary solution of equation~\eqref{jostor}. 
(Normalization convention: the rotation number of a usual circle rotation equals the rotation angle divided by $2\pi$.)

The {\it phase-lock areas} are those level subsets of the rotation number function 
in the $(B,A)$-plane that have non-empty interiors. They have been studied 
by V.\,M.\,Buchstaber, O.\,V.\,Karpov, S.\,I.\,Tertychnyi et al, see~\cite{bg}-\cite{bt1},~\cite{4, LSh2009, IRF, krs, RK} 
and references therein. It is known that the {\it phase-lock areas exist only 
for integer values of the rotation number function \cite{buch2}}, contrarily to 
the Arnold tongues picture~\cite[p.110]{arn}.

\begin{remark}
    \label{rklim}
    Fix an $\omega>0$. For a given 
    $(B,A)\in\rr^2$ consider the time $2\pi$ flow map 
    $$h_{(B,A)}:S^1_{\varphi}\to S^1_{\varphi}$$
    of the corresponding dynamical system (\ref{josvec}) on the torus acting on the 
    transversal circle $S^1_\varphi=\{\tau=0\}=\rr\slash 2\pi\zz$. It coincides with the 
    corresponding Poincar\'e map of the transversal circle. 
    It is known, see, e.\,g., \cite{RK}, that $(B,A)$ lies in 
    the boundary of a phase-lock area, if and only if $h_{(B,A)}$  either has a parabolic 
    fixed point (and then it has a unique fixed point $\pm\frac{\pi}2(\modulo2\pi\zz$), 
    or is identity. 
\end{remark} 

In what follows the phase-lock area corresponding to a rotation number $s$ will be 
denoted by $L_s$.

It is known that the phase-lock areas satisfy the following geometric statements:

\begin{itemize}
    \item[(i)] The boundary $\partial L_s$ of the $s$-th phase-lock area is a 
        union of two graphs $\partial L_{s,\pm}$ of two analytic functions 
        $B=g_{s,\pm}(A)$, see~\cite{buch1}. This fact was later explained by 
        A.\,V.\,Klimenko via symmetry, see~\cite{RK}, where it was shown 
        that each graph $\partial L_{s,\pm}$ consists exactly of those points 
        in $L_s$ for which the corresponding~Poincar\'e map $h_{(B,A)}$ fixes 
        the point $\pm\frac{\pi}2(\modulo2\pi\zz)$. 

    \item[(ii)] The latter functions have Bessel asymptotics, as $A\to\infty$ 
    (observed and 
        proved on physical level in ~\cite{shap}, see also \cite[chapter 5]{lich},
        \cite[section\,11.1]{bar}, ~\cite{buch2006}; proved mathematically in~\cite{RK}).

    \item[(iii)] Each phase-lock area is an infinite chain of bounded domains 
        going to infinity in the vertical direction, in this chain each two 
        subsequent domains are separated by one point. This follows from Bessel 
        asymptotics, see the references mentioned in~(ii). Those  separation 
        points that lie in the horizontal $B$-axis are calculated explicitly,  
        and we call them {\it exceptional}, or {\it growth points}, see 
        \cite[corollary 3]{buch1}. The other 
        separation points  lie outside the horizontal $B$-axis and are called 
        the {\it constrictions}, see Fig. 3, 4. {\it The Poincar\'e map $h_{(B,A)}$ is identity 
        if and only if $(B,A)$ is either a constriction, or a growth point}  \cite[proposition 2.2]{4}.
        
        \item[(iv)] Each phase-lock area $L_r$ is symmetric itself with respect to the $B$-axis and is symmetric to $L_{-r}$ with respect to the $A$-axis. This follows from the fact that the transformations $(\phi,\tau)\mapsto(\phi,\tau+\pi)$ and $(\phi,\tau)\mapsto(-\phi,\tau+\pi)$ 
   inverse the sign at $\mu$ (respectively, at $l$) in the vector 
   field (\ref{josvec}), see \cite[p.76]{bg2}. 
\end{itemize}

One of the main open conjectures on the geometry of phase-lock area portrait is 
the following. 
\begin{conjecture} \label{conexp} (experimental fact, see \cite{4}). {\it In 
every phase-lock area $L_\rho$ 
all the constrictions lie in the line $\{B=\rho\omega\}$.}
\end{conjecture}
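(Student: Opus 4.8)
The plan is to translate the conjecture into the language of the monodromy of equation (\ref{heun}) and then to exploit the irreducibility of the spectral curves $\Gamma_l$. By item (iii) and Remark \ref{rklim}, a point $(B,A)$ with $A\neq0$ is a constriction of $L_r$ precisely when the Poincar\'e map $h_{(B,A)}$ is the identity, i.e. when the monodromy operator $M$ of (\ref{heun}) along a period in $\tau$ is a scalar $M=cI$; here the renormalized parameters are $l=\frac B\omega$, $\mu=\frac A{2\omega}$, and $\la=\frac1{4\omega^2}-\mu^2$ is fixed by the slice $\{\la+\mu^2=\frac1{4\omega^2}\}$ appearing in Subsection 1.5. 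Thus every constriction is a real point of the locus $\mca\subset\cc^3$ on which $M$ has a double eigenvalue. First I would make this dictionary precise, distinguishing the off-axis scalar-monodromy points (the constrictions, with $\mu\neq0$) from the on-axis ones (the growth points, with $\mu=0$ and generically $l\notin\nn$), since only the former fall under the scope of Theorem \ref{tpol}.

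For the constrictions ($\mu\neq0$) I would show that scalar monodromy forces a polynomial-type solution, hence $l\in\nn$ and $(\la,\mu)\in\Gamma_l$ by Theorem \ref{tpol}. This is where irreducibility enters decisively: Theorem \ref{cirr} together with Lemma \ref{lemtop} and Theorem \ref{toval} show that the real locus of $\Gamma_l$ in $\{\mu>0\}$ consists of exactly $l$ smooth, monotone, non-intersecting and oval-free branches. Such rigid global structure is what one needs to follow a continuously varying invariant along $\Gamma_l$ without allowing it to jump between disconnected pieces.

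The invariant I would follow is the rotation number. A constriction of $L_r$ is a limit of interior points of $L_r$, so $\rho=r$ there by continuity. On the other hand I would compute $\rho$ at a scalar-monodromy point with $\mu\neq0$ directly from the Heun data: writing the time-$2\pi$ map as $\varphi\mapsto\varphi+2\pi n$, the return degree $n$ equals $\rho$, and the aim is to identify $n$ with $l$ via an argument-principle count of the winding of the degree-$l$ polynomial solution over one period, propagated along the branches of $\Gamma_l$ using the monotonicity of $\rho$ in $B$. Granting $n=l$, we obtain $r=\rho=n=l$, i.e. $B=r\omega$, which is exactly the axis $\La_r$, proving the conjecture.

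The hard part will be this last identification $\rho=l$, which is the genuine core of this open problem. The obstacle is twofold: first, to prove that off-axis scalar monodromy (with $\mu\neq0$) indeed yields a polynomial solution, so that $l$ is an integer and the constriction lands on $\Gamma_l$ at all; and second, to pin the rotation number to that integer, excluding "spurious" constrictions of $L_r$ sitting on an axis $\La_l$ with $l\neq r$. Both require relating the monodromy eigenvalue attached to the polynomial solution to the winding of $\varphi$, and then propagating this relation globally; the no-ovals and irreducibility results of Subsections 1.1 and 1.3, and the monotone control established in Section 4 for the generalized simple intersections, are precisely the tools that such a global argument would have to upgrade from those special boundary points to all constrictions.
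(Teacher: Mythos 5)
You are attempting to prove Conjecture \ref{conexp}, which the paper explicitly presents as an \emph{open problem}: there is no proof of it in the paper, only partial results, namely the known constraint from \cite{4} that at every constriction of $L_\rho$ one has $l=B/\omega\in\mathbb{Z}$, $l\equiv\rho\ (\mathrm{mod}\ 2)$ and $l\in[0,\rho]$; the implication that the Monotonicity Conjecture \ref{moncon} would yield Conjecture \ref{conexp}; and Theorem \ref{tmon}, which excludes left-moving tangencies only at generalized simple intersections with $s\neq l$. So there is no proof in the paper to compare yours against, and your text, as you concede in your last paragraph, is a plan with the decisive step left open rather than a proof.

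Beyond incompleteness, the first concrete step of your plan is not merely unproven but false. You propose to show that a constriction (where $\mu\neq0$ and the monodromy is scalar) forces a polynomial solution of equation (\ref{heun}), hence lands on $\Gamma_l$ via Theorem \ref{tpol}. This is exactly backwards. By Theorem \ref{bt02}, polynomial solutions of (\ref{heun}) characterize the \emph{generalized simple intersections}, which are by definition those axis--boundary intersection points that are \emph{not} constrictions. What identity monodromy at a constriction actually gives is a nontrivial entire solution of the other equation (\ref{heun2*}) (see the historical remarks in Subsection 1.8, citing \cite{bt1} and \cite{bg}), and by \cite[theorem 3.10]{bg} --- the very fact the paper invokes in the proposition of Subsection 1.7 --- having a polynomial solution of (\ref{heun}) and an entire solution of (\ref{heun2*}) are incompatible for real parameters. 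Consequently a constriction can never correspond to a real point of the spectral curve $\Gamma_l$ in the slice $\{\lambda+\mu^2=\frac1{4\omega^2}\}$, so the machinery you intend to exploit (irreducibility of $\Gamma_l$, absence of ovals, the structure of its real branches) has no grip on constrictions at all: there is nothing on the curve to propagate the rotation number along. Note also that the integrality $l=B/\omega\in\mathbb{Z}$ at constrictions, which you list as one of your two obstacles, is already known from \cite{4}; the genuinely open content is the equality $l=\rho$, and your argument does not engage it beyond restating it.
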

It was shown in~\cite{4} that at all the constrictions in $L_\rho$ one has 
$l=\frac B{\omega}\in\zz$, $l\equiv\rho(\modulo 2)$ and $l\in[0,\rho]$. See a survey on 
Conjecture \ref{conexp} and related open problems  in \cite{4, bg2, g18}.

\begin{figure}[ht]
    \begin{center}
        \epsfig{file=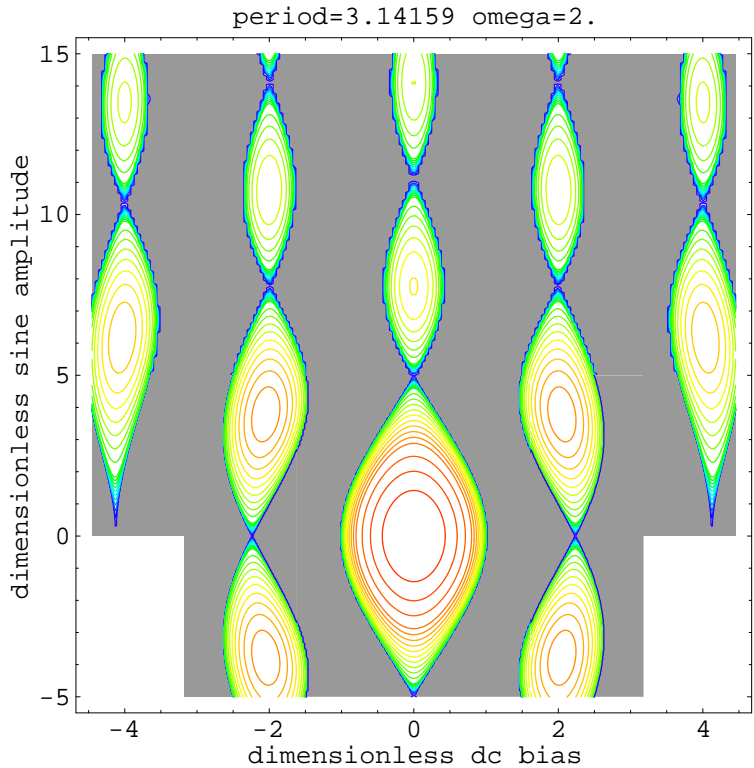}
        \caption{
            Phase-lock areas and their constrictions for $\omega=2$. 
            The abscissa is $B$, the ordinate is $A$. Figure taken from \cite[fig.\,1a)]{bg2}
        }
    \end{center}
    \label{fig:1}
\end{figure}

\begin{figure}[ht]
    \begin{center}
        \epsfig{file=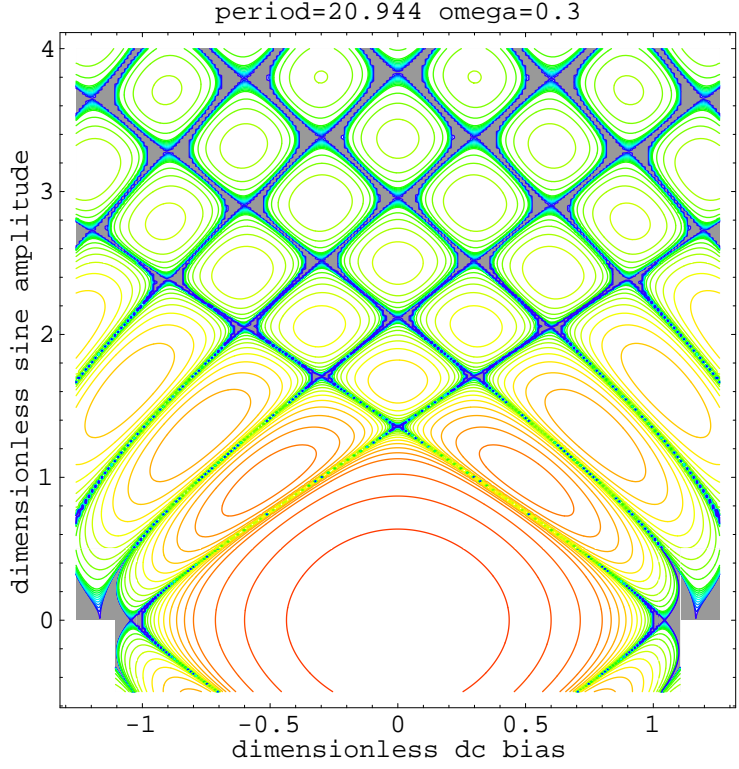}
        \caption{
            Phase-lock areas and their constrictions for $\omega=0.3$. 
            Figure taken from \cite[fig.\,1e)]{bg2}
        }
    \end{center}
    \label{fig:2}
\end{figure}

The family of non-linear equations~\eqref{josbeg} was reduced 
in~\cite{bkt1, Foote, bt1, IRF} to a family of linear equations, which were 
written in form (\ref{heun}) (with opposite sign at $l$) 
in~\cite{bt0, bt1, tert2}. Let us recall an equivalent 
reduction given in \cite[subsection 3.2]{bg}. Set 
\begin{equation}
    \Phi=e^{i\varphi}, \ z=e^{i\tau}=e^{i\omega t}.\label{phit}
\end{equation}
Considering equation~\eqref{josbeg} with complex time $t$ we get that 
transformation~\eqref{phit} sends it to the Riccati equation 
\begin{equation}
    \frac{d\Phi}{dz}=z^{-2}((lz+\mu(z^2+1))\Phi-\frac z{2i\omega}(\Phi^2-1)).
    \label{ricc}
\end{equation}
This equation is the projectivization of the following linear system in 
vector function $(u,v)$ with $\Phi=\frac v{u}$, see~\cite[subsection~3.2]{bg}:
\begin{equation}
    \begin{cases} 
        & v'=\frac1{2i\omega z}u\\
        & u'=z^{-2}(-(lz+\mu(1+z^2))u+\frac z{2i\omega}v)
    \end{cases}
    \label{tty}
\end{equation}
Set 
\begin{equation}
    \lambda=\left(\frac1{2\omega}\right)^2-\mu^2.
    \label{param}
\end{equation}
It is easy to check that a function~$v(z)$ is the component of a solution of 
system~\eqref{tty}, if and only if the function $E(z)=e^{\mu z}v(z)$ 
satisfies the second order linear equation obtained from (\ref{heun}) by changing 
sign at $l$: 
\begin{equation} 
    \mcl E=z^2E''+((l+1)z+\mu(1-z^2))E'+(\la-\mu(l+1)z)E=0.
    \label{heun2*}
\end{equation}

\begin{remark}
    Heun equations~\eqref{heun} and~\eqref{heun2*} corresponding to  the 
    family~\eqref{josvec} of dynamical systems on torus are those corresponding 
    to real parameters $l$, $\omega$, $\mu$, and thus, real $\la$. In the 
    present paper we are dealing with general  equation~\eqref{heun}, with 
    arbitrary complex parameters~$\la$, $\mu$.
\end{remark}

\def\mon{\operatorname{Mon}}

\subsection{Generalized simple intersections and polynomial solutions (A.\,A.\,Glutsyuk)}

Let us recall the following definition.

\begin{definition}\cite[definition 1.3]{g18}
    The {\it axis} of a phase-lock area $L_s=L_s(\omega)$ is the vertical line 
    \[
        \La_s=\La_s(\omega)=\{ B=s\omega\}\subset\rr^2_{(B,A)}; \ \ s\in\zz.
    \]
\end{definition}

\begin{definition} (compare with \cite[definition 1.9]{g18}) 
    A {\it generalized simple intersection} is a point of intersection of 
    an axis $\La_l$ with the boundary of a phase-lock area $L_s$ with 
    $s\equiv l(\modulo 2)$ that is not a constriction, see Fig.\,\ref{fig:3}.
\end{definition}
\begin{theorem} \label{bt02} \cite{bt0, bg2} 
    The generalized simple intersections with $B\geqslant0$ and $A\neq0$ correspond exactly 
    to those real parameter values $(B,A; \omega)$ with $\omega,B>0$, $A\neq0$  
    for which  Heun equation~\eqref{heun} with $l=\frac B\omega$, 
    $\mu=\frac A{2\omega}$, $\la=\frac1{4\omega^2}-\mu^2$ 
    has a polynomial solution. Generalized simple intersections 
    in $\La_l$ may exist only for $l\in\zz\setminus\{0\}$ 
    and those $s\equiv l(\modulo 2)$ that lie in the segment $[0,l]$. 
\end{theorem}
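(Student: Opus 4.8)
\emph{(plan).}
The plan is to translate the defining dynamical property of a generalized simple intersection into a condition on the monodromy of the linear system \eqref{tty}, and then into the polynomial-solution criterion of Theorem \ref{tpol}; the theorem being attributed to \cite{bt0, bg2}, the task is mainly to assemble these reductions and defer the analytic core to those references.

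First I would reduce the boundary condition to a parabolicity statement. By Remark \ref{rklim}, a point $(B,A)$ lies on a boundary $\partial L_s$ exactly when the Poincar\'e map $h_{(B,A)}$ has a parabolic fixed point or is the identity; by property (iii) the identity case occurs only at constrictions and growth points, the latter lying on $\{A=0\}$. Imposing $A\neq0$ and excluding constrictions therefore singles out the points where $h_{(B,A)}$ has a parabolic non-identity fixed point, and by the symmetry result (i) that fixed point is $\varphi=\pm\frac\pi2$, i.e. $\Phi=e^{i\varphi}=\pm i$ in the coordinate of \eqref{phit}.

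Next I would pass to the complexified picture. Under \eqref{phit} the flow of \eqref{josvec} becomes the Riccati equation \eqref{ricc}, the projectivization of the linear system \eqref{tty}, and one turn $\tau\in[0,2\pi]$ corresponds to the loop $|z|=1$ around the irregular singular point $z=0$. Hence $h_{(B,A)}$ is the restriction to $\{|\Phi|=1\}$ of the projectivized monodromy $\widehat M$ of \eqref{tty} over $|z|=1$, while the component $v$ of a solution satisfies \eqref{heun2*} via $E=e^{\mu z}v$. A residue computation of $\oint\operatorname{tr}$ of the coefficient matrix gives $\det M=e^{-2\pi i\,l}$, so the axis condition $l=\frac B\omega\in\zz$ built into $\La_l$ forces $M\in\mathrm{SL}_2(\cc)$. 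For such $M$ a parabolic fixed point at $\Phi=\pm i$ with $\operatorname{tr}M=2$ (eigenvalue $+1$, the sign selected by the parity $s\equiv l(\modulo2)$; the opposite sign $\operatorname{tr}M=-2$ yields $s\not\equiv l$ and is excluded by the definition) is equivalent to $M$ having a single-valued eigenvector over $\Phi=\pm i$; this eigenvector is an entire solution of \eqref{tty}, so $E=e^{\mu z}v$ is an entire monodromy eigenfunction of \eqref{heun2*}, and conversely a polynomial $E$ is single-valued, hence monodromy-invariant with eigenvalue $+1$. The step I expect to be hardest is upgrading ``entire monodromy eigenfunction'' to ``polynomial'': this requires the local analysis of \eqref{heun2*} at its two irregular singular points $z=0,\infty$ to exclude essential singularities (the competing formal solution behaving like $e^{\mu/z}$) and to pin the fixed point to $\Phi=\pm i$ with the correct eigenvalue sign. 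This is precisely the content established by Buchstaber and Tertychnyi, so I would invoke \cite{bt0, bg2} here rather than reprove it.

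Finally I would read off the stated constraints. Since $\mu=\frac A{2\omega}\neq0$, Theorem \ref{tpol} shows a polynomial solution of \eqref{heun} forces $l\in\nn$ and $(\la,\mu)\in\Gamma_l$; together with $B\geqslant0$ this gives $\omega,B>0$ and $l\in\zz\setminus\{0\}$, which is the asserted correspondence. For fixed $\omega$ the relation \eqref{param} confines $(\la,\mu)$ to the parabola $\{\la+\mu^2=\tfrac1{4\omega^2}\}$, so the generalized simple intersections in $\La_l$ are exactly the points of $\Gamma_l\cap\{\la+\mu^2=\tfrac1{4\omega^2}\}$, matching the description in Subsection 1.5. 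The parity $s\equiv l(\modulo2)$ is built into the definition; the bound $0\leqslant s\leqslant l$ I would obtain by relating the rotation number $s$ at such a boundary point to the degree of the corresponding polynomial solution (which ranges over $\{0,\dots,l-1\}$) and to the reflection symmetry (iv) forcing $s\geqslant0$ for $B>0$, in line with the constriction bounds recalled from \cite{4}. The only genuinely analytic input is the irregular-singularity argument of the previous paragraph; everything else is bookkeeping with \eqref{param}, Theorem \ref{tpol}, and the symmetries (i)--(iv).
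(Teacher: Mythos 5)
You should first be aware that the paper contains no proof of Theorem \ref{bt02}: it is imported wholesale from \cite{bt0, bg2}, so the only basis for comparison is the mechanism those references use, which the paper itself displays when it applies the theorem (notably in its proof of Lemma \ref{2irr}).

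Your plan has a genuine gap: it conflates equations \eqref{heun} and \eqref{heun2*}, and this is fatal to the central step. The dynamics at a point $(B,A)$ with $l=\frac B\omega>0$ is governed by \eqref{heun2*}; your chain ``parabolic fixed point at $\pm i$ $\Rightarrow$ monodromy eigenvector of \eqref{tty} $\Rightarrow$ eigenfunction $E=e^{\mu z}v$ of \eqref{heun2*} $\Rightarrow$ upgrade $E$ to a polynomial'' therefore tries to produce a polynomial solution of \eqref{heun2*}. But \eqref{heun2*} is exactly \eqref{heun} with $l$ replaced by $-l$, so by Theorem \ref{tpol} it has \emph{no} polynomial solutions whatsoever when $l>0$. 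Worse, if your ``exclude essential singularities'' step could be carried out, $E$ would be an entire solution of \eqref{heun2*}, and entire solutions of \eqref{heun2*} characterize \emph{constrictions} (see \cite[p.\,332, \S2]{bt1} and \cite[theorem 3.3]{bg}, as recalled in the paper's historical remarks); moreover a polynomial solution of \eqref{heun} and an entire solution of \eqref{heun2*} are incompatible by \cite[theorem 3.10]{bg}. Since a generalized simple intersection is by definition \emph{not} a constriction, the monodromy eigenfunction of \eqref{heun2*} at such a point necessarily has an essential singularity at $z=0$, so the statement your plan aims to prove is false. What is missing is the sign-flip bridge: a polynomial solution $E$ of \eqref{heun} with parameter $l$ yields $v=e^{-\mu z}E$ solving system \eqref{tty} with $l$ replaced by $-l$, i.e.\ it produces a periodic trajectory through $\varphi=\pm\frac{\pi}2$ for the dynamical system at the \emph{reflected} point $(-B,A)$; one then transports the conclusion to $(B,A)$ using the symmetry $(B,A)\mapsto(-B,A)$ of Theorem \ref{intch}. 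This is precisely how the paper uses the correspondence in proving Lemma \ref{2irr} (following \cite[appendix A]{bt0}), and it is the nontrivial content of the cited references that your outline bypasses.

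Two smaller inaccuracies: a monodromy eigenvector with eigenvalue $1$ is only single-valued and holomorphic on $\cc^*$, not ``an entire solution of \eqref{tty}'' (holomorphy at the irregular singular point $z=0$ is exactly what fails here); and the degree of a polynomial solution of \eqref{heun} with $l\in\nn$ is exactly $l-1$, not a quantity ranging over $\{0,\dots,l-1\}$, so it cannot encode the rotation number $s$ the way your last paragraph suggests -- the bound $0\leqslant s\leqslant l$ comes from the geometry of the phase-lock areas (comparison-type arguments as in \cite{4, g18}), not from the degree.
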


\begin{figure}[ht]
    \begin{center}
    \epsfig{file=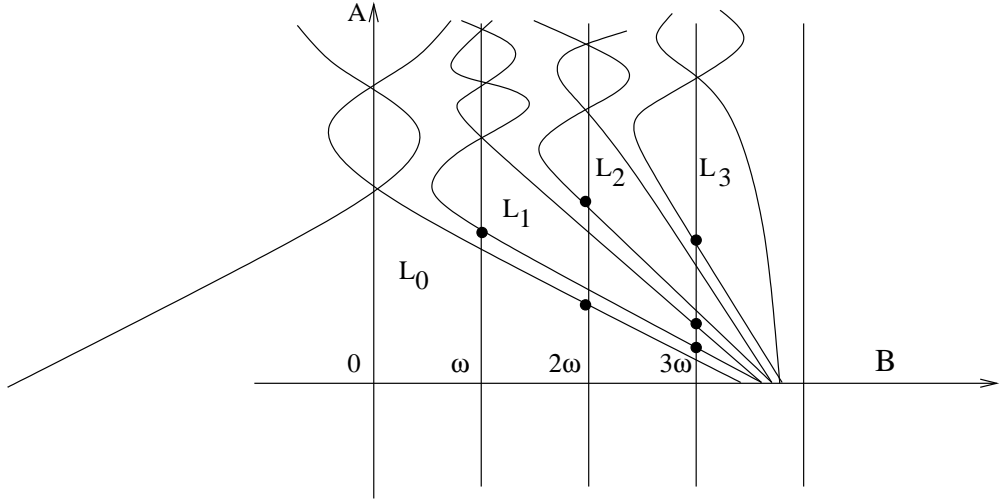}
    \caption{Approximate phase-lock areas for $\omega\simeq 0.27$; 
        the marked points are generalized simple intersections. 
        This hand-made figure, which is taken from~\cite[fig.\,2]{bg2}, 
        illustrates open Conjecture \ref{conexp}: for every $r\in\zz$ 
       all  the constrictions of the phase-lock area $L_r$ lie in 
       its axis $\La_r$. }
    \label{fig:3}
    \end{center}
\end{figure} 
 
 We prove the following new result on generalized simple intersections lying in {\it upper semiaxis}
$$\La_l^+=\La^+_l(\omega):=\La_l(\omega)\cap\{ A\geq0\}.$$
 
 \begin{theorem}
    \label{doubint}
    For every $l\in\nn$ and every $\omega>0$ the semiaxis 
    $\La_l^+=\La_l^+(\omega)$ 
    contains at least one generalized simple intersection lying in 
    $\partial L_l=\partial L_l(\omega)$. 
    For every $l,s\in\nn$, $l\geqslant3$, $s\equiv l(\modulo 2)$, $0<s<l$, and every $\omega>0$ 
    small enough (dependently on $l$) the semiaxis $\Lambda_l^+$ intersects each 
    component $\partial L_{s,\pm}$ of the boundary of the phase-lock area $L_s$. 
    For every  $l\geqslant2$ and every $\omega>0$ small enough the boundary 
    component $\partial L_{0,+}$ intersects $\Lambda_l^+$. 
    See Fig.\,5.
\end{theorem}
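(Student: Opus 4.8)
\emph{Reduction to the spectral curve.} The plan is to translate everything into the geometry of $\Gamma_l$. By Theorem~\ref{bt02} combined with Theorem~\ref{tpol}, a point of $\La_l^+$ with $B=l\omega>0$ and $A=2\omega\mu>0$ is a generalized simple intersection if and only if the associated pair $(\la,\mu)=(\tfrac1{4\omega^2}-\mu^2,\tfrac A{2\omega})$ lies on $\Gamma_l$, that is, if and only if $(\la,\mu)\in\Gamma_l\cap C_\omega$ with $\mu>0$, where $C_\omega:=\{\la+\mu^2=\tfrac1{4\omega^2}\}$. So I would first reduce the whole theorem to producing points of $\Gamma_l\cap C_\omega$ in the open upper half-plane and then reading off the rotation number $s$ of the boundary on which each such point sits. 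Using Theorem~\ref{tsimple} and Lemma~\ref{lemtop} I would write $\Gamma_l\cap\{\mu>0\}$ as the disjoint union of $l$ real-analytic graphs $\la=\la_1(\mu)<\dots<\la_l(\mu)$, $\mu>0$.

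\emph{Existence by the intermediate value theorem.} Next I would set $f_j(\mu):=\la_j(\mu)+\mu^2$, so that a point of $\Gamma_l\cap C_\omega$ on the $j$-th graph is exactly a solution of $f_j(\mu)=\tfrac1{4\omega^2}$. As $\mu\to+\infty$, Proposition~\ref{poval} gives $\la_j(\mu)\sim c_j\mu$ with $c_j\in\{l-1,\dots,-(l-1)\}$, hence $f_j(\mu)\to+\infty$; as $\mu\to0^+$ one has $f_j(0^+)=k_j(l-k_j)$, where the numbers $k(l-k)$, $k\in\{0,\dots,l-1\}$, are precisely the abscissas of $\Gamma_l\cap\{\mu=0\}$. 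By the intermediate value theorem $f_j$ then attains every value exceeding $k_j(l-k_j)$. For the lowest branch $k_1=0$, so $f_1(0^+)=0$ and a solution exists for \emph{every} $\omega>0$; for a branch with $k_j(l-k_j)>0$ a solution exists as soon as $\tfrac1{4\omega^2}>k_j(l-k_j)$, i.e.\ for all $\omega$ below an explicit $l$-dependent threshold. By the reduction, each solution is a generalized simple intersection (in particular not a constriction) lying on some $\partial L_s$.

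\emph{Identifying $s$.} To name $s$ for the $j$-th family I would compute the rotation number at the accessible endpoint of its $\omega$-range. The key point is that along the connected $\omega$-interval the intersection point moves analytically, so $\rho$ evaluated at it is a continuous integer-valued function of $\omega$, hence constant; it therefore suffices to take one limit. For a branch with limit value $k(l-k)>0$ the solution has $\mu\to0^+$ as $\omega\to\omega_c:=\tfrac1{2\sqrt{k(l-k)}}$, so the point tends to $(l\omega_c,0)$ on the $B$-axis; there equation~\eqref{jostor} is autonomous, $\tfrac{d\varphi}{d\tau}=-\tfrac{\sin\varphi}\omega+l$, with rotation number $\sqrt{l^2-\omega^{-2}}$, equal at $\omega_c$ to $\sqrt{l^2-4k(l-k)}=|l-2k|$. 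Hence $s=|l-2k|$ on the whole family. This gives $s=l$ for $k=0$ (Part~1, valid for all $\omega>0$; here $\omega_c=\infty$, and instead one lets $\omega\to\infty$, where $\mu\to0$, the field tends to $\dot\varphi=l$ and $\rho\to l$, consistently with the known structure of $L_l$ in statement C1 and~\cite{g18}); it gives every $s$ with $0<s<l$, $s\equiv l\,(\modulo 2)$ from the branches with $0<k<l/2$ (Part~2); and $s=0$ from the top branch $k=l/2$ when $l$ is even.

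\emph{The two components and the main obstacle.} For fixed $s$ with $0<s<l$ there are \emph{two} graphs with limit value $k(l-k)$, the ones whose solutions tend to the monomials $z^{k}$ and $z^{l-k}$ as $\mu\to0$; they yield two distinct intersection points on $\partial L_s$ at different heights of $\La_l^+$. To place them on the two different components I would use that these two polynomial solutions are exchanged by the involution of~\eqref{heun} that, under the reduction to~\eqref{josvec}, swaps the fixed points $\pm\tfrac\pi2$ of the Poincaré map, i.e.\ interchanges $\partial L_{s,+}$ and $\partial L_{s,-}$ in Klimenko's description (item~(i)); so exactly one point lands on each. For Part~3 I would argue directly: for $\omega<\tfrac1l$ the point $(l\omega,0)$ has $|B|=l\omega<1$ and thus lies in the interior of the lowest (bounded) domain of $L_0$ (at $A=0$ the autonomous equation has a fixed point, $\rho=0$), so moving up $\La_l^+$ out of that domain forces a crossing of $\partial L_{0,+}$. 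The hardest step will be the component identification in Part~2, namely rigorously matching the algebraic $z^{k}\leftrightarrow z^{l-k}$ symmetry of the two polynomial solutions to the dynamical $\pm\tfrac\pi2$ characterization of $\partial L_{s,\pm}$; the rotation-number labelling itself is then forced by the continuity-plus-integrality argument above.
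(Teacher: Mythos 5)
Your existence argument is sound and genuinely different from the paper's: the reduction via Theorem~\ref{bt02} to $\Gamma_l\cap\{\la+\mu^2=\frac1{4\omega^2}\}$, the intermediate value theorem applied to $f_j(\mu)=\la_j(\mu)+\mu^2$ on the $l$ branches of Lemma~\ref{lemtop} (with $f_j>0$ everywhere, since $P_l(\la,-\la)=\la^l$ forbids zeros of $f_j$ and $f_j\to+\infty$), and the identification $s=|l-2k_j|$ by continuity of the rotation number along the family and passing to the autonomous limit on the $B$-axis. One repair is needed even here: the family should be parametrized by $\mu$ (each $\mu>0$ determines a unique $\omega_j(\mu)=\frac1{2\sqrt{f_j(\mu)}}$), not by $\omega$, since for fixed $\omega$ the equation $f_j(\mu)=\frac1{4\omega^2}$ can have several roots and these need not form one connected analytic branch over an $\omega$-interval; this is exactly how the paper runs the analogous argument in Section~4 (Proposition~\ref{sjc}).

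The genuine gap is the component identification, which is part of the statement being proved: Part~2 requires hitting \emph{each} of $\partial L_{s,+}$ and $\partial L_{s,-}$, and Part~3 requires specifically $\partial L_{0,+}$. Your spectral-curve argument produces two points on $\partial L_s\cap\La_l^+$ but cannot distinguish the signs, and the repair you propose does not work: the known involution of the solution space of~\eqref{heun} (Tertychnyi's $\sharp$) corresponds to Klimenko's symmetry $(\varphi,t)\mapsto(\pi-\varphi,-t)$, and $\varphi\mapsto\pi-\varphi$ \emph{fixes both} points $\pm\frac{\pi}2(\modulo 2\pi\zz)$ rather than swapping them; the symmetries that do swap $+\frac\pi2$ and $-\frac\pi2$, such as $(\varphi,\tau)\mapsto(-\varphi,\tau+\pi)$, reverse the sign of $B$ (Theorem~\ref{intch}) and hence do not preserve the semiaxis $\La_l^+$ for $l>0$. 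Note also that the paper itself never identifies which branch of $\Gamma_l$ carries which sign: in Section~4 the bijection between branches and pairs $(s,\pm)$ (Proposition~\ref{prop7}) is deduced by a counting argument that takes Theorem~\ref{doubint} and Corollary~\ref{cdoubint} as input, i.e.\ the implication runs opposite to yours. The paper's proof of Theorem~\ref{doubint} instead handles each component separately by planar topology: $\partial L_{s,\pm}$ is a connected graph $B=g_{s,\pm}(A)$ (statement~(i) of Subsection~1.4); both components pass through the growth point $(\sqrt{s^2\omega^2+1},0)$, where the Poincar\'e map is the identity, and this point lies to the right of $\La_l$ for small $\omega$; the upper parts lie to the left of $\La_l^+$ because the upper part of $\La_l^+$ lies in $L_l$ by \cite[theorem 1.12]{g18} and $L_s$ is disjoint from $L_l$; then the intermediate value theorem applied to $g_{s,\pm}(A)-l\omega$ gives a crossing of each component. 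For $s=0$ the same scheme works after the explicit computation at $(1,0)$, where $\dot\varphi=\frac{1-\sin\varphi}\omega$ has parabolic fixed point $+\frac\pi2$, proving $(1,0)\in\partial L_{0,+}$; your Part~3 yields a crossing of $\partial L_0$ but never shows it lies on the $+$ component. To complete your proof you would have to import precisely these ingredients, which is where the real content of the paper's argument lies.
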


\begin{corollary} \label{cdoubint} For every $l\in\nn$ and every $\omega>0$ small enough (dependently on 
$l$) the semiaxis $\La_l^+(\omega)$ contains exactly $l$ distinct generalized simple intersections; exactly one of them lies in 
$\partial L_l$. 
They depend analytically on $\omega$ and tend to the $A$-axis, 
as $\omega\to0$: their common abscissa $\omega l$ tends to zero.
\end{corollary}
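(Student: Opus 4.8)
\section*{Proof proposal for Corollary \ref{cdoubint}}

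The plan is to translate the count of generalized simple intersections on $\La_l^+$ into a count of intersection points of the real spectral curve $\Gamma_l$ with a moving parabola, and then to run an asymptotic analysis as $\omega\to0$. By Theorems \ref{bt02} and \ref{tpol}, a generalized simple intersection on $\La_l^+$ with $A>0$ is exactly a point of $\La_l$ whose renormalized coordinates $(\la,\mu)=\bigl(\frac1{4\omega^2}-\mu^2,\,\frac A{2\omega}\bigr)$ lie on $\Gamma_l$ with $\mu>0$; equivalently, these intersections are in bijection with the points of $\Gamma_l\cap\{\mu>0\}$ on the parabola $\Pi_\omega:=\{\la+\mu^2=\frac1{4\omega^2}\}$, and two distinct such points have distinct $\mu$ (a point of $\Pi_\omega$ is determined by its $\mu$-coordinate), hence give distinct intersections. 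Moreover, for $\omega<\frac1l$ the endpoint $(\omega l,0)$ of $\La_l^+$ lies strictly inside $L_0$, since for $|B|=\omega l<1$ the autonomous field $\dot\varphi=-\sin\varphi+B$ has a hyperbolic fixed point that persists under small $A$; thus it is not a boundary point, and every generalized simple intersection on $\La_l^+$ has $A>0$. So it suffices to count $\Gamma_l\cap\{\mu>0\}\cap\Pi_\omega$.

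Next I would use Lemma \ref{lemtop}: since $\Gamma_l\cap\{\mu>0\}$ is a union of $l$ disjoint smooth branches along which $\partial_\la P_l\neq0$, each branch is the graph of a real-analytic function $\la=f_j(\mu)$ on $(0,\infty)$, and by Theorem \ref{tsimple} they can be ordered $f_1<\dots<f_l$. Writing $g_j(\mu)=f_j(\mu)+\mu^2$, a point of the intersection on the $j$-th branch is a root of $g_j(\mu)=\frac1{4\omega^2}$. By Proposition \ref{poval} the branch $f_j$ is asymptotic at infinity to a line of one of the $l$ distinct slopes $l-1,l-3,\dots,-(l-1)$, so $f_j(\mu)=(\text{slope})\cdot\mu+O(1)$ with $f_j'(\mu)\to\text{slope}$; hence $g_j(\mu)\to+\infty$ and $g_j'(\mu)=f_j'(\mu)+2\mu>0$ for all large $\mu$. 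Since $g_j$ is continuous on $(0,\infty)$ with a finite limit as $\mu\to0^+$ (the $\la$-abscissas of $\Gamma_l\cap\{\mu=0\}$ are finite and bounded by $\frac{l^2}4$), it is bounded on any interval $(0,M_j]$; choosing $M_j$ so that $g_j$ increases on $[M_j,\infty)$, the level $\frac1{4\omega^2}$ is attained by $g_j$ exactly once for $\frac1{4\omega^2}>\sup_{(0,M_j]}g_j$, at a root $\mu_j\in(M_j,\infty)$. Taking $\omega$ small for all $j$ simultaneously (and $\omega<\frac1l$, so that the vertex $\la=\frac1{4\omega^2}$ of $\Pi_\omega$ exceeds all $\mu=0$ abscissas), this yields exactly $l$ intersection points, all with $\mu_j>0$ and $\mu_j\to+\infty$ as $\omega\to0$.

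Because $g_j'(\mu_j)>0$, the analytic implicit function theorem makes each $\mu_j$ an analytic function of $\omega$, whence $A_j=2\omega\mu_j$ is analytic, and the common abscissa $B=\omega l\to0$; so the $l$ intersections depend analytically on $\omega$ and tend to the $A$-axis. It remains to identify how many lie on $\partial L_l$. Here I would invoke Theorem \ref{doubint}: for $\omega$ small it guarantees at least one generalized simple intersection on $\partial L_l$, at least one on each of the two components $\partial L_{s,\pm}$ for $0<s<l$ with $s\equiv l\ (\modulo 2)$, and, when $l$ is even, at least one on $\partial L_{0,+}$. A direct count of these boundary components gives exactly $l$ in both parities, matching the exact total $l$ just obtained; hence each minimal contribution is realized exactly once, and in particular precisely one intersection lies on $\partial L_l$.

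The hard part is the sharp count of the second paragraph: one must show that for \emph{every} sufficiently small $\omega$ each $g_j$ meets the large level $\frac1{4\omega^2}$ exactly once rather than several times. This hinges on the eventual strict monotonicity of $g_j$ together with a uniform bound for $g_j$ on a fixed compact $\mu$-interval, which I would extract from the Laurent expansion $f_j(\mu)=(\text{slope})\,\mu+a_0+a_1\mu^{-1}+\cdots$ at the regular point of $\Gamma_l$ on the infinity line --- legitimate because, as noted after Proposition \ref{poval}, $\Gamma_l$ meets that line transversally at $l$ distinct regular points. Everything else --- the reduction via Theorem \ref{bt02}, the exclusion of the endpoint $A=0$, the analytic dependence and the $\omega\to0$ limit, and the bookkeeping that isolates the single intersection on $\partial L_l$ against the lower bounds of Theorem \ref{doubint} --- is then routine.
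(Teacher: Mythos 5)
Your proof is correct, and its central counting step takes a genuinely different route from the paper's. The paper proves the exact count $l$ by a squeeze between an algebraic upper bound and a dynamical lower bound: the $\mu^2$-coordinates of the generalized simple intersections on $\Lambda_l^+$ are roots of the degree-$l$ polynomial $P_l\bigl(\tfrac{1}{4\omega^2}-\mu^2,\,\mu^2\bigr)$ viewed as a polynomial in $\mu^2$, so there are at most $l$ of them, while Theorem \ref{doubint} supplies at least $l$ for small $\omega$ (one on $\partial L_l$, two on each $\partial L_s$ with $0<s<l$, $s\equiv l\ (\mathrm{mod}\ 2)$, one on $\partial L_{0,+}$ for $l$ even); equality then forces both the count and the uniqueness of the intersection on $\partial L_l$, and analyticity follows because distinct roots of an analytic family of polynomials vary analytically. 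You instead derive the exact count from the real geometry of the spectral curve: the $l$ disjoint graphs $\lambda=f_j(\mu)$ of Lemma \ref{lemtop}, together with the expansion $f_j(\mu)=c_j\mu+a_0+O(1/\mu)$ at the $l$ regular transversal points at infinity (Proposition \ref{poval}), make each $g_j=f_j+\mu^2$ eventually strictly increasing and bounded on compact pieces, so each branch meets the parabola $\{\lambda+\mu^2=\tfrac{1}{4\omega^2}\}$ exactly once for small $\omega$. This buys a count that is independent of Theorem \ref{doubint} (which you then invoke, as the paper does, only to locate the intersections on the boundary components), extra information (one intersection per branch, $\mu_j\to\infty$, implicit-function-theorem analyticity), and an explicit exclusion of the endpoint $A=0$ that the paper leaves implicit; the price is a longer asymptotic analysis in place of the paper's one-line degree bound. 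One point that both your write-up and the paper leave implicit and that you could make explicit in the final bookkeeping: the intersections guaranteed on the distinct components $\partial L_{s,+}$ and $\partial L_{s,-}$ are distinct points, because a generalized simple intersection with $A>0$ and $\omega$ small is neither a constriction nor a growth point, so its Poincar\'e map is parabolic with a unique fixed point $\pm\tfrac{\pi}{2}$ and the point can lie on only one of the two components.
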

 
Set
\[
    r=\frac1{2\omega}; \ \ \mu=\frac A{2\omega}=Ar.
\]
For given $r>0$ (or equivalently, $\omega>0$) and $l\in\nn$, the set of the 
$\mu$-coordinates of the generalized simple intersections in the axis $\La_l(\omega)$ will 
be denoted by $\Simple_l(r)$. We set
\[
    \si_l:=\cup_{r>0}(\Simple_l(r)\times\{r\})\subset\rr^2_{(\mu,r)}.
\]

Recall that for every $(\mu,r)\in\si_l$ the corresponding point 
$(B=\frac l{2r}, A=\frac\mu r)$ is not a constriction, but a priori it can be a growth point. 
 The complement of the set $\si_l$ to growth points is an open and 
dense subset $\si_l^0\subset\si_l$ consisting of those 
parameters for which the  corresponding Poincar\'e map $h_{(B,A)}$ has a unique fixed point $\pm\frac{\pi}2(\modulo2\pi\zz)$, see 
Remark~\ref{rklim} and statement (iii) in Subsection 1.4. 
Thus, $\si_l^{0}$   is split into two disjoint parts $\si^0_{l,\pm}$  of those 
parameter values for which the above fixed point is $\pm\frac{\pi}2(\modulo2\pi\zz)$. 
Set 
\[
    \hsi_{l}:=\text{ the complex Zariski closure of the subset } \si_l 
    \text{ in } \cc^2_{(\mu,r)},
\]
\begin{equation}
    \hsi_{l,\pm}:=\text{ the complex Zariski closure of the subset } \si_{l,\pm}^0.
\label{sipm}\end{equation}

\begin{theorem}
    \label{irred1}
    For every $l\in\nn$ the image of the subset $\hsi_{l}\subset\cc^2_{(\mu,r)}$ 
    under the mapping $(\mu,r)\mapsto (\mu, r^2)$ is an irreducible 
    algebraic curve. The algebraic subset $\hsi_{l}$  consists of two 
    irreducible components: the algebraic curves $\hsi_{l,\pm}$.
\end{theorem}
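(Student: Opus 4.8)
The plan is to transport the whole problem onto the spectral curve $\Gamma_l$ and let its irreducibility (Theorem \ref{cirr}) do the work. By Theorem \ref{bt02} together with Theorem \ref{tpol} and the relation \eqref{param}, a generalized simple intersection in $\La_l$ is the same datum as a point $(\la,\mu)\in\Gamma_l$ carrying a value $r=\tfrac1{2\omega}$ with $\la+\mu^2=r^2$. Hence the coordinate change $(\la,\mu)\mapsto(\mu,w):=(\mu,\la+\mu^2)$, a polynomial automorphism of $\cc^2$ with inverse $(\mu,w)\mapsto(w-\mu^2,\mu)$, carries $\Gamma_l$ isomorphically onto
\[
    C:=\{\,P_l(w-\mu^2,\mu^2)=0\,\}\subset\cc^2_{(\mu,w)},
\]
which is therefore irreducible. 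Under $f\colon(\mu,r)\mapsto(\mu,r^2)=(\mu,w)$ the real set $\si_l$ lands in $C$. Since $\si_l$ is infinite (by Corollary \ref{cdoubint} it already contains $l$ analytic branches as $\omega\to0$) and $C$ is an irreducible curve, a proper Zariski-closed subset of $C$ would be finite, so $f(\si_l)$ is Zariski dense in $C$; as $f$ is a finite morphism it is closed, whence $f(\hsi_l)=\overline{f(\si_l)}=C$ is the asserted irreducible curve.

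For the second assertion, observe that $\hsi_l\subset f^{-1}(C)=\tilde C:=\{\,P_l(r^2-\mu^2,\mu^2)=0\,\}$, and that $f\colon\tilde C\to C$ is exactly the double cover obtained by adjoining $r=\sqrt{\la+\mu^2}$. This forces a clean dichotomy: either the cover is connected, in which case $\hsi_l=\tilde C$ is irreducible (ruling out two components), or it splits, $\tilde C=\tilde C_+\sqcup\tilde C_-$, into two pieces each mapping birationally onto the irreducible curve $C\cong\Gamma_l$ and hence irreducible. Thus the heart of the matter is to prove that the cover \emph{splits}, i.e.\ that $\la+\mu^2$ is a perfect square in $\cc(\Gamma_l)$; equivalently, that
\[
    P_l(r^2-\mu^2,\mu^2)=\det\!\bigl(r^2\operatorname{Id}+(H_l-\mu^2\operatorname{Id})\bigr)
\]
factors over $\cc[\mu,r]$ as $\pm\,p(\mu,r)\,p(\mu,-r)$, so that $\tilde C_\pm=\{r=\pm\rho\}$ with $\rho^2=\la+\mu^2$.

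I expect this splitting to be the main obstacle, and I would establish it conceptually rather than by a blind determinant computation. The matrix $H_l$ in \eqref{defh} encodes the three-term recurrence for polynomial solutions of the second-order Heun equation \eqref{heun}, and that equation is itself obtained by eliminating one component from the first-order $2\times2$ linear system \eqref{tty}, in which the parameter $r=\tfrac1{2\omega}$ enters \emph{linearly}. Tracking the polynomial solutions through this ``square root'' of the Heun equation should yield the factor $p(\mu,r)$ with the required property $p(\mu,r)\,p(\mu,-r)=\pm P_l(r^2-\mu^2,\mu^2)$, the two factors corresponding to the two fixed points $\Phi=\pm i$ (that is, $\varphi=\pm\tfrac\pi2$) of the Poincar\'e map. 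For small $l$ one verifies the factorization by hand: one finds $\rho=\mu$, $\rho=\la$, and $\rho=\tfrac{\la\mu}{\la-2}$ on $\Gamma_1,\Gamma_2,\Gamma_3$ respectively, so no uniform parity is available and a structural argument is genuinely needed.

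It then remains to match the two sheets with the two families $\si^0_{l,\pm}$ of \eqref{sipm}. At a physical point ($r=\tfrac1{2\omega}>0$) the sign of $\rho$ decides which component $\tilde C_\pm$ contains it, and this sign is precisely the dynamical label, since it is the same sign that distinguishes the fixed point $+\tfrac\pi2$ from $-\tfrac\pi2$ in the first-order system. Both signs occur along the real locus — for instance Theorem \ref{doubint} produces generalized simple intersections on both boundary components $\partial L_{s,\pm}$, hence of both labels — so each $\si^0_{l,\pm}$ is an infinite subset of the irreducible curve $\tilde C_\pm$ and is Zariski dense in it. Consequently $\hsi_{l,\pm}=\tilde C_\pm$ and $\hsi_l=\hsi_{l,+}\cup\hsi_{l,-}$ is its decomposition into two irreducible components. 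The two points carrying the real difficulty are exactly the existence of the rational square root $\rho$ (the splitting) and the identification of its sign with the $\pm\tfrac\pi2$ label.
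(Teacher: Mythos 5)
Your handling of the first statement is correct and is essentially the paper's own argument (the paper gets closedness of the image from the Remmert and Chow theorems where you use finiteness of $f$; same content). For the second statement, however, what you submit is a plan rather than a proof: the two steps you yourself flag as ``carrying the real difficulty'' are exactly the content of the assertion, and neither is established. On the splitting, ``tracking the polynomial solutions through the first-order system \eqref{tty} should yield the factor $p(\mu,r)$'' is a heuristic, not an argument; it happens to be true and citable, namely it is the Buchstaber--Tertychnyi identity $H_l+(r^2-\mu^2)\operatorname{Id}=-(\mathcal{G}_l+r\operatorname{Id})(\mathcal{G}_l-r\operatorname{Id})$ of \cite[\S3, eq.~(30)]{bt0}, which gives $P_l(r^2-\mu^2,\mu^2)=(-1)^l\,p(\mu,r)\,p(\mu,-r)$ with $p(\mu,r)=\det(\mathcal{G}_l+r\operatorname{Id})$; the paper itself only invokes this in Section \ref{genera}. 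On the sheet-matching, the gap is fatal if left open: even granted the splitting $\tilde C=\tilde C_+\cup\tilde C_-$, if arcs of $\si^0_{l,+}$ lay on \emph{both} sheets, then $\hsi_{l,+}$ would equal all of $\tilde C$, would not be irreducible, and the theorem would fail; so ``the sign of $\rho$ is precisely the dynamical label'' must actually be proved, not asserted. A workable way to close this, in the spirit of what the paper does in Subsection 2.3 (Corollary \ref{cormon}, Propositions \ref{proreg}--\ref{disteo}): at a point of $\si^0_{l,\pm}$ the Poincar\'e map is parabolic with unique fixed point $\pm\frac{\pi}{2}$, so the monodromy of (\ref{ricc}) fixes $\pm i$ and does \emph{not} fix $\mp i$; since ``the monodromy fixes $-i$'' is an analytic condition on the parameters (for $r\neq0$), if it holds on an arc of an irreducible component of $\tilde C$ it holds identically on that component, so no single component can carry arcs of both labels.

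There is also a concrete error in the one argument you do give for ``both labels occur'': Theorem \ref{doubint} yields intersections with both components $\partial L_{s,+}$ and $\partial L_{s,-}$ only when some $s$ with $0<s<l$, $s\equiv l(\modulo 2)$ exists, i.e.\ only for $l\geqslant3$. For $l=1$ it gives a single point, on $\partial L_1$, of unspecified sign; for $l=2$ it gives one point on $\partial L_2$ of unspecified sign and one on $\partial L_{0,+}$, so again only one label is guaranteed. Closing exactly this hole is the role of the paper's Lemma \ref{2irr}, whose cases $l=1,2$ are not formal: they need the symmetry result Theorem \ref{intch} and, for $l=2$, the explicit computation of \cite[appendix A]{bt0}. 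Structurally, your route is genuinely different from the paper's: the paper never proves the factorization, but combines the bound that $\pi^{-1}(\Gamma_l)$ has at most two components (degree-two cover of the irreducible $\Gamma_l$) with the real-dynamical input of Lemma \ref{2irr}, so the splitting emerges as a \emph{consequence}; the identification of the two components with the explicit curves $\Xi_l^\pm$ is then deduced in Section \ref{genera} \emph{from} the theorem rather than used to prove it. With (i) the citation above, (ii) the monodromy argument, and (iii) Lemma \ref{2irr} replacing your appeal to Theorem \ref{doubint} for $l\leqslant2$, your approach does go through, and has the merit of exhibiting the algebraic splitting explicitly from the start; as written, it is incomplete at precisely the points you identified.
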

Theorems \ref{doubint}, \ref{irred1} and Corollary \ref{cdoubint} will be proved in Subsection 2.2, together with  the following new result 
used in the proof of Theorem \ref{irred1}. 

\begin{theorem}
        \label{intch}
        For every $s\in\zz$ the symmetry $(B,A)\mapsto(B,-A)$ 
        with respect to the $B$-axis preserves the boundary curves 
        $\partial L_{s,\pm}$ for even $s$ and interchanges them for odd $s$. In particular, for every even $s$ the curves $\partial L_{s,\pm}$ 
        are orthogonal to the $B$-axis at the growth point of the 
        phase-lock area $L_s$. The symmetry $(B,A)\mapsto(-B,A)$ 
        with respect to the $A$-axis 
        permutes the curves $L_{s,\pm}$ and $L_{-s,\mp}$ for even  
        $s$ and the curves $L_{s,\pm}$, $L_{-s,\pm}$ for 
        odd $s$. 
    \end{theorem}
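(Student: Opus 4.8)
The plan is to argue entirely on the dynamical-systems side, using the Poincaré (time-$2\pi$) map $h_{(B,A)}$ of the field (\ref{josvec}) on the transversal circle $\{\tau=0\}$ together with the characterization recalled in statement (i) and Remark \ref{rklim}: a point lies on $\partial L_{s,\pm}$ exactly when $h_{(B,A)}$ has its (unique) parabolic fixed point at $\pm\frac{\pi}2\ (\modulo 2\pi\zz)$. I would first record the two symmetries of the family supplied by statement (iv): the shift $\sigma_B\colon(\varphi,\tau)\mapsto(\varphi,\tau+\pi)$ carries the field with parameters $(l,\mu)$ to the one with $(l,-\mu)$, hence realizes $(B,A)\mapsto(B,-A)$, while $\sigma_A\colon(\varphi,\tau)\mapsto(-\varphi,\tau+\pi)$ realizes $(B,A)\mapsto(-B,A)$. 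Both shift $\tau$ by $\pi$ and therefore send the cross-section $\{\tau=0\}$ to $\{\tau=\pi\}$; this single fact is the source of all the parity phenomena. I would also record Klimenko's involution $\kappa\colon(\varphi,\tau)\mapsto(\pi-\varphi,-\tau)$, which preserves every member of the family while reversing time and whose fixed locus on $\{\tau=0\}$ is exactly $\{\pm\frac{\pi}2\}$ — this is the structural reason the parabolic fixed point sits at $\pm\frac{\pi}2$.

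The crux is a lemma about the half-period flow. Let $g\colon S^1_\varphi\to S^1_\varphi$ be the flow of (\ref{josvec}) from $\tau=0$ to $\tau=\pi$, so that $h_{(B,A)}=g_2\circ g$, where $g_2$ is the flow from $\tau=\pi$ to $\tau=2\pi$. Tracing $\kappa$ along orbits will give the exact lift identity $\widetilde g_2\circ\iota\circ\widetilde g=\iota$, where $\iota(\varphi)=\pi-\varphi$; equivalently $\widetilde g_2=\iota\circ\widetilde g^{-1}\circ\iota$. At a parabolic boundary point of $L_s$ the canonical flow-lift satisfies $\widetilde h(\varphi_0)=\varphi_0+2\pi s$ with $\varphi_0\in\frac{\pi}2+\pi\zz$. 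Substituting $\widetilde h=\iota\,\widetilde g^{-1}\iota\,\widetilde g$ and using that $\widetilde g$ commutes with the deck shift $+2\pi$, a short computation will force $\widetilde g(\varphi_0)=\frac{\pi}2+\pi(m+s)$ whenever $\varphi_0=\frac{\pi}2+\pi m$. On $S^1_\varphi$ this says precisely that $g$ fixes the parabolic point $\pm\frac{\pi}2$ when $s$ is even and swaps $+\frac{\pi}2\leftrightarrow-\frac{\pi}2$ when $s$ is odd.

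With this lemma the three assertions fall out by bookkeeping. For the $B$-axis reflection, $\sigma_B$ shows that $\varphi_*$ is fixed by $h_{(B,A)}$ iff it is fixed by the return map of $(B,-A)$ on $\{\tau=\pi\}$, which by conjugation through $g$ holds iff $g^{-1}(\varphi_*)$ is fixed by $h_{(B,-A)}$. Taking $\varphi_*=\epsilon\frac{\pi}2$ for a point of $\partial L_{s,\epsilon}$ and applying the lemma to the system $(B,-A)$ (same rotation number $s$) shows $(B,-A)\in\partial L_{s,\epsilon}$ for even $s$ and $(B,-A)\in\partial L_{s,-\epsilon}$ for odd $s$, the claimed preservation/interchange. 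The $A$-axis reflection is identical except that $\sigma_A$ additionally reflects $\varphi\mapsto-\varphi$, contributing one extra swap of $\pm\frac{\pi}2$ and sending $s\mapsto-s$; combining this with the lemma (whose output depends only on the parity of $s$) gives $L_{s,\pm}\leftrightarrow L_{-s,\mp}$ for even $s$ and $L_{s,\pm}\leftrightarrow L_{-s,\pm}$ for odd $s$. Finally, the orthogonality statement for even $s$ is an immediate corollary: since $\partial L_{s,+}$ is then invariant under $(B,A)\mapsto(B,-A)$ and is a graph $B=g_{s,+}(A)$, invariance forces $g_{s,+}$ to be even in $A$, so $g_{s,+}'(0)=0$ and the curve meets the $B$-axis at the growth point with vertical tangent.

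I expect the lift bookkeeping inside the crux lemma to be the only genuine obstacle: one must fix the flow-lifts consistently, derive $\widetilde g_2=\iota\,\widetilde g^{-1}\iota$ with no stray multiples of $2\pi$, and correctly read off the parity of $m+s$ back on the circle. Everything else is a faithful transcription of statements (i), (iv) and Remark \ref{rklim} through the two symmetries $\sigma_B$ and $\sigma_A$.
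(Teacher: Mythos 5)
Your proposal is correct and follows essentially the same route as the paper: your ``crux lemma'' on the half-period flow is exactly the paper's Proposition~\ref{halfp} (derived, as you do, from Klimenko's involution $(\varphi,\tau)\mapsto(\pi-\varphi,-\tau)$), and the bookkeeping with the two torus symmetries $(\varphi,\tau)\mapsto(\varphi,\tau+\pi)$ and $(\varphi,\tau)\mapsto(-\varphi,\tau+\pi)$ matches the paper's proof of Theorem~\ref{intch}, with the only cosmetic difference that you phrase the argument via lifts and half-period return maps where the paper tracks the points through which the periodic orbits pass.
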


\subsection{
    Boundaries of phase-lock areas and the complexified union of their families
    (A.\,A.\,Glutsyuk)
}

For every $\omega>0$ let 
$\mcl_{\pm}^{odd}(\omega), \mcl_{\pm}^{even}(\omega)\subset\rr^2_{(B,A)}$ 
denote the unions of those points in the boundaries of the phase-lock areas 
with odd (respectively, even) rotation numbers, for which the corresponding 
Poincar\'e map $h_{(B,A)}$ fixes the point $\pm\frac{\pi}2(\modulo2\pi\zz)$. 
Recall that we denote $r=\frac1{2\omega}$. Set 
\[
    \mcl_{\pm}^{odd (even)}:=\bigcup_{\omega>0}
    (\mcl_{\pm}^{odd (even)}(\omega)\times\{ r\})\subset\rr^3_{(B,A,r)},
\]
\[
    \hmcl_{\pm}^{odd (even)}:=\text{ the minimal  analytic subset in } \cc^3_{(B,A,r)} 
    \text{ containing }\mcl_{\pm}^{odd (even)}.
\]

\begin{theorem}
    \label{irred2}
    The analytic subsets $\hmcl_{\pm}^{odd}$, $\hmcl_{\pm}^{even}$ are distinct 
    two-dimensional irreducible analytic subsets. The union of these four 
    irreducible analytic subsets is the minimal complex analytic subset 
    in $\cc^3_{(B,A,r)}$ containing $\bigcup_{\omega>0}\bigcup_{s\in\zz}(\partial L_s(\omega)
    \times\{ \frac1{2\omega}\})$. 
\end{theorem}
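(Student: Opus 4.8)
The plan is to realize the four complexified families as clopen pieces of the fixed-point loci of the complexified Poincar\'e map, and then to deduce irreducibility of each piece from that of the spectral curves by slicing along integer values of $l$.

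First I would complexify the Poincar\'e map. For $(B,A,r)\in\cc^3$ the linear system~\eqref{tty} (equivalently the Heun equation~\eqref{heun2*}) has its only singularities at $z=0,\infty$, so the time-$2\pi$ map of~\eqref{josvec} is the monodromy $\mon=\mon(B,A,r)$ of~\eqref{tty} around $z=0$, a M\"obius transformation of the $\Phi$-line depending holomorphically on the parameters. By item~(i) of Subsection~1.4 a real point lies on $\mcl_{\pm}^{odd}\cup\mcl_{\pm}^{even}$ exactly when $\mon$ fixes $\Phi=\pm i$ (the image of $\varphi=\pm\tfrac\pi2$). Hence I set $\hat M_{\pm}:=\{\,\mon\text{ fixes }\pm i\,\}\subset\cc^3$; each is cut out by a single holomorphic equation, so is a hypersurface, and $\hmcl_{\pm}^{odd}\cup\hmcl_{\pm}^{even}\subseteq\hat M_{\pm}$.

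Next I would split each $\hat M_{\pm}$ by the trace. The reversing symmetry underlying Klimenko's result (item~(iv), Theorem~\ref{intch}) complexifies to a holomorphic involution fixing $\pm i$ and sending $\mon$ to $\mon^{-1}$; at a fixed point it forces the M\"obius multiplier $\kappa$ to satisfy $\kappa=\kappa^{-1}$, and since $\kappa$ equals the square of the $SL_2$-eigenvalue $\lambda$ we get $\lambda^4=1$, so the holomorphic trace $T=\operatorname{tr}\mon=\lambda+\lambda^{-1}$ obeys $T(T-2)(T+2)\equiv0$ on $\hat M_{\pm}$. Thus $T$ is locally constant there, and $\hat M_{\pm}$ is the disjoint union of the clopen pieces $\{T=2\}$, $\{T=-2\}$, $\{T=0\}$. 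The first two are purely two-dimensional and, by the standard correspondence (the monodromy eigenvalue at the parabolic boundary of $L_s$ is $(-1)^s$), coincide with $\hmcl_{\pm}^{even}$ and $\hmcl_{\pm}^{odd}$; the elliptic piece $\{T=0\}$ contains no boundary points. The four sets are pairwise distinct, since $\{$fix $i\}\ne\{$fix $-i\}$ and $T=2\ne T=-2$; and by Remark~\ref{rklim} every boundary point is parabolic with fixed point $\pm i$, whence their union is the minimal analytic set containing $\bigcup_{\omega,s}(\partial L_s(\omega)\times\{\tfrac1{2\omega}\})$ (the remaining identity locus being lower-dimensional).

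It then remains to prove that each piece, say $\hmcl_{+}^{even}$, is irreducible, and here Theorems~\ref{cirr} and~\ref{irred1} enter. I would use the projection $(B,A,r)\mapsto l=2Br=B/\omega$ onto the $l$-line, whose fibre $\{l=l_0\}$ is the $(\mu,r)$-plane with $B=l_0/(2r)$. For an even integer $l_0$ the real trace of this fibre is exactly the set of $(+)$-type generalized simple intersections on the axis $\La_{l_0}$, which by Theorem~\ref{bt02} are the real points of $\Gamma_{l_0}$ under $\la=r^2-\mu^2$; passing to closures identifies the fibre of $\hmcl_{+}^{even}$ over $l_0$ with $\hsi_{l_0,+}$, an irreducible curve by Theorem~\ref{irred1} (itself a consequence of irreducibility of $\Gamma_{l_0}$, Theorem~\ref{cirr}). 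No component of $\hmcl_{+}^{even}$ can lie in a single fibre (it would be a whole coordinate plane, impossible for a boundary locus), so every component dominates the $l$-line and meets the generic fibre in a curve. If there were two components, the fibre over a generic even integer $l_0$ would be a union of two curves, contradicting irreducibility of $\hsi_{l_0,+}$; hence $\hmcl_{+}^{even}$ is irreducible. The same argument with $-i$, and with odd integers $l_0$ for the two odd sets, handles the remaining three pieces.

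The main obstacle I anticipate is the fibre identification in the last step: a priori the fibre of the \emph{closure} $\hmcl_{+}^{even}$ over $\{l=l_0\}$ may be strictly larger than the closure of its real trace, i.e.\ slicing need not commute with taking the minimal analytic set, so the complex fibre might acquire spurious components beyond $\hsi_{l_0,+}$. Controlling this is the crux, and I expect to resolve it by exploiting that at an integer $l_0$ the condition ``$\mon$ fixes $i$ with $T=2$'' is governed by the monodromy eigenfunction, which at integer $l_0$ degenerates precisely to the polynomial solution detected by $P_{l_0}$; this pins the entire complex fibre to the spectral curve and hence, under the shear $\la=r^2-\mu^2$, to $\hsi_{l_0,+}$.
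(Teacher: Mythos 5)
Your complexification step and trace trichotomy are sound, and they give a clean proof of the \emph{distinctness} of the four sets: the involution $(\Phi,z)\mapsto(-\Phi^{-1},z^{-1})$ does force the multiplier at $\pm i$ to square to one, the normalized trace $T$ is a global holomorphic function on $\cc^3_{(B,A,r)}$ (the determinant of the linear monodromy of (\ref{tty}) equals $e^{-4\pi iBr}$, which has a global holomorphic square root), and the value $T=2(-1)^s$ on $\partial L_s$ can be checked at the growth point ($A=0$, where the system is of Euler type) and propagated by connectedness. This is a legitimate alternative to the paper's winding-number-parity argument (Propositions \ref{ppar}, \ref{disteo}). Note, however, that your assertion that the clopen pieces $\{T=\pm2\}$ \emph{coincide} with $\hmcl_{\pm}^{even(odd)}$ is essentially the paper's open Question 4; only the inclusions $\hmcl_{\pm}^{even}\subset\{T=2\}$ and $\hmcl_{\pm}^{odd}\subset\{T=-2\}$ are available, and fortunately only these are needed for distinctness.

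The irreducibility argument --- the heart of the theorem --- fails, for two independent reasons. First, your fibre identification is false, not merely unproven. The real points of $\hmcl_{+}^{even}$ over $\{l=l_0\}$ ($l_0$ an even integer) are the intersections of the curves $\partial L_{s,+}$, $s$ even, with the axes $\La_{l_0}(\omega)$, $\omega>0$; by definition these consist of generalized simple intersections \emph{together with constrictions}. Constrictions do lie in $\mcl_{\pm}^{even(odd)}$ (there the Poincar\'e map is the identity, which fixes both $\pm\frac{\pi}2$, and $T=2(-1)^s$ there as well, so the trace does not exclude them), every constriction of an even-numbered area has an even integer $l$-value by \cite{4}, and constrictions are provably \emph{off} the spectral curve: they correspond to entire solutions of (\ref{heun2*}), which by \cite[theorem 3.10]{bg} is incompatible with equation (\ref{heun}) having a polynomial solution. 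So the fibre acquires one-dimensional components through the constrictions besides $\hsi_{l_0,+}$, and your proposed resolution of the ``crux'' --- that at integer $l_0$ the condition ``the monodromy fixes $i$ with $T=2$'' degenerates precisely to $P_{l_0}=0$ --- is wrong: identity monodromy also satisfies that condition, and that locus (the curve $C_{l_0}$ of Section 5) is governed by Stokes multipliers and entire solutions, not by $P_{l_0}$. Second, even if every fibre were irreducible, the inference ``two components of $\hmcl_{+}^{even}$ would give two curves in a generic fibre'' is a non sequitur: two distinct irreducible surfaces can meet a given hyperplane $\{l=l_0\}$ in the same irreducible curve, and minimality only prevents one component from containing all of $\mcl_{+}^{even}$ --- e.g.\ one component could carry $\mcl_{0,+}\cup\mcl_{2,+}$ and another $\mcl_{4,+}$, with all of them containing curves of complexified simple intersections. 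What rules this out in the paper is a local ingredient absent from your proposal: by monotonicity of the Poincar\'e map in $B$ (Propositions \ref{pmono}, \ref{proreg}), the hypersurface $\Sigma_{\pm}$ is \emph{regular} at each real generalized simple intersection, so any irreducible two-dimensional analytic subset of $\Sigma_{\pm}$ through such a point has germ equal to that of $\Sigma_{\pm}$; since each $\hmcl_{s,\pm}$ is irreducible (the real surface $\mcl_{s,\pm}$ being smooth and connected) and all of them, for fixed parity, contain a common curve $g_l(\hsi_{l,\pm})$ (Theorems \ref{doubint}, \ref{irred1}, resting on Theorem \ref{cirr}), they all share a two-dimensional germ and hence coincide, with growth points handling negative $s$ (Proposition \ref{co+-}). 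Without a substitute for this regularity-and-shared-germ step, your fibration scheme cannot be repaired.
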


Theorem~\ref{irred2} will be proved in Subsection~2.3.

\subsection{ On family of phase-lock area portraits, the Monotonicity Conjecture and 
constrictions (A.A.Glutsyuk)}

In this subsection we consider the model of Josephson effect and 
study the evolution of the phase-lock area portrait, as $\omega>0$ decreases. 
We present  the Monotonicity Conjecture, its 
relation with Conjecture \ref{conexp} on abscissas of constrictions and a 
partial result towards  confirmation of the Monotonicity Conjecture. 

\begin{definition} Let $\omega_0>0$, $l\in\nn$, $\mu>0$, $B_0=l\omega_0$, 
$A_0=2\mu\omega_0$. Let $s\in\zz_{\geqslant0}$. Let 
$(B_0,A_0)\in\partial L_{s,\pm}(\omega_0)$ for 
appropriately chosen sign $\pm$. Let $\partial L_{s,\pm}(\omega_0)$ be tangent to 
the axis $\La_l(\omega_0)=\{ B=l\omega_0\}$ at $(B_0,A_0)$. Let the germ of the curve 
$\partial L_{s,\pm}$ at $(B_0,A_0)$  lie on the right from the line $\La_l(\omega_0)$. 
We say that the point $(B_0,A_0)\in\rr_+^2$ is a {\it left-moving tangency} 
(for $\omega=\omega_0$), if 
the point $(B_0,A_0)$ of  intersection $\partial L_{s,\pm}(\omega)\cap\La_l(\omega)$ 
with $\omega=\omega_0$ 
disappears for $\omega>\omega_0$: there exists a neighborhood $U$ of the 
point $(B_0,A_0)$ such that for every $\omega>\omega_0$ close enough to $\omega_0$ 
(dependently on $U$) the intersection $\partial L_{s,\pm}(\omega)\cap\La_l(\omega)$ 
contains no points in $U$.
\end{definition}

\begin{conjecture} \label{moncon} (Monotonicity Conjecture). {\it There are 
no left-moving tangencies in model of Josephson effect.} 
\end{conjecture}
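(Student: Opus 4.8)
The plan is to reformulate the conjecture as a one-sided monotonicity inequality for the renormalized boundary curves and then reduce it to a weighted integral inequality along the associated periodic orbits. I would work in the renormalized coordinates $(l,\mu)=(B/\omega,A/(2\omega))$, in which the axis $\La_l$ becomes the fixed vertical line $\{l=\text{const}\}$ and the torus field \eqref{josvec} depends on $\omega$ (with $l,\mu$ fixed) only through the term $-\sin\varphi/\omega$, so that $\partial_\omega R=\sin\varphi/\omega^2$ for $R(\varphi,\tau)=-\sin\varphi/\omega+l+2\mu\cos\tau$. A boundary point of $L_{s,+}$ carries a $2\pi$-periodic solution $\varphi_*(\tau)$ of \eqref{jostor} with $\varphi_*(0)=\tfrac\pi2$ and rotation $s$; the boundary is locally the zero set $\{F_{s,+}=0\}$ of $F_{s,+}(l,\mu,\omega):=\varphi(2\pi;l,\mu,\omega)-\tfrac\pi2-2\pi s$, where $\varphi(\tau;\cdot)$ is the solution with $\varphi(0)=\tfrac\pi2$, and solving $F_{s,+}=0$ for $l$ gives the renormalized boundary $l=G_{s,+}(\mu;\omega)$. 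Tangency of this graph to the fixed axis from the right is exactly $\partial_\mu G_{s,+}=0$ together with $\partial_\mu^2 G_{s,+}>0$, and a short fold computation shows that such a tangency is left-moving if and only if $\partial_\omega G_{s,+}>0$. Thus the Monotonicity Conjecture is equivalent to the inequality $\partial_\omega G_{s,\pm}\leqslant0$ at every right-tangency, for all $s$ and all $\omega>0$; this matches the heuristic C3) that as $\omega$ decreases the boundary moves rightward.

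Second, I would compute the three relevant derivatives by the standard variation-of-parameters formula for the flow. Writing $a(\tau)=\partial_\varphi R|_{\varphi_*}=-\cos\varphi_*(\tau)/\omega$ and $U(\tau)=\exp\int_0^\tau a$, the multiplier of the Poincar\'e map at the parabolic fixed point is $U(2\pi)$, so the boundary condition forces $U(2\pi)=1$, i.e. $\int_0^{2\pi}\cos\varphi_*\,d\tau=0$, and the positive weight $U(\tau)^{-1}=\exp\bigl(\tfrac1\omega\int_0^\tau\cos\varphi_*\bigr)$ is well defined. On the boundary one then gets
\[
    \partial_l F=\int_0^{2\pi}U(\tau)^{-1}\,d\tau>0, \quad
    \partial_\mu F=\int_0^{2\pi}U(\tau)^{-1}\,2\cos\tau\,d\tau, \quad
    \partial_\omega F=\int_0^{2\pi}U(\tau)^{-1}\,\frac{\sin\varphi_*(\tau)}{\omega^2}\,d\tau.
\]
Since $\partial_l F>0$, implicit differentiation gives $\partial_\omega G=-\partial_\omega F/\partial_l F$ and, at a point with $\partial_\mu G=0$, $\partial_\mu^2 G=-\partial_\mu^2F/\partial_l F$. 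Hence the right-tangency condition becomes $\partial_\mu F=0$ and $\partial_\mu^2 F<0$, and the conjectured inequality $\partial_\omega G\leqslant0$ becomes the single weighted integral inequality $\partial_\omega F\geqslant0$, that is
\[
    \int_0^{2\pi}U(\tau)^{-1}\,\sin\varphi_*(\tau)\,d\tau\geqslant0,
\]
to be proved under the constraints $\int_0^{2\pi}\cos\varphi_*\,d\tau=0$, $\int_0^{2\pi}U(\tau)^{-1}\cos\tau\,d\tau=0$ and the second-variation sign $\partial_\mu^2F<0$.

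Third --- and this is where I expect the real difficulty --- one must establish the displayed inequality for the genuine nonlinear periodic orbit $\varphi_*$, uniformly over all $s$, all $l$, and all $\omega>0$. There is no closed form for $\varphi_*$, so this is a true dynamical estimate rather than a formal identity: the challenge is to exploit the two linear constraints and the curvature sign to control the correlation between the positive weight $U(\tau)^{-1}$ and $\sin\varphi_*$. A natural line of attack is a comparison argument for the one-parameter family of periodic orbits as $\mu$ varies through the tangency, using that $U(\tau)^{-1}$ is the exponential of the running integral of $\cos\varphi_*$, whose total integral vanishes; but making this robust across the full parameter range, and simultaneously on all four complexified boundary components $\hmcl_{\pm}^{odd}$, $\hmcl_{\pm}^{even}$ of Theorem \ref{irred2}, is precisely the open core. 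Where the boundary point is in addition a generalized simple intersection, the linear equation \eqref{heun2*} has a polynomial solution and $\varphi_*$ is governed by the spectral curve $\Gamma_l$ intersected with the circle $\{\la+\mu^2=r^2\}$; there the required sign can be read off from the transversal, monotone dependence of these intersection points on $r$ (this is the mechanism behind the partial result of Section 4 and Corollary \ref{cdoubint}). For the full conjecture one would have to replace this algebraic shortcut, available only on $\Gamma_l$ and failing on the exceptional branch $s=l$, by the uniform nonlinear estimate above; I do not expect the irreducibility results alone to yield the sign, since signs of real period integrals are not propagated by complex irreducibility.
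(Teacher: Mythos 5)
You have not proved the statement, and you could not have been expected to: Conjecture~\ref{moncon} is stated in the paper as an \emph{open} conjecture, and the paper itself offers no proof of it --- only the partial result Theorem~\ref{tmon}, proved in Section~4, which rules out left-moving tangencies that are generalized simple intersections with $s\neq l$. Your first two paragraphs are a sound reformulation: the variation-of-parameters computation is correct (the parabolicity of the Poincar\'e map at a boundary point does give $U(2\pi)=1$, hence the constraint $\int_0^{2\pi}\cos\varphi_*\,d\tau=0$, and the three weighted integrals for $\partial_l F$, $\partial_\mu F$, $\partial_\omega F$ are right), and the implicit-function/fold analysis correctly translates ``no left-moving tangency'' into the sign condition $\partial_\omega F\geqslant 0$ at right-tangencies, modulo the degenerate case $\partial_\mu^2 G=0$, where disappearance of intersections is not captured by first-order data. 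But your third paragraph concedes exactly what remains: the weighted integral inequality $\int_0^{2\pi}U(\tau)^{-1}\sin\varphi_*(\tau)\,d\tau\geqslant0$ is not established, and it \emph{is} the conjecture, merely rewritten. A reduction that terminates in an admittedly open estimate is not a proof; the entire analytic core is missing.

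Two further corrections on how your sketch relates to what the paper actually does. First, the paper's partial result (Theorem~\ref{tmon}) is not obtained by ``reading off the sign from the transversal, monotone dependence of the intersection points on $r$.'' It is a contradiction argument: assuming a generalized simple intersection $z=\Pi_j(\mu_0)$ is a left-moving tangency, Chaplygin comparison forces the boundary curve to return to $z$ at some $\omega^*>\omega_j(\mu_0)$, so $\omega^*=\omega_m(\mu_0)$ with $m<j$, and then two distinct indices $m,j$ correspond to the same pair $(s,\pm)$, contradicting the bijectivity statement of Proposition~\ref{prop7}; no sign of any period integral is computed. Second, your parenthetical attributing the mechanism partly to Corollary~\ref{cdoubint} conflates the counting of simple intersections for small $\omega$ with the monotonicity statement; the former enters only through Proposition~\ref{prop7}. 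Your closing skepticism --- that irreducibility of the complexified boundaries cannot by itself produce the sign of a real period integral --- is, however, well placed, and is consistent with the paper leaving the conjecture open.
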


\begin{remark} The Monotonicity Conjecture means that as $\omega>0$ 
decreases, the intersections of 
the axes $\La_k(\omega)$, $k\in\nn$, with boundaries of phase-lock areas 
cannot be born  from those pieces of  boundaries that move "from the right to the left" in 
the renormalized coordinates $(l=\frac B\omega,\mu=\frac A{2\omega})$. That is why 
we call it the Monotonicity Conjecture. 
\end{remark}
\begin{remark} \label{simpoc} Let $s\equiv l(\modulo 2)$, $s\geqslant0$, $\omega>0$. Every point of intersection 
 $\La_l(\omega)\cap\partial L_{s,\pm}(\omega)$ (if any; in particular, every 
 left-moving tangency lying there)  is either 
a generalized simple intersection, or a constriction, 
see Figures 6 and 7. This follows from definition.  If $s\equiv l+1(\modulo 2)$, then the 
above intersection points are neither generalized simple 
intersections (by definition), nor  constrictions (by  \cite[theorem 3.17]{4}). 
\end{remark}

\begin{proposition} Let $l\in\nn$, $s\equiv l(\modulo 2)$, $\omega_0>0$, 
and let some left-moving tangency 
$(B_0,A_0)$ for $\omega=\omega_0$ be a constriction. 
Then for every $\omega<\omega_0$ 
close enough to $\omega_0$ the points of intersection $\La_l(\omega)\cap
\partial L_{s,\pm}(\omega)$ newly born from $(B_0,A_0)$ (if any) are constrictions. 
See Fig. 7.  
\end{proposition}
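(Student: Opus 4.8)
The plan is to avoid the local differential geometry of the tangency altogether and instead to combine the dichotomy of Remark~\ref{simpoc} with the correspondence of Theorem~\ref{bt02} between generalized simple intersections and polynomial solutions, i.e.\ points of the spectral curve $\Gamma_l$. Write $\mu_0=\frac{A_0}{2\omega_0}$ (this is the number $\mu$ of the definition) and $\la_0=\frac1{4\omega_0^2}-\mu_0^2$ for the renormalized coordinates of $(B_0,A_0)$; note that $\mu_0>0$, so $A_0\neq0$, and that $B_0=l\omega_0>0$.

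The first and only substantial step is to observe that the constriction $(B_0,A_0)$ carries no polynomial solution, hence $(\la_0,\mu_0)\notin\Gamma_l$. Indeed, since $B_0>0$ and $A_0\neq0$, Theorem~\ref{bt02} applies at $(B_0,A_0;\omega_0)$; a constriction is by definition not a generalized simple intersection, so the equivalence in Theorem~\ref{bt02} forces the Heun equation~\eqref{heun} with parameters $(l,\la_0,\mu_0)$ to have no polynomial solution. By Theorem~\ref{tpol} and $\mu_0\neq0$ this is exactly the statement $P_l(\la_0,\mu_0^2)\neq0$, that is, $(\la_0,\mu_0)\notin\Gamma_l$.

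Now I would argue by contradiction. Suppose that along some sequence $\omega\to\omega_0^-$ there are newly born points $P(\omega)=(l\omega,A(\omega))\in\La_l(\omega)\cap\partial L_{s,\pm}(\omega)$ that are not constrictions. By Remark~\ref{simpoc} each such $P(\omega)$ is then a generalized simple intersection, so Theorem~\ref{bt02} together with Theorem~\ref{tpol} gives $(\la(\omega),\mu(\omega))\in\Gamma_l$, where $\mu(\omega)=\frac{A(\omega)}{2\omega}$ and $\la(\omega)=\frac1{4\omega^2}-\mu(\omega)^2$. Because $P(\omega)\to(B_0,A_0)$ we have $A(\omega)\to A_0$, whence $\mu(\omega)\to\mu_0$ and $\la(\omega)\to\la_0$. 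Since $\Gamma_l=\{P_l(\la,\mu^2)=0\}$ is closed, passing to the limit yields $(\la_0,\mu_0)\in\Gamma_l$, contradicting the previous step. Therefore for all $\omega<\omega_0$ close enough to $\omega_0$ every newly born intersection point is a constriction.

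I expect the only real obstacle to be pinning down the first step rigorously: one must read Theorem~\ref{bt02} as a genuine equivalence and use the definitional fact that generalized simple intersections exclude constrictions, in order to place the constriction strictly off $\Gamma_l$. Everything after that is a soft continuity argument relying on the closedness of the algebraic curve $\Gamma_l$ and on the continuity of the renormalization $(A,\omega)\mapsto(\la,\mu)$. It is worth remarking that neither the tangency hypothesis nor the left-moving condition is actually used in the argument beyond guaranteeing that newly born intersection points exist and accumulate at $(B_0,A_0)$; the same conclusion holds for any intersection points of $\La_l(\omega)\cap\partial L_{s,\pm}(\omega)$ approaching a constriction.
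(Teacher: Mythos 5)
Your proof is correct, and its skeleton coincides with the paper's: assume some newly born intersection point is not a constriction, conclude via Remark~\ref{simpoc} that it is a generalized simple intersection, hence that the corresponding equation~\eqref{heun} has a polynomial solution (equivalently, that the renormalized parameters give a point of $\Gamma_l$), and pass to the limit $\omega\to\omega_0^-$, using closedness of $\Gamma_l$, to place the constriction itself on $\Gamma_l$. Where you genuinely diverge is the source of the final contradiction. The paper argues that the constriction, carrying a polynomial solution of~\eqref{heun} in the limit, also forces equation~\eqref{heun2*} to have an entire solution (the characterization of constrictions from \cite{bt1, bg}), and that these two properties are incompatible for real parameters by \cite[theorem 3.10]{bg}. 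You instead exploit the word ``exactly'' in Theorem~\ref{bt02}: since a constriction is by definition not a generalized simple intersection, the stated bijective correspondence already puts $(\la_0,\mu_0)$ off the curve $\Gamma_l$, so no appeal to entire solutions is needed. This reading of Theorem~\ref{bt02} is legitimate --- the paper itself uses this direction of the correspondence in Subsection~2.2, where real points of $\Gamma_l$ with $\la+\mu^2>0$ are identified with generalized simple intersections --- so your argument is a valid and more self-contained derivation from the paper's stated results. The trade-off is that the exactness of Theorem~\ref{bt02} is, in the cited literature, itself established using precisely the incompatibility result the paper quotes (this is spelled out in the paper's historical remarks); so the two proofs rest on the same underlying fact, which the paper unpacks and you invoke as a black box packaged inside Theorem~\ref{bt02}. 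Your closing observation that neither the tangency nor the left-moving hypothesis is really used --- only that non-constriction intersection points of $\La_l(\omega)\cap\partial L_{s,\pm}(\omega)$ accumulate at a constriction --- is also accurate, and applies equally to the paper's own argument.
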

\begin{proof} If for every $\omega<\omega_0$ close 
enough to $\omega_0$ some of the above newly born intersection points were 
not a constriction, then it would be a generalized simple intersection, see 
Remark \ref{simpoc}. Hence, the corresponding double confluent Heun 
equation (\ref{heun}) would have a polynomial solution. Passing to limit, as 
$\omega\to\omega_0$, 
we get that equation (\ref{heun}) corresponding to a constriction $(B_0,A_0)$ also 
has a polynomial solution. On the other hand, the corresponding equation (\ref{heun2*}) 
has an entire solution, since $(B_0,A_0)$ is a constriction, see \cite[p.\,332,~\S2]{bt1}, \cite[theorem~3.3 and subsection~3.2]{bg}. But the above properties of equations 
(\ref{heun}) and (\ref{heun2*}) to have polynomial (entire) solution are incompatible 
for real parameter values, by \cite[theorem 3.10]{bg}. The contradiction thus obtained 
 proves the proposition.
 \end{proof}
 
 \begin{figure}[ht]
    \begin{center}
        \epsfig{file=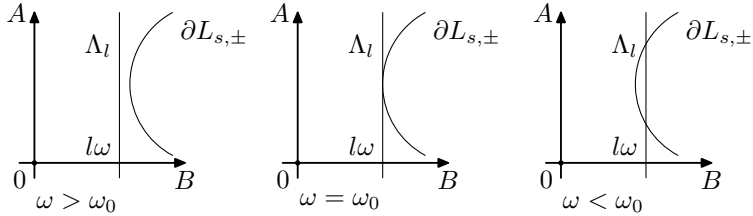}
        \caption{Left-moving tangencies (conjecturally non-existing) that are simple intersections}
    \end{center}
    \label{fig:left1}
\end{figure} 
 \begin{figure}[ht]
    \begin{center}
        \epsfig{file=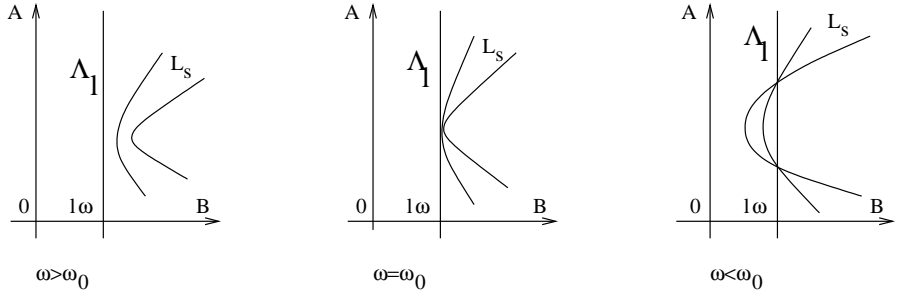}
        \caption{Left-moving tangencies (conjecturally non-existing) that are 
        constrictions}
    \end{center}
    \label{fig:left2}
\end{figure} 

 \begin{proposition} \cite[proposition 4.7]{g18} Let no left-moving tangency (if any) be a constriction. Then 
 Conjecture \ref{conexp} holds.
 \end{proposition}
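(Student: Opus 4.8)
The plan is to argue by contraposition: assuming Conjecture \ref{conexp} fails, I will manufacture a left-moving tangency that is a constriction, which the hypothesis forbids. So suppose some phase-lock area $L_\rho$ has a constriction $(B_1,A_1)$ not lying on its own axis $\Lambda_\rho$. By the result of \cite{4} every constriction of $L_\rho$ satisfies $B/\omega=l\in\zz$ with $0\le l\le\rho$ and $l\equiv\rho\ (\modulo 2)$; using the symmetries in statement (iv) I reduce to $\rho>0$, $A_1>0$, $l\ge0$ (a constriction is off the $B$-axis, so $A_1\ne 0$). Being off-axis then means the constriction sits on the line $\Lambda_l(\omega)=\{B=l\omega\}$ with $l<\rho$.

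First I would continue this constriction as $\omega$ varies. Since being a constriction is the \emph{closed} condition that the Poincar\'e map $h_{(B,A)}$ be the identity (statement (iii) and Remark \ref{rklim}), the constriction persists along a real-analytic arc; along this arc $l=B/\omega$ is a continuous integer, hence constant, so the arc stays exactly on $\Lambda_l(\omega)$ with only its ordinate moving. The geometric input I need is that the set of $\omega$ carrying such an off-axis constriction is bounded above (for $\omega$ large, $L_\rho$ carries no constriction off $\Lambda_\rho$). Granting this, let $\omega_*$ be the supremum of that set and $(B_*,A_*)$ the limiting point on $\Lambda_l(\omega_*)$; by closedness of the identity condition $(B_*,A_*)$ is again a constriction, born as $\omega$ decreases through $\omega_*$.

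It remains to identify $(B_*,A_*)$ as a left-moving tangency. In the renormalized coordinates $(B/\omega,A/(2\omega))$ the axis $\Lambda_l$ is a fixed vertical line, and since $l<\rho$ the area $L_\rho$ lies locally to its right; hence near the constriction the relevant branch $\partial L_{\rho,\pm}$ is a graph $B=g(A)$ with $g\ge l\omega$ attaining the minimum value $l\omega$ at the pinch, so $g'=0$ there and $\partial L_{\rho,\pm}$ is automatically tangent to $\Lambda_l$ with germ on the right. This tangency subsists for every $\omega\le\omega_*$, but the intersection with $\Lambda_l$ disappears only above $\omega_*$. To see that it genuinely disappears for $\omega>\omega_*$ rather than surviving as a non-constriction point, I use the dichotomy of Remark \ref{simpoc}: a surviving point of $\Lambda_l\cap\partial L_{\rho,\pm}$ (with $\rho\equiv l$) would be either a constriction — excluded for $\omega>\omega_*$ by maximality — or a generalized simple intersection, and in the latter case letting $\omega\to\omega_*^{+}$ would produce a polynomial solution of \eqref{heun} at $(B_*,A_*)$, where the constriction already forces an entire solution of \eqref{heun2*}, contradicting \cite[Theorem~3.10]{bg}. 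Hence the intersection vanishes for $\omega>\omega_*$ and $(B_*,A_*)$ is a left-moving tangency that is a constriction — the desired contradiction.

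The main obstacle is supplying the two geometric facts on which the argument rests. The first is the boundedness from above of the $\omega$-range of an off-axis constriction, i.e. the absence of off-axis constrictions for large $\omega$; this is exactly what produces a finite birth moment $\omega_*$, and without it the family could a priori persist over all $\omega$. The second is the local analysis at $\omega_*$: one must confirm that $L_\rho$ lies on the right of $\Lambda_l$ near the pinch (so the germ is genuinely on the right and the tangency is to $\Lambda_l$), that the degeneration is a simple fold, and one must rule out the competing scenarios in which the constriction instead escapes to $A=\infty$, descends to the $B$-axis to merge with a growth point, or collides with a second constriction. The incompatibility between polynomial solutions of \eqref{heun} and entire solutions of \eqref{heun2*} is the clean device converting ``the constriction ceases to exist'' into ``the intersection with $\Lambda_l$ disappears,'' exactly as in the preceding proposition.
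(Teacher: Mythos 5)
The paper itself contains no proof of this proposition: it is quoted from \cite[proposition 4.7]{g18}, so your proposal has to be judged on its own merits. Its skeleton — contraposition, a supremum argument in $\omega$, and the dichotomy of Remark \ref{simpoc} combined with the incompatibility of polynomial solutions of \eqref{heun} with entire solutions of \eqref{heun2*} (\cite[theorem 3.10]{bg}) to force local disappearance of intersections — is a reasonable one, and the two ``geometric facts'' you flag at the end are in fact citable rather than genuinely missing: absence of intersections of $L_\rho$ with $\Lambda_l$, $l<\rho$, for all large $\omega$ follows from \cite[proposition 3.4]{4} (Chaplygin comparison, invoked in this paper's proof of Theorem \ref{tmon}) together with the symmetry $(B,A)\mapsto(-B,A)$; and escape to $A=\infty$ over a compact $\omega$-interval is excluded by the Bessel asymptotics of statement (ii), since for large $A$ the area $L_\rho$ hugs its own axis $\Lambda_\rho$, at distance $(\rho-l)\omega$ from $\Lambda_l$. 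By contrast, your opening claim that, the constriction condition being closed, ``the constriction persists along a real-analytic arc'' is a non sequitur: closedness gives that limits of constrictions are constrictions, not that a constriction can be continued in $\omega$ — continuability is essentially the open Conjecture \ref{conj2}. This slip happens to be harmless, because your supremum argument needs only closedness plus the two bounds above, not an arc.

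The genuine gap is the clause ``since $l<\rho$ the area $L_\rho$ lies locally to its right.'' Nothing of the sort follows from $l<\rho$: near a constriction, $L_\rho$ consists of two sectors bounded by the crossing graphs $\partial L_{\rho,+}$, $\partial L_{\rho,-}$, and these sectors may a priori lie weakly to the left of $\Lambda_l$ or straddle it. Your maximality-plus-disappearance argument does force both graphs to be tangent to $\Lambda_l$ at $(B_*,A_*)$ (a transversal or odd-order crossing would persist for $\omega>\omega_*$), but it cannot exclude that both tangent germs lie on the \emph{left}; in that case $(B_*,A_*)$ fails the ``germ on the right'' clause of the paper's definition, is not a left-moving tangency, and the hypothesis of the proposition gives no contradiction. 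Reflection does not rescue this: $(B,A)\mapsto(-B,A)$ converts a left-sided tangency into a right-sided one, but for the area $L_{-\rho}$ and the axis $\Lambda_{-l}$, which the definition (stated for $l\in\nn$, $s\in\zz_{\geqslant0}$) does not cover. The repair is to run the supremum over the larger set: let $\omega^{**}$ be the supremum of those $\omega$ with $L_\rho(\omega)\cap\Lambda_l(\omega)\neq\emptyset$ (finite and attained, by the two citable facts). For $\omega>\omega^{**}$ the connected set $L_\rho(\omega)$, containing the growth point $(\sqrt{\rho^2\omega^2+1},0)$ on the right of $\Lambda_l(\omega)$, lies entirely strictly to the right; by continuity of the boundary graphs in $\omega$, at $\omega=\omega^{**}$ it lies weakly to the right and touches $\Lambda_l$, so every touching point (take one with $A>0$, which exists by the $A\mapsto-A$ symmetry, and note $l\neq0$ since points of the $A$-axis have zero rotation number) is automatically a tangency from the right whose intersection disappears for $\omega>\omega^{**}$ — a left-moving tangency. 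Finally, instead of your limiting argument via \cite[theorem 3.10]{bg}, it is quicker to quote Theorem \ref{bt02}: generalized simple intersections in $\Lambda_l$ exist only for rotation numbers $s\in[0,l]$, whereas here $s=\rho>l$; so by Remark \ref{simpoc} every touching point is a constriction, and the contradiction with the hypothesis is complete.
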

 
 \begin{corollary} Monotonicity Conjecture \ref{moncon} 
 implies Conjecture \ref{conexp}.
 \end{corollary}
 
 The main result of the present subsection is the following theorem giving a 
 partial positive result towards the Monotonicity Conjecture. 
 
 \begin{theorem} \label{tmon} No left-moving tangency of a boundary curve 
 $\partial L_{s,\pm}$ with $\La_l$, $s\neq l$, can be a generalized simple 
 intersection.
 \end{theorem}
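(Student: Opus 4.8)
The plan is to translate the statement completely into the geometry of the real spectral curve $\Gamma_l$, and then to isolate the one extra ingredient that separates $s=l$ from $s<l$.

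First I would set up the correspondence. By Theorem~\ref{bt02} a generalized simple intersection in $\La_l$ is a real point $(\la_0,\mu_0)\in\Gamma_l$ with $\mu_0>0$, the value $\omega_0$ being fixed by $\la_0+\mu_0^2=\tfrac1{4\omega_0^2}$. By Lemma~\ref{lemtop} the point lies on a smooth branch of $\Gamma_l\cap\{\mu>0\}$, a graph $\la=\la_j(\mu)$; as $\omega$ runs over the corresponding interval the generalized simple intersections sweep out this branch, with $\omega=\omega(\mu)$ determined by $f_j(\mu):=\la_j(\mu)+\mu^2=\tfrac1{4\omega^2}$. Thus the whole $\omega$-family of intersection points carried by this branch is encoded by the single function $f_j$, and moving along it is the same as moving along a fibre of the parabola pencil $\{\la+\mu^2=\mathrm{const}\}$.

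Next I would characterise the left-moving tangency. Since $\mu_0>0$ and the point is not a constriction, near it the family of boundaries is a smooth surface, a graph $B=\mathcal B(A,\omega)$ with $\mathcal B(\cdot,\omega)=g_{s,\pm}(\cdot;\omega)$. Writing the branch as the curve $\{\mathcal B(A,\omega)=l\omega\}$ and computing the tangent plane of the surface at $(B_0,A_0,\omega_0)$ shows that $\partial L_{s,\pm}(\omega_0)$ is tangent to $\La_l(\omega_0)$ exactly when $\mathcal B_A=0$, and that this is equivalent to $\omega$ having a critical point along the branch, i.e.\ to $f_j'(\mu_0)=0$. Differentiating once more along the branch gives
\[
    \omega''=\frac{\mathcal B_{AA}\,(A')^2}{l-\mathcal B_\omega},\qquad A'=2\omega_0\neq0,
\]
so the disappearance of the intersection for $\omega>\omega_0$ (equivalently $\mu_0$ a local \emph{minimum} of $f_j$, i.e.\ $\omega$ a local maximum) together with the required germ ``on the right'' ($\mathcal B_{AA}>0$) forces $\mathcal B_\omega>l$. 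In short: a left-moving tangency occurs iff $f_j'(\mu_0)=0$ \emph{and} the renormalized abscissa $\ell=B/\omega$ of the boundary has a local \emph{minimum} equal to $l$ at the point (the ``from the right'' condition).

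The first condition is purely a property of $\Gamma_l$, and it is genuinely weak: inspection of small cases shows that $f_j$ does possess interior minima on branches with $s<l$, so the mere tangency with the parabola pencil does not distinguish $s=l$. The discriminating ingredient is the ``from the right'' condition, which is \emph{not} spectral: it depends on $\partial L_{s,\pm}$ for values of $B/\omega$ near $l$, which $\Gamma_l$ does not see (no polynomial solution of \eqref{heun} exists there). The geometric reason the theorem should hold is that for $s<l$ the axis $\La_l$ lies strictly to the right of the axis $\La_s$ of $L_s$, so the generalized simple intersections in $\La_l$ sit on the right-bulging parts of $\partial L_{s,\pm}$; there $\ell=B/\omega$ attains a local \emph{maximum} (the boundary is tangent to $\La_l$ from the left), never a local minimum. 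Hence I must prove that for $s<l$ the boundary cannot be tangent to $\La_l$ from the right, equivalently that $\ell$ cannot have a local minimum $=l$ along $\partial L_{s,\pm}$.

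The main obstacle is exactly this last step, namely controlling the sign of the boundary curvature $\mathcal B_{AA}$ at a generalized simple intersection with $s<l$. I would attack it by differentiating the parabolicity condition for the monodromy of the linear system \eqref{heun2*} (equivalently of the Poincar\'e map $h_{(B,A)}$) with respect to the parameters in a full neighbourhood of $\ell=l$, using the explicit polynomial solution of \eqref{heun} available at the point to evaluate the relevant first and second variations. The strict inequality $s<l$ --- together with the bound $s\le l$ of Theorem~\ref{bt02} and the incompatibility of polynomial solutions of \eqref{heun} with entire solutions of \eqref{heun2*}, cf.\ \cite[theorem 3.10]{bg} --- should then fix the sign so as to exclude the from-the-right configuration, in the same spirit as the treatment of constrictions in the Proposition preceding Figure~7.
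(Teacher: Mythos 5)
Your reduction to the spectral curve is sound as far as it goes: the tangency condition is indeed equivalent to criticality of $f_j(\mu)=\la_j(\mu)+\mu^2$ along a branch of $\Gamma_l$, and you correctly diagnose that this spectral condition alone cannot separate $s=l$ from $s<l$, so that some non-spectral input is required. But the step you yourself flag as ``the main obstacle'' --- excluding the from-the-right configuration for $s<l$ --- \emph{is} the content of Theorem~\ref{tmon}, and for it you offer only a plan, not an argument: ``differentiating the parabolicity condition \dots should then fix the sign.'' No variation of the monodromy is actually computed, and nothing in the proposal shows how the hypothesis $s<l$ would enter such a computation to force the sign of $\mathcal B_{AA}$. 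So the proof is missing precisely where it would have to produce new content. A secondary problem is that your local second-order characterization of a left-moving tangency presupposes non-degeneracy ($\mathcal B_{AA}\neq0$); the paper's definition allows degenerate tangencies, for which curvature-sign arguments are vacuous.

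For comparison, the paper's proof avoids local curvature analysis entirely and is global. Propositions~\ref{sjc} and~\ref{prop7} show that for fixed $l$ and $\mu_0>0$ the values of $\omega$ for which $(l\omega,2\mu_0\omega)$ is a generalized simple intersection are exactly $\omega_1(\mu_0)>\dots>\omega_l(\mu_0)$, coming from the ordered roots $\la_1<\dots<\la_l$ of $P_l(\,\cdot\,,\mu_0^2)$, and --- the key point --- that the induced correspondence $j\mapsto(s,\pm)$ between root indices and boundary components is \emph{one-to-one}: for each pair $(s,\pm)$ there is a \emph{unique} such $\omega$. Given this, a left-moving tangency at $\omega_0=\omega_j(\mu_0)$ with $s<l$ self-destructs: by definition the boundary moves strictly to the right of the axis (in renormalized coordinates) for $\omega$ slightly above $\omega_0$, while by Chaplygin's comparison theorem (via \cite[proposition 3.4]{4}) the whole area $L_s(\omega)$ lies to the left of $\{B/\omega=l\}$ for $\omega$ large; by continuity $\partial L_{s,\pm}(\omega)$ must recross the same renormalized point at some $\omega^*>\omega_0$, producing a second index $m\neq j$ assigned to the same pair $(s,\pm)$ --- contradicting the uniqueness. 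If you want to complete your approach, the realistic route is to import this counting-plus-continuity mechanism rather than to attempt a sign computation for the second variation of the monodromy; as written, the proposal does not prove the theorem.
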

 
 Theorem \ref{tmon} is proved in Section 4.

\subsection{Historical remarks}

Model of overdamped Josephson junction and its phase-lock area portraits were studied 
in \cite{bg} -- \cite{bt1}, \cite{4, g18, LSh2009, IRF, krs, RK, tert2}. 
In the case, when $\omega>0$, $B\geqslant0$ and $A\neq0$, Buchstaber and Tertychnyi have  shown that the 
constrictions in model of Josephson junction correspond exactly to those parameter values $(B,A)$ with $A\neq0$, for which $l=\frac B\omega$ is 
integer and  equation~\eqref{heun2*}  has a non-trivial holomorphic solution 
at 0 (which is automatically an entire solution: holomorphic on $\cc$); see the 
statement in \cite[p.\,332,~\S2]{bt1} and the proof in~\cite[theorem~3.3 and subsection~3.2]{bg}. 
They have explicitly constructed a family of holomorphic solutions for 
parameters satisfying an explicit functional equation $\xi_l(\la,\mu)=0$, 
see~\cite[theorem~2]{bt1}. They have conjectured that the latter functional 
equation describes the constrictions completely. They have reduced this 
conjecture to another one saying that if equation~\eqref{heun} has a polynomial 
solution, then equation~\eqref{heun2*} does not have an entire solution. 
Both conjectures were proved in~\cite{bg}.

V.\,M.\,Buchstaber and S.\,I.\,Tertychnyi have constructed symmetries of 
double confluent Heun equation~\eqref{heun}~\cite{bt1, bt3}. 
The symmetry $\sharp:E(z)\mapsto 2\omega z^{-l-1}(E'(z^{-1})-\mu E(z^{-1}))$, which 
is an involution of its solution space, was constructed 
in \cite[equations~(32),~(34)]{tert2}. It corresponds to the symmetry 
$(\varphi,t)\mapsto(\pi -\varphi,-t)$ of the nonlinear equation (\ref{josbeg}); 
the latter symmetry was found in \cite{RK}. In \cite{bt3} they have found new 
nontrivial symmetries in the case, when $l\in\nn$ and 
equation (\ref{heun}) has no polynomial solutions. 

For a survey of Conjecture \ref{conexp} on abscissas of constrictions 
and related results see \cite{bg2, 4, g18} and references therein. 

Numerical experiences made by V.M.Buchstaber, 
S.I.Tertychnyi, I.V.Schurov, V.A.Kleptsyn, D.A.Filimonov allowed  to  observe  that as $\omega\to0$,  the "upper part" of the 
phase-lock area portrait converges to a parquet-like structure 
in the renormalized coordinates $(l,\mu)$.  
More precisely,  the complement to the union of the phase-lock 
areas is a very thin subset whose upper part tends to the 
boundary of  parquet pieces.  See Fig. 4 and also \cite{krs}. This 
statement is an open problem. 
In \cite{krs} V.A.Kleptsyn, O.L.Romaskevich and I.V.Schurov 
proved results  on 
smallness of gaps between the phase-lock areas  for small $\omega$ and the rate of their convergence to zero, as $\omega\to0$, using methods of  slow-fast systems. 

\section{Complex spectral curves and boundaries. Proofs of Theorems \ref{irr}, \ref{cirr}, \ref{doubint}, 
    \ref{irred1}, \ref{irred2} (A.\,A.\,Glutsyuk)
}

\subsection{
    Irreducibility of determinant. Proof of Theorems~\ref{irr} and~\ref{cirr}
}

First let us prove Theorem \ref{irr}. For $l=1$ the irreducibility is obvious, 
since the determinant under question coincides with the monomial $\la$. Thus, 
everywhere below we consider that $l\geqslant2$. We  prove irreducibility of the 
determinant, by finding its Newton diagram at $0$ in appropriate new affine 
coordinates $(\la,R)$. 

Conjugation  by diagonal matrix $\diag(1,\mu^{-1},\dots,\mu^{1-l})$ 
transforms the matrix $H_l$ to the same matrix without multipliers $\mu$ below 
the diagonal and with the multipliers $\mu$ above the diagonal being replaced 
by $\mu^2$. The above conjugation together with subsequent substitution 
\[
    \mu^2=R-\la, \ R:=r^2=\left(\frac1{2\omega}\right)^2
\]
transform the matrix $H_l+\la Id$ to the matrix 
\begin{equation}
    M(\la,R)=M_1(\lambda)+M_2(R), 
    \label{matm}
\end{equation}
\[
    M_1(\la) := 
    \left(
        \begin{matrix}
            \la & -\la & 0 & 0 &\dots & 0\\
            l-1 & \la-(l-1) & -2\la & 0 & \dots & 0\\
            \dots & \dots & \dots & \dots & \dots\\
            0 &\dots & 2 & \la-2(l-2) & -(l-1)\la\\
            0 & \dots & 0 & 1 & \la-(l-1)
        \end{matrix}
    \right),
\]
\[
    M_2(R)=
    \left(
        \begin{matrix}
            0 & R & 0  & 0 & \dots & 0\\
            0 & 0 & 2R & 0 & \dots & 0\\
            \dots & \dots & \dots & \dots &  \dots & (l-1)R\\
            0 & \dots & \dots &\dots &\dots & 0
        \end{matrix}
    \right).
\]

\begin{proposition}
    \label{Newton1n}
    The determinant $\det M(\la,R)$ is a polynomial containing the 
    monomials $\la^l$ and $R$ with non-zero coefficients and containing no 
    monomials $\la^k$ with $k\neq l$. 
\end{proposition}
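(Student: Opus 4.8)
The plan is to treat the two families of monomials separately. The pure powers $\la^k$ (that is, the monomials $\la^k R^0$) are precisely the terms of $\det M(\la,0)=\det M_1(\la)$, since $M_2(0)=0$. On the other hand, writing $M_2(R)=RN$ with $N$ the super-diagonal matrix $N_{j,j+1}=j$, the coefficient of the monomial $R=R^1\la^0$ in $\det M(\la,R)$ is the coefficient of $R$ in $\det M(0,R)=\det\bigl(M_1(0)+RN\bigr)$, obtained by setting $\la=0$. So the Proposition splits into the two independent claims: $\det M_1(\la)=c\,\la^l$ with $c\neq0$, and the $R$-linear part of $\det M(0,R)$ is nonzero.

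First I would compute $\det M_1(\la)$, which at once yields that the coefficient of $\la^l$ is nonzero and that no other pure power $\la^k$ occurs. Since $M_1(\la)$ is tridiagonal, its leading principal minors $D_k$ satisfy the three-term recursion $D_k=(\la-c_k)D_{k-1}+c_k\la D_{k-2}$, where $c_k=(k-1)(l-k+1)$ arises as (minus) the product of the super-diagonal entry $-(k-1)\la$ and the sub-diagonal entry $l-k+1$, with $D_0=1$ and $D_1=\la$. A one-line induction then gives $D_k=\la^k$, since $(\la-c_k)\la^{k-1}+c_k\la\cdot\la^{k-2}=\la^k$. Hence $\det M_1(\la)=D_l=\la^l$, settling both the existence of the monomial $\la^l$ and the absence of all monomials $\la^k$, $k\neq l$.

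For the coefficient of $R$ I would work with $M(0,R)=M_1(0)+RN$, noting that $M_1(0)$ is lower bidiagonal with diagonal $M_1(0)_{pp}=-(p-1)(l-p+1)$, and in particular $M_1(0)_{11}=0$. Extracting the coefficient of $R$ means summing over $j=1,\dots,l-1$ the contributions of those permutations that use exactly the one super-diagonal entry $(j,j+1)$ of $N$ and otherwise only diagonal and sub-diagonal entries of $M_1(0)$. The bookkeeping here is the main (though elementary) obstacle: one must check that such a permutation is forced to be the adjacent transposition $(j,j+1)$ with all other indices fixed — covering column $j$ forces $\sigma(j+1)=j$, and the lower-triangularity of $M_1(0)$ forces the identity on each of the two remaining blocks $\{1,\dots,j-1\}$ and $\{j+2,\dots,l\}$. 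The corresponding contribution then contains the factor $\prod_{p\neq j,j+1}M_1(0)_{pp}$, which vanishes for every $j\geqslant2$ because it includes the zero entry $M_1(0)_{11}$. Thus only $j=1$ survives, with value $(-1)\cdot 1\cdot(l-1)\cdot\prod_{p=3}^{l}\bigl(-(p-1)(l-p+1)\bigr)=(-1)^{l-1}(l-1)\,(l-1)!\,(l-2)!\neq0$, which proves the claim. The low cases $\det M=\la^2-R$ for $l=2$ and $\det M=\la^3-4R\la+4R$ for $l=3$ provide a sanity check on both the induction and this coefficient formula.
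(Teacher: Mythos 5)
Your proposal is correct and follows essentially the same strategy as the paper: both split the claim into showing $\det M_1(\la)=\la^l$ (obtained by setting $R=0$) and showing that the coefficient of $R$ in $\det M(0,R)$ is nonzero, the latter hinging on the degeneracy of the first row of $M(0,R)$, whose only nonzero entry is $R$ (equivalently, on $M_1(0)_{11}=0$). The only differences are cosmetic choices of elementary technique: you evaluate $\det M_1(\la)$ by the tridiagonal three-term recursion where the paper uses successive column operations, and you extract the $R$-coefficient by Leibniz-formula bookkeeping over permutations where the paper uses cofactor expansion along the first row and column followed by taking the constant term of the remaining principal minor.
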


\begin{proof}
    The coefficients at monomials $\la^k$ in the determinant $\det M(\la,R)$ 
    are equal to the corresponding coefficients in the determinant $\det M_1(\la)$. Let us 
    calculate $\det M_1(\la)$. Let us replace the second column of the matrix $M_1(\la)$ by 
    its sum with the first column. This cancels the first element $-\la$ and the free term 
    $-(l-1)$ in the second element: the rest of the second column remains unchanged. 
    In the new matrix thus obtained let us replace the third column by its sum with 
    the second column multiplied by two. This cancels its second element $-2\la$ 
    and the free term $-2(l-2)$ in the third element. Repeating similar operations, we 
    finally get a lower-triangular matrix with diagonal elements $\la$. We finally get that 
    \begin{equation}
        \det M_1(\la)=\la^l.
        \label{detm1}
    \end{equation}
    This implies that the only monomial $\la^k$ entering the polynomial $\det M(\la,R)$ 
    with non-zero coefficient is $\la^l$. 

    Now it remains to show that the monomial $R$ enters the polynomial $\det M(\la,R)$ 
    with non-zero coefficient. To do this, consider the auxiliary 
    matrix 
    \[
        N(R)=
        \left(
            \begin{matrix}
                0 & R & 0 & 0 & \dots & 0\\
                l-1 & 1-l & 2R & 0 & \dots & 0\\
                \dots & \dots & \dots & \dots & \dots & \dots\\
                0 & \dots & 0 & 2 & -2(l-2) & (l-1)R\\
                0  & \dots & 0 & 0 &  1 & 1-l
            \end{matrix}
        \right),
    \]
    whose above-diagonal elements (elements next to the diagonal)
    are  equal to $jR$, $j=1,\dots,l-1$, and whose other elements are equal to the corresponding 
    free terms in the entries of the matrix $M_1(\la)$. The coefficient at the 
    monomial $R$ is the same in both polynomials $\det M(\la,R)$ and $\det N(R)$. Thus, 
    it suffices to show that the corresponding coefficient in $\det N(R)$ is non-zero. 
    The only non-zero element 
    in the first line of the matrix $N(R)$ is its second element $R$. 
    Therefore, its determinant is equal to the product of the number 
    $-(l-1)R$ and its principal minor formed by its columns number $3,4,\dots,l$. 
    The non-zero free terms in the latter minor exist exactly in the diagonal 
    and immediately below it. This implies that the coefficient at the monomial 
    $R$ in the polynomial $\det N(R)$ is equal to the product of the number 
    $1-l$ and the diagonal terms in the above minor. The latter product is 
    non-zero. Therefore, the coefficient at $R$ in the determinant $\det N(R)$ 
    (and hence, in $\det M(\la,R)$) is non-zero. The proposition is proved.
\end{proof}

\begin{corollary} \label{cnewton}
    The Newton diagram of the polynomial $Q(\la,R)=\det M(\la,R)$ 
    consists of just one edge $E$ with vertices $(l,0)$ and $(0,1)$. 
\end{corollary}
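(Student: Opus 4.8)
The plan is to read off the Newton diagram directly from Proposition \ref{Newton1n}, since that proposition already pins down the two extremal monomials and controls the entire lowest stratum of the support. Write $\operatorname{supp}(Q)\subset\zz^2_{\geqslant0}$ for the set of exponents $(a,b)$ of the monomials $\la^aR^b$ occurring in $Q(\la,R)=\det M(\la,R)$ with nonzero coefficient. Recall that the Newton diagram at $0$ is the union of the compact faces of the boundary of $\operatorname{conv}(\operatorname{supp}(Q)+\rr^2_{\geqslant0})$, i.e.\ the part of that boundary facing the origin. I want to show this boundary is the single segment $E$ joining $(l,0)$ and $(0,1)$.

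First I would record the two support points supplied by Proposition \ref{Newton1n}: the monomials $\la^l$ and $R$ occur, so $(l,0)\in\operatorname{supp}(Q)$ and $(0,1)\in\operatorname{supp}(Q)$; these are the candidate vertices of $E$. The line through them is $\{a+lb=l\}$.

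Next I would show that no support point lies strictly below this line, i.e.\ every $(a,b)\in\operatorname{supp}(Q)$ satisfies $a+lb\geqslant l$. I split on the value of $b$. If $b\geqslant1$, then $a+lb\geqslant 0+l=l$ automatically, since $a\geqslant0$. If $b=0$, then $\la^aR^b=\la^a$ is a pure power of $\la$; by Proposition \ref{Newton1n} the only such monomial present is $\la^l$, so $(a,b)=(l,0)$, which lies on the line. Hence $\operatorname{supp}(Q)$ is contained in the closed half-plane $\{a+lb\geqslant l\}$, and the only support points on its boundary line are $(l,0)$ and $(0,1)$.

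It follows that the segment $E$ from $(l,0)$ to $(0,1)$ is a supporting edge of $\operatorname{conv}(\operatorname{supp}(Q)+\rr^2_{\geqslant0})$ meeting the support in exactly its two endpoints, so the Newton diagram at $0$ is precisely the single edge $E$ with the stated vertices. The only real content here is Proposition \ref{Newton1n}; the remainder is the elementary convexity observation above. The one point requiring care is to treat the $b=0$ stratum separately (where the Proposition is essential to exclude $\la^k$ with $k<l$, and in particular the constant term) from the $b\geqslant1$ stratum, where nonnegativity of the exponents alone already forces $a+lb\geqslant l$; so I expect no genuine obstacle beyond invoking the Proposition correctly.
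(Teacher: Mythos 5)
Your proof is correct and takes essentially the same approach as the paper, which disposes of the corollary in one line by saying it follows immediately from Proposition~\ref{Newton1n} and the definition of the Newton diagram. You have simply made explicit the routine convexity argument the paper leaves implicit: all support points lie in the half-plane $\{a+lb\geqslant l\}$ (trivially for $b\geqslant1$, and by the proposition's exclusion of monomials $\la^k$, $k\neq l$, for $b=0$), so the diagram is the single edge $E$ joining $(l,0)$ and $(0,1)$.
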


The corollary follows immediately from the proposition and the definition of 
Newton diagram.

\begin{proof} {\bf of Theorem~\ref{irr}.}
    Suppose the contrary: the polynomial $P_l(u,v)$ is not irreducible. Then the 
    above polynomial $Q(\la,R)$, which is obtained from the polynomial $P(u,v)$ 
    by affine variable change $(u,v)=(\la,R-\la)$, is also not irreducible: it is 
    a product of two 
    polynomial factors $P_1$ and $P_2$. At least one of them, say $P_1$, 
    vanishes at 0. The Newton diagram of each polynomial factor vanishing at 
    $0$ should consist of an edge parallel to the above edge $E$ and have 
    vertices with integer non-negative coordinates. Moreover, the latter edge 
    should either coincide with $E$, or lie below $E$. This follows from the 
    well-known fact that the upper component of the Newton diagram of product 
    of two germs of analytic functions is the Minkovski sum of analogous 
    components of the factors. But the only possible edge parallel to $E$ with 
    integer vertices and lying no higher than $E$ in the positive quadrant is the edge 
    $E$ itself, since one of its vertices is $(0,1)$. 

    Therefore, only $P_1$ vanishes at $0$, and  its Newton diagram coincides 
    with the edge $E$. Thus, $P_1$  contains the monomials $\la^l$ and $R$. The 
    polynomial $P_2$ cannot be non-constant: otherwise the product $Q=P_1P_2$ 
    would have degree greater than $l$ in $(\la, R)$, while $\det M(\la,R)=Q(\la,R)$ 
    is clearly a polynomial of degree $l$. This proves irreducibility of the 
    polynomial $Q(\la,R)$, and hence, $P(u,v)$. Theorem \ref{irr} is proved.
\end{proof}

\begin{proof} {\bf of Theorem~\ref{cirr}.} The case, when $l=1$, being obvious ($P_1(\la,v)=\la$), we consider that $l\geq2$. 
    The curve $\Gamma_l$ is the preimage of the irreducible zero locus 
    $W_l=\{ P_l(\la,v)=0\}$ under the degree two mapping 
    $F\colon(\la,\mu)\mapsto(\la,\mu^2)$. Therefore, $\Gamma_l$ is an algebraic 
    curve consisting of either one, or two irreducible components. To show that 
    it is just one component, it suffices to show that the two preimages of 
    some point in $W_l$ close to the origin are connected by a path lying in 
    the regular part of the curve $W_l$. The germ at 0 of the curve $W_l$ is 
    regular and tangent to the line $R=\la+v=\operatorname{const}$: the  linear part at the 
    origin of the polynomial $P_l$ is equal to $R$ times a non-zero constant, by Corollary \ref{cnewton} and since $l\geq2$. 
    Thus, the germ under question is transversal to the line $\{ v=0\}$: the 
    critical value line of the mapping $F$. Therefore, a small circuit around 
    the origin in the above germ lifts via $F$ to a path  in the regular part 
    of the curve $W_l$ that connects two different preimages. This together 
    with the above discussion proves Theorem~\ref{cirr}. 
\end{proof}

\subsection{
    Generalized simple intersections. Proof of Theorems ~\ref{doubint}, ~\ref{irred1}, \ref{intch} and Corollary  \ref{cdoubint}
}

\begin{proof} {\bf of Theorem \ref{doubint}.} 
    The first statement of the theorem follows from \cite[appendix C]{bt0} 
    and~\cite[theorem 1.12]{g18}, and the latter theorem also states that a 
     semi-infinite interval  $Sl\subset\Lambda_l^+$ lying in the upper half-plane and 
     bounded by the highest  
    generalized simple intersection in $\La_l^+$ lies entirely in $L_l$. 
    Let $s\in\nn$, $s<l$. 
    The phase-lock area $L_s$ lies on the left from the phase-lock area $L_l$. 
    Hence, its upper part consisting of the points with  ordinates $A$ large enough 
    lies on the left from  the above interval $Sl$. On the other hand, its 
     growth point, the intersection $L_s\cap\{ A=0\}$, 
    has abscissa $B=\sqrt{s^2\omega^2+1}$, see \cite[corollary 3]{buch1}. 
    Hence, it lies on the right from the axis 
    $\La_l=\{B=l\omega\}$, whenever $\omega$ is small enough. Finally, 
    the upper part of each boundary component,  
    $\partial L_{s,\pm}\cap\{ A\geq0\}$,  intersects $\Lambda_l^+$, whenever 
    $\omega$ is small enough, since  it contains  a point on the left from the 
    semiaxis $\Lambda_l^+$ and the above growth point lying on its right. For $s=0$ 
    the intersection point of the boundary $\partial L_0$ with the positive 
    $B$-semiaxis is the point $(1,0)$. This follows, e.\,g., from arguments 
    in~\cite[example~5.23]{bg2}. The corresponding differential 
    equation~\eqref{jostor} has the form $\dot\varphi=\frac{1-\sin\varphi}{\omega}$; 
    $l=\frac1\omega$, $\mu=0$. 
    Hence, the Poincar\'e  map of the corresponding vector field~\eqref{josvec} 
    is parabolic with fixed point $\frac{\pi}2(\modulo2\pi\zz)$, and thus, 
    $(1,0)\in\partial L_{0,+}$. For every $l\in\nn$ and every $\omega>0$ small 
    enough (dependently on $l$) the latter point $(1,0)$ lies on the right from the axis 
    $\Lambda_l=\{ B=l\omega\}$. Hence, $\partial L_{0,+}$ intersects $\Lambda_l^+$, 
    as in the above discussion. This proves Theorem \ref{doubint}.
\end{proof}

\begin{proof}  {\bf of Corollary \ref{cdoubint}.} The number of generalized simple intersections lying in a given semiaxis 
$\La_l^+$ is 
no greater than $l$, since they are defined by a polynomial in $(\la,\mu^2)$ of degree $l$, 
$\mu=\frac A{2\omega}$.  For small $\omega$ the number of generalized simple intersections in $\La_l^+$ 
mentioned in Theorem \ref{doubint}  
is at least $l$: at least one lies in $\partial L_l$; 
at least two lie in $\partial L_s$ for $0<s<l$, $s\equiv l(\modulo 2)$; 
at least one lies in $\partial L_0$, if $l$ is even. Therefore, their number is exactly 
equal to $l$, exactly one of them lies in $\partial L_l$, 
and they depend analytically on $\omega$, since the squares 
of their $\mu$-coordinates are distinct roots 
of an analytic family of polynomials of degree $l$. Their abscissas are equal to the same number 
$\omega l$, which tends to 0, as $l\to0$. This proves Corollary \ref{cdoubint}.
\end{proof}

Recall that we denote $r=\frac1{2\omega}$. The mapping 
$\pi\colon\cc^2\to\cc^2$, $\pi(\mu,r)=(\la=r^2-\mu^2, \mu)$, sends the $(\mu,r)$-coordinates 
of the generalized  simple intersections in $\La_l(\omega)$ to the zero locus 
$\Gamma_l=\{ P_l(\la,\mu^2)=0\}$ and vice versa: for every  
real point  $(\la,\mu)\in\Gamma_l$ with $\la+\mu^2>0$  its preimage with $r>0$ 
is a generalized simple intersection 
(Theorems~\ref{bt02} and~\ref{tpol}). Set $\widehat\Gamma_l=\pi^{-1}(\Gamma_l)$. 
One has $\hsi_l\subset\widehat\Gamma_l$  and $\pi(\hsi_l)=\Gamma_l$, by the above 
discussion, and since the image of the algebraic set $\hsi_l$ under 
degree two rational branched cover $\pi$ is algebraic (Remmert 
Proper Mapping Theorem and Chow Theorem \cite{griff}). 
We already know that the complex 
algebraic curve $\Gamma_l$ is irreducible (Theorem~\ref{cirr}). This proves 
the first statement of Theorem \ref{irred1}. The mapping 
$\pi$ has degree two. Therefore, the set $\widehat\Gamma_l$, which 
contains $\hsi_l$, is an algebraic curve that either is 
irreducible, or has two irreducible components. We already know that 
$\hsi_l=\hsi_{l,+}\cup\hsi_{l,-}$. Let us show that the sets $\hsi_{l,\pm}$ are 
non-empty analytic curves. This will imply their irreducibility, the 
equality $\widehat\Gamma_l=\hsi_l$ and Theorem~\ref{irred1}.

\begin{lemma}
    \label{2irr}
    For every $l\in\nn$, every $\omega>0$ small enough and every sign $\pm$ 
    there exists an $s\in\zz_{\geqslant 0}$, $s\equiv l(\modulo 2)$, $s\leqslant l$, such 
    that the boundary component $\partial L_{s,\pm}$ intersects $\Lambda_l$.
\end{lemma}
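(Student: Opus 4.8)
The plan is to prove, separately for each sign, that some admissible index $s$ (with $s\equiv l\,(\modulo 2)$ and $0\leqslant s\leqslant l$) yields a genuine generalized simple intersection — neither a constriction nor a growth point — lying on the prescribed component $\partial L_{s,\pm}$ inside $\La_l$. The basic inputs are Theorem \ref{doubint}, which furnishes such intersections, and the symmetries recorded in Theorem \ref{intch}. The argument divides according to whether there exists an index $s$ strictly between $0$ and $l$ of the correct parity.

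For $l\geqslant3$ the lemma should follow cleanly. I would take $s=l-2$, so that $0<s<l$ and $s\equiv l\,(\modulo 2)$. By the second assertion of Theorem \ref{doubint}, for every $\omega>0$ small enough the semiaxis $\La_l^+$ meets \emph{each} of the two boundary components $\partial L_{s,+}$ and $\partial L_{s,-}$; the corresponding intersection points are generalized simple intersections lying off the $B$-axis, so both signs are realized simultaneously by this single $s$.

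The two edge cases $l=1$ and $l=2$ admit no intermediate $s$ and must be treated by hand. For $l=1$ I would use $s=1=l$: the first assertion of Theorem \ref{doubint} produces a generalized simple intersection on $\partial L_1\cap\La_1^+$, necessarily on one component $\partial L_{1,\varepsilon}$ with $A>0$; since $l$ is odd, the reflection $(B,A)\mapsto(B,-A)$ interchanges $\partial L_{1,+}$ and $\partial L_{1,-}$ while fixing the vertical line $\La_1$ (Theorem \ref{intch}), and carries this point to a generalized simple intersection on the opposite component in $\La_1$. Thus both signs come from $s=1$. For $l=2$ the sign $+$ is immediate from $s=0$: the third assertion of Theorem \ref{doubint} gives a point of $\partial L_{0,+}\cap\La_2^+$, which is a generalized simple intersection since $0\equiv2\,(\modulo 2)$.

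The remaining case, the sign $-$ for $l=2$, is the main obstacle, because here the symmetries are powerless. For the even indices $s\in\{0,2\}$ the reflection $(B,A)\mapsto(B,-A)$ preserves each component $\partial L_{s,\pm}$ (Theorem \ref{intch}) and carries $\La_2^+$ to $\La_2^-$, so every generalized simple intersection in $\La_2$ lies on a component already met by $\La_2^+$; and the reflection $(B,A)\mapsto(-B,A)$ relating $\partial L_{0,+}$ and $\partial L_{0,-}$ sends $\La_2$ to $\La_{-2}$, producing intersections with the wrong axis. By Corollary \ref{cdoubint}, for small $\omega$ the semiaxis $\La_2^+$ carries exactly two generalized simple intersections, one on $\partial L_{0,+}$ (the $+$ case above) and one on $\partial L_2$; hence the sign $-$ can be realized only if this second point lies on $\partial L_{2,-}$, and the crux of the lemma is to prove exactly this. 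I would establish it by a local analysis at the growth point $G=(\sqrt{4\omega^2+1},0)$ of $L_2$, which lies to the right of $\La_2$ and through which both components pass orthogonally to the $B$-axis (Theorem \ref{intch}): since the Poincar\'e map equals the identity at $G$, expanding it to leading order in $(B-\sqrt{4\omega^2+1},A)$ should locate the two branches carrying parabolic fixed points at $+\frac\pi2$ and at $-\frac\pi2$, and a curvature comparison should show that the branch bending leftward toward $\La_2$ — the one meeting it — is $\partial L_{2,-}$. Equivalently, the component can be read off the polynomial solution of the Heun equation \eqref{heun} attached to this point via Theorem \ref{bt02}. I expect this sign determination at the growth point to be the only genuinely delicate step; everything else follows formally from Theorems \ref{doubint} and \ref{intch}.
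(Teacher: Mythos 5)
Your reduction is sound and, up to the crux, follows the paper's own route: for $l\geqslant3$ the second assertion of Theorem \ref{doubint} settles both signs at once (any $s$ with $0<s<l$, $s\equiv l\ (\modulo 2)$ works, e.g.\ your $s=l-2$); for $l=1$ the odd-parity reflection of Theorem \ref{intch} transports the single intersection guaranteed by Theorem \ref{doubint} to the other component (this is exactly the paper's argument); and for $l=2$, sign $+$, the third assertion of Theorem \ref{doubint} suffices. You also correctly isolate the genuine difficulty: by Corollary \ref{cdoubint} the only candidate for the sign $-$ at $l=2$ is the unique generalized simple intersection in $\partial L_2\cap\La_2^+$, and one must prove it lies on $\partial L_{2,-}$ rather than $\partial L_{2,+}$; the even-parity symmetries are useless here.

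But at that crux your proposal has a genuine gap: you do not prove the sign determination, you only sketch a plan, and the plan as stated would not work. You propose a leading-order expansion of the Poincar\'e map at the growth point $G=(\sqrt{4\omega^2+1},0)$ and a ``curvature comparison'' of the two branches. The trouble is that for small $\omega$ the axis $\La_2=\{B=2\omega\}$ lies at horizontal distance $\sqrt{4\omega^2+1}-2\omega\approx 1$ from $G$, so which branch of $\partial L_2$ eventually crosses $\La_2$ is a \emph{global} question about the boundary curves, not one decidable from their jets at $G$; a local curvature comparison cannot rule out that the other branch bends back and reaches $\La_2$ first (or instead). The paper resolves this case by a completely explicit computation imported from Buchstaber--Tertychnyi \cite[appendix A, subsection A.1]{bt0}: for $\omega<\frac12$ the polynomial solutions of \eqref{heun} occur at $\mu_{\pm}=(2\omega)^{-1}\sqrt{1\mp2\omega}$, and these induce solutions $\varphi(\tau)$ of the sign-reversed equation \eqref{jostor}${}^*$ with \emph{known initial values} $\varphi(0)=\mp\frac{\pi}2$ and rotation numbers $0$ and $-2$; this pins the points to $\partial L_{0,-}$ and $\partial L_{-2,+}$ for the reversed system, whence $(B,A_+)\in\partial L_{0,+}\cap\La_2$ and $(B,A_-)\in\partial L_{2,-}\cap\La_2$ by the $A$-axis symmetry of Theorem \ref{intch}. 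Your parenthetical alternative (``read the component off the polynomial solution via Theorem \ref{bt02}'') is indeed the right direction, but Theorem \ref{bt02} alone only certifies existence of a polynomial solution; the sign information comes from the explicit solutions and their initial conditions, which your proposal never produces.
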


    For $l\geqslant3$ the statement of the lemma holds for every 
    $s\equiv l(\modulo 2)$, $0<s<l$, by Theorem~\ref{doubint}. 
    Below we prove it for $l=1,2$. In the proofs of Lemma \ref{2irr} 
    and Theorems \ref{irred1}, \ref{intch} we use the  following 
    proposition. 

    \begin{proposition}
        \label{halfp} \cite[p. 40, the discussion around formulas (10)-(11)]{krs}
        For every $s\in\zz$ and every point $(B,A)\in\partial L_{s,\pm}$ 
        the orbit of the corresponding vector field (\ref{josvec}) starting at 
        $(\varphi,\tau)=(\pm\frac{\pi}2,0)$ is $2\pi$-periodic and invariant under the symmetry 
        $I\colon(\varphi,\tau)\mapsto(\pi-\varphi,-\tau)$ (found by A.\,V.\,Klimenko 
        in~\cite{RK}) of the field~\eqref{josvec}. In particular, the value $(\modulo2\pi)$ 
        of the corresponding solution of differential equation on a function $\varphi(\tau)$ 
        at the half-period $\tau=\pi$ is the initial value $\pm\frac{\pi}2$, if $s$ is even, 
        and $\mp\frac{\pi}2$, if $s$ is odd.
    \end{proposition}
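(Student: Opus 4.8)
The plan is to deduce the statement directly from the Klimenko symmetry $I$, in three moves: first check that $I$ is a symmetry of the field, then use the boundary characterization to manufacture a periodic orbit that is forced to be $I$-invariant, and finally read off the half-period value by a short modular computation.

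First I would verify that $I\colon(\varphi,\tau)\mapsto(\pi-\varphi,-\tau)$ sends orbits of (\ref{josvec}) to orbits. Concretely, if $\varphi(\tau)$ solves (\ref{jostor}), I set $\psi(\tau):=\pi-\varphi(-\tau)$ and check by the chain rule that $\psi$ solves the same equation; the computation collapses to the two identities $\sin(\pi-\varphi)=\sin\varphi$ and $\cos(-\tau)=\cos\tau$, which leave the right-hand side of (\ref{jostor}) invariant. Note that $I$ reverses the $\tau$-direction but preserves the family of solution graphs. Next I would invoke the description of the boundary components: by statement (i) of Subsection 1.4 (see also Remark \ref{rklim}), a point $(B,A)$ lies on $\partial L_{s,\pm}$ exactly when the Poincar\'e map $h_{(B,A)}$ fixes $\pm\frac\pi2\,(\modulo2\pi\zz)$. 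Hence the orbit through $(\varphi,\tau)=(\pm\frac\pi2,0)$ returns to $(\pm\frac\pi2,0)$ after $\tau$ advances by $2\pi$, i.e. it is $2\pi$-periodic on $\mathbb T^2$.

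To obtain invariance, I would note that $I$ fixes the point $(\pm\frac\pi2,0)$ modulo $2\pi\zz^2$: indeed $I(\pm\frac\pi2,0)=(\pi\mp\frac\pi2,0)\equiv(\pm\frac\pi2,0)$. Since $I$ carries the above orbit to the orbit through $I(\pm\frac\pi2,0)=(\pm\frac\pi2,0)$, uniqueness of orbits forces the orbit to be $I$-invariant. In terms of the lift $\varphi(\tau)$ this invariance reads $\varphi(\tau)+\varphi(-\tau)=\pi-2\pi k$ for a fixed integer $k$, determined by evaluating at $\tau=0$ to be $k=0$ in the $+$ case and $k=1$ in the $-$ case.

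The final move is modular bookkeeping at the half-period. I would combine the invariance relation with the periodicity $\varphi(\tau+2\pi)=\varphi(\tau)+2\pi s$ coming from rotation number $s$, so that $\varphi(-\pi)=\varphi(\pi)-2\pi s$. Substituting $\tau=\pi$ into $\varphi(\pi)+\varphi(-\pi)=\pi-2\pi k$ gives $2\varphi(\pi)=\pi+2\pi(s-k)$, whence $\varphi(\pi)=\frac\pi2+\pi(s-k)\,(\modulo2\pi)$; reading off the parity of $s-k$ yields the initial value $\pm\frac\pi2$ for $s$ even and $\mp\frac\pi2$ for $s$ odd, as claimed. The only real obstacle is the bookkeeping of additive constants modulo $2\pi$: one must pin down the integer $k$ in the symmetry relation and keep the rotation-number shift $2\pi s$ consistent with it, so that the parity dependence comes out with the correct sign; everything else is a one-line computation. (Alternatively one may simply cite \cite{krs}, but the above self-contained argument is short.)
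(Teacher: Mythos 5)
Your proof is correct and follows essentially the same route as the paper's: $2\pi$-periodicity from the boundary characterization via the Poincar\'e map, $I$-invariance of the orbit from the fact that $I$ fixes the initial point $(\pm\frac{\pi}{2},0)$ on the torus together with uniqueness of orbits, and the half-period value from the resulting symmetry relation. The paper's own proof is terser -- it merely notes that $\pm\frac{\pi}{2}$ are the only fixed points of $\varphi\mapsto\pi-\varphi$ and that $0,\pi$ are the only fixed points of $\tau\mapsto-\tau$, leaving the parity bookkeeping implicit -- so your explicit lift computation with the integer $k$ and the rotation-number shift $2\pi s$ simply fills in details the paper omits.
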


    \begin{proof} The orbit under question is $2\pi$-periodic, since 
    $(B,A)\in\partial L_{s,\pm}$, and hence, $\pm\frac{\pi}2$ is a fixed point of the Poincar\'e 
    map of the field (\ref{josvec}). On the other hand, the points $\pm\frac{\pi}2$ are the 
    only  fixed points of the circle involution $\varphi\mapsto\pi-\varphi$, and $0$, $\pi$ are the 
        only fixed points of the circle involution $\tau\mapsto-\tau$. These statements imply 
        the proposition.
    \end{proof}

    \begin{proof} {\bf of Theorem \ref{intch}.} 
        The transformation 
        $J\colon(\varphi,\tau)\mapsto(\varphi,\tau+\pi)$ sends a vector field~\eqref{josvec} corresponding to a point $(B,A)$ 
        to the similar vector field denoted \eqref{josvec}$^*$ 
        with changed sign at $\mu$, i.\,e., corresponding to 
        the point $(B,-A)$, see~\cite[p.76]{bg2}. The rotation number obviously remains the same; let us denote it by $s$. 
        The symmetry $J$ transforms an orbit $O_1$ of the field 
        \eqref{josvec} through 
        the point $(\pm\frac{\pi}2,0)$ to an orbit $O_2$  
        of the field \eqref{josvec}$^*$ through the point 
        $(\pm\frac{\pi}2,\pi)$. The orbit $O_2$  is $2\pi$-periodic, if and only 
        if so is $O_1$. Fix a sign $\pm$, and let $O_1$ be periodic; 
        then $s\in\zz$. In the case, 
        when  $s$ is even, the orbit $O_1$ passes through both points 
        $(\pm\frac{\pi}2,0)$ and $(\pm\frac{\pi}2,\pi)$, by the last statement of Proposition~\ref{halfp}. 
        Hence, both of them are fixed by the time $2\pi$ flow map of the field. 
        This together with the above discussion implies that the orbit $O_2$ is periodic and 
        also passes through the same two points. Finally, both  vector fields~\eqref{josvec} 
        corresponding to the points $(B,A)$ and $(B,-A)$ have the same periodic point 
        $(\pm\frac{\pi}2,0)$ and the same rotation number. This implies that the symmetry 
        $(B,A)\mapsto(B,-A)$ leaves each curve $\partial L_{s,\pm}$ invariant. Similarly in the 
        case, when $s$ is odd, the orbit $O_1$ starting at $(\pm\frac{\pi}2,0)$ 
        passes through  the point 
        $(\mp\frac{\pi}2,\pi)$, by Proposition \ref{halfp}. Hence, the orbit $O_2=J(O_1)$ 
        passes through the point $(\mp\frac{\pi}2,0)$ and is periodic. This implies that 
        the symmetry $(B,A)\mapsto(B,-A)$ interchanges the curves $\partial L_{s,\pm}$. 
        
        Now let us prove the last statement 
       of Theorem \ref{intch}, on the symmetry $(B,A)\mapsto(-B,A)$. 
       The transformation $(\varphi,\tau)\mapsto(-\varphi,\tau+\pi)$ 
      sends a vector field (\ref{josvec}) corresponding to a point 
      $(B,A)$ to the same field with 
       opposite sign at $l$, i.e., corresponding to $(-B,A)$; 
       the latter field will be denoted by  \eqref{josvec}$^*$. It changes the sign of the rotation number. It transforms an orbit $O_1$ 
       of the field \eqref{josvec} through 
        the point $(\pm\frac{\pi}2,0)$ to an orbit $O_2$ of the field  
        \eqref{josvec}$^*$ through the point 
        $(\mp\frac{\pi}2,\pi)$. Fix a sign $\pm$, and let $O_1$ be periodic. Let $s$ denote its rotation number; then $s\in\zz$. 
        In the case, 
        when $s$ is even, the orbit $O_1$ passes through both points 
        $(\pm\frac{\pi}2,0)$ and $(\pm\frac{\pi}2,\pi)$; the orbit $O_2$ is periodic and 
         passes through the  two points $(\mp\frac{\pi}2,0)$ and $(\mp\frac{\pi}2,\pi)$, as in the above discussion. Finally, the vector fields~\eqref{josvec} 
        corresponding to the points $(B,A)$ and $(-B,A)$ have 
         periodic points 
        $(\pm\frac{\pi}2,0)$ and $(\mp\frac{\pi}2,0)$ and opposite 
         rotation numbers. Thus, the symmetry 
         $(B,A)\mapsto(-B,A)$ interchanges the curves 
         $\partial L_{s,\pm}$ and $\partial L_{-s,\mp}$.  In the case, when $s$ is odd, the orbit $O_1$ passes through the points 
        $(\pm\frac{\pi}2,0)$ and $(\mp\frac{\pi}2,\pi)$; the orbit $O_2$ is periodic and 
         passes through the  two points $(\mp\frac{\pi}2,\pi)$ and 
         $(\pm\frac{\pi}2,0)$. Hence, the vector fields~\eqref{josvec} 
        corresponding to the points $(B,A)$ and $(-B,A)$ have 
      the same   periodic point 
        $(\pm\frac{\pi}2,0)$  and opposite rotation numbers. Thus, the symmetry interchanges the curves 
         $\partial L_{s,\pm}$ and $\partial L_{-s,\pm}$
        This proves Theorem \ref{intch}.
  \end{proof}

    \begin{proof} {\bf of Lemma \ref{2irr}.} Recall that it suffices to 
    prove Lemma \ref{2irr} for $l=1,2$ (Theorem \ref{doubint}). 
     Let $l=1$. 
    At least one boundary component $\partial L_{1,\pm}$ intersects $\Lambda_1$ (Theorem~
    \ref{doubint}). The other component  is symmetric to it with respect to the 
    $B$-axis (Theorem~\ref{intch}), and hence, also intersects $\Lambda_1$. 
    This proves the statement of Lemma~\ref{2irr} for $l=1$. The case, when $l=2$, 
    was already treated in~\cite[appendix~A,~subsection~A.1]{bt0}. In more detail, 
    let~\eqref{jostor}${}^*$ and~\eqref{josvec}${}^*$ denote respectively the 
    differential equation~\eqref{jostor} and the vector field~\eqref{josvec} 
    where the parameter $l$ is taken with opposite sign. It was shown 
    in loc.\,cit.\,that for every positive $\omega<\frac12$ there are two 
    positive values $\mu_{\pm}=(2\omega)^{-1}\sqrt{1\mp2\omega}$ of the parameter $\mu$ for 
    which the corresponding equation (\ref{heun}) has a polynomial solution; 
    these polynomial solutions induce solutions $\varphi(\tau)$ of 
    equation~\eqref{jostor}${}^*$ with $\varphi(0)=\mp\frac{\pi}2$, 
    $\varphi(2\pi)=\varphi(0)-2\pi\pm2\pi$. Hence, the corresponding trajectories of 
    dynamical systems~\eqref{josvec}${}^*$ on torus are periodic with initial 
    conditions $\mp\frac{\pi}2(\modulo2\pi)$ and rotation numbers 0 (for $\mu_+$) and $-2$ 
    (for $\mu_-$). 
    
    Set $B=2\omega$, $A_{\pm}=2\mu_{\pm}\omega$.     
    By construction, $(B,A_{\pm})\in\La_2$ are generalized simple 
    intersections, and the above  system (\ref{josvec})$^*$ 
    is the system (\ref{josvec}) 
    corresponding to the parameters 
    $(-B,A_{\pm})$.    The above discussion implies that 
     $(-B,A_+)\in \partial L_{0,-}$ and $(-B,A_-)\in \partial L_{-2,+}$. Therefore, 
     $(B,A_+)\in \partial L_{0,+}\cap\La_2$, $(B,A_-)\in \partial 
     L_{2,-}\cap\La_2$, by Theorem \ref{intch}. This proves Lemma~\ref{2irr}.
\end{proof}

Lemma \ref{2irr} together with the discussion before it imply that the algebraic 
set $\hsi_l$, which consists of at most two irreducible components, is the union 
of two non-trivial algebraic curves $\hsi_{l,\pm}$. Therefore, the latter curves 
are irreducible and the number of components under question is exactly two. 
This proves Theorem \ref{irred1}.

\subsection{
    Boundaries of phase-lock areas. Proof of Theorem \ref{irred2}.
}

Recall that the complexification of equation~\eqref{jostor} is the corresponding 
Riccati equation~\eqref{ricc}. For every collection of complex parameters $l$, 
$\mu$, $\omega$ in~\eqref{ricc} with $\omega\neq0$ the {\it monodromy transformation} $\mon$ of the 
Riccati equation (\ref{ricc}) acts on the space of initial conditions at $z=1$ 
by analytic continuation of solutions along the positively oriented unit circle 
in the $z$-line.  It is a M\"obius transformation of the Riemann sphere, since 
the equation under question is the projectivization of a linear equation. 
For real parameter values  the corresponding Poincar\'e 
map $h_{(B,A)}$ acts on the circle $\rr\slash2\pi\zz$ with coordinate $\varphi$, 
and the variable change $\varphi\mapsto\Phi=e^{i\varphi}$ sends it to the unit circle 
in the Riemann sphere. The  monodromy $\mon$ is the complexification of the 
Poincar\'e map considered as a unit circle diffeomorhism. Thus,  the Poincar\'e 
map is a {\it M\"obius circle diffeomorphism:} a unit circle diffeomorphism  
 that extends as a M\"obius transformation of the Riemann sphere. 

Recall that the condition 
saying that $h_{(B,A)}$ is parabolic implies that it fixes some of the points 
$\pm \frac{\pi}2(\modulo2\pi\zz)$, see~Remark~\ref{rklim}. This implies the 
following corollary. 

\begin{corollary}
    \label{cormon}
    For every $\omega>0$, $s\in\nn$ and  sign $\pm$ if a point 
    $(B,A)\in\rr^2$ lies in $\partial L_{s,\pm}$, then the monodromy $\mon$ of 
    the corresponding Riccati equation (\ref{ricc}) fixes the point $\pm i$. 
    Thus, the subsets $\mcl_{\pm}^{odd (even)}$, and hence, their 
    complexifications $\hmcl_{\pm}^{odd (even)}$ lie in the hypersurfaces 
    \[
        \Sigma_{\pm}\subset\cc^3_{(B,A,r)}, \ r=\frac1{2\omega},
    \]
    of those parameter values, for which the monodromy $\mon$ fixes the point $\pm i$. 
\end{corollary}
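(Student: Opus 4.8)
The plan is to read the statement off from the description of the boundary components $\partial L_{s,\pm}$ given in item (i) of Subsection 1.4, combined with the complexification dictionary between the Poincar\'e map and the monodromy $\mon$ recalled in the paragraph just before the corollary. The whole argument amounts to tracking one point through a change of variable, so I expect no serious difficulty beyond confirming that $\Sigma_\pm$ is genuinely analytic.

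First I would recall that, by item (i) of Subsection 1.4, a real point $(B,A)$ lies in $\partial L_{s,\pm}$ precisely when the Poincar\'e map $h_{(B,A)}$ of the vector field \eqref{josvec} fixes the point $\varphi=\pm\frac\pi2\,(\modulo2\pi\zz)$. Using the already-established fact that $\mon$ is the complexification of $h_{(B,A)}$ under the change of variable $\varphi\mapsto\Phi=e^{i\varphi}$, which carries the circle $\rr/2\pi\zz$ onto the unit circle of the Riemann sphere, I would then simply follow the fixed point: $\Phi=e^{\pm i\pi/2}=\pm i$. Since $\mon$ restricts to $h_{(B,A)}$ on the unit circle, it fixes $\pm i$, with the sign matching that of the boundary component. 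This yields the first assertion, and the argument is uniform in $s\in\zz$, since both item (i) and the complexification are insensitive to the value of $s$.

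For the second assertion I would observe that, by their very definition, $\mcl_{\pm}^{odd\,(even)}(\omega)$ is the union of the components $\partial L_{s,\pm}(\omega)$ taken over odd (respectively even) rotation numbers $s$; hence the first assertion gives $\mcl_{\pm}^{odd\,(even)}\subset\Sigma_\pm$. It then remains to pass to the complexifications. For this I would check that $\Sigma_\pm$ is a complex analytic subset of $\cc^3_{(B,A,r)}$: the monodromy $\mon$ depends analytically on the parameters of \eqref{ricc}, hence on $(B,A,r)$ through $l=2rB$, $\mu=rA$, $\omega=\frac1{2r}$, and the requirement that the M\"obius transformation $\mon$ fix the prescribed point $\pm i$ is a single analytic equation in $(B,A,r)$. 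Being closed analytic and containing the real set $\mcl_{\pm}^{odd\,(even)}$, the locus $\Sigma_\pm$ must also contain the minimal complex analytic subset $\hmcl_{\pm}^{odd\,(even)}$ containing $\mcl_{\pm}^{odd\,(even)}$, which completes the proof.

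The one point deserving care, and the only place I would expect a subtlety, is the analyticity of $\Sigma_\pm$ together with the accompanying remark that the fixed-point condition cuts out an honest hypersurface rather than degenerating; once $\mon$ is written as a M\"obius transformation whose coefficients are holomorphic in $(B,A,r)$ (by holomorphic dependence of solutions of \eqref{ricc} on parameters), fixing a prescribed point is plainly one nontrivial holomorphic equation. The substantive work -- the identification of $\mon$ with the complexified Poincar\'e map -- is already in place in the text preceding the statement, so this is indeed a corollary.
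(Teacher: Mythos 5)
Your proposal is correct and takes essentially the same route as the paper, which derives the corollary from exactly the same ingredients: Klimenko's characterization of $\partial L_{s,\pm}$ as the locus where the Poincar\'e map fixes $\pm\frac{\pi}2\ (\modulo 2\pi\zz)$ (item (i) of Subsection 1.4 together with Remark \ref{rklim}), the identification of $\mon$ with the complexified Poincar\'e map under $\varphi\mapsto\Phi=e^{i\varphi}$ (so the fixed point $\pm\frac{\pi}2$ becomes $\pm i$), and the minimality of $\hmcl_{\pm}^{odd (even)}$ inside the analytic locus $\Sigma_{\pm}$. Your added verification that $\Sigma_{\pm}$ is cut out by a single holomorphic equation in the parameters is a sound filling-in of a detail the paper leaves implicit.
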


For every $l\in\zz$ set 
\[
    \mcl_{l,\pm} =
    \bigcup_{\omega>0}\left(\partial L_{l,\pm}(\omega)\times\left\{\frac1{2\omega}\right\}\right)\subset 
    \rr^3_{(B,A,r)},
\]
\[
    \hmcl_{l,\pm}:=\text{ the minimal analytic subset in } \cc^3 
    \text{ containing } \mcl_{l,\pm}.
\]
Recall that for every $l\in\nn$ by $\si_{l,+}$ we denote the union of families of 
tuples $(\mu, r=\frac1{2\omega})\in\rr^2$ that correspond to the generalized 
simple intersections with $\omega=\frac1{2r}$, $B=l\omega$, $A=2\mu\omega$. 
Consider the transformation 
\[
    g_l:\cc^2_{(\mu,r)}\mapsto\cc^3_{(B,A,r)}: \ g_l(\mu,r)=\left(\frac l{2r}, \frac{\mu}r, r\right).
\]
Set 
\[
    \wt\Gamma_{l,\pm}:= g_l(\hsi_{l,\pm}), \ \widehat{\Gamma}_{l,\pm}:= 
    \text{ the closure of the curve } \wt\Gamma_{l,\pm} \text{ in the usual 
topology.}
\]
\def\whg{\widehat{\Gamma}}
It is clear that  $\widehat{\Gamma}_{l,\pm}$ is an irreducible algebraic curve, since so is 
$\hsi_{l,\pm}$ (Theorem~\ref{irred1}) and the mapping $g_l$ is rational and injective on the set $r\neq0$. 

\begin{proposition} \label{pmono} Fix an $\omega>0$.  Consider the family of the 
    Poincar\'e maps $h_{(B,A)}$ as a family of elements of  M\"obius circle 
    transformation group. Let $(B_0,A_0)\in\rr^2$ lie in the boundary curve 
    $\partial L_{s,\pm}$ of a phase-lock area $L_s$. The derivative in $B$ of 
    the Poincar\'e map family $h_{(B,A)}$ at $(B_0,A_0)$ is a vector transversal to the hypersurface 
    of M\"obius circle transformations fixing $\pm \frac{\pi}2(\modulo2\pi\zz)$. In particular $\mcl_{s,\pm}$ is a smooth 
    two-dimensional surface.
    \end{proposition}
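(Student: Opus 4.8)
The plan is to reformulate the transversality assertion as the non-vanishing of a single directional derivative, and then to compute that derivative from the variational equation of the flow~\eqref{josvec}. First I would pass to the coordinate $\Phi=e^{i\varphi}$ in which the Poincar\'e map is a M\"obius transformation of the unit circle, and recall that the group $G$ of M\"obius transformations preserving the unit circle is three-dimensional. For the distinguished point $p=\pm\frac{\pi}2(\modulo2\pi\zz)$ (that is, $\Phi=\pm i$) the orbit map $\operatorname{ev}_p\colon G\to S^1$, $g\mapsto g(p)$, is a submersion, so its fibre $H_p=\{g\in G:g(p)=p\}$ is a smooth hypersurface of codimension one. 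Since $(B_0,A_0)\in\partial L_{s,\pm}$, the map $h_{(B_0,A_0)}$ lies in $H_p$ by statement~(i) of Subsection~1.4 (equivalently, by Remark~\ref{rklim} together with Proposition~\ref{halfp}). A tangent vector $v\in T_{h_{(B_0,A_0)}}G$ is transversal to $H_p$ if and only if $d(\operatorname{ev}_p)(v)\neq0$; applied to $v=\partial_B h_{(B,A)}|_{(B_0,A_0)}$, this reduces the whole statement to showing that $\partial_B\big[h_{(B,A)}(p)\big]$ does not vanish at $(B_0,A_0)$.

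Second I would compute this derivative. Writing $\varphi(\tau;\varphi_0)$ for the solution of~\eqref{jostor} with $\varphi(0)=\varphi_0$, one has $h_{(B,A)}(\varphi_0)=\varphi(2\pi;\varphi_0)$, with $l=\frac B\omega$, $\mu=\frac A{2\omega}$. Since the abscissa $B$ enters the right-hand side of~\eqref{jostor} only through the additive term $l=\frac B\omega$, the derivative of the field in $B$ equals $\frac1\omega$, and the variational function $\eta(\tau)=\partial_B\varphi(\tau;\varphi_0)$ solves the linear equation
\[
    \dot\eta=-\frac{\cos\varphi(\tau;\varphi_0)}{\omega}\,\eta+\frac1\omega,\qquad \eta(0)=0.
\]
Its value at the period is
\[
    \eta(2\pi)=\frac1\omega\int_0^{2\pi}\exp\!\left(-\frac1\omega\int_\sigma^{2\pi}\cos\varphi(t;\varphi_0)\,dt\right)d\sigma>0,
\]
the integrand being everywhere positive. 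Taking $\varphi_0=p$ gives $\partial_B[h_{(B,A)}(p)]|_{(B_0,A_0)}=\eta(2\pi)>0$, which is exactly the required non-vanishing, hence the transversality.

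For the ``in particular'' part I would invoke the implicit function theorem. The function $G_\pm(B,A,r)=\tilde h_{(B,A)}(\pm\tfrac{\pi}2)-(\pm\tfrac{\pi}2)$, where $\tilde h$ is the lift of the Poincar\'e map and $\omega=\frac1{2r}$, depends analytically on $(B,A,r)$ because the flow of the analytic family~\eqref{josvec} does. On $\mcl_{s,\pm}$ one has $G_\pm\equiv2\pi N$ for the integer $N$ determined by $s$ and the sign, while the computation above gives $\partial_B G_\pm>0$. Hence near any of its points the level set $\{G_\pm=2\pi N\}$ is the analytic graph $B=g_{s,\pm}(A,r)$, a smooth two-dimensional surface. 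The only point requiring the symmetry of the model is the identification of this level set with $\mcl_{s,\pm}$ itself: the Klimenko symmetry $\varphi\mapsto\pi-\varphi$ of Proposition~\ref{halfp} permutes the fixed points of $h_{(B,A)}$, so two \emph{distinct} fixed points are interchanged and can never equal $\pm\frac{\pi}2$; thus $h_{(B,A)}$ fixes $\pm\frac{\pi}2$ precisely when its two fixed points merge, i.e.\ exactly on the boundary, whence locally $\{G_\pm=2\pi N\}=\mcl_{s,\pm}$.

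The computation itself is elementary once the setup is in place; the one genuinely delicate step is this last bookkeeping --- keeping track of the correct lift value $2\pi N$ and using the symmetry to exclude $\pm\frac{\pi}2$ as a fixed point in the interior of $L_s$, so that the analytic graph produced by the implicit function theorem is really $\mcl_{s,\pm}$ and not a strictly larger level set.
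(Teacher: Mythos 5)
Your proposal is correct and takes essentially the same route as the paper: the paper's entire proof is a one-line appeal to the classical monotonicity of the Poincar\'e map in $B$, which is exactly what your variational-equation computation $\partial_B\tilde h_{(B,A)}(\varphi_0)=\eta(2\pi)>0$ establishes, with the evaluation-map transversality and implicit-function-theorem bookkeeping being the (omitted) elaboration the paper intends. One small caveat: in your last paragraph the inference that the involution $\varphi\mapsto\pi-\varphi$ ``permutes the two distinct fixed points, hence interchanges them'' is incomplete as stated (a priori both could be individually fixed, i.e.\ equal to $\frac{\pi}2$ and $-\frac{\pi}2$; to exclude this one must use that the symmetry reverses time, hence conjugates $h_{(B,A)}$ to its inverse and swaps the attracting and repelling fixed points), but this step is in any case redundant, since the identification of $\bigl\{h_{(B,A)}\text{ fixes }\pm\frac{\pi}2\bigr\}\cap L_s$ with $\partial L_{s,\pm}$ is precisely Klimenko's result quoted in statement (i) of Subsection 1.4 (equivalently Remark \ref{rklim}), which you already cite in your first paragraph.
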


The proposition follows from monotonicity of the Poincar\'e map as a function 
of the parameter $B$ (classical). 

\begin{proposition}
    \label{pris}
    Each complex analytic set $\hmcl_{l,\pm}$ is irreducible and purely 
    two-dimensional. For every $s\in\nn$, $l>s$, $l\equiv s(\modulo2\zz)$ and sign $\pm$ 
    the surface $\hmcl_{s,\pm}$ contains the curve $\widehat\Gamma_{l,\pm}$. 
    The same statement also holds for $s=0$, the sign $+$ and even $l\in\nn$.
\end{proposition}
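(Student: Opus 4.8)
The plan is to derive the first assertion from the complexification of an explicit real-analytic parametrization, and the last two from an analytic-continuation (identity-theorem) argument fed by the intersection points produced in Theorem~\ref{doubint}. For the irreducibility and pure two-dimensionality I would realize $\mcl_{l,\pm}$ as the image of a real-analytic immersion. By item~(i) of Subsection~1.4 (Klimenko) the component $\partial L_{l,\pm}(\omega)$ is, for each fixed $\omega>0$, the graph $\{B=g_{l,\pm}(A;\omega)\}$ of a real-analytic function, namely the locus where the Poincar\'e map $h_{(B,A)}$ fixes $\pm\frac{\pi}2$. Since $h_{(B,A)}$ depends analytically on $(B,A,\omega)$ and is strictly monotone in $B$ (Proposition~\ref{pmono}), the implicit function theorem makes $g_{l,\pm}$ jointly real-analytic in $(A,\omega)$, so that
$$\Psi\colon\rr_{>0}\times\rr\to\rr^3\subset\cc^3,\qquad
\Psi(\omega,A)=\Bigl(g_{l,\pm}(A;\omega),\,A,\,\tfrac1{2\omega}\Bigr),$$
is a real-analytic immersion (its last two components already have rank two) of a connected domain onto $\mcl_{l,\pm}$. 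Because $\mcl_{l,\pm}\subset\rr^3$ is totally real in $\cc^3$, any analytic set containing it must contain the whole two-dimensional complexification of $\Psi$; the latter is irreducible (connected source) and of pure dimension two, and it is therefore the minimal analytic set $\hmcl_{l,\pm}$.

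For the containment, fix $s\in\nn$ with $l>s$ and $l\equiv s(\modulo2)$ and a sign. By Theorem~\ref{doubint} there is an $\omega_0>0$ such that for all $\omega\in(0,\omega_0)$ the semiaxis $\La_l^+(\omega)$ meets $\partial L_{s,\pm}(\omega)$, and by Corollary~\ref{cdoubint} this intersection point depends analytically on $\omega$ and has $A>0$; in the $(\mu,r)$-plane it therefore traces a non-constant real-analytic arc $\gamma$. Each point of $\gamma$ is a genuine generalized simple intersection lying in $\si_{l,\pm}^0$: by Remark~\ref{simpoc} it is either such a point or a constriction, but a constriction of $L_s$ obeys $B/\omega\in[0,s]$ by~\cite{4}, impossible here since $B/\omega=l>s$; having $A\neq0$ it is not a growth point, and lying on $\partial L_{s,\pm}$ its Poincar\'e map fixes exactly $\pm\frac{\pi}2$, so the sign matches the subscript of $\si_{l,\pm}^0$.

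It then remains to apply $g_l$ and conclude by analytic continuation. On the one hand $\gamma\subset\si_{l,\pm}^0\subset\hsi_{l,\pm}$ gives $g_l(\gamma)\subset\wt\Gamma_{l,\pm}\subset\widehat\Gamma_{l,\pm}$; on the other hand each point of $\gamma$ maps to $(l\omega,2\mu\omega,\tfrac1{2\omega})\in\partial L_{s,\pm}(\omega)\times\{r\}\subset\mcl_{s,\pm}\subset\hmcl_{s,\pm}$. Thus the non-constant analytic arc $g_l(\gamma)$ lies in $\widehat\Gamma_{l,\pm}\cap\hmcl_{s,\pm}$. Since $\widehat\Gamma_{l,\pm}$ is an irreducible algebraic curve (Theorem~\ref{irred1} and the discussion preceding the proposition) and $\hmcl_{s,\pm}$ is a closed analytic set, their intersection is an analytic subset of the curve containing a set with an accumulation point, hence, by the identity theorem, all of $\widehat\Gamma_{l,\pm}$; that is, $\widehat\Gamma_{l,\pm}\subset\hmcl_{s,\pm}$. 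The case $s=0$, sign $+$, even $l\geqslant2$ is identical, the arc now coming from $\La_l^+\cap\partial L_{0,+}$ as furnished by the last part of Theorem~\ref{doubint}.

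The step I expect to be the real obstacle is not the concluding identity-theorem argument but the bookkeeping of the previous paragraph: one must be sure the points produced by Theorem~\ref{doubint} genuinely populate $\si_{l,\pm}^0$ \emph{with the correct sign}, i.e.\ that they are neither constrictions nor growth points and that the branch $\partial L_{s,\pm}$ on which they sit carries precisely the fixed point $\pm\frac{\pi}2$ indexing $\si_{l,\pm}^0$. This is exactly where the hypotheses $l>s$ and $l\equiv s(\modulo2)$ enter, through the constriction constraint $B/\omega\in[0,s]$ of~\cite{4} and the Klimenko description of the boundary components; the joint $\omega$-analyticity needed in the first paragraph is comparatively routine, following from analyticity of the monodromy in the parameters together with monotonicity.
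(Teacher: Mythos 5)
Your second half (the containment $\widehat\Gamma_{l,\pm}\subset\hmcl_{s,\pm}$) is correct and is in substance the paper's own argument: the arc of generalized simple intersections supplied by Theorem~\ref{doubint} and Corollary~\ref{cdoubint} lies both in $\mcl_{s,\pm}$ and in the irreducible algebraic curve $\widehat\Gamma_{l,\pm}$ (Theorem~\ref{irred1}), and an analytic subset of an irreducible curve that contains a non-discrete set must be the whole curve. Your bookkeeping is also the right one: not a constriction because constrictions of $L_s$ have $B/\omega\in[0,s]$ while here $B/\omega=l>s$; not a growth point because $A>0$; and the sign of the fixed point $\pm\frac{\pi}2$ matches the boundary label by Klimenko's description of $\partial L_{s,\pm}$.

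The genuine gap is in the first part, at the sentence claiming that the complexification of $\Psi$ \emph{``is therefore the minimal analytic set $\hmcl_{l,\pm}$''} and hence that $\hmcl_{l,\pm}$ is purely two-dimensional. What your totally-real argument actually gives is only one inclusion: every analytic set containing $\mcl_{l,\pm}$ contains the \emph{local} complexifications of $\Psi$. The union of these local complexifications is an immersed complex surface; nothing in your argument shows it is a closed analytic subset of $\cc^3$, and if it is not, the minimal analytic set $\hmcl_{l,\pm}$ is strictly larger than it. For a general connected totally real real-analytic surface $M\subset\rr^3\subset\cc^3$ the minimal analytic subset containing $M$ can perfectly well be all of $\cc^3$ (this happens whenever $M$ lies in no proper analytic subset, e.g.\ when the complexified parametrization develops a natural boundary), and since the only three-dimensional analytic subset of $\cc^3$ is $\cc^3$ itself, your argument as written cannot exclude $\hmcl_{l,\pm}=\cc^3$, i.e.\ it does not prove pure two-dimensionality. (Irreducibility does survive: each local complexification germ is irreducible, connectedness of $\mcl_{l,\pm}$ forces them all into one irreducible component of any analytic set containing $\mcl_{l,\pm}$, and that component already contains $\mcl_{l,\pm}$, so minimality makes it everything.) The missing ingredient --- and exactly how the paper closes this hole --- is Corollary~\ref{cormon}: $\mcl_{l,\pm}$ lies in the hypersurface $\Sigma_{\pm}\subset\cc^3_{(B,A,r)}$ of parameters whose Riccati monodromy fixes $\pm i$, which is a closed two-dimensional analytic subset. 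Minimality then gives $\hmcl_{l,\pm}\subset\Sigma_{\pm}$, hence $\dim\hmcl_{l,\pm}\leqslant 2$; combined with your lower bound (smoothness and real two-dimensionality of $\mcl_{l,\pm}$, i.e.\ Proposition~\ref{pmono}) this yields ``irreducible and purely two-dimensional''. With that one citation and the dimension cap inserted, the rest of your proof goes through.
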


\begin{proof} The minimal complex analytic subset $\widehat\mcl_{s,\pm}$ 
     containing 
    $\mcl_{s,\pm}$ is irreducible and at least two-dimensional, since 
    $\mcl_{s,\pm}$ is smooth and two-dimensional, by the last statement of Proposition \ref{pmono}. It cannot have 
    bigger dimension, since it is already contained in a two-dimensional 
    analytic subset $\Sigma_{\pm}$. This implies that it is purely 
    two-dimensional. Let us prove the inclusion (the second and third statements) of 
    Proposition~\ref{pris}. The surface $\mcl_{s,\pm}$ contains a family of 
    simple intersections in $\Lambda_l(\omega)$ with small $\omega$
     (Theorem~\ref{doubint}). The minimal complex analytic subset in $\cc^3$ 
    containing the above family of simple intersections is the curve 
    $\widehat\Gamma_{l,\pm}$, by definition and  irreducibility (Theorem~\ref{irred1}). 
    This implies that the irreducible complex surface $\hmcl_{s,\pm}$ contains 
    the irreducible curve $\widehat\Gamma_{l,\pm}$ and proves Proposition~\ref{pris}.
\end{proof}

\begin{proposition}
    \label{proreg}
    The analytic surface $\Sigma_{\pm}$ is regular at each its 
    real point corresponding to a simple intersection.
\end{proposition}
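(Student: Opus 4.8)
The plan is to produce a local holomorphic defining function for $\Sigma_\pm$ near the given real simple intersection and to show that its differential is non-zero there; in fact I would verify only that the partial derivative in $B$ does not vanish, which is precisely what the transversality Proposition~\ref{pmono} supplies.

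First I would fix a real simple intersection $p=(B_0,A_0,r_0)$ with $r_0>0$, corresponding to a boundary point $(B_0,A_0)\in\partial L_{s,\pm}$ whose Poincar\'e map fixes $\pm\frac{\pi}2(\modulo2\pi\zz)$; equivalently, the monodromy $\mon$ of the Riccati equation~\eqref{ricc} fixes the point $\pm i$. Writing $\mon$ as a M\"obius transformation whose coefficients depend holomorphically on $(B,A,r)$ near $p$ (with $r_0>0$, so $\omega=\frac1{2r}\neq0$), the value $\mon(\pm i)$ is holomorphic near $p$, the relevant denominator being non-zero since $\mon(\pm i)=\pm i\neq\infty$. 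I would then set
\[
    F_\pm(B,A,r):=\mon(\pm i)\mp i,
\]
a holomorphic function whose zero locus near $p$ is exactly $\Sigma_\pm$ (Corollary~\ref{cormon}). Regularity of $\Sigma_\pm$ at $p$ would then follow from $dF_\pm(p)\neq0$, and for this it suffices to prove $\partial_B F_\pm(p)\neq0$.

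The crux is to convert the real transversality of Proposition~\ref{pmono} into non-vanishing of this complex partial derivative. Here I would exploit that $\mon(\pm i)$ is holomorphic in the complex variable $B$, so its complex derivative at $B_0$ may be computed by approaching $B_0$ along the real axis; and along real parameters $\mon$ is the complexification of the Poincar\'e map $h_{(B,A)}$, with $\pm i=e^{\pm i\pi/2}$ the image of $\varphi=\pm\frac{\pi}2$. Consequently $\partial_B F_\pm(p)$ equals the velocity $\frac{d}{dB}[h_{(B,A)}(\pm i)]$ of the image of $\pm i$ under the Poincar\'e map as $B$ varies. Finally, the evaluation map sending a M\"obius circle transformation $g$ to $g(\pm i)$ is a submersion onto the circle whose fiber over $\pm i$ is precisely the hypersurface of transformations fixing $\pm i$; hence this velocity is non-zero exactly when $\partial_B h_{(B,A)}$ is transversal to that hypersurface, which is the content of Proposition~\ref{pmono}. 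This gives $\partial_B F_\pm(p)\neq0$, hence $dF_\pm(p)\neq0$, so $\Sigma_\pm$ is a smooth complex hypersurface at $p$.

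The main obstacle is exactly this real-to-complex passage: one must be certain that the transversality statement of Proposition~\ref{pmono}, which lives in the real group of M\"obius circle transformations, genuinely controls the complex differential of $F_\pm$. The resolution is that holomorphy of $\mon(\pm i)$ in $B$ forces the complex derivative $\partial_B F_\pm(p)$ to coincide with the ordinary real $B$-derivative of $h_{(B,A)}(\pm i)$, so that no complex-analytic input beyond Proposition~\ref{pmono} is required.
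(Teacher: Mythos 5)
Your proposal is correct and follows the paper's own route: the paper's proof of Proposition~\ref{proreg} is precisely the one-line deduction from Proposition~\ref{pmono}, and your argument supplies the details of that deduction (the local holomorphic defining function $F_\pm=\mon(\pm i)\mp i$ for $\Sigma_\pm$, the identification of the complex $B$-derivative at a real point with the real $B$-velocity of $h_{(B,A)}(\pm\frac{\pi}{2})$, and the reformulation of transversality via the evaluation submersion $g\mapsto g(\pm i)$). These details are sound, so the proof is a faithful, fleshed-out version of the paper's argument.
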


Proposition~\ref{proreg} follows from Proposition~\ref{pmono}.

\begin{proposition}
    \label{coincide}
    For every sign $\pm$ and every $s_1,s_2\in\nn$, $s_1-s_2$ being even, one 
    has $\hmcl_{s_1,\pm}=\hmcl_{s_2,\pm}$. The same statement holds for the 
    sign~"$+$" in the case, when $s_{1,2}\in\zz_{\geqslant0}$ and $s_i=0$ for some $i$.  For every sign $\pm$ and every $s_1,s_2\in\zz_{<0}$, $s_1-s_2$ being even, one 
    has $\hmcl_{s_1,\pm}=\hmcl_{s_2,\pm}$. The same statement holds for the 
    sign~"$-$" in the case, when $s_{1,2}\in\zz_{\leqslant0}$ and $s_i=0$ for some $i$. 
\end{proposition}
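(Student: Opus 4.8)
Proposition \ref{coincide} asserts that the irreducible two-dimensional analytic surfaces $\hmcl_{s,\pm}$ depend only on the parity class of $s$ (and on the sign $\pm$), after distinguishing nonnegative and negative $s$. The natural strategy is to exploit the already-established containments from Proposition \ref{pris} together with irreducibility. For nonnegative indices of the same parity, the plan is to show that any two surfaces $\hmcl_{s_1,\pm}$, $\hmcl_{s_2,\pm}$ each contain a common irreducible curve $\whg_{l,\pm}$ for a suitable $l$, and then invoke irreducibility and pure two-dimensionality to force the two surfaces to coincide.

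**The main argument for nonnegative same-parity indices.** Let me take $s_1,s_2\in\nn$ with $s_1\equiv s_2\,(\modulo 2)$, and fix a sign $\pm$. Choose any $l\in\nn$ with $l>\max(s_1,s_2)$ and $l\equiv s_1\,(\modulo 2)$. By Proposition \ref{pris}, both surfaces $\hmcl_{s_1,\pm}$ and $\hmcl_{s_2,\pm}$ contain the irreducible algebraic curve $\whg_{l,\pm}$. The key step is the following dimension-counting contradiction argument: if $\hmcl_{s_1,\pm}\neq\hmcl_{s_2,\pm}$, then since both are irreducible two-dimensional analytic sets, their intersection would be an analytic subset of dimension at most one, hence a countable union of points and curves. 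This intersection nonetheless contains the two-dimensional surface---wait, it contains only the curve $\whg_{l,\pm}$, which is consistent with dimension one. So the mere containment of one common curve is \emph{not} by itself enough; I need infinitely many distinct common curves, or a single surface contained in the intersection. The fix is to observe that the common curve $\whg_{l,\pm}$ exists for \emph{every} admissible $l$ of the correct parity exceeding $\max(s_1,s_2)$, giving infinitely many curves $\whg_{l,\pm}$ all lying in $\hmcl_{s_1,\pm}\cap\hmcl_{s_2,\pm}$. If the two surfaces were distinct, their intersection would be at most one-dimensional, so it could contain only finitely many irreducible curves of positive dimension within any compact region; but the Zariski closure of the union $\bigcup_{l}\whg_{l,\pm}$ is two-dimensional (it accumulates as $l\to\infty$ to fill a surface), contradicting one-dimensionality. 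Thus $\hmcl_{s_1,\pm}=\hmcl_{s_2,\pm}$.

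**Handling the $s=0$ endpoint and the negative indices via symmetry.** For the statement involving $s=0$ with sign $+$: Proposition \ref{pris} explicitly records that $\hmcl_{0,+}$ contains $\whg_{l,+}$ for every even $l\in\nn$, so the same infinitely-many-curves argument applies, identifying $\hmcl_{0,+}$ with $\hmcl_{s,+}$ for every even $s\in\nn$. The negative-index cases and the sign-$-$ endpoint case then follow by transporting the nonnegative results through the symmetry $(B,A)\mapsto(-B,A)$. By the last statement of Theorem \ref{intch}, this symmetry permutes the boundary curves $\partial L_{s,\pm}$ and $\partial L_{-s,\mp}$ for even $s$, and $\partial L_{s,\pm}$, $\partial L_{-s,\pm}$ for odd $s$. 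Lifting this to $\cc^3_{(B,A,r)}$ (the symmetry acts as $(B,A,r)\mapsto(-B,A,r)$ and preserves $r$, hence $\omega$), it induces an algebraic automorphism carrying each $\hmcl_{s,\pm}$ to $\hmcl_{-s,\mp}$ (even $s$) or $\hmcl_{-s,\pm}$ (odd $s$). Applying this automorphism to the already-proven equalities among nonnegative same-parity indices yields the corresponding equalities among negative same-parity indices, together with the sign-$-$ endpoint statement at $s=0$.

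**The expected main obstacle.** The delicate point is the dimension argument asserting that the Zariski closure of $\bigcup_l \whg_{l,\pm}$ is genuinely two-dimensional rather than a one-dimensional set accidentally swept out countably many times. I would justify this by noting that the curves $\whg_{l,\pm}$ live at the distinct abscissa levels $B=\tfrac{l}{2r}=l\omega$, and as $l$ ranges over an arithmetic progression the images $g_l(\hsi_{l,\pm})$ occupy a set of parameter values whose projection to, say, the $(B,r)$-plane is not contained in any single algebraic curve---because for each fixed small $\omega$ one obtains genuine generalized simple intersections at the axis $\La_l(\omega)$ for infinitely many $l$, and these fill out a two-dimensional region as $(\omega,l)$ vary. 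Making this accumulation rigorous---ruling out that all $\whg_{l,\pm}$ collapse into a one-dimensional family inside $\Sigma_{\pm}$---is the heart of the matter; everything else is formal manipulation with irreducibility, pure-dimensionality (Proposition \ref{pris}), and the symmetry of Theorem \ref{intch}.
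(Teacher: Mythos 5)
Your reduction of the negative-index cases via the symmetry $(B,A)\mapsto(-B,A)$ and Theorem \ref{intch} matches the paper, and you correctly notice that a single common curve $\widehat\Gamma_{l,\pm}$ cannot force two irreducible two-dimensional surfaces to coincide. But the fix you propose --- infinitely many common curves whose union has two-dimensional Zariski closure --- has a genuine gap, and in fact the key claim fails in the relevant category. The sets $\hmcl_{s,\pm}$ are defined as minimal \emph{analytic} (not algebraic) subsets of $\cc^3_{(B,A,r)}$, so if they were distinct, their intersection would be an analytic set of dimension at most one; such a set may perfectly well contain \emph{infinitely many} irreducible curves, provided the family is locally finite. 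And the family $\{\widehat\Gamma_{l,\pm}\}_l$ \emph{is} locally finite: every point of $\wt\Gamma_{l,\pm}=g_l(\hsi_{l,\pm})$ satisfies $2Br=l$ by the definition of $g_l$, hence $\widehat\Gamma_{l,\pm}$ lies in the hypersurface $\{2Br=l\}$; these hypersurfaces are pairwise disjoint, and any compact subset of $\cc^3$ meets only finitely many of them, since $|2Br|$ is bounded on it. Consequently $\bigcup_l\widehat\Gamma_{l,\pm}$ is itself a one-dimensional analytic subset of $\cc^3$ --- there is no accumulation ``filling a surface'' as $l\to\infty$ --- so no contradiction with one-dimensionality of the intersection arises. (The two-dimensionality you appeal to is what makes each $\hmcl_{s,\pm}$ two-dimensional, via the $\omega$-family of boundaries of a \emph{fixed} phase-lock area $L_s$, Proposition \ref{pmono}; it does not apply to the union over $l$ of the curves, which live on disjoint levels of $2Br$.)

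The missing idea, which is what the paper uses, is local and exploits the ambient hypersurface $\Sigma_\pm$ of parameters whose monodromy fixes $\pm i$ (Corollary \ref{cormon}). Fix one admissible $l$ and a real point $p\in\wt\Gamma_{l,\pm}$ corresponding to a generalized simple intersection. By Proposition \ref{proreg} (a consequence of the transversality statement of Proposition \ref{pmono}), the surface $\Sigma_\pm$ is \emph{regular} at $p$, with a two-dimensional germ there. Each $\hmcl_{s_i,\pm}$ is purely two-dimensional (Proposition \ref{pris}), is contained in $\Sigma_\pm$, and passes through $p$, since it contains $\widehat\Gamma_{l,\pm}$; a purely two-dimensional analytic subset of a regular two-dimensional germ passing through its center must equal that whole germ. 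Hence both surfaces $\hmcl_{s_1,\pm}$ and $\hmcl_{s_2,\pm}$ have at $p$ the same germ as $\Sigma_\pm$; being irreducible and sharing a two-dimensional germ, they coincide. This germ-coincidence step is what your argument needs in place of the Zariski-closure count, and with it a single common curve $\widehat\Gamma_{l,\pm}$ (one value of $l$) already suffices.
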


\begin{proof} Let us prove only the  statements of the 
proposition for $s_1,s_2\geqslant0$; its  statements for $s_1,s_2\leqslant0$ 
then 
follow by symmetry and Theorem \ref{intch}. 

    The surfaces $\hmcl_{s_i,\pm}$ contain the irreducible analytic curve 
    $\widehat\Gamma_{l,\pm}$, whenever $l$ is greater than each $s_i$ and has the same 
    parity (Proposition~\ref{pris}). Their germs at each point  $p\in\wt\Gamma_{l,\pm}$ 
    representing a generalized simple intersection coincide with the  germ of the 
    ambient  surface $\Sigma_{\pm}\supset\hmcl_{s_i,\pm}$, since the latter germ is 
    regular and  two-dimensional (Proposition \ref{proreg}). Finally, the surfaces 
    $\hmcl_{s_i,\pm}$, $i=1,2$, are irreducible and have the same germ at $p$. 
    Therefore, they coincide. This proves Proposition~\ref{coincide}.
\end{proof} 

\begin{proposition}
    \label{co+-}
    For every $s\in\nn$ one has $\hmcl_{s,\pm}=\hmcl_{-s,\pm}$. 
\end{proposition}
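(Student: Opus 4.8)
Here is how I would attack Proposition~\ref{co+-}.

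The plan is to reduce the statement to the same mechanism already used in the proof of Proposition~\ref{coincide}. By Corollary~\ref{cormon}, Proposition~\ref{pris} and their mirror images under Theorem~\ref{intch}, both $\hmcl_{s,+}$ and $\hmcl_{-s,+}$ are irreducible two-dimensional analytic subsets of one and the same ambient surface $\Sigma_+$, and $\Sigma_+$ is regular at each of its real simple-intersection points (Proposition~\ref{proreg}). Hence it suffices to exhibit a single irreducible curve $C$ contained in $\hmcl_{s,+}\cap\hmcl_{-s,+}$ and passing through such a regular point: there both surfaces acquire the germ of the regular two-dimensional germ of $\Sigma_+$, and two irreducible surfaces sharing a germ must coincide. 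Thus the whole problem is \emph{to find a curve common to the positive and the negative family}.

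The natural candidate for $C$ is a complexified spectral curve $\whg_{l,+}$ for a suitable $l\equiv s\ (\modulo 2)$, $l>s$. By Proposition~\ref{pris} we already have $\whg_{l,+}\subset\hmcl_{s,+}$, so the task is to place $\whg_{l,+}$ in $\hmcl_{-s,+}$ as well. Here I would use the involution $\iota\colon(\mu,r)\mapsto(-\mu,-r)$ of $\cc^2_{(\mu,r)}$. Since $P_l(\la,\mu^2)=P_l(r^2-\mu^2,\mu^2)$ depends only on $\mu^2$ and $r^2$, the curve $\widehat\Gamma_l=\pi^{-1}(\Gamma_l)$ is $\iota$-invariant, and a direct check gives $g_l\circ\iota=\rho\circ g_l$ with $\rho\colon(B,A,r)\mapsto(-B,A,-r)$. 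The map $\rho$ is the complexification of the reflection $\sigma_A\colon(B,A)\mapsto(-B,A)$ of Theorem~\ref{intch} composed with the time-reversal $\omega\mapsto-\omega$; the reflection carries the $+$-family over $L_s$ to the $+$-family over $L_{-s}$ for odd $s$ (Theorem~\ref{intch}), and the time-reversal preserves the condition that the monodromy fixes $+i$. So $\rho$ carries the positive $+$-family to the negative $+$-family.

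First I would settle the odd case. For odd $l$ the reflection $\sigma_A$ preserves the ``$+$'' marking (Theorem~\ref{intch}), which translates into $\iota$ \emph{preserving} the component $\hsi_{l,+}$; equivalently $\rho$ fixes $\whg_{l,+}=g_l(\hsi_{l,+})$. Then $\whg_{l,+}=\rho(\whg_{l,+})\subset\rho(\hmcl_{s,+})=\hmcl_{-s,+}$, so $C=\whg_{l,+}$ is a common curve through a real simple intersection, and the reduction above closes the odd case.

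The even case is where I expect the real difficulty. For even $l$ the reflection $\sigma_A$ \emph{swaps} the two markings, so $\iota$ interchanges $\hsi_{l,+}$ and $\hsi_{l,-}$ and hence $\rho(\whg_{l,+})=\whg_{l,-}$ lands in $\hmcl_{-s,-}$, not in $\hmcl_{-s,+}$; moreover the positive-axis curves inside $\hmcl_{s,+}$ lie in $\{2Br=l\}$ while the negative-axis curves inside $\hmcl_{-s,+}$ lie in $\{2Br=-l\}$, so they are disjoint and no $\whg$-type curve is manifestly common to the two even surfaces. This parity-dependent sign bookkeeping is precisely the obstacle. To overcome it I would look for a common curve of a different nature — for instance the growth-point (or constriction) locus, which lies in $\Sigma_+$ and, because at such a point the Poincar\'e map is the identity and fixes both $\pm i$, belongs simultaneously to several of the families — or, alternatively, I would try to show that the ``even part'' of $\Sigma_+$ is irreducible of dimension two, forcing the two irreducible surfaces $\hmcl_{s,+}$ and $\hmcl_{-s,+}$ contained in it to coincide. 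Controlling the $\pm i$ fixed point of the monodromy along these continuations, and thereby pinning down the sign label throughout, is the step I expect to require the most care.
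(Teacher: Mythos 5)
Your opening reduction is exactly the paper's: both surfaces are irreducible, two\-/dimensional, and contained in $\Sigma_{\pm}$, so it suffices to exhibit one irreducible curve lying in both and passing through a real point where $\Sigma_{\pm}$ is regular. The trouble is that neither of your two cases actually produces such a curve. In the odd case the unproved step is $\rho(\hmcl_{s,+})=\hmcl_{-s,+}$. Write $\rho=\sigma\circ\tau$, where $\sigma\colon(B,A,r)\mapsto(-B,A,r)$ and $\tau\colon(B,A,r)\mapsto(B,A,-r)$. The complexification of Theorem~\ref{intch} does give $\sigma(\hmcl_{s,+})=\hmcl_{-s,+}$ for odd $s$ (a biholomorphism of $\cc^3$ sending $\mcl_{s,+}$ onto $\mcl_{-s,+}$ sends the minimal analytic sets containing them onto each other), so your claim is equivalent to $\tau(\hmcl_{s,+})=\hmcl_{s,+}$. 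Your justification of the latter --- that time reversal preserves the condition that the monodromy fixes $+i$ --- is true (it is realized by $z\mapsto z^{-1}$, which inverts the monodromy and hence preserves its fixed points), but it only yields $\tau(\Sigma_+)=\Sigma_+$. Now $\hmcl_{s,+}$ is a single irreducible component of $\Sigma_+$, and $\tau$ maps the half-space $\{r>0\}$, which carries all the real boundary points, into $\{r<0\}$, which carries none; a priori $\tau$ could permute the components of $\Sigma_+$ and send $\hmcl_{s,+}$ to a component containing no real boundary points at all (whether such components exist is precisely the paper's open Question 4). The same structural objection applies to your claim that Theorem~\ref{intch} ``translates into'' $\iota$-invariance of $\hsi_{l,+}$: that invariance happens to be provable algebraically, but it does not follow from the real symmetry, which again only concerns $\{r>0\}$. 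To pin down $\tau(\hmcl_{s,+})$ one needs a $\tau$-invariant irreducible curve inside $\hmcl_{s,+}$ through a regular point of $\Sigma_+$, and $\widehat\Gamma_{l,+}$ cannot play this role: it lies in $\{2Br=l\}$ while its $\tau$-image lies in $\{2Br=-l\}$. So the device you postpone to the even case is already needed in the odd case; and the even case itself you leave as two suggested directions rather than a proof.

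The first of your two suggestions is the paper's actual proof, and it settles both parities at once, with no spectral curves and no sign bookkeeping. For $s\in\nn$ the growth point of $L_{\pm s}$ has abscissa $\pm\sqrt{s^2\omega^2+1}$, see \cite[corollary 3]{buch1}, and it lies on all the boundary curves $\partial L_{\pm s,+}$ and $\partial L_{\pm s,-}$, because there the Poincar\'e map is the identity and so fixes both points $\pm\frac{\pi}2$. Hence each of the four surfaces $\hmcl_{\pm s,\pm}$ contains a one-parameter real family of points of one and the same irreducible complex quadric $G_s=\{A=0,\ B^2=s^2\omega^2+1\}\subset\cc^3_{(B,A,r)}$, $\omega=\frac1{2r}$: the complex curve does not remember which real branch $B=\pm\sqrt{s^2\omega^2+1}$ the points came from. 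By irreducibility each of the four surfaces therefore contains all of $G_s$. Since $\Sigma_{\pm}$ is regular at the real points of $G_s$ (same argument as Proposition~\ref{proreg}), at the point $q_-\in G_s$ with $B=-\sqrt{s^2\omega^2+1}$ the germs of the purely two-dimensional sets $\hmcl_{s,\pm}$ and $\hmcl_{-s,\pm}$ both coincide with the regular germ of $\Sigma_{\pm}$, and two irreducible analytic sets with a common germ coincide. Note also that $G_s$ is invariant under $\sigma$, $\tau$ and $\rho$, so this single curve simultaneously supplies the invariance statement missing from your odd case. In short, the idea you name in passing is the whole proof; what is missing in your proposal is its execution, and without it neither parity case is complete.
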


\begin{proof}
    Recall that the growth point of the phase-lock area $L_s$, $s\in\nn$,~i.\,e., 
    its intersection with the horizontal $B$-axis has abscissa $B_s(\omega)\sign(s)$, 
    \[
        B_s(\omega):=\sqrt{s^2\omega^2+1},
    \]
    see \cite[corollary 3]{buch1}. Therefore, the corresponding complex surface $\hmcl_{s,\pm}$ contains 
    the algebraic curve  
    \[
        G_s:=\{ A=0, \ B^2=s^2\omega^2+1\}\subset\cc^3_{(B,A,\frac1{2\omega})}.
    \]
    The curve $G_s$ is obviously irreducible and contained in both surfaces 
    $\hmcl_{s,\pm}$ and $\hmcl_{-s,\pm}$. For every given $\omega>0$ the curve $G_s$ 
    contains two real symmetric points $q_+$ and $q_-$ with coordinates 
    $B= B_s(\omega)$ and $B=-B_s(\omega)$. The germ of the ambient surface 
    $\Sigma_{\pm}$ at a real point in $G_s$ (i.\,e., at a point corresponding 
    to a growth point) is regular, as in Proposition~\ref{proreg}. This implies 
    that the germ of the surface $\hmcl_{s,\pm}$ at $q_+$ and its germ obtained 
    by analytic extension to $q_-$ along a path in $G_s$ coincide with the germs 
    of the ambient surface $\Sigma_{\pm}$, as in the proof of 
    Proposition~\ref{coincide}. Similar statement should hold for the surface 
    $\hmcl_{-s,\pm}$. Finally, the irreducible surfaces $\hmcl_{s,\pm}$ and 
    $\hmcl_{-s,\pm}$ have coinciding germs at the point $q_-$. Hence, they 
    coincide. This proves the proposition.
\end{proof}

\begin{proposition}
    \label{irrtot}
    The surfaces $\hmcl_{\pm}^{even (odd)}$ are irreducible.
\end{proposition}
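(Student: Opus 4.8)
The plan is to show that each of the four surfaces $\hmcl_\pm^{even}$, $\hmcl_\pm^{odd}$ in fact coincides with a single one of the already-understood irreducible surfaces $\hmcl_{s,\pm}$, so that irreducibility becomes an immediate consequence of Proposition \ref{pris}. The argument is purely formal once the coincidence results of Propositions \ref{coincide} and \ref{co+-} are assembled; no new geometric input is needed.

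First I would record the set-theoretic identity $\mcl_\pm^{even}=\bigcup_{s\ \mathrm{even}}\mcl_{s,\pm}$ (and likewise with "odd"). This is immediate from the description (i) in Subsection 1.4: the graph $\partial L_{s,\pm}$ consists precisely of those boundary points of $L_s$ at which the Poincar\'e map fixes $\pm\frac{\pi}2(\modulo 2\pi\zz)$. Hence $\mcl_\pm^{even}$ is exactly the union, over all even $s\in\zz$, of the two-dimensional pieces $\mcl_{s,\pm}$, and its minimal analytic hull $\hmcl_\pm^{even}$ is the minimal analytic subset containing this union.

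Next, using Propositions \ref{coincide} and \ref{co+-}, I would chain the equalities $\hmcl_{s,\pm}=\hmcl_{s',\pm}$ to conclude that all the surfaces $\hmcl_{s,\pm}$ with $s$ even, as $s$ ranges over all of $\zz$, coincide with one common irreducible surface, call it $\mathcal{S}_\pm^{even}$. Concretely, Proposition \ref{coincide} gives this coincidence separately within $s\geqslant0$ and within $s\leqslant0$ (with the stated sign-dependent conventions at the endpoint $s=0$), while Proposition \ref{co+-} bridges the two halves through $\hmcl_{s,\pm}=\hmcl_{-s,\pm}$. The same chaining applies verbatim to odd indices, producing a common surface $\mathcal{S}_\pm^{odd}$.

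Finally I would close by a minimality sandwich. Since each $\mcl_{s,\pm}\subseteq\hmcl_{s,\pm}=\mathcal{S}_\pm^{even}$, the whole union $\mcl_\pm^{even}$ lies in the analytic set $\mathcal{S}_\pm^{even}$, so by minimality $\hmcl_\pm^{even}\subseteq\mathcal{S}_\pm^{even}$. Conversely, fixing any single even $s$, the analytic set $\hmcl_\pm^{even}$ contains $\mcl_{s,\pm}$, hence it contains the minimal analytic set $\hmcl_{s,\pm}=\mathcal{S}_\pm^{even}$; therefore $\hmcl_\pm^{even}=\mathcal{S}_\pm^{even}$, which is irreducible by Proposition \ref{pris}. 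I expect the only point deserving care to be the chaining step: one must check that the piecewise equalities of Propositions \ref{coincide} and \ref{co+-} (positive part, negative part, and the sign-sensitive $s=0$ endpoints) really link \emph{all} even, respectively odd, indices $s\in\zz$ into one equivalence class for each fixed sign $\pm$. Once this bookkeeping is verified, irreducibility follows with no further work.
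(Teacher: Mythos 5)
Your proposal is correct and follows essentially the same route as the paper's proof, which likewise derives the statement by combining Propositions~\ref{coincide} and~\ref{co+-} to identify all the surfaces $\hmcl_{s,\pm}$ of a fixed parity and sign. The paper leaves the chaining across $s=0$ and the minimality sandwich implicit, whereas you spell them out; this is just added detail, not a different argument.
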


\begin{proof}
    For every $s_1,s_2\in\zz$ having the same parity the minimal analytic sets $\hmcl_{s_j,\pm}$ 
    containing the real surfaces $\mcl_{s_j,\pm}$, $j=1,2$, coincide, 
    by Propositions ~\ref{coincide} and~\ref{co+-}. 
    This implies the statement of Proposition \ref{irrtot}.
\end{proof}

\begin{proposition}
    \label{disteo}
    The four irreducible surfaces $\hmcl_{\pm}^{even (odd)}$ are distinct. 
\end{proposition}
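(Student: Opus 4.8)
The plan is to show the four surfaces are pairwise distinct by producing, for each comparison, a holomorphic function on $\cc^3_{(B,A,r)}$ that is forced to take incompatible constant values on the two candidate surfaces. Recall from Corollary~\ref{cormon} that $\hmcl_{+}^{even}$ and $\hmcl_{+}^{odd}$ both lie in $\Sigma_{+}=\{\mon(i)=i\}$, whereas $\hmcl_{-}^{even}$ and $\hmcl_{-}^{odd}$ both lie in $\Sigma_{-}=\{\mon(-i)=-i\}$. Accordingly the six pairwise comparisons split into two groups: separating the two surfaces of a \emph{fixed} sign (a parity question), and separating two surfaces of \emph{opposite} signs.

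For the parity question I would introduce the \emph{half-monodromy} $\mon_{1/2}$ of the Riccati equation~\eqref{ricc}, namely the transformation of the $\Phi$-line $\oc$ obtained by continuing solutions along the upper unit semicircle $z=e^{i\tau}$, $\tau\in[0,\pi]$, from $z=1$ to $z=-1$. Since~\eqref{ricc} is regular away from $z=0,\infty$, this $\mon_{1/2}$ is a well-defined M\"obius transformation depending holomorphically on $(B,A,r)$, with invertible defining matrix. Proposition~\ref{halfp} pins down its action on $\{\pm i\}$ over the real boundary surfaces: for a real point of $\partial L_{s,+}$ the orbit issued from $\varphi=\tfrac{\pi}2$ (that is, $\Phi=i$) returns to $\varphi=\tfrac{\pi}2$ at $\tau=\pi$ when $s$ is even and to $\varphi=-\tfrac{\pi}2$ when $s$ is odd, which through $\Phi=e^{i\varphi}$ reads
\[
    \mon_{1/2}(i)=i \text{ on } \mcl_{+}^{even}, \qquad \mon_{1/2}(i)=-i \text{ on } \mcl_{+}^{odd},
\]
and symmetrically $\mon_{1/2}(-i)=-i$ on $\mcl_{-}^{even}$ and $\mon_{1/2}(-i)=i$ on $\mcl_{-}^{odd}$. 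Each such equality is a single holomorphic equation on $\cc^3$. Since $\hmcl_{\pm}^{even/odd}$ is by definition the \emph{minimal} analytic subset containing the corresponding real surface, the intersection of $\hmcl$ with the hypersurface cut out by that equation still contains the real surface, hence by minimality equals $\hmcl$; so the equation holds identically on the whole complex surface. Thus an equality $\hmcl_{+}^{even}=\hmcl_{+}^{odd}$ would force $\mon_{1/2}(i)$ to equal both $i$ and $-i$, which is absurd, and likewise with $-i$ for the minus-surfaces.

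For opposite signs I would exploit genuine parabolicity. By Remark~\ref{rklim} together with statement~(iii) of Subsection~1.4, every boundary point is either a parabolic map with a single fixed point among $\pm\tfrac{\pi}2$, or the identity, the latter occurring only on the proper lower-dimensional locus of constrictions and growth points. Hence a generic point of $\mcl_{+}^{even}$ (for instance a non-exceptional point of $\partial L_{0,+}$) gives a truly parabolic Poincar\'e map whose \emph{unique} fixed point is $\tfrac{\pi}2$, i.e.\ $\Phi=i$; its monodromy fixes $i$ but not $-i$, so the point lies in $\Sigma_{+}\setminus\Sigma_{-}$. Consequently $\hmcl_{+}^{even}\not\subset\Sigma_{-}$, while both minus-surfaces lie in $\Sigma_{-}$, ruling out equality of $\hmcl_{+}^{even}$ with $\hmcl_{-}^{even}$ or $\hmcl_{-}^{odd}$; the symmetric use of a parabolic point on each remaining surface disposes of the other mixed-sign pairs.

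The main obstacle, and the step I would write out most carefully, is the propagation of the half-period identities from the real locus to the entire complex surface: one must verify that $\{\mon_{1/2}(i)=i\}$ is genuinely a global analytic subset of $\cc^3$ — which follows by writing the condition as the polynomial equation $b+c+(a-d)i=0$ in the entries $\left(\begin{smallmatrix}a&b\\c&d\end{smallmatrix}\right)$ of $\mon_{1/2}$ — and then invoke minimality of $\hmcl$ to upgrade ``holds on $\mcl$'' to ``holds on $\hmcl$''. A convenient safeguard is that the two parity conditions in matrix form together force $\det\mon_{1/2}=0$, contradicting invertibility of the monodromy even without evaluating at a point. The only other thing to check is the genericity claim in the mixed-sign case, namely that the identity-map locus is a proper subset of each real boundary surface, which is precisely statement~(iii).
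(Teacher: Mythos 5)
Your proof is correct, but the heart of it --- separating $\hmcl_{\pm}^{even}$ from $\hmcl_{\pm}^{odd}$ --- runs along a genuinely different line than the paper's. For the mixed-sign pairs you and the paper do essentially the same thing: both rely on the inclusions $\hmcl_{\pm}^{even(odd)}\subset\Sigma_{\pm}$ from Corollary \ref{cormon}, and where the paper dismisses distinctness of opposite-sign surfaces in one line (``as are the ambient surfaces $\Sigma_{\pm}$''), your parabolicity argument --- a generic point of $\partial L_{0,+}$ is genuinely parabolic with $i$ as its unique fixed point, hence lies in $\Sigma_+\setminus\Sigma_-$ --- is exactly the justification that one-liner tacitly needs, so there you are filling in rather than diverging. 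For the equal-sign pairs the paper argues quite differently: for $q\in\Sigma_{\pm}$ it takes the solution $f_q$ of (\ref{ricc}) with $f_q(1)=\pm i$, meromorphic on $\cc^*$, and uses the index (winding number) $\rho_q$ of $f_q$ along the unit circle; Proposition \ref{ppar} shows that off a nowhere dense locus $\mathcal S$ (where $f_q$ has zeros or poles on $S^1$) this index is even on $\hmcl_{\pm}^{even}$ and odd on $\hmcl_{\pm}^{odd}$, the key point being that the parity of $\rho_q$ cannot jump across $\mathcal S$ because zeros and poles of $f_q$ on $S^1$ come in conjugate pairs of equal multiplicity, by the complexified Klimenko symmetry $(\Phi,z)\mapsto(-\Phi^{-1},z^{-1})$. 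Your half-monodromy argument replaces this topological invariant by the fixed-point data of $\mon_{1/2}$, propagated from the real locus to $\hmcl$ by minimality; note that both arguments ultimately rest on the same half-period symmetry --- you invoke Proposition \ref{halfp} directly, the paper invokes its complexification. What your route buys is economy: no winding numbers and no analysis of the exceptional locus $\mathcal S$, only the entire matrix entries of $\mon_{1/2}$ (holomorphy in $(B,A,r)$ holds because system (\ref{tty}), rewritten with $r=\frac1{2\omega}$, has coefficients polynomial in the parameters) together with your determinant safeguard, which correctly excludes the degenerate case. What the paper's route buys is a reusable invariant: the index parity is defined on all of $\Sigma_{\pm}\setminus\mathcal S$, not merely on the boundary surfaces, and the paper exploits precisely this later, in the splitting of $\Sigma_{\pm}$ into even and odd parts in Section 5 and in the Addendum to Theorem \ref{irred3}.
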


\begin{proof}
    It is clear that the above surfaces with different signs indices should be 
    distinct, as are the ambient surfaces $\Sigma_{\pm}$. Let us prove that 
    $\hmcl_{\pm}^{even}\neq\hmcl_{\pm}^{odd}$. 

    For every point $q\in\Sigma_{\pm}$ let $f_q(z)$ denote the corresponding 
    solution of the Riccati equation (\ref{ricc}) with 
    $f_q(1)=\pm i$. It is  meromorphic on $\cc^*$, since the monodromy fixes the 
    point $\pm i$ and hence, fixes its initial branch at $z=1$. 
    In the case, when the function $f_q$ has neither zeros, nor 
    poles in the unit circle $S^1=\{ |z|=1\}$, let $\rho_q$ denote the index 
    of the restriction $f_q: S^1\to\cc^*$: the argument increment divided by $2\pi$. 

    \begin{proposition}
        \label{ppar} 
        Each surface $\hmcl_{\pm}^{even (odd)}$ contains a nowhere 
        dense real-analytic subset $\mathcal S$ consisting of those points $q$ 
        for which the function $f_q(z)$ has either zeros, or poles in $S^1$. 
        For $q\in\hmcl_{\pm}^{even (odd)}\setminus\mathcal S$ the indices $\rho_q$ are even (respectively, odd). 
    \end{proposition}

    \begin{proof}
        In the case, when $q$ is a real point, the function $f_q$ takes values 
        in the unit circle on $S^1$ (thus, has neither zeros, nor poles there), 
        and  the number $\rho_q$ is clearly equal to the rotation number of the 
        corresponding dynamical system~\eqref{josvec} on torus. This together with 
        irreducibility implies 
        the first statement of the proposition. For every $q\in\mathcal S$ if 
        the function $f_q(z)$ has a zero at a point $z$ of the unit circle, 
        then it has a pole at its conjugate $z^{-1}=\bar z$, and the 
        multiplicities of zero and pole are equal. This follows from the 
        symmetry of the Riccati equation~\eqref{ricc} under the involution 
        $(\Phi,z)\mapsto(-\Phi^{-1},z^{-1})$, which is the complexification of 
        the symmetry $(\varphi,\tau)\mapsto(\pi-\varphi,-\tau)$ of equation~\eqref{jostor}. 
        The above statement together with symmetry imply that when $q$ crosses 
        the real codimension one hypersurface $\mathcal S$, the parity of the 
        index $\rho_q$ does not change. This proves the proposition.
    \end{proof}

    Proposition~\ref{ppar} implies Proposition~\ref{disteo}.
\end{proof}

Proposition \ref{disteo}~implies Theorem~\ref{irred2}. 

\section{
    Spectral curves: genera and real parts 
 (I.\,V.\,Netay)
 }\label{genera}

Let us consider some other curves birationally equivalent to~$\Gamma_l$ but
more convenient. Let us introduce an $l\times l$-matrix
\[
	\mathcal{G}_l =
	\begin{pmatrix}
		0 & \ldots & 0 & \mu \\
		\vdots && \rotatebox{75}{$\ddots$} & -(l-1) \\
		0 &\rotatebox{75}{$\ddots$} & \rotatebox{75}{$\ddots$} & \\
		\mu & -1& & 0\\
	\end{pmatrix}.
\]

The following relation (found in~\cite[\S3,~eq.~(30)]{bt0}) holds:
\[
    H_l + (r^2 - \mu^2)\operatorname{Id} =
    -(\mathcal{G}_l + r\operatorname{Id})
     (\mathcal{G}_l - r\operatorname{Id})
\]
for a variable~$r$. Let us put~$\lambda = r^2 - \mu^2$.
Then we obtain
\[
    \det(H_l + \lambda \operatorname{Id}) = (-1)^{l} 
    \det(\mathcal{G}_l + r\operatorname{Id}) 
    \det(\mathcal{G}_l - r\operatorname{Id}).
\]
Then the mapping
\[
    \pi\colon (\mu,r) \mapsto (\lambda = r^2 - \mu^2, \mu)
\]
maps points of the curve~$\{
    \det(\mathcal{G}_l + r\operatorname{Id}) 
    \det(\mathcal{G}_l - r\operatorname{Id}) = 0
\}$ of affine plane~$\mathbb{A}_{\mu,r}$ to points 
of the curve~$\Gamma_l=\{\det(H_l + \lambda\operatorname{Id})=0\}\subset\mathbb A_{\la,\mu}$.

Denote the curves~$\{\det(\mathcal{G}_l \pm r\operatorname{Id})=0\}$ by~$\Xi_l^\pm$. 
 Then $\pi^{-1}(\Gamma_l)$ is the union of the curves $\Xi_l^{\pm}$. 
On the other hand, the latter preimage  is the union of two irreducible 
curves $\hsi_{l,\pm}$  (Theorem~\ref{irred1}). 
Hence, the curves $\Xi_{l}^{\pm}$ are irreducible and coincide with the curves 
$\hsi_{l,\pm}$ (up to transposition, i.\,e., sign change), and 
   ~$\pi$ maps each of~$\Xi_l^-$ and~$\Xi_l^+$ birationally to~$\Gamma_l$. 

Let us look at the projective closures $\overline{\Xi_l^{\pm}}$ of some of the curves~$\Xi_l^\pm$:
\begin{itemize}
    \item[$l=1$]: the curve is a line;
    \item[$l=2$]: the curve is a smooth conic and is rational;
    \item[$l=3$]: the curve is a singular cubic and therefore is rational;
    \item[$l=4$]: it is a quartic with two simple self-intersections, and hence,  an elliptic curve.
\end{itemize}

Consider the following diagram of morphisms:
\[
    \xymatrix{
        \widehat{\Xi}_l^\pm \ar[d]\ar@{^{(}->}[r] &
        \mathbb{P}^1\times\mathbb{P}^1 & 
        X_7 \ar[l]_-{\operatorname{Bl}_{\{\theta=0\}}}\ar@/_1pt/[d]^{\operatorname{Bl}_{p_\pm}}
        \\
        \Xi_l^\pm \ar[d]_-{2:1}\ar@{^{(}->}[r] &
        \mathbb{A}_{\mu,r}^2 \ar@{->>}[d]|- {\ \ \pi\colon}^-{\ \  r\mapsto\lambda=r^2-\mu^2}\ar@{->}[u]\ar@{^{(}->}[r]&
        \mathbb{P}_{\mu:r:\theta}^2 \ar@{-->}[d]^{\widetilde{\pi}}\ar@{-->}[ul]
        \\
        \Gamma_l \ar@{^{(}->}[r] &
        \mathbb{A}_{\lambda,\mu}^2 \ar@{^{(}->}[r]&
        \mathbb{P}_{\lambda:\mu:\zeta}^2 &
        \\
    }
\]
Let us describe the diagram. 
The natural compactification of the curve~$\Gamma_l$ lies 
in~$\mathbb{P}_{\lambda:\mu:\zeta}^2$.
At the same time the natural compactification of the curve~$\Xi_l^\pm$ lies in~$\mathbb{P}_{\mu:r:\theta}^2$.
The rational map~$\wt\pi:\xymatrix{\mathbb{P}_{\mu:r:\theta}^2 \ar@{-->}[r] & \mathbb{P}_{\lambda:\mu:\zeta}^2}$
is defined as the map $\pi$ on an open subset, the affine chart~$\mathbb{A}_{\mu,r}^2=\cc^2_{(\mu,r)}$, and has degree~$2$.
It is easy to define the map in homogeneous coordinates:
\[
 \widetilde\pi:   \begin{cases}
        \lambda & \mapsto r^2 - \mu^2, \\
        \mu & \mapsto \mu\theta, \\
        \theta & \mapsto \theta^2.\\
    \end{cases}
\]
The rational mapping $\widetilde\pi$ is not well-defined exactly 
at two points, namely,~$(\mu:r:\theta) = (1:\pm1:0)$. Let us denote these points
by~$p_\pm$ correspondingly.

If we blow them up, we get del Pezzo surface $X_7$ of degree~$7$. It has three
$(-1)$-curves: preimages of points $p_\pm$ (which will be denoted by $C_{\pm}$) 
 and the strict transform $C_{\infty}$ of the line
$\{\theta=0\}$ passing through them. If we blow $C_{\infty}$ down, 
then we get del Pezzo surface of degree~$8$, namely,~$\mathbb{P}^1\times\mathbb{P}^1$.
Consider the composition $\mathbb{A}_{\mu,r}^2 \to \mathbb{P}^1\times\mathbb{P}^1$ of 
the inclusion $\mathbb{A}_{\mu,r}^2\to\mathbb P^2_{\mu:r:\theta}$ with the above blows up and down. It  
is well-defined, because the points blown up lie at  infinity. Preimages of the 
two line families~$\{\mathrm{pt}\}\times\mathbb{P}^1$ and~$\mathbb{P}^1\times\{\mathrm{pt}\}$
are lines~$r\pm\mu=\operatorname{const}$.

\begin{proposition} \label{prosi}
    The only real singular points of projective closures~$\overline{\Xi_l^\pm}$ are~$(1:\pm1:0)$
    (except smooth curves for $l=1,2$ and the smooth point $p_+$ for $\Xi_l^+$ 
    and $p_-$ for $\Xi_l^-$ for $l=3$).
    In particular, the affine curves~$\Xi_l^\pm(\mathbb{R})$ are smooth.
\end{proposition}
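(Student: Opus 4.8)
The plan is to separate the affine and infinite parts of the real singular locus and treat them by different tools. Write $Q^{\pm}(\mu,r)=\det(\mathcal{G}_l\pm r\operatorname{Id})$, so that $\Xi_l^{\pm}=\{Q^{\pm}=0\}$. A real point of $\Xi_l^{\pm}$ can be singular only where $\partial_r Q^{\pm}=0$, that is, where $\mp r$ is a multiple eigenvalue of $\mathcal{G}_l(\mu)$; elsewhere the implicit function theorem already gives a smooth local graph $r=r(\mu)$. Thus the affine part reduces to showing that $\mathcal{G}_l(\mu)$ has simple spectrum for every real $\mu$. For $\mu\neq0$ this is immediate from the relation $\mathcal{G}_l^2=\mu^2\operatorname{Id}-H_l$ (equivalent to the factorisation recalled above): the eigenvalues of $\mathcal{G}_l(\mu)$ are square roots of the eigenvalues $\mu^2-\nu_i$ of $\mathcal{G}_l^2$, and since the $\nu_i$ are real and pairwise distinct (Theorem~\ref{tsimple}) the $l$ numbers $\mu^2-\nu_i$ are distinct, so the $l$ eigenvalues of $\mathcal{G}_l(\mu)$ have distinct squares, hence are distinct.

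\textbf{The line $\{\mu=0\}$.} This is the subtle case, since there $H_l$ has double eigenvalues and $\Gamma_l$ is genuinely singular (its nodes on the $\lambda$-axis). The key observation is that $\mathcal{G}_l(0)$ is the explicit weighted reversal $e_1\mapsto 0$, $e_j\mapsto-(j-1)e_{l+2-j}$. This operator preserves each plane $\langle e_j,e_{l+2-j}\rangle$, acting on it by $\bigl(\begin{smallmatrix}0&-(l+1-j)\\-(j-1)&0\end{smallmatrix}\bigr)$ with the symmetric pair of eigenvalues $\pm\sqrt{(j-1)(l-j+1)}$; the leftover vectors contribute the simple eigenvalue $0$ (from $e_1$) and, for even $l$, the central eigenvalue $-l/2$. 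As the products $(j-1)(l-j+1)$ are pairwise distinct over distinct pairs $\{j,l+2-j\}$, the spectrum of $\mathcal{G}_l(0)$ is simple, so again $\partial_r Q^{\pm}\neq0$ and $\Xi_l^{\pm}(\rr)$ is smooth. This also explains conceptually why a node of $\Gamma_l$ does not obstruct smoothness: its two branches carry the opposite values $\pm\sqrt{(j-1)(l-j+1)}$ of $r$ and therefore split, one into $\Xi_l^+$ and one into $\Xi_l^-$, leaving each sheet smooth.

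\textbf{Points at infinity.} Here I would read off the leading form of $Q^{\pm}$ from the factorisation $P_l(r^2-\mu^2,\mu^2)=(-1)^l Q^+(\mu,r)Q^-(\mu,r)$. By Proposition~\ref{poval} the top-degree form of $P_l(\lambda,\mu^2)$ is $\prod_k(\lambda-k\mu)$ over $k=l-1,l-3,\dots,-(l-1)$; substituting $\lambda=r^2-\mu^2$ and taking the top degree gives $(r^2-\mu^2)^l=(r-\mu)^l(r+\mu)^l$. Since $\cc[\mu,r]$ is a unique factorisation domain, each factor $Q^{\pm}$ has leading form $c\,(r\mp\mu)^a(r\pm\mu)^b$ with $a+b=l$, supported only on the lines $r=\pm\mu$. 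Hence $\overline{\Xi_l^{\pm}}$ meets the line at infinity $\{\theta=0\}$ only at $p_{\pm}=(1:\pm1:0)$, and so every real singular point at infinity is one of these. Combined with the affine smoothness this yields $\operatorname{Sing}\overline{\Xi_l^{\pm}}\subset\{p_+,p_-\}$.

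\textbf{Exceptions and the main obstacle.} It remains to compute the multiplicities $a,b$ of $p_{\pm}$, which is both where the stated small-$l$ exceptions come from and where the real work lies. I expect, and would verify by computing the leading form explicitly, that $a=\lceil l/2\rceil$ and $b=\lfloor l/2\rfloor$, reflecting the split of the directions $k$ into $k\geq0$ and $k<0$; then $p_{\pm}$ have multiplicity $\geq2$ exactly for $l\geq4$, while $l=3$ gives $(a,b)=(2,1)$ — so that $p_-$ is smooth on $\Xi_l^-$ and $p_+$ is smooth on $\Xi_l^+$ — and $l\leq2$ gives smooth curves. The main obstacle is the local analysis at $p_{\pm}$: because $\widetilde\pi$ is indeterminate precisely at $p_{\pm}$, one cannot simply transport smoothness from $\Gamma_l$ there, and must instead pass to the blow-up $X_7$ (equivalently to $\mathbb{P}^1\times\mathbb{P}^1$) of the diagram and recover the branch structure at $p_{\pm}$ from the $l$ distinct asymptotic directions of $\Gamma_l$, checking that $p_{\pm}$ are ordinary multiple points and nothing worse. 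This is the only step that genuinely needs the birational geometry rather than the eigenvalue bookkeeping above.
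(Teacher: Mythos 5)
Your affine argument is correct, complete, and in fact tidier than the paper's own treatment. The paper handles real points with $\mu\neq0$ by transporting smoothness from $\Gamma_l$ through $\pi$ (plus a citation to Buchstaber--Tertychnyi) and handles $\{\mu=0\}$ by the explicit factorization (\ref{gnr}); you unify both cases into the single statement that $\mathcal{G}_l(\mu)$ has simple spectrum for every real $\mu$, which gives $\partial_r Q^{\pm}\neq0$ at every real point of $\Xi_l^{\pm}$. The deduction for $\mu\neq0$ from $\mathcal{G}_l^2=\mu^2\operatorname{Id}-H_l$ and Theorem \ref{tsimple} is sound (distinct squares force distinct eigenvalues), your block decomposition of $\mathcal{G}_l(0)$ is exactly the block-diagonalization behind (\ref{gnr}), and as a bonus your route avoids the fact that $\pi$ is ramified along $\{r=0\}$, which the paper's ``image of a singular point is singular'' argument silently needs. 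Your identification of the points at infinity via the top form $(r-\mu)^l(r+\mu)^l$ of $P_l(r^2-\mu^2,\mu^2)$ and unique factorization is also correct. So the main clause of the proposition, and the ``in particular'' statement, are fully proved.

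The genuine gap is the parenthetical part of the statement: deciding which of $p_\pm$ actually \emph{are} singular, and in particular the $l=3$ exception. This is exactly where the paper does its real work (Proposition \ref{trpp}: a determinant computation in the chart $\mu=1$ showing that $p_\pm$ is an ordinary point of multiplicity $\left[\frac l2\right]$ on $\overline{\Xi_l^\pm}$ and of multiplicity $l-\left[\frac l2\right]$ on $\overline{\Xi_l^\mp}$, with every branch transversal to $\{\theta=0\}$), and your proposal does not supply it. Two concrete problems. First, ``computing the leading form explicitly'' cannot settle it by itself: the exponents $a,b$ are the intersection multiplicities of $\overline{\Xi_l^\pm}$ with the infinity line, which only bound the multiplicity of the point from above; an exponent $\geqslant 2$ does not yield a singular point unless you also know no branch is tangent to $\{\theta=0\}$ -- precisely the transversality clause of Proposition \ref{trpp}. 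Second, your guessed exponents are swapped: the correct leading form of $Q^+=\det(\mathcal{G}_l+r\operatorname{Id})$ is $c\,(r-\mu)^{\lfloor l/2\rfloor}(r+\mu)^{\lceil l/2\rceil}$. For instance, for $l=3$ one computes $Q^+=(r+\mu)^2(r-\mu)-2r$, so the exponent at $p_+$ is $1$ and at $p_-$ is $2$; with your values $(a,b)=(2,1)$ your own rule would make $p_-$, not $p_+$, the smooth point of $\overline{\Xi_3^+}$ -- the opposite of the statement -- so your $l=3$ conclusion is reached only because two errors cancel. Finally, the blow-up plan you sketch cannot recover these numbers either: each of the $l$ asymptotic directions of $\Gamma_l$ (Proposition \ref{poval}) lifts to one branch at $p_+$ \emph{and} one at $p_-$, and deciding which lift lies on $\Xi_l^+$ and which on $\Xi_l^-$ requires an additional computation with $\mathcal{G}_l$ itself; that computation is what Proposition \ref{trpp} performs.
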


\begin{proof}
    Absence of real singularities except for those lying in the infinity line and the 
    line~$\{\mu=0\}$ is guarantied by
    smoothness of~$\Gamma_l$ outside the line~$\{\mu=0\}$ (because the image of a singular point 
    is singular). Also smoothness of~$\Xi_l^-\cup\Xi_l^+$ outside the line~$\{\mu=0\}$ is
    proved in~\cite[\S3,~Cor.\,4]{bt0}.
    The only points of~$\Xi^\pm$ at infinity are~$(1:\pm1:0)$.
    The multiplicities of these two points are calculated in the following 
    proposition.

    It remains only to consider the line~$\{\mu=0\}$.
    For $\mu=0$ one has 
    \begin{equation}
        \det(\mathcal{G}_n+r\operatorname{Id}) = \begin{cases} r\prod_{k=1}^{\frac{l-1}2}(r^2-k(l-k)) & \text{ if } l \text { is odd,}\\
        r(r-\frac{l}2)\prod_{k=1}^{\frac {l-2}2}(r^2-k(l-k)) & \text{ if } l \text { is even,}
        \end{cases}
    \label{gnr}\end{equation}
    since appropriate permuting of lines and columns of the matrix in the above 
    left-hand side 
    makes it block-diagonal with obvious blocks of dimension two or one. The above polynomial has no multiple roots. Therefore all the intersections of a given 
    curve $\Xi^{\pm}$ 
    with the line~$\{\mu=0\}$ are simple. In particular, this implies that there are no
    singular points among the intersections.
\end{proof}

\begin{definition}
    Let us denote the {\it multiplicity} of a point~$P$ on a curve~$C$ by~$\upmu_P(C)$: 
    this is its intersection index at $P$ with a generic line through $P$. Recall that 
    in the case when $P$ is an intersection of $k$ pairwise transversal smooth 
    local branches, its multiplicity is equal to $k$; in this case we will call $P$ a {\it pairwise 
    transversal  self-intersection}. A regular point, which corresponds to one branch, 
    will be also treated as a pairwise transversal self-intersection.
\end{definition}

\begin{proposition} \label{trpp} Each of the points $p_+$, $p_-$ 
 is a pairwise transversal 
 self-intersection  
for every curve $\overline{\Xi^{\pm}_l}$, and one has 
    $\upmu_{p_\pm}(\overline{\Xi_l^\pm}) = \left[\frac{l}{2}\right]$;
    $\upmu_{p_\pm}(\overline{\Xi_l^\mp}) = l-\left[\frac{l}{2}\right]$. 
    Moreover, all the local branches of the 
    curve $\overline{\Xi^+_l}\cup\overline{\Xi^-_l}$ at $p_{\pm}$ are transversal 
    to the infinity line $\{\theta=0\}$.
\end{proposition}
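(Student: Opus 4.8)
The plan is to compute the local structure of $\overline{\Xi_l^{\pm}}$ at $p_{\pm}=(1:\pm1:0)$ directly, by passing to the affine chart where $\mu=1$. First I would set up coordinates: in the chart $\{\mu\neq0\}$ write $s=r/\mu$ and $w=\theta/\mu$, so that $p_{\pm}$ becomes the point $(s,w)=(\pm1,0)$, and the infinity line $\{\theta=0\}$ becomes $\{w=0\}$. The defining equation $\det(\mathcal G_l\pm r\,\mathrm{Id})=0$ is homogeneous of degree $l$ in the entries, which after dehomogenization becomes a polynomial whose leading (lowest-order in $w$) behaviour near $s=\pm1$ controls the multiplicity. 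The key is that the highest homogeneous part of $\det(\mathcal G_l+r\,\mathrm{Id})$ in $(\mu,r)$ is exactly $\det(r\,\mathrm{Id}+\mu\cdot(\text{antidiagonal part of }\mathcal G_l))$, which is the determinant of an \emph{anti}-three-diagonal matrix. The anti-diagonal entries of $\mathcal G_l$ are the $\mu$'s (in the two corner positions) and the integers $-(l-1),\dots,-1$; so this top-degree form factors just as in Proposition \ref{poval}, namely into $l$ distinct linear factors in $(\mu:r)$ corresponding to the ratios $r/\mu\in\{\text{eigenvalues of the antidiagonal operator}\}$.

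Next I would identify which of these $l$ linear factors pass through $p_+$ versus $p_-$. The point $p_{\pm}$ corresponds to the ratio $r/\mu=\pm1$, so the multiplicity $\upmu_{p_\pm}(\overline{\Xi_l^+})$ equals the number of factors of the top-degree form equal (up to scale) to $r\mp\mu$. Since the antidiagonal operator is (after the symmetric-power identification used in Proposition \ref{poval}, but now for the curve $\Xi$) the action of an $\mathfrak{sl}(2)$-type element, its eigenvalues on homogeneous polynomials of degree $l-1$ are again $l-1,l-3,\dots,-(l-1)$, but the relevant variable is $r$ rather than $\la$, and one must divide by the scaling coming from the $\mathcal G_l$ normalization. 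I would compute which eigenvalues give ratio exactly $+1$ (for $p_+$) and which give $-1$ (for $p_-$); counting the number of eigenvalues in each class yields $\lfloor l/2\rfloor$ for one sign and $l-\lfloor l/2\rfloor$ for the other, matching the claimed formula. The distribution between $\overline{\Xi_l^+}$ and $\overline{\Xi_l^-}$ follows because the two factors $\det(\mathcal G_l\pm r\,\mathrm{Id})$ carry complementary subsets of these eigenvalues, and interchanging the sign of $r$ swaps $p_+\leftrightarrow p_-$, giving the asserted duality $\upmu_{p_\pm}(\overline{\Xi_l^\mp})=l-\lfloor l/2\rfloor$.

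To upgrade from a multiplicity count to the stronger claim that $p_{\pm}$ is a pairwise transversal self-intersection with all branches transversal to $\{\theta=0\}$, I would argue that the $l$ asymptotic directions are \emph{distinct} linear factors (exactly as in Proposition \ref{poval}, the eigenvalues $l-1,l-3,\dots,-(l-1)$ are pairwise distinct), so the local branches of the combined curve $\overline{\Xi_l^+}\cup\overline{\Xi_l^-}$ at $p_{\pm}$ have pairwise distinct tangent lines in the $(s,w)$-chart. Since none of these tangent directions is $\{w=0\}$ itself (the direction $\{\theta=0\}$ corresponds to the point at infinity of the pencil and is not an eigendirection — here I would use that $p_\pm$ itself, the singular point of $\widetilde\pi$, does not coincide with a branch tangent to $\{\theta=0\}$), each branch is smooth and crosses $\{\theta=0\}$ transversally. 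The main obstacle I anticipate is the bookkeeping in translating the $\mathfrak{sl}(2)$ eigenvalue computation from the $H_l$-setting of Proposition \ref{poval} into the $\mathcal G_l$-setting, and in carefully verifying that the factorization of the top-degree form has genuinely \emph{simple} roots so that transversality (not merely the multiplicity count) is justified; this requires checking that the two points $p_\pm$ are not themselves limits of tangency with the infinity line, which I would handle by the explicit linear-factor description rather than by a Newton-polygon argument.
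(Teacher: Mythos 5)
Your argument breaks precisely where you pass from the factorization of the top-degree form to conclusions about the local structure of $\overline{\Xi_l^\pm}$ at $p_\pm$, and the gap is not repairable within the route you propose. First, the factorization is not what you claim: in $\mathcal G_l$ the integers $-(l-1),\dots,-1$ sit on the band adjacent to the anti-diagonal and are \emph{constants} (degree $0$ in $(\mu,r)$), so they drop out of the highest homogeneous part, which is $\det(r\operatorname{Id}+\mu J)$ with $J$ the anti-diagonal involution. The eigenvalues of $J$ are only $\pm1$, with multiplicities $\lceil l/2\rceil$ and $\lfloor l/2\rfloor$, so the top form is $(r+\mu)^{\lceil l/2\rceil}(r-\mu)^{\lfloor l/2\rfloor}$: there are exactly \emph{two} distinct asymptotic directions, not $l$ distinct ones ``as in Proposition \ref{poval}'' --- your own count of $\lfloor l/2\rfloor$ and $l-\lfloor l/2\rfloor$ eigenvalues per class already contradicts the distinctness you invoke later. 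Second, and more fundamentally, the multiplicity of a linear factor in the top form equals the intersection number of the projective curve with the line $\{\theta=0\}$ at the corresponding point, and this is only an \emph{upper} bound for the multiplicity $\upmu_{p_\pm}$ of the point on the curve: equality holds if and only if no local branch there is tangent to the infinity line. (Compare $y=x^2$: the top form is $x^2$, yet the point at infinity is a \emph{smooth} point of the projective conic, the infinity line being tangent to it.) So your derivation of the multiplicity formula silently assumes the transversality statement that is part of what must be proved; the argument is circular. Finally, all branches of $\overline{\Xi_l^\pm}$ at $p_+$ have the \emph{same} asymptotic direction --- they pass through the same point at infinity --- so distinctness of asymptotic directions, even if it held, could never yield pairwise transversality of these branches; what must be shown distinct are their tangent lines \emph{at} $p_+$ (affinely, the intercepts of parallel asymptotes), and the top-degree form carries no information about these.

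The missing ingredient is a computation of the tangent cone of $\overline{\Xi_l^\pm}$ at $p_\pm$, i.e.\ of the lowest homogeneous part of the defining equation in local coordinates centered at $p_\pm$, and this is exactly what the paper's proof consists of: in the chart $\mu=1$ one sets $r=\pm1+a$ and, by explicit row and column operations, identifies the lowest homogeneous part of $\det(\mathcal G_l+ r\operatorname{Id})$ in $(a,\theta)$ with the determinant of a tridiagonal matrix of size $\left[\frac l2\right]$ (respectively $l-\left[\frac l2\right]$). That determinant is not divisible by $\theta$, which gives transversality of every branch to the infinity line; and after fixing $\theta>0$ and conjugating by a suitable diagonal matrix, the relevant tridiagonal matrix becomes symmetric with positive off-diagonal entries, so its eigenvalues are real and pairwise distinct, whence the tangent cone is a product of pairwise non-proportional linear forms --- which is exactly what ``pairwise transversal self-intersection'' requires. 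Without this local analysis (or some substitute for it), your proposal establishes none of the three assertions of the proposition: neither the multiplicity values, nor the transversal self-intersection structure, nor the transversality to $\{\theta=0\}$.
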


\begin{proof} Consider a germ of analytic curve $C$ at $O$ in an affine chart. Recall that 
to show that $O$ is a pairwise transversal self-intersection of multiplicity $k$, it suffices to prove that 
the function $f$ defining $C$ (i.e., generating the ideal of germs of functions vanishing 
at $C$) has lower Taylor homogeneous part of degree $k$ that is a product of 
$k$ pairwise non-proportional 
linear forms. In the case, when $O=p_{\pm}$, to show that each local branch is transversal to the infinity line, one has to show that 
the above lower homogeneous part is not divisible by $\theta$. 

    The symmetry~$r\leftrightarrow -r$ swaps pairs~$(p_+,p_-)$ 
    and~$(\overline{\Xi_l^+},\overline{\Xi_l^-})$ for each~$l$.
    So let us consider only the curve~$\overline{\Xi_l^+}$.

    Let us consider the affine chart~$\mu=1$.
    At first, consider the point~$p_+=(1:1:0)$. So, we substitute~$r=1+a$ with $a$ 
    being small.

    Suppose~$l$ is even.
    In the new coordinates the curve~$\Xi_l^+$ is defined by
    \begin{gather*}
        \begin{vmatrix}
            1 + a & 0 & \cdots& \cdots & 0 & 1 \\
            0 & \ddots & & & \udots & -(l-1)\theta \\
            \vdots & & 1+a & 1 & \udots & 0 \\
            \vdots & & 1 & 1+a-\frac{l}{2}\theta & & \vdots \\
            0 & \udots & \udots & & \ddots & 0 \\
            1 & -\theta & 0 & \cdots & 0 & 1 + a \\
        \end{vmatrix} =
        \intertext{Let us subtract columns~$k$ from~$l-k$ for~$k=1,\ldots,l/2$:}
        =
        \begin{vmatrix}
            1 + a & 0 & \cdots& \cdots & 0 & -a \\
            0 & \ddots & & & \udots & -(l-1)\theta \\
            \vdots & & 1+a & -a & \udots & 0 \\
            \vdots & & 1 & a-\frac{l}{2}\theta & & \vdots \\
            0 & \udots & \udots & \left(\frac{l}{2}-1\right)\theta & & \vdots \\
            0 & \udots & \udots &\ddots & \ddots & 0 \\
            1 & -\theta & 0 & \cdots & \theta & a \\
        \end{vmatrix} =
        \intertext{Note that the constants now are only in the left half of matrix.
        Now let us do the same operation with the matrix rows:}
        =\det
            \left(
                \begin{tabular}{c|c}
                    $\begin{matrix}
                        1 + a & & 0 \\
                        & \ddots & \\
                        0 & & 1 + a \\
                    \end{matrix}$
                    & 
                    *
                    \\ \hline
                    *
                    & 
                    $\begin{matrix}
                        2a-\frac{l}{2}\theta & \left(\frac{l}{2}+1\right)\theta & & 0\\
                        \left(\frac{l}{2}-1\right)\theta & 2a & \ddots & \\
                        & \ddots & \ddots & (l-1)\theta \\
                        0 & & \theta & 2a \\
                    \end{matrix}$
                    \\
                \end{tabular}
            \right).
            \intertext{Here the left-lower and the right-upper parts of the matrix 
            marked by $*$ do not contain constant terms. Therefore, the lowest homogeneous part of the function $f=\det(\mathcal{G}_l + r\operatorname{Id})$ defining 
            $\Xi_l^+$ is equal to
          the determinant of the lower right matrix part being a three-diagonal matrix.}
    \end{gather*}
    The determinant of the latter matrix is obviously not divisible by $\theta$, which follows from 
    (\ref{gnr}). Let us show that it is a 
    product of pairwise non-proportional linear forms. 
    
    Fix a $\theta>0$. We look for those $a$ for which the determinant vanishes; they 
    are equal to  minus half-eigenvaules  of the same matrix without the 
    $2a$ on the diagonal.  
    Since pairs of elements symmetric~w.\,r.\,t. the diagonal have positive 
    coefficients at~$\theta$, this matrix can be conjugated by a constant diagonal matrix
    in such a way that the result would be a symmetric three-diagonal matrix
    with positive entries below and above the diagonal.

    From~\cite[\S\,1.3.10]{ilyin} 
    the eigenvalues of such matrix are real and 
    do not coincide. This implies that for every given $\theta>0$ the above determinant 
    vanishes if and only if  $a$ takes one of $\frac l2$ distinct real values. 
    Therefore, the linear forms in its product decomposition are pairwise non-proportional. 

    So we conclude that for even~$l$ the equality~$\upmu_{p_+}(\overline{\Xi_l^+})=l/2$ holds,
    the point has $l/2$ tangent directions and~$\theta=0$ is not one of them. 
    
    Now consider the case of~$l$ odd. We process the same transformations with
    matrix (except the central row and column) and obtain
    \[
        \det
        \left(
            \begin{tabular}{c|c}
                $\begin{matrix}
                    1 + a &&& 0 \\
                    & \ddots && \\
                    && 1 + a & \\
                    0 &&& 2 + a \\
                \end{matrix}$
                & 
                *
                \\ \hline
                *
                & 
                $\begin{matrix}
                    2a & \frac{l+3}{2}\theta & & 0\\
                    \frac{l-3}{2}\theta & 2a & \ddots & \\
                    & \ddots & \ddots & (l-1)\theta \\
                    0 & & \theta & 2a \\
                \end{matrix}$
                \\
            \end{tabular}
        \right).
    \]
    In this case the lowest homogeneous component has 
    degree~$\frac{l-1}{2}=\left[\frac{l}{2}\right]=\upmu_{p_+}(\overline{\Xi_l^+})$.
    Analogously, all the eigenvalues are real and do not coincide.

    Now let us consider the point~$p_-$. Here we substitute~$r=a-1$.
    We make the same transformations, but we add columns and rows instead 
    of subtracting.

    In the case of~$l$ even we obtain the same three-diagonal matrix with
    $\theta$ replaced by~$-\theta$. All the rest of reasoning is the same.

    The case of~$l$ odd for~$p_-$ is slightly different, because the constants
    at the central element of matrix vanish:
    \[
        \det
        \left(
            \begin{tabular}{c|c}
                $\begin{matrix}
                    1 + a && 0 \\
                    & \ddots & \\
                    0 && 1 + a \\
                \end{matrix}$
                & 
                *
                \\ \hline
                *
                & 
                $\begin{matrix}
                    a & \frac{l+1}{2}\theta & & 0\\
                    \frac{l-1}{2}\theta & 2a & \ddots & \\
                    & \ddots & \ddots & (l-1)\theta \\
                    0 & & \theta & 2a \\
                \end{matrix}$
                \\
            \end{tabular}
        \right).
    \]
    In this case the lowest component has 
    degree~$l-\frac{l-1}{2}=l-\left[\frac{l}{2}\right]=\upmu_{p_-}(\overline{\Xi_l^+})$.
\end{proof}

\begin{proposition}
    \label{GeneraMax}
    The genus of~$\widehat{\Xi}_l^\pm$ does not exceed
    \[
        \begin{cases}
          \left(\dfrac{l-2}{2}\right)^2, & \text{ for $l$ even,} \\
            \\
            \left(\dfrac{l-1}{2}\right)\left(\dfrac{l-3}2\right), & \text{ for $l$ odd.}  
                    \end{cases}
    \]
    The equality takes place it and only if the curves $\widehat{\Xi}_l^{\pm}$ are non-singular, 
    which is equivalent to the statement that the complex affine curve 
    $\Gamma_l\subset\cc^2$ has no 
    singularities outside the line $\{\mu=0\}$. 
\end{proposition}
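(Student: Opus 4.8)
The plan is to compute the geometric genus of the plane curve $\overline{\Xi_l^\pm}\subset\mathbb P^2_{\mu:r:\theta}$, which is birationally equivalent to $\widehat\Xi_l^\pm$, and to read off both the bound and the equality criterion from the singularity data already established in Propositions~\ref{prosi} and~\ref{trpp}. Since every entry of $\mathcal G_l\pm r\operatorname{Id}$ is affine-linear in $(\mu,r)$ and the diagonal contributes the monomial $r^l$, the curve $\overline{\Xi_l^\pm}$ has degree exactly $l$, hence arithmetic genus $p_a=\frac{(l-1)(l-2)}2$. Its geometric genus is then $p_g=p_a-\sum_P\delta_P$, the sum running over all singular points $P$ with $\delta_P$ the corresponding delta-invariant; this number is a birational invariant and therefore equals the genus of $\widehat\Xi_l^\pm$.

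First I would isolate the contribution of the two points at infinity. By Proposition~\ref{trpp} the points $p_+,p_-$ are the only points of $\overline{\Xi_l^\pm}$ on $\{\theta=0\}$, and each is an ordinary multiple point (pairwise transversal self-intersection) of multiplicities $\upmu_{p_\pm}(\overline{\Xi_l^\pm})=\left[\frac l2\right]$ and $\upmu_{p_\mp}(\overline{\Xi_l^\pm})=l-\left[\frac l2\right]$. The delta-invariant of an ordinary $k$-fold point is $\binom k2$, so writing $m=\left[\frac l2\right]$ and $n=l-m$ the infinity contribution is $\binom m2+\binom n2=\frac{m^2+n^2-l}2$. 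Substituting $m^2+n^2=l^2-2mn$ gives $p_a-\binom m2-\binom n2=mn-l+1$, which equals $\left(\frac{l-2}2\right)^2$ for even $l$ and $\left(\frac{l-1}2\right)\left(\frac{l-3}2\right)$ for odd $l$, that is, exactly the asserted value. Because every further singular point contributes a strictly positive $\delta_P$, this yields $g(\widehat\Xi_l^\pm)\le mn-l+1$, with equality if and only if $\overline{\Xi_l^\pm}$ has no singular point besides $p_+,p_-$.

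Next I would turn ``no further singularity'' into nonsingularity of $\widehat\Xi_l^\pm$. The birational map $\mathbb P^2_{\mu:r:\theta}\dashrightarrow\mathbb P^1\times\mathbb P^1$ of the diagram blows up $p_\pm$ and then blows down the strict transform $C_\infty$ of $\{\theta=0\}$. Since all local branches of $\overline{\Xi_l^\pm}$ at $p_\pm$ are smooth, pairwise transversal, and transversal to $\{\theta=0\}$ (Proposition~\ref{trpp}), a single blow-up of $p_\pm$ separates them, and their strict transforms meet the exceptional divisors at points distinct from $C_\infty$; hence the strict transform of $\overline{\Xi_l^\pm}$ is disjoint from $C_\infty$ and is untouched by the blow-down. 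Thus $\widehat\Xi_l^\pm$ is obtained from $\overline{\Xi_l^\pm}$ by resolving precisely the (ordinary) singularities at $p_\pm$, so $\widehat\Xi_l^\pm$ is nonsingular if and only if $\overline{\Xi_l^\pm}$ is smooth away from $p_\pm$, i.e. carries no further singularity. This gives the first equivalence.

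Finally I would identify the affine singularities of $\Xi_l^\pm$ (the only remaining points of $\overline{\Xi_l^\pm}$) with those of $\Gamma_l$ through $\pi(\mu,r)=(\lambda=r^2-\mu^2,\mu)$. On $\{\mu=0\}$ all intersections are simple by Proposition~\ref{prosi} (formula~(\ref{gnr}) has no multiple roots), so no singularities occur there. Over $\{\mu\ne0,\ r\ne0\}$ the map $\pi$ is a local biholomorphism, and since $\pi\colon\Xi_l^+\to\Gamma_l$ is birational, smoothness transfers in both directions. The delicate point, and the main obstacle, is the ramification locus $\{r=0\}$, where the two sheets $\Xi_l^+,\Xi_l^-$ meet over a point of $\{\lambda+\mu^2=0\}$; here I would pass to the normalization maps $\nu_1\colon\widehat\Xi_l^+\to\Xi_l^+$ and $\nu_2=\pi\circ\nu_1\colon\widehat\Xi_l^+\to\Gamma_l$. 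A point of $\Gamma_l$ with $\lambda+\mu^2=0$, $\mu\ne0$ has the single preimage $(\mu_0,0)$ on $\Xi_l^+$, and because $\ker d\pi_{(\mu_0,0)}=\langle\partial_r\rangle$ while birationality of $\pi|_{\Xi_l^+}$ forbids a vertical ($\partial_r$) tangent there, $\nu_1$ is an immersion at the relevant point exactly when $\nu_2$ is; hence $\Xi_l^+$ is smooth at $(\mu_0,0)$ if and only if $\Gamma_l$ is smooth at its image, and by the symmetry $r\leftrightarrow-r$ the same holds for $\Xi_l^-$. Combining the three loci, $\widehat\Xi_l^\pm$ is nonsingular if and only if $\Gamma_l$ has no singularity outside $\{\mu=0\}$, which completes the proof.
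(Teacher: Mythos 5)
Your first three paragraphs are correct and amount to the paper's own argument in different packaging: the paper applies the bidegree genus formula $g=(d_1-1)(d_2-1)-\delta$ directly in $\mathbb{P}^1\times\mathbb{P}^1$, after identifying the bidegree with the multiplicity vector $\bigl(m,n\bigr)=\bigl(\left[\tfrac{l}{2}\right],\,l-\left[\tfrac{l}{2}\right]\bigr)$ by exactly the blow-up/blow-down argument you give, whereas you do the equivalent bookkeeping in $\mathbb{P}^2$, taking $p_a=\tfrac{(l-1)(l-2)}{2}$ and subtracting the $\delta$-invariants $\binom{m}{2}+\binom{n}{2}$ of the two ordinary points $p_{\pm}$ supplied by Proposition~\ref{trpp}. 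Both computations give $mn-l+1$ and the criterion that equality holds if and only if $\widehat{\Xi}_l^{\pm}$ is nonsingular.

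The genuine gap is in your last paragraph, in the implication ``$\Xi_l^{\pm}$ smooth $\Rightarrow$ $\Gamma_l$ smooth outside $\{\mu=0\}$''. Over $\{\mu\neq0,\ r\neq0\}$ the germ of $\Gamma_l$ at $q=\pi(\mu_0,r_0)$ is the biholomorphic image of the germ at $(\mu_0,r_0)$ of the \emph{full} preimage $\pi^{-1}(\Gamma_l)=\Xi_l^{+}\cup\Xi_l^{-}$, not of $\Xi_l^{+}$ alone. If a point $(\mu_0,r_0)$ with $\mu_0\neq0$ lay on both $\Xi_l^{+}$ and $\Xi_l^{-}$, then $q$ would be a singular point of $\Gamma_l$ (two distinct local branches) even though both curves $\Xi_l^{\pm}$ are smooth there; birationality of $\pi|_{\Xi_l^{\pm}}$ does not exclude finitely many such coincidences, so ``smoothness transfers in both directions'' is unjustified as written (the paper is terse at this point too, but this is exactly the missing fact). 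What must be proved is that $\Xi_l^{+}\cap\Xi_l^{-}\subset\{\mu=0\}$, and this follows from the Jacobi structure of $H_l$: a common point with $r_0\neq0$ gives independent eigenvectors of $\mathcal{G}_l(\mu_0)$ with eigenvalues $\pm r_0$, hence, via $H_l=\mu^2\operatorname{Id}-\mathcal{G}_l^2$, an eigenvalue of $H_l(\mu_0)$ of geometric multiplicity two; but for $\mu_0\neq0$ deleting the first row and last column of $H_l+\lambda\operatorname{Id}$ leaves a triangular $(l-1)\times(l-1)$ matrix with nonzero diagonal entries $\mu_0(l-1),\dots,\mu_0$, so $\operatorname{rank}(H_l+\lambda\operatorname{Id})\geq l-1$ for every $\lambda$ and every eigenvalue has geometric multiplicity one. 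A common point with $r_0=0$ would give $P_l(-\mu_0^2,\mu_0^2)=0$, impossible for $\mu_0\neq0$ because $P_l(\lambda,-\lambda)=\lambda^l$ by (\ref{detm1}).

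Separately, the case you single out as ``the main obstacle'' ($r=0$, $\mu\neq0$) is vacuous: the same identity $P_l(\lambda,-\lambda)=\lambda^l$ shows that $\Gamma_l$ meets $\{\lambda+\mu^2=0\}$ only at the origin, so $\Gamma_l$ has no points with $\lambda+\mu^2=0$, $\mu\neq0$, and $\Xi_l^{\pm}$ have no points with $r=0$, $\mu\neq0$. Moreover, the two principles your treatment of that case relies on are false in general: birationality of $\pi|_{\Xi_l^{+}}$ does not forbid tangency of a branch with $\ker d\pi$ (the smooth branch $\mu=\mu_0+r^3$ maps injectively under $\pi$), and immersivity of the normalization map does not imply smoothness of the curve (a node has immersive normalization). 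That paragraph should simply be replaced by the vacuousness observation together with the intersection argument above.
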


\begin{proof} Recall that the curves $\widehat{\Xi}_l^\pm$ lie in 
$\mathbb{P}^1\times\mathbb{P}^1$. We use the following formula for the 
geometric genus of an irreducible curve of bidegree 
$(d_1,d_2)$ in $\mathbb{P}^1\times\mathbb{P}^1$, 
see ~\cite[\S2,~Prop.\,3,~Ex\,2.2]{Serre}: 
\begin{equation}
        g(C) = (d_1-1)(d_2-1) - \delta,
    \label{fgd}\end{equation}
where~$\delta = \sum_P\delta(P)$ is the sum of so-called $\delta$-invariants
    of singular points of~$C$.  The analytic invariant $\delta(P)$ is positive
    if and only if the point~$P$ is singular, see~\cite[\S1,~Prop.\,1]{Serre}. The 
    $\delta$-invariant admits several definitions. Let us recall the geometric definition. 
    Let $(\alpha,P)$ be a germ of analytic curve, $P\in\cc^2_{x,y}$, 
    and let $f(x,y)=0$ be its equation. 
    Let $V$ be a small ball centered at $P$. Then $\alpha_\eps:=\{f(x,y)=\eps\}\cap V$, for $0<|\eps|\ll1$, is a smooth real two-dimensional closed surface with $r$ holes ({\it Milnor fiber}). The $\delta$-invariant
$\delta(P)$ topologically can be defined as the genus of the closed
surface obtained by attaching a sphere with $r$ holes to the surface $\alpha_\eps$.
See ~\cite{Milnor}. 

    Let us now apply formula (\ref{fgd}) to the curves 
    $\widehat\Xi_l^{\pm}\subset\mathbb P^1\times\mathbb P^1$, say, to the curve 
    with the sign $+$. 
    
    {\bf Claim 1.} {\it The bidegree of the curve $\widehat\Xi_l^{+}$ is equal to the vector 
    of  multiplicities $(\upmu_{p_+}(\overline\Xi_l^+), \upmu_{p_-}(\overline\Xi_l^+))$ 
    up to permutation.}
   
    \begin{proof}
    Blowing up each singularity $p_{\pm}$ separates the branches through it 
    and erases the 
    singularity of the blown up curve (i.e., its strict transform). 
    It yields a (-1)-curve $C_{\pm}$ intersecting the above strict transform  
    transversely in $\upmu_{p_\pm}$ distinct points. The latter intersection points 
    do not lie in the strict transform $C_{\infty}$ of the infinity line, since the branches at 
    $p_{\pm}$ (before the blow up) are transversal to the infinity line  (the last 
    statement of Proposition \ref{trpp}).   Therefore, 
    blowing down $C_{\infty}$ does not create additional singularities. 
    Moreover, the strict transforms of the above (-1)-curves $C_{\pm}$ 
     are vertical and 
    horizontal fibers in the product structure $\mathbb P^1\times\mathbb P^1$ of the 
    blown down surface. They intersect the curve $\widehat\Xi_l^{+}$ 
    transversely at $\upmu_{p_\pm}$ distinct smooth points, by the above argument. 
    This implies the statement of the claim.
   \end{proof}
   
   Substituting the formulas for the multiplicities from Proposition \ref{trpp} to the 
   bidegree vector in the  above claim and substituting everything to 
   genus formula (\ref{fgd}) yields the statement of Proposition \ref{GeneraMax}. 
   Its last statement follows from construction and the above blow-up argument. 
\end{proof}

\begin{corollary}
    The strict transforms of curves~$\overline{\Xi_l^\pm}$ for any~$l$ on~$X_7$ have no real
    singularities. Also, their projections~$\widehat{\Xi}_l^\pm\subset\mathbb{P}^1\times\mathbb{P}^1$
    are smooth over~$\mathbb{R}$.
\end{corollary}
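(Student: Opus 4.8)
The plan is to deduce the corollary directly from Propositions~\ref{prosi} and~\ref{trpp}, together with the standard behaviour of ordinary multiple points under blowing up. Essentially all the real geometric content has already been extracted in those two propositions (and in Claim~1 within the proof of Proposition~\ref{GeneraMax}), so what remains is to track the strict transform of $\overline{\Xi_l^\pm}$ through the two maps $X_7\to\mathbb P^2_{\mu:r:\theta}$ and $X_7\to\mathbb P^1\times\mathbb P^1$ and to check that the relevant facts hold over $\mathbb R$.

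First I would localize the real singular set of $\overline{\Xi_l^\pm}$ in $\mathbb P^2_{\mu:r:\theta}$. By Proposition~\ref{prosi} the affine part $\Xi_l^\pm(\mathbb R)$ is smooth and the only real singular points of the projective closure are the two points at infinity $p_+=(1:1:0)$ and $p_-=(1:-1:0)$ (with the small-$l$ exceptions recorded there). Hence it suffices to analyze the strict transform only over $p_\pm$; at every other real point the blow-up is an isomorphism and the curve is already smooth.

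Next I would resolve the singularities at $p_\pm$. By Proposition~\ref{trpp} each of $p_+,p_-$ is a pairwise transversal self-intersection of $\overline{\Xi_l^\pm}$, that is, an ordinary multiple point all of whose local branches are smooth and meet pairwise transversally. Blowing up such a point separates its branches and makes the strict transform smooth there, meeting the exceptional $(-1)$-curve $C_\pm$ transversally in $\upmu_{p_\pm}$ distinct points; since $p_\pm$ are real and the blow-up is defined over $\mathbb R$, this resolution takes place over the reals. Combined with the previous step, the strict transform of $\overline{\Xi_l^\pm}$ on $X_7$ has no real singular points, which is the first assertion of the corollary.

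Finally I would pass to $\mathbb P^1\times\mathbb P^1$ by contracting $C_\infty$, the strict transform of the infinity line $\{\theta=0\}$. The key input is the last statement of Proposition~\ref{trpp}: all local branches of $\overline{\Xi_l^+}\cup\overline{\Xi_l^-}$ at $p_\pm$ are transversal to $\{\theta=0\}$. Thus the points where the strict transform meets $C_\pm$ correspond to branch directions distinct from the direction of the infinity line, so they avoid $C_\infty\cap C_\pm$; and off the exceptional divisors the curve met $\{\theta=0\}$ only at $p_\pm$ (Proposition~\ref{prosi}). Therefore the strict transform is disjoint from $C_\infty$, the contraction is an isomorphism on a neighbourhood of it, and $\widehat{\Xi}_l^\pm$ is locally isomorphic to the strict transform near each of its points, inheriting smoothness over $\mathbb R$. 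This is the very mechanism already used in Claim~1 of Proposition~\ref{GeneraMax}, which is why I expect no genuine obstacle: the only thing to watch is that the two invoked facts—resolution of ordinary multiple points by one blow-up, and transversality to the contracted line—are exactly what Proposition~\ref{trpp} provides over $\mathbb R$.
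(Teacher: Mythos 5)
Your proof is correct and takes essentially the same route as the paper: the paper's own proof is a one-line reference to the blow-up argument in Claim~1 of Proposition~\ref{GeneraMax}, which is precisely what you spell out. Namely, you localize the real singularities at $p_\pm$ via Proposition~\ref{prosi}, resolve them by a single blow-up using the ordinary-multiple-point statement of Proposition~\ref{trpp}, and use the transversality of the branches to $\{\theta=0\}$ to conclude that the strict transform misses $C_\infty$, so contracting $C_\infty$ is an isomorphism near the curve and smoothness over $\mathbb{R}$ descends to $\widehat{\Xi}_l^\pm$.
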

The corollary follows from the blow up argument in the proof of Claim 1.

    One can see examples of curves~$\Xi_*^+$ on Fig.\,\ref{xi}

    \begin{figure}[h!]
        \centering
        \begin{subfigure}[$\Xi_2^+$]
            \centering
            \includegraphics[width=0.3\textwidth]{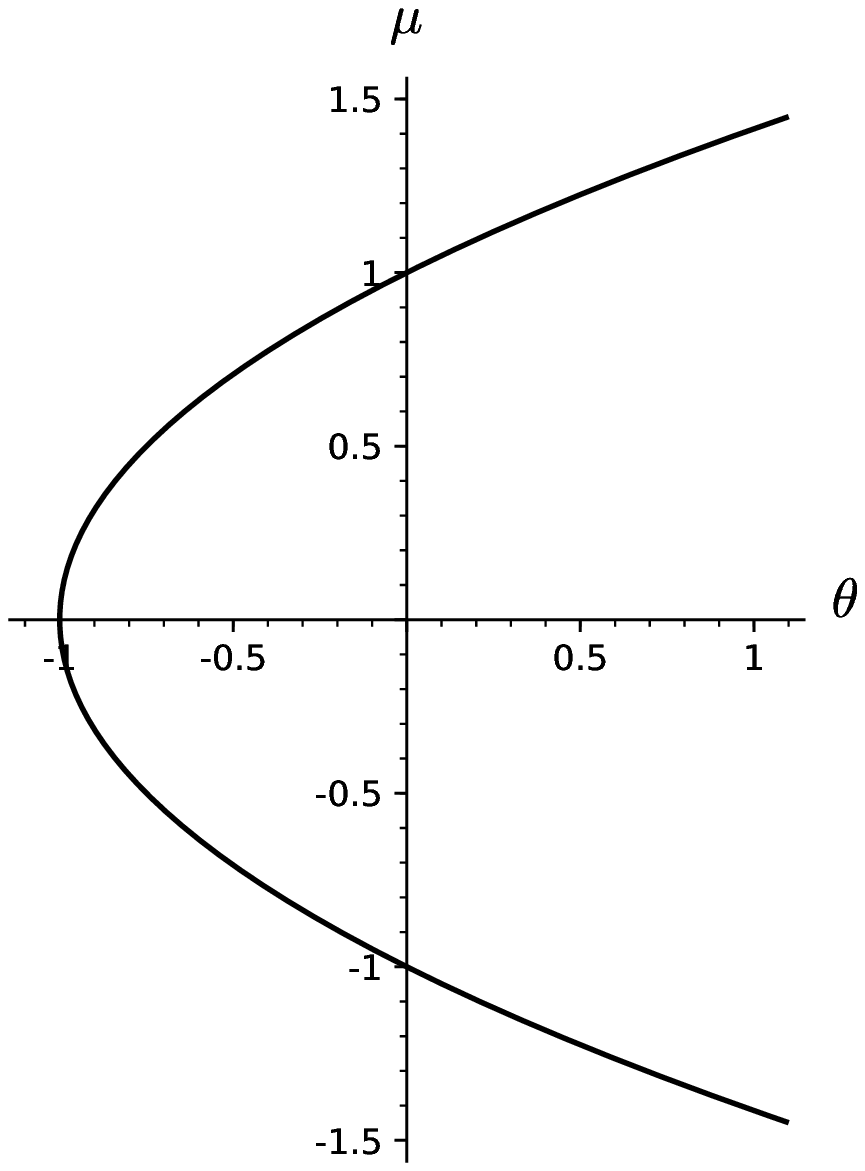}
        \end{subfigure}
        \begin{subfigure}[$\Xi_3^+$]
            \centering
            \includegraphics[width=0.3\textwidth]{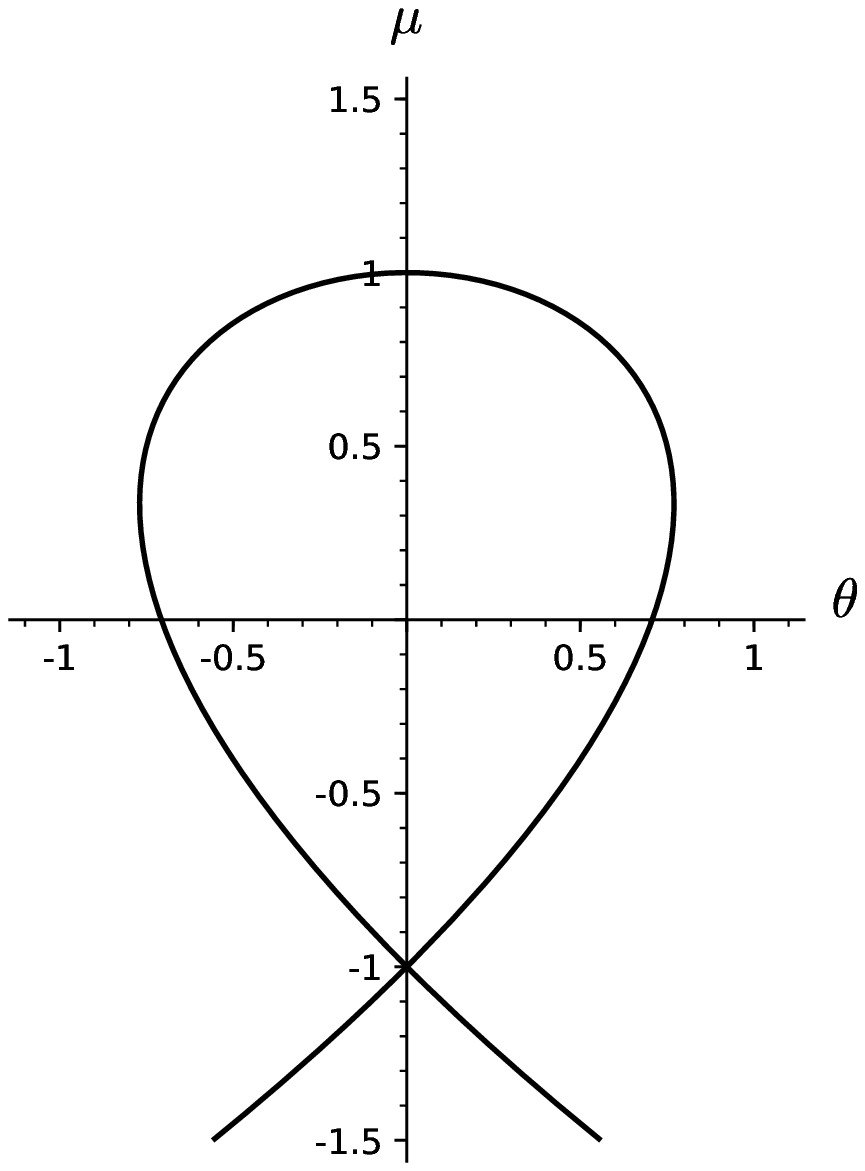}
        \end{subfigure}
        \begin{subfigure}[$\Xi_4^+$]
            \centering
            \includegraphics[width=0.3\textwidth]{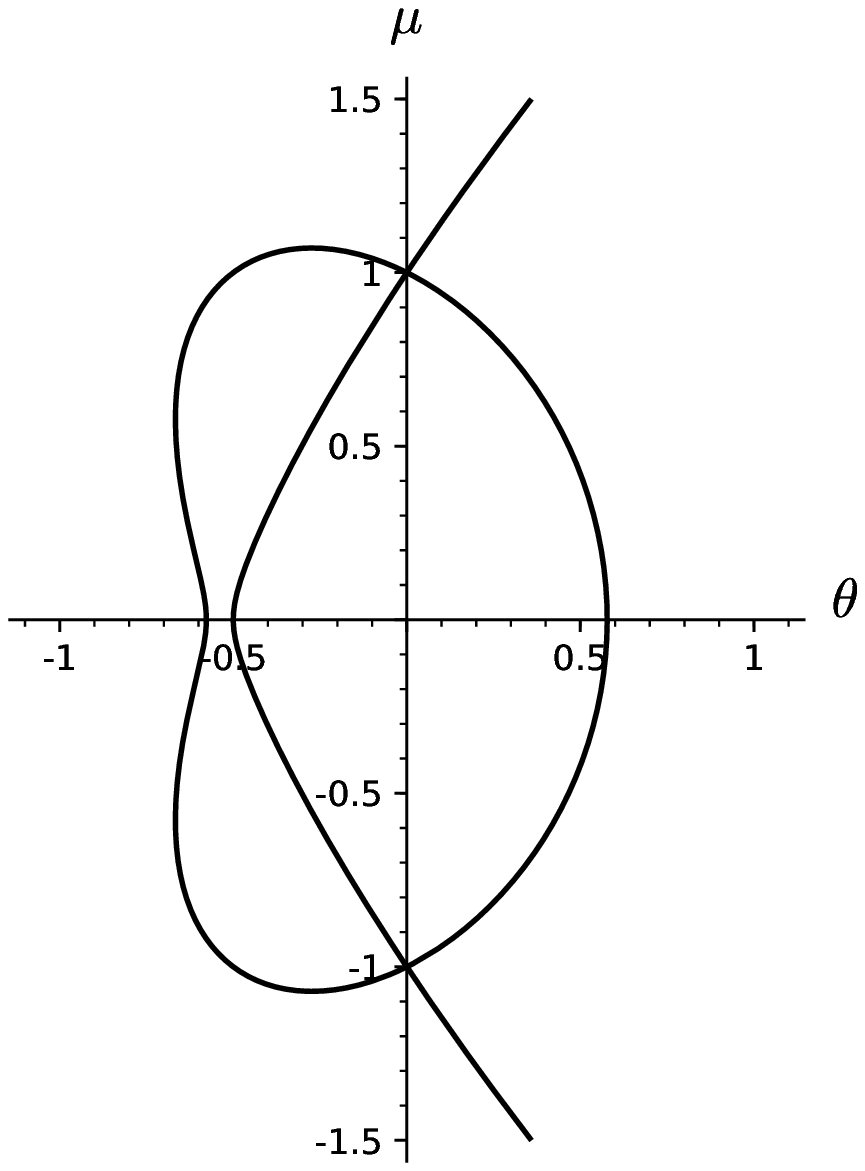}
        \end{subfigure}
        \begin{subfigure}[$\Xi_5^+$]
            \centering
            \includegraphics[width=0.3\textwidth]{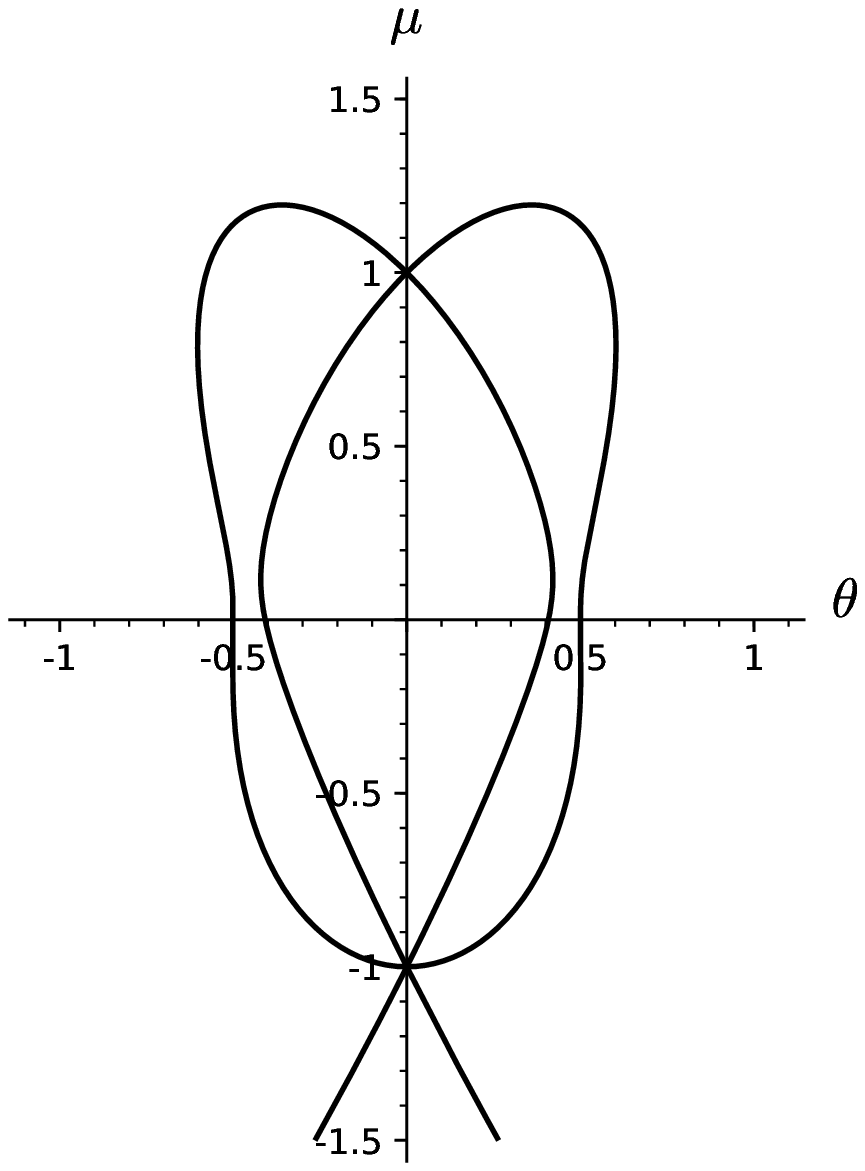}
        \end{subfigure}
        \begin{subfigure}[$\Xi_6^+$]
            \centering
            \includegraphics[width=0.3\textwidth]{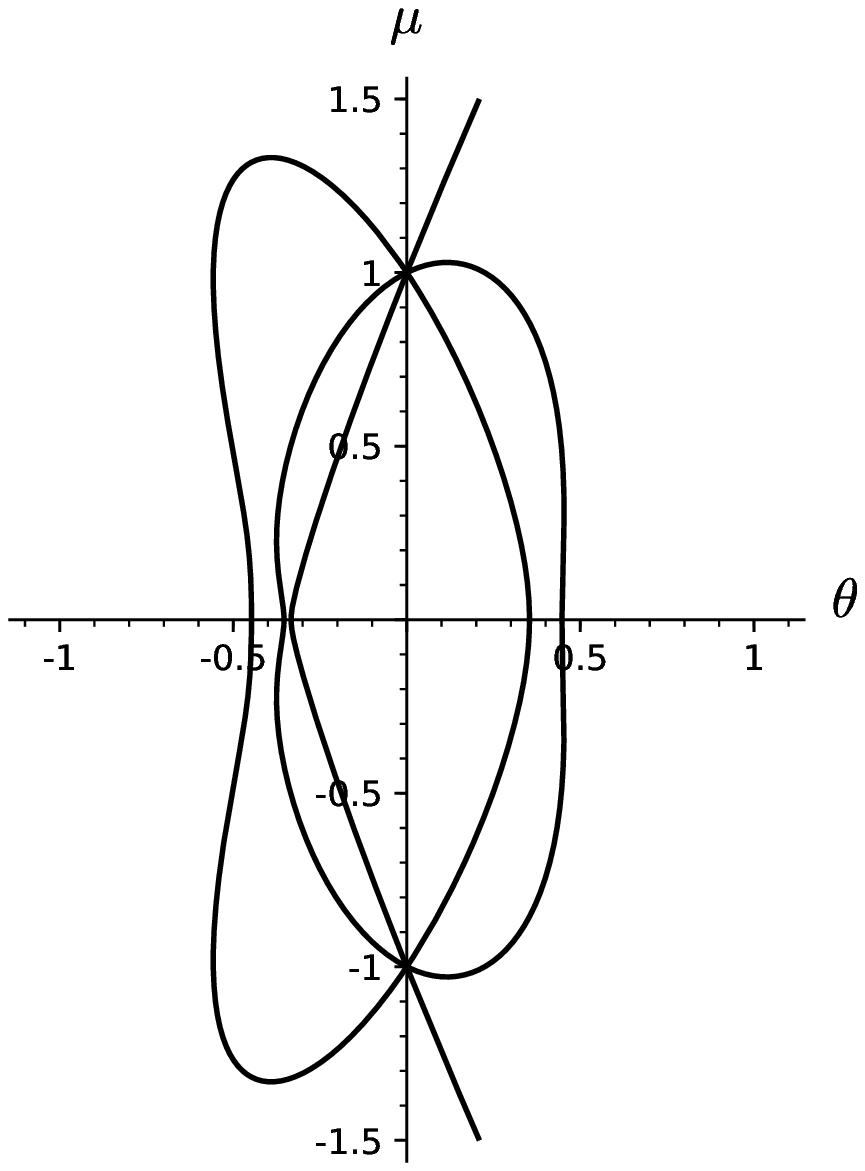}
        \end{subfigure}
        \begin{subfigure}[$\Xi_7^+$]
            \centering
            \includegraphics[width=0.3\textwidth]{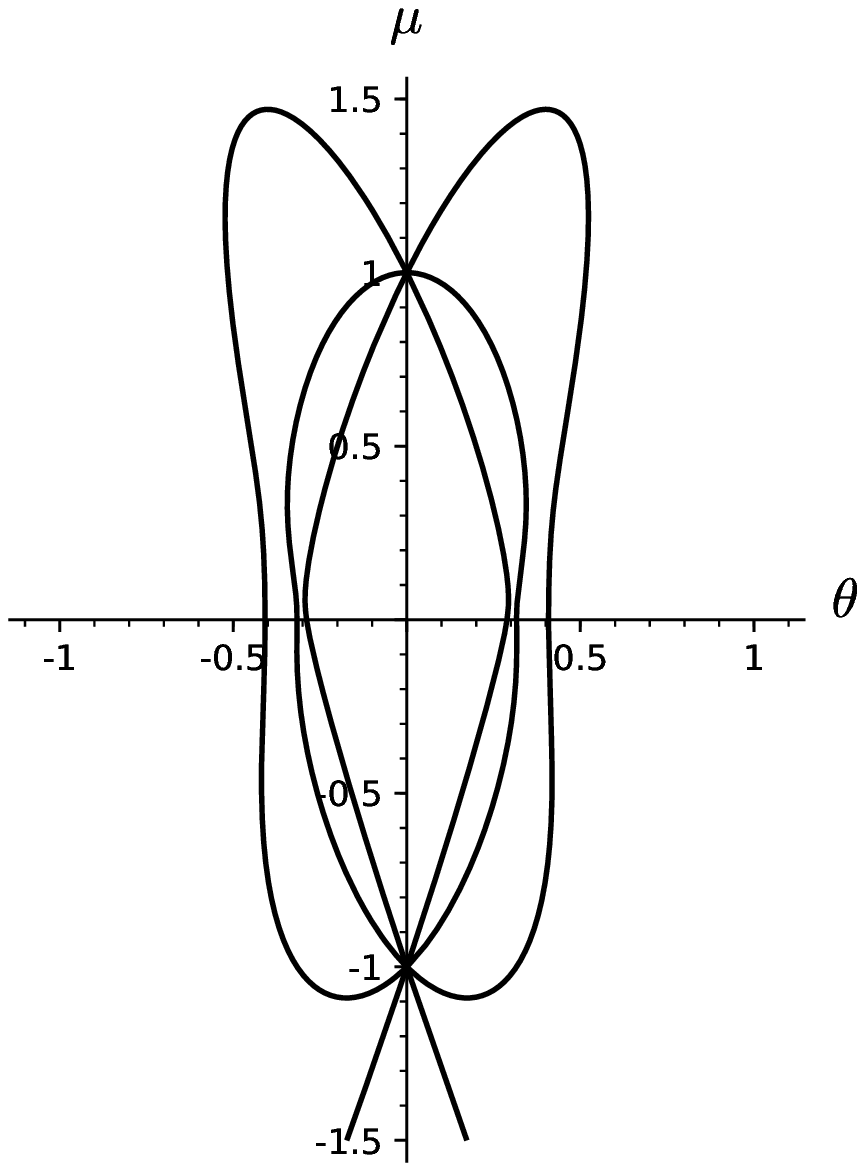}
        \end{subfigure}
        \caption{Some examples of curves~$\Xi_l^+$ in the affine chart~$r\ne0$.}
        \label{xi}
    \end{figure}

   \begin{corollary} Conjecture~\ref{congenu} on genus formula is equivalent to smoothness of some (any) of the 
    curves~$\widehat{\Xi}_l^\pm$ on~$\mathbb{P}^1\times\mathbb{P}^1$ over~$\mathbb{C}$ and also equivalent to the smoothness of the  complex curve 
    $\Gamma_l\setminus\{\mu=0\}\subset\cc^2$.\end{corollary}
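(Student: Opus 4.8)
The plan is to derive this corollary almost entirely from Proposition~\ref{GeneraMax} together with the birational equivalence already established between $\widehat{\Xi}_l^\pm$ and $\Gamma_l$. Recall from the discussion preceding Proposition~\ref{prosi} that $\pi$ maps each of $\Xi_l^+$ and $\Xi_l^-$ birationally onto $\Gamma_l$; since the geometric genus is a birational invariant, one has $g(\widehat{\Xi}_l^+)=g(\widehat{\Xi}_l^-)=g(\Gamma_l)$. Thus Conjecture~\ref{congenu}, which asserts that $g(\Gamma_l)$ equals the stated number, is literally the assertion that $g(\widehat{\Xi}_l^\pm)$ equals that number, for either choice of sign; this already accounts for the phrase \emph{some (any)}, and is reinforced by the $r\mapsto -r$ symmetry of Proposition~\ref{trpp} which swaps $\widehat{\Xi}_l^+$ and $\widehat{\Xi}_l^-$.

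Next I would check that the value in Conjecture~\ref{congenu} coincides with the upper bound appearing in Proposition~\ref{GeneraMax}. By Claim~1 in the proof of that proposition the bidegree of $\widehat{\Xi}_l^+$ is $(\upmu_{p_+}(\overline{\Xi_l^+}),\,\upmu_{p_-}(\overline{\Xi_l^+}))$, which by Proposition~\ref{trpp} equals $([l/2],\,l-[l/2])$. For even $l$ this is $(l/2,l/2)$, so $(d_1-1)(d_2-1)=((l-2)/2)^2$; for odd $l$ it is $((l-1)/2,(l+1)/2)$, so $(d_1-1)(d_2-1)=((l-1)/2)((l-3)/2)$. These are exactly the two cases of Conjecture~\ref{congenu}, so the conjectured genus is precisely the quantity $(d_1-1)(d_2-1)$ entering formula~(\ref{fgd}).

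Having identified the conjectured value with the upper bound, the corollary becomes a restatement of the last sentence of Proposition~\ref{GeneraMax}. Concretely, $g(\widehat{\Xi}_l^\pm)=(d_1-1)(d_2-1)$ holds if and only if the correction term $\delta=\sum_P\delta(P)$ in~(\ref{fgd}) vanishes, equivalently (since $\delta(P)>0$ exactly at singular points) if and only if $\widehat{\Xi}_l^\pm$ has no singularities, equivalently --- again by Proposition~\ref{GeneraMax} --- if and only if the affine curve $\Gamma_l\setminus\{\mu=0\}\subset\cc^2$ is smooth. Chaining this with the genus identity of the first paragraph yields the three-way equivalence asserted in the corollary. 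I do not expect a genuine obstacle here; the only point requiring care is to confirm that the geometric genus of $\Gamma_l$ is unaffected by its singularities on $\{\mu=0\}$. This is automatic, because a birational morphism of irreducible curves induces an isomorphism of normalizations: those singularities are resolved in passing to the normalization and therefore contribute nothing to $g(\Gamma_l)$, so comparing $g(\Gamma_l)$ with $g(\widehat{\Xi}_l^\pm)$ is legitimate and the reduction to Proposition~\ref{GeneraMax} is complete.
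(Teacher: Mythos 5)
Your proposal is correct and takes essentially the same route as the paper: the corollary is stated there without a separate proof precisely because it is the immediate combination of Proposition~\ref{GeneraMax} (including its equality criterion) with the birational equivalences $\widehat{\Xi}_l^\pm\sim\Xi_l^\pm\sim\Gamma_l$ and the bidegree computation from Claim~1 and Proposition~\ref{trpp}. Your write-up simply makes these implicit steps explicit --- the numerical identification of the conjectured genus with $(d_1-1)(d_2-1)$, the vanishing of $\delta$ in formula~(\ref{fgd}) as the smoothness criterion, and the birational invariance of the geometric genus.
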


\section{The Monotonicity Conjecture: proof for generalized simple intersections with 
smaller rotation numbers (A.A.Glutsyuk)}

Here we prove Theorem \ref{tmon}.  For its proof let us first recall that for every 
 $\mu>0$ the polynomial $P_l(\la,\mu^2)$ with fixed $\mu$ and variable $\la$ 
 is a polynomial of degree $l$ that has $l$ distinct real roots
 \begin{equation}\la_1<\la_2<\dots<\la_l; \ \ \ \la_j=\la_j(\mu),\label{lord}\end{equation}
 by Buchstaber--Tertychnyi Theorem \ref{tsimple} (see \cite[p.974, theorem 1]{bt0}). Set 
 $$R_j(\mu):=\la_j(\mu)+\mu^2.$$
 In the case, when $R_j(\mu)>0$, set 
 $$\omega_j(\mu):=\sqrt{\frac1{4R_j(\mu)}}>0, \ B_j(\mu):=l\omega_j(\mu), \ 
 A_j(\mu):=2\mu\omega_j(\mu),
$$
\begin{equation}\Pi_j(\mu):=(B_j(\mu),A_j(\mu)).\label{pij}
\end{equation}
Then the point $\Pi_j(\mu)$ is a generalized simple intersection for $\omega=\omega_j(\mu)$.

Our first goal is to show that $R_j(\mu)>0$ for every $\mu>0$ and to describe 
 the rotation numbers $s=s(j)$ of the generalized simple intersections defined by the roots $\la_j(\mu)$. 
 This is done in the three following propositions. 
\begin{proposition} 
For every $\mu>0$ one has $R_j(\mu)\neq0$; thus, each function 
$R_j(\mu)$ in $\mu>0$ has constant sign.
\end{proposition}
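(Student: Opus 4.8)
The plan is to reduce the non-vanishing of $R_j(\mu)$ to the non-vanishing of a single, explicitly computable determinant. Recall from Section~\ref{genera} the factorization $H_l+(r^2-\mu^2)\operatorname{Id}=-(\mathcal{G}_l+r\operatorname{Id})(\mathcal{G}_l-r\operatorname{Id})$. Setting $r=0$ gives $H_l-\mu^2\operatorname{Id}=-\mathcal{G}_l^2$, whence $\det(H_l-\mu^2\operatorname{Id})=(-1)^l(\det\mathcal{G}_l)^2$. On the other hand, $\det(H_l+\la\operatorname{Id})=P_l(\la,\mu^2)$ is monic of degree $l$ in $\la$ with roots $\la_1(\mu),\dots,\la_l(\mu)$, so evaluating at $\la=-\mu^2$ yields $\det(H_l-\mu^2\operatorname{Id})=(-1)^l\prod_{j=1}^l(\la_j(\mu)+\mu^2)=(-1)^l\prod_{j=1}^lR_j(\mu)$. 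Comparing the two expressions, $\prod_{j=1}^lR_j(\mu)=(\det\mathcal{G}_l)^2$.

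Next I would compute $\det\mathcal{G}_l$. The matrix $\mathcal{G}_l$ is anti-bidiagonal: its only nonzero entries are $\mu$ along the anti-diagonal and the integers $-1,-2,\dots,-(l-1)$ along the neighbouring super-anti-diagonal (this is the shape displayed in Section~\ref{genera}, and it is consistent with the factorization, since $H_l=\mu^2\operatorname{Id}-\mathcal{G}_l^2$ then reproduces exactly the three-diagonal entries of $H_l$). Reversing the order of the rows, i.e.\ multiplying by the anti-identity permutation of sign $(-1)^{\lfloor l/2\rfloor}$, turns $\mathcal{G}_l$ into an upper-triangular bidiagonal matrix all of whose diagonal entries equal $\mu$. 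Hence $\det\mathcal{G}_l=(-1)^{\lfloor l/2\rfloor}\mu^l$, and therefore
\[
    \prod_{j=1}^lR_j(\mu)=\mu^{2l}.
\]
For $\mu>0$ the right-hand side is strictly positive, so no factor $R_j(\mu)$ can vanish; this is the first assertion. As a sanity check, at $\mu=0$ formula~(\ref{gnr}) gives $\det\mathcal{G}_l=0$, matching $\mu^l|_{\mu=0}=0$, so the only zero of $\prod_jR_j$ sits at $\mu=0$.

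Finally, for the constant-sign statement I would invoke simplicity of the spectrum. By Theorem~\ref{tsimple} the roots $\la_1(\mu)<\dots<\la_l(\mu)$ are real and simple for every $\mu>0$, so each $\la_j(\mu)$, and with it each $R_j(\mu)=\la_j(\mu)+\mu^2$, depends continuously (indeed real-analytically) on $\mu$ throughout the connected half-line $\{\mu>0\}$. A continuous nowhere-vanishing real function on a connected set has constant sign, which gives the second assertion. I do not expect a genuine obstacle here: the only point requiring care is the determinant bookkeeping, namely correctly reading off the anti-bidiagonal shape of $\mathcal{G}_l$ and the sign of the row-reversal permutation. Everything else is a direct comparison of two evaluations of the characteristic determinant, together with continuity of simple eigenvalues.
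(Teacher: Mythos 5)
Your proof is correct, and it reaches the conclusion by a genuinely different route than the paper, which argues pointwise by contradiction in the coordinates $(\la,R)$ of Subsection 2.1: if $R_j(\mu)=0$, then $\mu^2=-\la_j(\mu)$, so $0=P_l(\la_j,\mu^2)=P_l(\la_j,-\la_j)=\det M_1(\la_j(\mu))=\la_j^l(\mu)$ by formula (\ref{detm1}), whence $\la_j(\mu)=0$ and then $\mu^2=R_j(\mu)-\la_j(\mu)=0$, contradicting $\mu>0$. You instead globalize: from the Buchstaber--Tertychnyi factorization $H_l-\mu^2\operatorname{Id}=-\mathcal{G}_l^2$ and the anti-bidiagonal shape of $\mathcal{G}_l$ (whose consistency with the factorization you rightly check; it gives $\det\mathcal{G}_l=\pm\mu^l$, the sign being irrelevant after squaring) you get the product formula $\prod_{j=1}^l R_j(\mu)=\mu^{2l}$, which is positive for $\mu>0$. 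Both arguments ultimately rest on the same identity --- the restriction of $P_l$ to the line $\{\la+\mu^2=0\}$ equals $\la^l=(-1)^l\mu^{2l}$ --- but the paper derives it from the column reduction of $M_1(\la)$ already established for the irreducibility proof (so it costs nothing extra), whereas you derive it from the factorization through $\mathcal{G}_l$ that the paper only exploits later, in Section \ref{genera}. Your version buys slightly more: the explicit identity $\prod_j R_j(\mu)=\mu^{2l}$ is stronger than mere non-vanishing and shows at a glance that the product of the $R_j$ is positive (positivity of each individual $R_j$ is obtained in the paper only later, in Proposition \ref{prop7}, by dynamical arguments). Your treatment of the constant-sign claim --- continuity of the ordered simple roots $\la_j(\mu)$ on the connected half-line $\{\mu>0\}$, hence of $R_j(\mu)$ --- is exactly the argument implicit in the paper's ``thus''.
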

\begin{proof} If, to the contrary, $R_j(\mu)=0$ for some $\mu>0$, then 
$$0=P_l(\la_j(\mu),\mu^2)=P_l(\la_j,-\la_j)(\mu)=\det M_1(\la_j(\mu))=\la_j^l(\mu),$$
see the definition of the matrix function $M_1(\la)$ and formula (\ref{detm1}) 
for its determinant in Subsection 2.1. Finally, 
$$\la_j(\mu)=0, \ \mu^2=R_j(\mu)-\la_j(\mu)=0,$$
-- a contradiction. The proposition is proved.
\end{proof}
\begin{proposition} \label{sjc} Let $l\in\nn$. For every $j$ such that $R_j>0$ there exists 
 a unique pair $(s,\pm)$ of a number $s=s(j)\equiv l(\modulo 2)$, 
$0\leqslant s\leqslant l$, and a sign $\pm$  (for $s(j)=0$ the corresponding sign is always "$+$") 
such that for every $\mu>0$ the point $\Pi_j(\mu)$, see (\ref{pij}), is a generalized 
simple intersection of the axis $\La_l(\omega_j(\mu))$ with the boundary curve 
$\partial L_{s,\pm}(\omega_j(\mu))$.
\end{proposition}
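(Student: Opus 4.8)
The plan is to fix $j$ with $R_j>0$ and to exploit the previous proposition, which guarantees $R_j(\mu)>0$ for \emph{all} $\mu>0$, so that the whole family $\Pi_j(\mu)$ is defined on the connected half-line $\{\mu>0\}$. First I would note that, by Theorem~\ref{tsimple}, $\la_j(\mu)$ is a \emph{simple} root of the analytic family of degree-$l$ polynomials $\la\mapsto P_l(\la,\mu^2)$, so it depends analytically on $\mu$; hence $\omega_j(\mu)$, $B_j(\mu)$, $A_j(\mu)$ and $\Pi_j(\mu)$ are analytic in $\mu>0$. Since $\frac1{4\omega_j(\mu)^2}=R_j(\mu)=\la_j(\mu)+\mu^2$ and $P_l(\la_j(\mu),\mu^2)=0$, the parameters $(B_j(\mu),A_j(\mu);\omega_j(\mu))$ satisfy the hypotheses of Theorem~\ref{bt02}, so $\Pi_j(\mu)$ is a generalized simple intersection. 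In particular it lies on $\partial L_s(\omega_j(\mu))$ for some $s\equiv l\ (\modulo 2)$ with $0\le s\le l$, and it is not a constriction.

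Next I would check that for each individual $\mu>0$ the pair $(s,\pm)$ is uniquely determined. Because $\mu>0$, the ordinate $A_j(\mu)=2\mu\omega_j(\mu)$ is strictly positive, so $\Pi_j(\mu)$ is not a growth point (growth points lie on the $B$-axis), and it is not a constriction (generalized simple intersections never are). Hence the Poincar\'e map $h_{\Pi_j(\mu)}$ is not the identity (statement~(iii) of Subsection~1.4); being a boundary point it is therefore parabolic (Remark~\ref{rklim}), and a parabolic M\"obius circle map fixes a \emph{unique} point, necessarily one of $\pm\frac{\pi}2(\modulo2\pi\zz)$. Thus $\Pi_j(\mu)$ lies on exactly one component $\partial L_{s,\pm}$, which defines a sign $\varepsilon(\mu)\in\{+,-\}$ unambiguously. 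The same dichotomy shows that $\partial L_{s,+}$ and $\partial L_{s,-}$ meet only where $h$ fixes both points $\pm\frac{\pi}2$, i.e.\ only at constrictions and growth points.

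Then I would prove that the pair is independent of $\mu$. The rotation number $\mu\mapsto\rho(\Pi_j(\mu);\omega_j(\mu))$ equals $s$ at each $\mu$, is continuous, and takes integer values on the connected set $\{\mu>0\}$, hence is constant; this yields a single $s=s(j)$. For the sign, local constancy suffices: if $\Pi_j(\mu_0)\in\partial L_{s,+}\setminus\partial L_{s,-}$, then, $\partial L_{s,-}$ being closed, some neighbourhood of $\Pi_j(\mu_0)$ is disjoint from it, and by continuity $\Pi_j(\mu)$ stays in that neighbourhood for $\mu$ near $\mu_0$, forcing $\varepsilon$ to be locally constant; connectedness of $\{\mu>0\}$ then makes $\varepsilon$ constant, and uniqueness of $(s,\pm)$ follows.

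The remaining point, and the step I expect to be the main obstacle, is to show that the constant sign equals $+$ when $s(j)=0$. Here I would argue that $\partial L_{0,-}$ lies in the closed left half-plane $\{B\le0\}$, whereas every $\Pi_j(\mu)$ has abscissa $B_j(\mu)=l\omega_j(\mu)>0$; hence $\Pi_j(\mu)$ cannot lie on $\partial L_{0,-}$ and must lie on $\partial L_{0,+}$. To justify the inclusion I would use that the reflection $(B,A)\mapsto(-B,A)$ interchanges $\partial L_{0,+}$ and $\partial L_{0,-}$ (Theorem~\ref{intch}, the even case), so that $\partial L_{0,-}=\{B=g_{0,-}(A)\}$ with $g_{0,-}=-g_{0,+}$; combined with $(1,0)\in\partial L_{0,+}$ (established in the proof of Theorem~\ref{doubint}) and with the chain structure of $L_0$ from statement~(iii), the right boundary $\partial L_{0,+}$ stays in $\{B\ge0\}$ and meets the axis $\{B=0\}$ only at constrictions. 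The delicate part is exactly to exclude a transversal crossing of $\partial L_{0,+}$ into $\{B<0\}$: any point of $\partial L_{0,+}$ on the axis is, by the reflection symmetry, simultaneously a point of $\partial L_{0,-}$ and hence a constriction at which $L_0$ merely pinches, so the sign of $g_{0,+}$ does not change.
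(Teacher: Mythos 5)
Your main line of argument is essentially the paper's: $\Pi_j(\mu)$ is a generalized simple intersection by Theorems \ref{tpol} and \ref{bt02}; for each fixed $\mu$ the pair $(s,\pm)$ is unique because such a point is neither a growth point nor a constriction, so its Poincar\'e map is parabolic (Remark \ref{rklim}) and fixes exactly one of $\pm\frac{\pi}{2}$; and constancy in $\mu$ follows from continuity plus discreteness on the connected set $\{\mu>0\}$. One technical flaw in your constancy step: you treat $\partial L_{s,-}$ as a fixed closed set, but as $\mu$ varies the relevant set is $\partial L_{s,-}(\omega_j(\mu))$, which moves with $\omega_j(\mu)$; being in a neighbourhood disjoint from $\partial L_{s,-}(\omega_j(\mu_0))$ says nothing about the moving family. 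The clean argument runs through the Poincar\'e map itself: if $\mu_n\to\mu_0$ with $\Pi_j(\mu_n)\in\partial L_{s,-}(\omega_j(\mu_n))$, then by continuity of $h_{(B,A)}$ in all three parameters $(B,A,\omega)$ the map at $\Pi_j(\mu_0)$ fixes $-\frac{\pi}{2}$ as well as $+\frac{\pi}{2}$, hence is the identity, contradicting the fact that $\Pi_j(\mu_0)$ is neither a constriction nor a growth point; thus each sign defines a closed subset of $\{\mu>0\}$, and connectedness finishes.

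The genuine gap is exactly the step you flag as delicate: the claim that $g_{0,+}$ cannot change sign at a constriction on the $A$-axis. You assert that $L_0$ ``merely pinches'' there, but nothing you cite excludes an odd-order zero of $g_{0,+}$, i.e.\ the component $\partial L_{0,+}$ switching from right boundary to left boundary of $L_0$ at such a point, after which $\partial L_{0,-}$ would enter $\{B>0\}$. The side of the boundary (right/left, i.e.\ the semi-stability type of the parabolic point) and the location of the fixed point ($+\frac{\pi}{2}$ versus $-\frac{\pi}{2}$) are independent pieces of data, linked only by computation at specific points, so this needs a real argument; moreover the global statement $\partial L_{0,+}\subset\{B\geqslant0\}$ is not controllable by the tools at hand, since generic points of $\partial L_0$ have non-integer $B/\omega$ and are invisible to the spectral-curve machinery. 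The fix available inside the paper is weaker and suffices: by your own constancy step it is enough to identify the sign for a single $\mu$. Since the eigenvalues $\la_j(\mu)$ of $-H_l$ grow at most linearly in $\mu$, one has $R_j(\mu)\to\infty$, i.e.\ $\omega_j(\mu)\to0$, as $\mu\to+\infty$; and for $\omega$ small Corollary \ref{cdoubint} gives exactly $l$ generalized simple intersections on $\La_l^+(\omega)$, while Theorem \ref{doubint} exhibits at least $l$ of them with at least one on $\partial L_{0,+}$, so the unique one lying on $\partial L_0$ lies on $\partial L_{0,+}$ and none lies on $\partial L_{0,-}$. This small-$\omega$ counting is how the paper itself pins the sign for $s=0$ (in the proof of Proposition \ref{prop7}); note the paper's proof of Proposition \ref{sjc} is silent on the parenthetical claim, so your attempt goes beyond it, but as written it does not close.
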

\begin{proof} It follows by construction that each point $\Pi_j(\mu)$ is a 
generalized simple intersection 
depending continuously on $\mu>0$. The corresponding rotation number 
$s=s(j)(\mu)$ is integer-valued, by construction, and also continuous in $\mu$. 
Therefore, it is constant in $\mu>0$ and hence, depends only on $j$. 
Continuity and thus, constance of the sign $\pm=\pm(\mu)$ follows 
by the same argument and by well-definedness of the sign at 
a generalized simple intersection. Indeed, a generalized simple 
intersection with $\mu>0$ cannot lie in two distinct 
boundary components of a phase-lock 
area, since it is neither a growth point, nor a constriction. 
The proposition is proved.
\end{proof}
\begin{proposition} \label{prop7} Leg $l\in\nn$. 

1) For every $j=1,\dots,l$ one has $R_j>0$. 

2) The corresponding 
rotation number $s(j)$ from Proposition \ref{sjc} is given by the formula
\begin{equation}
s(j)=\begin{cases} l-j+1 & \text{ for odd } j,\\
l-j & \text{ for even } j.\end{cases}\label{sjform}
\end{equation}

3) For every $s$, $0<s<l$, choose arbitrary sign $\pm$. For $s=0$ 
choose the sign $+$. There exists a (unique) choice of sign $\pm=(\pm)_l$ for 
$s=l$ such that for every pair $(s,\pm)$ as above with 
$s\equiv l(\modulo2)$ and every $\mu>0$  there exists a 
unique $\omega>0$ for which $(l\omega, 2\mu\omega)$ is 
a generalized simple intersection of the axis $\Lambda_l(\omega)$ and the 
boundary curve $\partial L_{s,\pm}(\omega)$. The latter $\omega$ is equal to some 
$\omega_j(\mu)$. The above $j$ and $s$ are related by formula (\ref{sjform}); the number 
$j$ is uniquely determined by the pair $(s,\pm)$. 
\end{proposition}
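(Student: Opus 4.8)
The plan is to handle the three parts in order, exploiting throughout that $s(j)$ and the attached sign are constant in $\mu>0$ (Proposition~\ref{sjc}), so that each may be read off in whatever limit is convenient. For part 1) I would combine the preceding proposition (which gives $R_j(\mu)\neq0$, hence $R_j$ of constant sign on $\{\mu>0\}$) with the behaviour at infinity. By Proposition~\ref{poval} all asymptotic directions of $\Gamma_l$ have finite slope $\la/\mu\in\{l-1,\dots,-(l-1)\}$, so each root grows at most linearly, $\la_j(\mu)=O(\mu)$ as $\mu\to+\infty$. Then $R_j(\mu)=\la_j(\mu)+\mu^2=\mu^2+O(\mu)\to+\infty$, so $R_j>0$ for large $\mu$, and by constancy of sign $R_j(\mu)>0$ for all $\mu>0$.

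For part 2) I would compute $s(j)$ in the limit $\mu\to0^+$. At $\mu=0$ the off-diagonal entries of $H_l$ vanish, so by (\ref{defh}) the polynomial $P_l(\la,0)$ has roots $(k-1)(l-k+1)$, $k=1,\dots,l$; here $\la_1(0)=0$ is simple and the remaining values occur in equal pairs $g(k)=g(l+2-k)$ with $g(k)=(k-1)(l-k+1)$. Fix $j\geq2$, so $\la_j(0)>0$ and $\omega_j(0)=(4\la_j(0))^{-1/2}$ is finite and positive. Since $A_j(\mu)=2\mu\omega_j(\mu)\to0$, the generalized simple intersection $\Pi_j(\mu)\in\partial L_{s(j)}$ tends to $(l\,\omega_j(0),0)$ on the positive $B$-axis at frequency $\omega_j(0)$; as the family of boundaries is closed and the rotation number is continuous and equals $s(j)$ along the curve, this limit is the growth point of $L_{s(j)}$. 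Matching its abscissa with $B=\sqrt{s^2\omega^2+1}$ from \cite[corollary 3]{buch1} gives $l\,\omega_j(0)=\sqrt{s(j)^2\omega_j(0)^2+1}$, i.e. $\la_j(0)=\tfrac14\,(l^2-s(j)^2)$; solving and inserting the explicit values of $\la_j(0)$ yields exactly (\ref{sjform}) for $j\geq2$. For $j=1$ the limit degenerates ($\la_1(0)=0$, $\omega_1(0)=+\infty$), and I would argue by elimination: the indices $j\geq2$ already realize each admissible $0<s<l$ twice and $s=0$ once (for even $l$), so the only admissible value left for the injective assignment is $s(1)=l$, again as in (\ref{sjform}).

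For part 3) part 2) shows that $j\mapsto s(j)$ hits each admissible $0<s<l$ at the consecutive indices $j=l-s$ (even) and $j=l-s+1$ (odd) and hits $s=0,l$ once. It remains to pin the signs and prove uniqueness of $\omega$. Uniqueness is immediate: if $(l\omega,2\mu\omega)$ is a generalized simple intersection then $\la=r^2-\mu^2$ (with $r=\tfrac1{2\omega}$) is a root of $P_l(\cdot,\mu^2)$, so $\la=\la_{j'}(\mu)$ and $\omega=\omega_{j'}(\mu)$ for some $j'$; once the bijection below is in place, prescribing $(s,\pm)$ forces $j'$. For the signs I would use the factorization $\det(H_l+\la\operatorname{Id})=(-1)^l\det(\mathcal{G}_l+r\operatorname{Id})\det(\mathcal{G}_l-r\operatorname{Id})$ of Section~3 and the identification of the two sheets $\Xi_l^{\pm}$ with the two components $\hsi_{l,\pm}$ (Theorem~\ref{irred1}): the physical point $(\mu,\sqrt{R_j})$ lies on $\Xi_l^{+}$ or $\Xi_l^{-}$ according as $-\sqrt{R_j}$ or $+\sqrt{R_j}$ is an eigenvalue of $\mathcal{G}_l$, and this membership is precisely the sign $\pm(j)$. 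By (\ref{gnr}), at $\mu=0$ the colliding value $\sqrt{\la_{l-s}(0)}=\sqrt{\la_{l-s+1}(0)}$ has \emph{both} $\pm\sqrt{R}$ as eigenvalues of $\mathcal{G}_l$; for $\mu>0$ simplicity of the roots (Theorem~\ref{tsimple}) forbids $\pm\sqrt{R_j}$ from being eigenvalues simultaneously, so by continuity the two branches split onto the two opposite signs. Together with the conventions $s=0\mapsto{+}$ and the well-defined $(\pm)_l$ for $s=l$ (Proposition~\ref{sjc}), this makes $j\mapsto(s(j),\pm(j))$ the asserted bijection and yields part 3).

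The main obstacle is exactly this sign analysis: showing that the two roots which collide at a growth point at $\mu=0$ separate onto the two opposite boundary components $\partial L_{s,\pm}$ as $\mu$ increases. This is where the determinantal factorization through $\mathcal{G}_l$, the simplicity Theorem~\ref{tsimple}, and the symmetry Theorem~\ref{intch} must be used together, rather than any one of them alone. A secondary (routine but necessary) point in part 2) is justifying that the positive-$B$ intersection of $\partial L_s$ with the $B$-axis is its growth point, so that the $\mu\to0$ limit lands on the growth-point abscissa formula.
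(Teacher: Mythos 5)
Your part 1) (boundedness $\la_j(\mu)=O(\mu)$ from Proposition \ref{poval}, hence $R_j(\mu)=\mu^2+O(\mu)>0$ for large $\mu$, combined with constancy of sign) and your part 2) for $j\geqslant 2$ (passing to the limit $\mu\to0^+$ and matching the limit point against the growth-point abscissa $\sqrt{s^2\omega^2+1}$, which forces $\la_j(0)=\tfrac14(l^2-s(j)^2)$) are correct, and they follow a genuinely different route from the paper: the paper proves 1) and 3) by a counting argument --- by Theorem \ref{doubint} and Corollary \ref{cdoubint}, for small $\omega$ the semiaxis $\La_l^+$ carries exactly $l$ generalized simple intersections realizing $l$ distinct pairs $(s,\pm)$, which by Proposition \ref{sjc} yields a bijection $j\mapsto(s(j),\pm(j))$ and in particular $R_j>0$ for all $j$ --- and then deduces (\ref{sjform}) from the ordering $\omega_1(\mu)>\dots>\omega_l(\mu)$ and the order in which the moving growth points cross the axis as $\omega$ decreases. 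Your sign-splitting at the collision points $(\mu,r)=(0,\sqrt{i(l-i)})$, using (\ref{gnr}), simplicity (Theorem \ref{tsimple}) and the identification of $\Xi_l^{\pm}$ with $\hsi_{l,\pm}$, is also sound.

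The genuine gap is your determination of $s(1)$, and with it the injectivity on which part 3) rests. Your elimination step (``the only admissible value left for the injective assignment is $s(1)=l$'') presupposes that no pair $(s,\pm)$ is realized by two distinct indices $j$; but that injectivity is itself part of what Proposition \ref{prop7} asserts, and nothing in your proposal proves it --- your later appeal to ``the bijection below'' makes the argument circular. The sign-splitting only shows that the two colliding branches $j=2i,\,2i+1$ carry opposite signs; it does not exclude the scenario $s(1)=s_0\in(0,l)$, in which the branch $j=1$ duplicates the pair $(s_0,\epsilon)$ of one of the branches $l-s_0$, $l-s_0+1$, i.e.\ the curve $\partial L_{s_0,\epsilon}(\omega)$ meets $\La_l^+(\omega)$ twice while $\partial L_l$ never meets it. No local property of the spectral curve forbids this; it is excluded only by global information on the phase-lock portrait. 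Two clean repairs: (a) invoke the first statement of Theorem \ref{doubint}: for every $\omega>0$ there is a generalized simple intersection in $\partial L_l\cap\La_l^+$; by Theorem \ref{bt02} it equals $\Pi_j(\mu)$ for some $j$, and since your formula already gives $s(j)\leqslant l-2$ for all $j\geqslant2$, necessarily $j=1$ and $s(1)=l$; or (b) argue directly: by the proof of Theorem \ref{cirr} the germ of $\{P_l(\la,v)=0\}$ at the origin is tangent to $\{\la+v=\operatorname{const}\}$, so $R_1(\mu)\to0$ and $\omega_1(\mu)\to\infty$ as $\mu\to0^+$; the field (\ref{josvec}) at the parameters of $\Pi_1(\mu)$ then converges uniformly to $\dot\varphi=l$, $\dot\tau=1$, and continuity of the rotation number gives $s(1)=l$. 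With $s(1)=l$ secured, your formula plus the sign-splitting do produce the bijection, and the uniqueness of $\omega$ in part 3) follows as you indicate.
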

\begin{proof} For every $s$, $0<s<l$,  every choice of sign $\pm$ 
and every $\omega>0$ small enough the 
boundary curve $\partial L_{s,\pm}(\omega)$ intersects $\La_l(\omega)$ at some point 
$\Pi^{s,\pm}(\omega)=(l\omega,A^{s,\pm}(\omega))$. A version of this statement holds for $s=0$ and the sign "$+$", and for $s=l$ 
and exactly one choice of sign. Both statements follow from Theorem  \ref{doubint} and Corollary \ref{cdoubint}. 
The number of  pairs $(s,\pm)$ with 
$s\equiv l(\modulo2)$  for which the above statement holds 
(including $s=0$ and $s=l$) 
is equal to  $l$, by Corollary \ref{cdoubint}. Set 
$$\mu^{s,\pm}:=\frac{A^{s,\pm}(\omega)}{2\omega}, \ \ \ s\equiv l(\modulo2).$$
Then $\Pi^{s,\pm}(\omega)=\Pi_j(\mu^{s,\pm})$ for some $j\in\{1,\dots,l\}$, and one has $R_j(\mu^{s,\pm})>0$. This 
follows from construction. The numbers $j=j(s,\pm)$ are uniquely defined by 
$(s,\pm)$, and they  are distinct for distinct pairs $(s,\pm)$. This follows from 
Proposition \ref{sjc}. The number of indices $j$ corresponding 
to  pairs $(s,\pm)$ is 
equal to $l$, as is the number of pairs $(s,\pm)$, and $R_j>0$ for 
these $j$. The total number of indices $j$ is equal to $l$. 
Therefore, the correspondence $j\mapsto(s,\pm)$ is one-to-one, 
 and  for $s=l$ the corresponding 
sign is uniquely determined, as in Theorem \ref{doubint}. This   
also implies that $R_j>0$ for all $j=1,\dots,l$. For every given $l\in\nn$, 
$\mu>0$ and $\omega>0$ the point $(B=l\omega,A=2\mu\omega)$ is a 
generalized simple intersection, if and only if it coincides with some of 
$\Pi_j(\mu)$, and then $\omega=\omega_j(\mu)$. This follows from 
construction. 

The above discussion proves   Proposition \ref{prop7}, except 
for formula (\ref{sjform}) and the last part of statement 3), which follows from this formula. 
Let us now prove (\ref{sjform}). 

{\bf Claim.} {\it The numbers $s(j)$ form a non-increasing sequence.} 

\begin{proof} Note that for every integer 
$s_1<s_2$ the phase-lock area $L_{s_1}$ lies on the left from the phase-lock area 
$L_{s_2}$. For every $s\in\nn$ consider the growth point $(\sqrt{s^2\omega^2+1},0)$ 
of the phase-lock area $L_s$, see \cite[corollary 3]{buch1}. In the renormalized 
coordinates  
$(l=\frac B{\omega}, \mu=\frac A{2\omega})$ its abscissa $\sqrt{s^2+\frac1{\omega^2}}$ 
moves from the left to the right, as $\omega$ decreases. Therefore, for every 
$l\in\nn$ and $s_1<s_2\leqslant l$, as $\omega>0$ decreases, 
 an intersection of the axis $\La_l(\omega)$ with the boundary $\partial L_{s_2}$ 
will appear earlier than its intersection with $\partial L_{s_1}$. The above intersections 
are generalized simple intersections, if $s_1\equiv s_2\equiv l(\modulo 2)$ and $s_2<l$. 
Indeed, they are either constrictions, or generalized simple intersections (by definition), and  
they cannot be 
constrictions by inequality $s_1<s_2<l$ and \cite[theorem 1.2]{4}. 

We already known that for given $l\in\nn$,  $\mu>0$ and $\omega>0$ a point 
$(B=l\omega, A=2\mu\omega)$ is a generalized simple intersection, if and only if 
$\omega=\omega_j(\mu)$ for some $j\in\{1,\dots,l\}$. One has 
$\omega_1(\mu)>\dots>\omega_l(\mu)$, by (\ref{lord}). This together with the above 
discussion implies that the sequence $s(j)$ of the corresponding rotation numbers 
does not increase and proves the claim.
\end{proof} 

The claim implies that the pairs $(s,\pm)(1),\dots,(s,\pm)(l)$ are ordered so that 
the pair $(l,(\pm)_l)$ goes first, then two pairs $((l-1),\pm)$ with both signs $\pm$, 
etc. This implies formula (\ref{sjform}) and proves Proposition \ref{prop7}.
\end{proof} 

\begin{proof} {\bf of Theorem \ref{tmon}.} Suppose the contrary: for  certain 
$k\in\nn$, $\mu_0>0$ and $j\in\{1,\dots,k\}$  a generalized simple intersection 
$z=\Pi_j(\mu_0)\in\La_k(\omega)\cap\partial L_{s,\pm}(\omega)$, 
 is a left-moving tangency; here $s=s(j)$, $\omega=\omega_j(\mu_0)$. 
We use the rescaled coordinates $(l=\frac B\omega, 
\mu=\frac A{2\omega})$ on $\rr^2$. 
Recall that we consider that $s<k$,  $s\equiv k(\modulo 2)$, 
by the conditions of Theorem \ref{tmon}  and 
since generalized simple intersections of a boundary curve $\partial L_{s,\pm}$ 
and an axis $\La_k$ may exist only for $s\in[0,k]$, $s\equiv k(\modulo 2)$, see 
Theorem \ref{bt02}. Then for $\omega>\omega_j(\mu_0)$ the boundary curve 
$\partial L_{s,\pm}(\omega)$ intersects the horizontal line $\mu=\mu_0$ on the right from 
the point $z=\Pi_j(\mu_0)$, by the definition of left-moving tangency (see Fig. 6). On the 
other hand, for every $\omega>0$ large enough the phase-lock area 
$L_s(\omega)$ should lie on the left from the line $\{ l=k\}$, by  \cite[proposition 3.4]{4} 
(which in its turn follows from Chaplygin's comparison theorem \cite{chap, buch1}). 
Hence, there exists a $\omega^*>\omega_j(\mu_0)$ such that the boundary curve 
$\partial L_{s,\pm}(\omega^*)$ contains $z$, by continuity. 
Then the point $z$ is a generalized simple intersection of the curve 
$\partial L_{s,\pm}(\omega^*)$ and the line $\La_k(\omega^*)$, and hence, 
$\omega^*=\omega_m(\mu_0)$ for a certain $m\in\{1,\dots,k\}$. One has $m<j$, since 
$\omega_m(\mu_0)=\omega^*>\omega_j(\mu_0)$, by construction and the sequence 
$\omega_n(\mu)$ decreases. Finally, two distinct indices $m$ and $j$ correspond to the 
same pair $(s,\pm)$, - a contradiction to the last statement of Proposition \ref{prop7}. This proves 
Theorem \ref{tmon}.
\end{proof}
 
\section{
    Some open problems and a corollary (A.A.Glutsyuk)
}

First we discuss some open problems on model of Josephson effect: on 
boundaries of phase-lock areas, constrictions and their complexifications. 
Afterwards we reformulate Theorem~\ref{irred2} in terms of family~\eqref{heun2*}
of special double confluent Heun equations and state an open question about them.

\subsection{
    Open problems related to geometry of phase-lock area portrait
}

To our opinion the main results of the paper and solutions of the next open 
problems might be useful in study of the geometry of the family of  phase-lock 
area portrait in the model of Josephson effect and dependence of 
the constrictions on the parameter $\omega$.

Let us formulate a complex version of Conjecture \ref{conexp} and related questions. 
To do this, for a given $l\in\zz$ consider the one-dimensional complex analytic 
subset 
\[
    C_l=\{(\mu,\omega)\in\cc^2\setminus\{\mu=0\} \ | \ 
    \text{ Riccati equation (\ref{ricc}) has trivial monodromy}\}.
\]
This is a complex analytic subset in 
$$\cc^2_{\mu\neq0}:=\cc^2_{\mu,\omega}\setminus\{\mu=0\}.$$ 
\begin{remark}
    Let $\omega>0$, $\mu\in\rr$, $l\in\zz$, $B=l\omega$, $A=2\mu\omega$. 
    Let $(B,A)$ be a constriction for the model of Josephson effect. 
    Then $(\mu,\omega)\in C_l$. Thus, the set $C_l$ can be viewed as the set 
    of {\it complex constrictions} in the complex axis 
    $\La_l=\{ B=\omega l\}\simeq\cc$. The numerical pictures (see, e.\,g., 
    Figs.~1,~2) show that the real constrictions viewed as points in $C_l$ 
    depend continuously on the parameter $\omega$. 
\end{remark}
\begin{proposition} For every $l\in\zz$ the  analytic subset $C_l$ 
has pure dimension 
one and is given by zeros of a global holomorphic function $\cc^2_{\mu\neq0}\to\cc$. 
For $l\geqslant0$  ($l<0$) 
the latter function is the Stokes multiplier $c_0$  (respectively, 
$c_1$)  of the irregular singular point 0 of linear system (\ref{tty}), see 
\cite[formula (2.2)]{g18} for the definition of the Stokes multipliers $c_{0,1}$. 
\end{proposition}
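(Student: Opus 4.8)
The plan is to read off all three assertions from the local structure of the irregular singular point $z=0$ of the linear system \eqref{tty}, whose projectivization is \eqref{ricc}. First I would record that structure. Near $z=0$ the system has an unramified irregular singularity of Poincar\'e rank $1$: the two formal solutions carry exponential factors $e^{0}$ and $e^{\mu/z}$, and the corresponding formal exponents are integers (for the scalar form \eqref{heun2*} one checks they equal $0$ and $1-l$). Since $l\in\zz$, the difference of exponents is the integer $l-1$ and $e^{\mu/z}$ is single-valued under $z\mapsto e^{2\pi i}z$; hence the formal monodromy at $0$ is trivial, and the full monodromy $\mon$ of \eqref{ricc} is the product of the two Stokes operators attached to the two Stokes rays of this rank-$1$ point. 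By \cite{bg,bt1} the monodromy $\mon$ is trivial precisely when \eqref{heun2*} possesses a holomorphic (hence entire) solution at $0$, i.e. when the tame formal solution converges; and by the analysis of \cite[formula (2.2)]{g18} this convergence is detected by a single one of the two Stokes multipliers, the resonance coming from $l\in\zz$ freezing the other. The sign of the exponent $1-l$, i.e. the sign of $l$, determines which Stokes ray is relevant and thus selects $c_0$ when $l\geqslant0$ and $c_1$ when $l<0$. This identifies $C_l$ set-theoretically with $\{c_0=0\}$ (respectively $\{c_1=0\}$).

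Next I would verify that the chosen Stokes multiplier is a global holomorphic function on $\cc^2_{\mu\neq0}$. The Stokes matrices of a holomorphic family of linear systems with an irregular singularity of fixed Poincar\'e rank depend holomorphically on the parameters as long as the leading exponential data keep a constant combinatorial type, i.e. as long as the two exponential parts stay distinct. Here the exponential parts are $0$ and $\mu/z$, which coincide exactly when $\mu=0$; deleting the line $\{\mu=0\}$ is precisely what keeps the irregular point nondegenerate and the Stokes data well defined and analytic. Hence $c_0$ (respectively $c_1$) extends to a single-valued holomorphic function $\cc^2_{\mu\neq0}\to\cc$, and $C_l$ is its zero set.

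Finally I would settle purity of dimension. The manifold $\cc^2_{\mu\neq0}=\cc^2\setminus\{\mu=0\}$ is connected, and the relevant Stokes multiplier is not identically zero on it: for most real parameters the Poincar\'e map $h_{(B,A)}$ is not the identity (its rotation number is generically non-integer), so $\mon$ is generically nontrivial. A not-identically-zero holomorphic function on a connected complex $2$-manifold has zero locus of pure codimension one; therefore $C_l$ is purely one-dimensional and is cut out by the single global holomorphic function $c_0$ (resp. $c_1$), which proves all three assertions.

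The main obstacle is the first paragraph: reducing triviality of $\mon$ to the vanishing of a \emph{single} Stokes multiplier. A priori, once the formal monodromy is trivial, triviality of the product of the two Stokes operators looks like the two independent conditions $c_0=0$ and $c_1=0$; the essential point is that the integrality of $l$ (the resonance between the exponents $0$ and $1-l$), combined with the equivalence ``trivial monodromy $\Longleftrightarrow$ entire solution of \eqref{heun2*}'' of \cite{bg}, collapses this pair of conditions into one, the sign of $l$ choosing which. Carrying out this collapse cleanly, that is, matching the surviving Stokes multiplier with the object $c_0$ or $c_1$ of \cite[(2.2)]{g18}, is the delicate step; the holomorphic dependence on parameters and the purity of dimension are then routine.
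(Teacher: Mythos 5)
Your proposal has the same skeleton as the paper's proof: identify $C_l$ with the zero set of a \emph{single} Stokes multiplier, then get analyticity from holomorphic dependence of Stokes data on the parameters and purity of dimension from the fact that a non-identically-zero holomorphic function on the connected manifold $\cc^2_{\mu\neq0}$ has zero locus of pure codimension one. Your last two paragraphs (holomorphy and purity) are correct and indeed more explicit than what the paper writes. The genuine gap is exactly at the step you yourself flag as delicate, and the heuristic you offer there cannot close it. With trivial formal monodromy, the monodromy of \eqref{tty} is the product of the two unipotent Stokes matrices, so its triviality is a priori the \emph{two} conditions $c_0=c_1=0$, while $c_0=0$, $c_1\neq 0$ (or vice versa) gives a nontrivial parabolic monodromy. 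Your claim that integrality of $l$ (``resonance'') ``freezes'' the second multiplier once the first vanishes is not a valid principle: it is symmetric in $c_0$ and $c_1$, whereas the true statement is asymmetric — for $l\in\zz_{\geqslant0}$ one has $\{c_0=0\}\subseteq\{c_1=0\}$ but \emph{not} conversely. Concretely, at every generalized simple intersection with $A\neq0$ (these exist on each axis $\La_l$, $l\in\nn$, by Theorem~\ref{doubint}, and $l$ is an integer there by definition) the Poincar\'e map, hence the monodromy $\mon$, is parabolic and not the identity (Remark~\ref{rklim}, statement (iii) of Subsection 1.4, Proposition~\ref{moni}); so exactly one Stokes multiplier vanishes there even though $l\in\zz$. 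Thus integrality does not collapse the two conditions into one; what does is a specific analytic theorem about this particular family, which your argument neither proves nor cites.

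The paper closes this step with three precise inputs: (i) for $l\in\zz$, triviality of the monodromy of \eqref{ricc} is equivalent to $c_0=c_1=0$ (\cite[proof of lemma 3.3]{4}); (ii) for $l\in\zz_{\geqslant0}$ this is equivalent to existence of an entire solution of \eqref{heun2*} (\cite[theorem 4.10]{bg2}); (iii) vanishing of $c_0$ \emph{alone} already implies existence of that entire solution — proved in \cite[proof of theorem 2.5]{g18} for real parameters, with the observation that the proof goes through for complex parameters. Chaining (iii), (ii), (i) gives $C_l=\{c_0=0\}$ for $l\geqslant0$. Note also that your equivalence ``$\mon$ trivial iff \eqref{heun2*} has a holomorphic solution at $0$'' is only backed by the literature for $l\geqslant0$; for $l<0$ the relevant equation is the one with $l$ replaced by $-l$. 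Accordingly, instead of arguing about ``which Stokes ray is relevant,'' the paper reduces $l<0$ to $l\geqslant0$ via the explicit linear isomorphism \cite[(2.14)]{g18} between solution spaces of \eqref{tty} and of the same system with opposite sign at $l$, which conjugates monodromies and interchanges the numbering of the Stokes multipliers; this is what yields $C_l=\{c_1=0\}$ for $l<0$. Without inputs of type (i)--(iii), or substitutes for them, your argument establishes only $C_l=\{c_0=0\}\cap\{c_1=0\}$, which is strictly weaker than the proposition.
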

\begin{proof} It is known that for $l\in\zz$ triviality of the monodromy transformation 
of Riccati equation (\ref{ricc}) is equivalent to vanishing of both Stokes multipliers, 
$c_0=c_1=0$, see \cite[proof of  lemma 3.3]{4}. In the case, when $l\in\zz_{\geqslant0}$, 
this is equivalent to the existence of an entire solution of the double confluent Heun 
equation \eqref{heun2*}, see \cite[theorem 4.10]{bg2}. On the other hand, vanishing of just  one Stokes multiplier $c_0$ 
implies the existence of the latter solution.  See the proof of the real analogue of the latter  statement in \cite[proof of theorem 2.5]{g18}, which remains valid for complex parameters as well. The case, when $l\in\zz_{<0}$, is reduced to the case, when $l\in\zz_{\geqslant0}$, via linear isomorphism \cite[(2.14)]{g18} between solution spaces of system 
(\ref{tty}) and the same system with opposite sign at $l$. The latter isomorphism  conjugates the corresponding monodromies and 
inverses the numeration of the canonical basic solutions and thus, of the Stokes multipliers. 
Therefore, vanishing of the Stokes multiplier $c_0$ for system (\ref{tty}) with non-negative 
$l$ is equivalent to vanishing of the Stokes multiplier $c_1$ for the same system 
with opposite sign at $l$. This proves the proposition.
\end{proof}

{\bf Question 1.} 
How many irreducible components does the curve $C_l$ have?

{\bf Question 2.}
Let $\widehat C_l$ denote the normalization of the curve $C_l$: the Riemann 
surface bijectively parametrizing $C_l$ (except for self-intersections). Is it 
true that the projection 
\[
    \widehat C_l\to\cc, \ (\mu,\omega)\mapsto\omega
\]
is   a (ramified) covering over  $\cc$?
Describe its ramification points (if any).

\begin{conjecture}
    \label{conj2} 
    The above projection has no real critical points $(\mu,\omega)$.
\end{conjecture}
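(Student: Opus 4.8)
The plan is to convert the statement into a monotonicity property of a real branch of constrictions and then attack that property by a variational argument. By the preceding proposition $C_l=\{c_0=0\}$ for $l\geqslant0$, and vanishing of $c_0$ is equivalent to the existence of an entire solution of the double confluent Heun equation (\ref{heun2*}) with $\la=\frac1{4\omega^2}-\mu^2$. Writing $R:=\la+\mu^2=\frac1{4\omega^2}$ and recording that $\omega=\tfrac1{2\sqrt R}$ is a strictly monotone function of $R$, the $\omega$-projection on $\widehat C_l$ is, near real points with $R>0$, the same as the function $R$ on the constriction curve
\[
    D_l:=\{(\la,\mu):\mu\neq0,\ \text{(\ref{heun2*}) has an entire solution}\}\subset\cc^2_{(\la,\mu)},
\]
to which $C_l$ is locally real-analytically isomorphic via $(\mu,\omega)\mapsto(\la,\mu)$ (the entire-solution condition depends only on $l,\la,\mu$). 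Thus a real point of $C_l$ is a critical point of the $\omega$-projection exactly when the corresponding point of $D_l$ is a critical point of $R=\la+\mu^2$. First I would prove that $D_l$ is smooth at its real points off $\{\mu=0\}$, so that $\widehat C_l$ may be replaced by $D_l$ itself there, by establishing simplicity of the entire solution, in the spirit of Theorem \ref{tsimple} and of the simplicity invoked in Proposition \ref{prop7}.

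On a smooth real branch I would then use $\mu$ as a local coordinate, writing $\la=\la(\mu)$, so that the assertion reduces to
\[
    R'(\mu)=\la'(\mu)+2\mu\neq0\qquad\text{along every real branch of }D_l .
\]
By the symmetry $(\la,\mu)\mapsto(\la,-\mu)$ of (\ref{heun2*}) it suffices to treat $\mu>0$. To compute $\la'(\mu)$ I would differentiate the identity $\mcl_{\la(\mu),\mu}E=0$ for the entire solution $E=E(z;\mu)$ with respect to $\mu$ along $D_l$; since $\partial_\la\mcl$ is multiplication by $1$ and $(\partial_\mu\mcl)E=(1-z^2)\partial_zE-(l+1)zE$, this gives
\[
    \la'(\mu)\,E+(1-z^2)\,\partial_zE-(l+1)z\,E+\mcl_{\la(\mu),\mu}(\partial_\mu E)=0 .
\]
Pairing this identity against a linear functional annihilating the image of $\mcl_{\la(\mu),\mu}$ on the relevant solution space then expresses $\la'(\mu)$ as a ratio of two explicit pairings built from $E$, from the coefficient variation $(1-z^2)\partial_z-(l+1)z$, and from the adjoint (or $\sharp$-transformed) entire solution. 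The natural candidate functional is the solvability obstruction for the inhomogeneous equation within the class of entire functions, which should be constructed from the Buchstaber--Tertychnyi involution $\sharp$ and the formal adjoint of $\mcl$.

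The remaining, and hardest, step is to show that the resulting expression never equals $-2\mu$, i.e. that $R'(\mu)\neq0$. I expect the main obstacle precisely here: the operator $\mcl$ carries no evident self-adjoint structure and the ``entire on $\cc$'' condition is not a standard $L^2$ boundary condition, so neither a Hellmann--Feynman sign nor a Sturm-type monotonicity of $\la(\mu)$ is available off the shelf. The realistic route is to exhibit a real, $\sharp$-invariant bilinear form with respect to which the pairing computing $R'(\mu)$ is sign-definite, feeding in the reality and $\sharp$-symmetry of the entire solution at real parameters (the symmetry of (\ref{ricc}) under $(\Phi,z)\mapsto(-\Phi^{-1},z^{-1})$ used in Proposition \ref{ppar}, and the orbit symmetry of Proposition \ref{halfp}). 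Equivalently, one might try to deduce $R'(\mu)\neq0$ from the strict monotonicity of the Poincar\'e map family in $B$ (Proposition \ref{pmono}); the difficulty is that this monotonicity controls the $B$-direction along the axis $\La_l$, whereas the statement requires control in the transverse $\omega$-direction along $D_l$, and converting one into the other is exactly what keeps the conjecture open.
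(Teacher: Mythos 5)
You should first be clear about what you are up against: the statement is Conjecture \ref{conj2}, which the paper itself leaves \emph{open}; the paper offers no proof of it, only the remark that a positive answer would imply Conjecture \ref{conexp} (via \cite[proposition~5.31]{bg2}). Your proposal is likewise not a proof, and you say so yourself: after reducing the conjecture to the non-vanishing $R'(\mu)=\lambda'(\mu)+2\mu\neq 0$ along real branches of the entire-solution locus $D_l$, you state that you see no mechanism forcing this. That last step is not a missing technicality --- it is the entire content of the conjecture, so what you have is a (reasonable) reformulation, not a solution.

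The reformulation itself is essentially sound as far as it goes: for $\omega\neq0$ the map $(\mu,\omega)\mapsto(\lambda,\mu)$ with $\lambda=\frac1{4\omega^2}-\mu^2$ is a local isomorphism, $d\omega$ vanishes exactly where $d(\lambda+\mu^2)$ does, and for $l\geqslant0$ trivial monodromy of \eqref{ricc} is equivalent to entire solvability of \eqref{heun2*}, which depends only on $(l,\lambda,\mu)$; the variational identity you get by differentiating $\mathcal{L}E=0$ in $\mu$ is also correct. But beyond the admitted main gap, two auxiliary steps cannot be carried out as you describe. First, smoothness/simplicity at the real points of $D_l$: Theorem \ref{tsimple} and Proposition \ref{prop7} concern polynomial solutions of \eqref{heun}, i.e.\ the spectral curve $\Gamma_l$, which is a \emph{different} locus; in fact, by \cite[theorem~3.10]{bg} (quoted in the paper), polynomial solvability of \eqref{heun} and entire solvability of \eqref{heun2*} are mutually exclusive for real parameters, so no ``simplicity'' statement can be borrowed from there, and since the conjecture concerns the projection from the normalization $\widehat C_l$, singular real points would have to be treated in any case. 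Second, the functional ``annihilating the image of $\mathcal{L}$'' on entire functions is only named, never constructed: there is no off-the-shelf Fredholm or adjoint theory for the class of solutions entire on $\mathbb{C}$ that is in play here, so even your expression for $\lambda'(\mu)$ remains formal. In short, your outline correctly locates where the difficulty lives --- converting monotonicity of the Poincar\'e map in $B$ into transverse control along the constriction branches --- but it establishes nothing beyond what the paper already records as an open problem.
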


\begin{remark}
    A positive solution of Conjecture \ref{conj2} would imply the solution of 
    Conjecture~\ref{conexp}. This implication follows from~\cite[proposition~5.31]{bg2} 
    and the discussion before it. 
\end{remark}

Recall that $\Sigma_{\pm}$ is the locus of those parameter 
values for which the monodromy of Riccati equation (\ref{ricc}) fixes the 
point $\pm i$. The curve $C_l$  is included into the intersection 
$\Sigma_+\cap\Sigma_-$ via the rational mapping 
\[
    (\mu,\omega)\mapsto(l\omega, 2\mu\omega,\frac1{2\omega}).
\]
\begin{proposition} \label{moni} The surface $\Sigma_{\pm}$ consists exactly of those points, 
for which the monodromy of the 
Riccati equation (\ref{ricc}) is a parabolic (or idendical) M\"obius transformation fixing $\pm i$. Each point in $\cc^3_{(B,A,r)}$ 
corresponding to a parabolic (or identical) monodromy lies in the union 
$\Sigma=\Sigma_+\cup\Sigma_-$.
\end{proposition}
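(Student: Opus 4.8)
The engine of the proof is the reversing symmetry induced by Klimenko's involution $(\varphi,\tau)\mapsto(\pi-\varphi,-\tau)$ of \eqref{jostor}, whose complexification is the involution $(\Phi,z)\mapsto(-\Phi^{-1},z^{-1})$ of the Riccati equation \eqref{ricc}. Writing $\sigma$ for the M\"obius involution $\Phi\mapsto-\Phi^{-1}$, whose fixed-point set is exactly $\{i,-i\}$, I would first record that continuing a solution once around the positively oriented unit circle and applying this involution yields, for \emph{every} complex parameter value, the relation $\sigma\circ\mon\circ\sigma=\mon^{-1}$; the inversion of $\mon$ comes from the fact that $z\mapsto z^{-1}$ reverses the orientation of $S^1$. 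This single identity drives everything.

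The second sentence of the proposition is then immediate. If $\mon$ is parabolic it has a unique fixed point $p$; its inverse $\mon^{-1}$ is parabolic with the same fixed point $p$, while $\sigma\mon\sigma$ is parabolic with fixed point $\sigma(p)$. The relation forces $\sigma(p)=p$, hence $p\in\{i,-i\}$ and the parameter lies in $\Sigma_+\cup\Sigma_-$; the identity fixes both $\pm i$ and lies in both. For the first sentence the inclusion ``$\supseteq$'' is trivial, so the content is ``$\subseteq$''. Suppose $\mon$ fixes $+i$ but is neither parabolic nor the identity. Then it has two distinct fixed points, one being $+i$, and the reversing symmetry makes $\sigma$ permute this two-point set; as $\sigma(i)=i$ the second fixed point must itself be $\sigma$-fixed, hence equal to $-i$, and comparing multipliers through $\sigma\mon\sigma=\mon^{-1}$ forces the multiplier at $i$ to equal its own inverse, i.e.\ to be $-1$. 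Thus $\mon$ would be the unique order-two M\"obius involution fixing $\pm i$, which is precisely $\sigma$. So the whole first sentence reduces to excluding the single possibility $\mon=\sigma$.

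Excluding $\mon=\sigma$ is the step I expect to be the main obstacle. The plan is to consider $\tau:=\operatorname{tr}(\mathcal M)^2/\det(\mathcal M)$, where $\mathcal M$ is the holomorphically parameter-dependent monodromy matrix of the linear system \eqref{tty}; this is a globally defined holomorphic function (with $\det\mathcal M\neq0$), and by the trichotomy just established it takes on $\Sigma_\pm$ only the two values $4$ (parabolic or identity) and $0$ (the involution $\sigma$), hence is locally constant there and constant on each irreducible component. For real parameters $\mon$ preserves $S^1$ and is orientation-preserving, whereas $\sigma$ reverses the orientation of $S^1$ (it acts by $e^{i\varphi}\mapsto e^{i(\pi-\varphi)}$), so no real point can have $\mon=\sigma$; moreover at a real non-constriction boundary point the complexified Poincar\'e map is parabolic (Remark~\ref{rklim}), giving $\tau=4$. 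By Corollary~\ref{cormon} the surfaces $\hmcl_{\pm}^{odd},\hmcl_{\pm}^{even}$ lie in $\Sigma_\pm$ and, being complexifications of real boundary families, carry such real points, so $\tau\equiv4$ on each (these surfaces are irreducible by Theorem~\ref{irred2}).

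The delicate point, which I regard as the crux, is to show that these surfaces exhaust $\Sigma_\pm$, equivalently that no irreducible component of $\Sigma_\pm$ lies entirely in $\{\mon=\sigma\}$. I would close this by a dimension count: $\{\mon=\sigma\}\subseteq\Sigma_+\cap\Sigma_-$, and the equation $\mon=\sigma$ prescribes a fixed element of the three-dimensional group $PSL_2(\cc)$, so it ought to cut out at most a one-dimensional set, whereas $\Sigma_+\cap\Sigma_-$ otherwise contains only the one-dimensional complex-constriction curves $C_l$ and growth-point curves. Such a one-dimensional set cannot contain an irreducible component of the pure two-dimensional hypersurface $\Sigma_\pm$, which would force $\{\mon=\sigma\}=\varnothing$. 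The real work is in justifying the expected-codimension bound, i.e.\ checking that the monodromy map $(B,A,r)\mapsto\mon\in PSL_2(\cc)$ is generically submersive near this locus so that the fibre over $\sigma$ does not jump in dimension; this is where I anticipate the genuine difficulty, and it is the one place the argument is not purely formal.
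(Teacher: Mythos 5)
Your first two paragraphs are, in substance, the paper's own proof. The paper proves Proposition \ref{moni} with exactly the ingredients you use: the complexified Klimenko involution $(\Phi,z)\mapsto(-\Phi^{-1},z^{-1})$ restricted to the fibre $\{z=1\}$ conjugates $\mon$ with $\mon^{-1}$; its fixed points are exactly $\pm i$; the parabolic direction of the second statement is handled exactly as you handle it; and the first statement is deduced from the fact that the germs of $\mon$ and $\mon^{-1}$ at the common fixed point $\pm i$ are analytically conjugated. Where you diverge is that you carry this last deduction out carefully: conjugacy of germs only forces the multiplier $\kappa$ at $\pm i$ to satisfy $\kappa=\kappa^{-1}$, and besides $\kappa=1$ (parabolic or identity) there is the possibility $\kappa=-1$, which your fixed-point analysis correctly identifies as occurring exactly when $\mon$ equals the involution $\sigma(\Phi)=-\Phi^{-1}$ itself. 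The paper's proof passes directly from ``the germs are analytically conjugated'' to ``hence parabolic or identical'' and never mentions the multiplier $-1$ case; so your trichotomy exposes a case the published argument silently skips, and your orientation remark (no real point can have $\mon=\sigma$, since $\sigma$ reverses orientation of the unit circle) is a genuine addition not found in the paper.

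However, your exclusion of $\mon=\sigma$ --- which you rightly call the crux --- has a genuine gap, and it is not a repairable technicality within the argument you propose. By your own (correct) argument with $\tau=\operatorname{tr}(\mathcal M)^2/\det(\mathcal M)$, the locus $\{\mon=\sigma\}$ is open and closed in $\Sigma_{\pm}$, hence a union of irreducible components of $\Sigma_{\pm}$; and since $\Sigma_{\pm}$ is the zero set of a single holomorphic function on $\cc^3$, every such component is purely two-dimensional. Therefore ``$\{\mon=\sigma\}$ has dimension at most one'' is logically equivalent to ``$\{\mon=\sigma\}=\varnothing$'': the expected-codimension count for the fibre of $(B,A,r)\mapsto\mon$ over $\sigma$ assumes precisely what has to be proved. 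No submersivity is available at the relevant points: Proposition \ref{pmono} gives a transversality statement only at real boundary points, which by your own orientation remark cannot lie in $\{\mon=\sigma\}$ anyway. Equivalently, $\{\mon=\sigma\}=\Sigma_+\cap\{\operatorname{tr}\mathcal M=0\}$ is an intersection of two hypersurfaces, and bounding its dimension by one amounts to showing that no component of $\Sigma_+$ lies inside $\{\operatorname{tr}\mathcal M=0\}$ --- i.e., to the claim itself; likewise your assertion that $\Sigma_+\cap\Sigma_-$ ``otherwise'' consists only of the curves $C_l$ and growth-point curves is unproved and is essentially the paper's open Question 4. What would close the gap is a proof that every irreducible component of $\Sigma_{\pm}$ contains real points (then your $\tau\equiv4$ argument finishes); neither your proposal nor the paper establishes this, the paper's proof being silent on the very case you isolated.
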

\begin{proof} The symmetry $(\Phi,z)\mapsto(-\Phi^{-1},z^{-1})$ of Riccati equation 
(\ref{ricc}) (see the proof of Proposition \ref{ppar}) restricted to the invariant 
fiber $\{ z=1\}$ conjugates the monodromy 
transformation of the fiber with its inverse. Its fixed  points 
are exactly $\pm i$. 
For every point in $\Sigma_{\pm}$ the monodromy and its inverse both fix 
the corresponding point $\pm i$. Therefore, their germs at the fixed point are analytically conjugated, and hence, 
are either parabolic, or identical. Vice versa, if the monodromy is 
parabolic, then its unique fixed point should be also fixed by the 
above symmetry and hence, should coincide with some of $\pm i$. 
The proposition is proved.
\end{proof} 

It could be useful to study analogues of Conjecture~\ref{conj2} 
for the projections of the surfaces $\Sigma_{\pm}$. 

{\bf Question 3.}
Is it true that the projections of the normalizations $\widehat\Sigma_{\pm}$ 
of the surfaces $\Sigma_{\pm}\subset\cc^3_{(B,A,r)}$ to $\cc^2_{(B,r)}$, 
$(B,A,r)\mapsto(B,r)$, are (ramified) coverings? 
Find their ramification loci (if any).

The surfaces $\hmcl_{\pm}$ were constructed as the minimal analytic subsets 
containing the union of families of $\pm$-boundary components of the phase-lock 
areas for the model of Josephson effect in $\rr^3_{(B,A,\frac1{2\omega})}$. 
It follows from definition that $\hmcl_{\pm}=\hmcl_{\pm}^{even}\cup\hmcl_{\pm}^{odd}
\subset\Sigma_{\pm}$, and all the surfaces under question are purely two-dimensional. 

\begin{remark}
    Theorem \ref{irred2} implies that $\hmcl_{\pm}^{even (odd)}$ are some 
    irreducible components of the surfaces $\Sigma_{\pm}$. Thus, {\it the 
    surface  $\Sigma=\Sigma_+\cup\Sigma_-$ consists of at least four irreducible 
    components.} {\it Each $\Sigma_{\pm}$ splits into two parts 
    $\Sigma_{\pm}^{even (odd)}$, which are unions of  some irreducible components, 
    by the index parity argument from the proof of Proposition \ref{ppar}.}
\end{remark}

{\bf Question 4.}
Is it true that $\hmcl_{\pm}=\Sigma_{\pm}$? In other terms, is it true 
that each  surface $\Sigma_{\pm}$ is a union of two irreducible components 
$\Sigma_{\pm}^{even (odd)}=\hmcl_{\pm}^{even (odd)}$?

\subsection{
    A corollary and an open problem on special double confluent Heun equation
}

Recall that the {\it monodromy operator} of linear equation~\eqref{heun2*} 
(or linear system~\eqref{tty}) is a linear operator acting on the space of its 
solutions by analytic extension along a counterclockwise circuit around the 
origin in the $z$-line. The monodromy operators of equation~\eqref{heun2*} and 
the corresponding linear system (\ref{tty}) are conjugated via the transformation 
\[
    E(z)\mapsto (u(z),v(z))=( 2i\omega z e^{-\mu z}(E'(z)-\mu E(z)),  
    e^{-\mu z}E(z)), \ \omega^2=\frac1{4(\la+\mu^2)}
\]
sending solutions of~\eqref{heun2*} to those of system~\eqref{tty}. The ratio 
\begin{equation}
    \Phi(z)=\frac{v(z)}{u(z)}=\frac{E(z)}{2i\omega z(E'(z)-\mu E(z))}
    \label{phiez}
\end{equation}
is a solution of the Riccati equation~\eqref{ricc}.

The transformation 
\[
    \Psi: (B,A,r=\frac1{2\omega})\mapsto(l=2Br, \ \la=r^2(1-A^2),\mu=Ar).
\]
sends the parameters of model of Josephson effect (or equivalently, the parameters 
of Riccati equation~\eqref{ricc}) to the parameters $(l,\la,\mu)$ 
of the corresponding special double confluent Heun equation~\eqref{heun2*}.

\begin{remark} \label{rmoni}
    A point $(B,A,r)\in\cc^3$ lies in $\Sigma=\Sigma_+\cup\Sigma_-$, if and only 
    if the corresponding Heun equation (\ref{heun2*}) has monodromy operator 
    with multiple eigenvalue. This follows from Proposition \ref{moni}. 
    If in this case the monodromy operator is a 
    Jordan cell, then its eigenfunction $E(z)$ (unique up to constant factor) 
    defines a meromorphic solution $\Phi(z)$, see~\eqref{phiez}, of Riccati 
    equation~\eqref{ricc}, for which $\Phi(1)=\pm i$. If we change $\omega$ 
    by $-\omega$ (which does not change the parameters $(l,\la,\mu)$ of Heun 
    equation), then the corresponding value $\Phi(1)=\pm i$ will change the sign. 
\end{remark}

Theorem~\ref{irred2}, Proposition \ref{moni} and  Question~$4$ 
can be equivalently reformulated as follows in terms of double confluent Heun 
equations~\eqref{heun2*}.

\begin{theorem}
    \label{irred3}
    Let $\widetilde\Sigma\subset\cc^3_{(l,\la,\mu)}$ denote the subset of those 
    parameters $(l,\la,\mu)\in\cc^3$ for which the corresponding special double 
    confluent Heun equation~\eqref{heun2*} has a monodromy operator with multiple 
    eigenvalue. The set of real points of the surface $\widetilde\Sigma$ 
    satisfying the inequality $\la+\mu^2>0$ (which thus come from boundaries of 
    the phase-lock areas) is contained in the union of two irreducible components 
    $\wt\mcl^{even (odd)}$ of the surface $\widetilde\Sigma$. 
\end{theorem}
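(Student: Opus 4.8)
The plan is to transport Theorem~\ref{irred2} and Proposition~\ref{moni} from the Josephson parameter space $\cc^3_{(B,A,r)}$ to the Heun parameter space $\cc^3_{(l,\la,\mu)}$ through the rational map $\Psi$. First I would record that $\Psi(B,A,r)=(2Br,\,r^2(1-A^2),\,Ar)$ is a $2:1$ map whose deck involution is $\sigma\colon(B,A,r)\mapsto(-B,-A,-r)$, since one checks at once that $\Psi\circ\sigma=\Psi$ and that $\sigma$ fixes $(l,\la,\mu)$ while sending $\omega=\tfrac1{2r}$ to $-\omega$. By Remark~\ref{rmoni} (which rests on Proposition~\ref{moni}) a point lies in $\Sigma=\Sigma_+\cup\Sigma_-$ exactly when its $\Psi$-image lies in $\widetilde\Sigma$, so $\widetilde\Sigma=\Psi(\Sigma)$. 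Moreover a real point $(l,\la,\mu)$ of $\widetilde\Sigma$ with $\la+\mu^2>0$ has a genuine positive frequency $\omega=1/(2\sqrt{\la+\mu^2})$ and a unique real lift with $r>0$; by Proposition~\ref{moni} the monodromy of the corresponding Riccati equation \eqref{ricc} is then parabolic or identical, so by Remark~\ref{rklim} the point $(B,A)=(l\omega,2\mu\omega)$ lies on a boundary $\partial L_s(\omega)$. This already yields the parenthetical assertion that such real points come from boundaries of phase-lock areas.

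Next I would set $\wt\mcl^{even}:=\Psi(\hmcl_+^{even})$ and $\wt\mcl^{odd}:=\Psi(\hmcl_+^{odd})$; being images of the irreducible two-dimensional components $\hmcl_+^{even}$, $\hmcl_+^{odd}$ of $\Sigma_+$ under the finite morphism $\Psi$, these are irreducible two-dimensional components of $\widetilde\Sigma$. By Theorem~\ref{irred2} the complexified union of all boundaries equals $\hmcl_+^{even}\cup\hmcl_+^{odd}\cup\hmcl_-^{even}\cup\hmcl_-^{odd}$; applying $\Psi$ and using the previous paragraph, every real point of $\widetilde\Sigma$ with $\la+\mu^2>0$ lies in $\Psi(\hmcl_+^{even})\cup\Psi(\hmcl_+^{odd})\cup\Psi(\hmcl_-^{even})\cup\Psi(\hmcl_-^{odd})$. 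Thus the theorem comes down to showing that these four images collapse to the two sets $\wt\mcl^{even}$ and $\wt\mcl^{odd}$, i.e. that $\Psi(\hmcl_-^{even})=\Psi(\hmcl_+^{even})$ and $\Psi(\hmcl_-^{odd})=\Psi(\hmcl_+^{odd})$.

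To obtain this folding I would combine the two physical symmetries of the model with $\sigma$. Writing $\tau\colon(B,A,r)\mapsto(-B,A,r)$, $\upsilon\colon(B,A,r)\mapsto(B,-A,r)$ and $\nu\colon(B,A,r)\mapsto(B,A,-r)$, Theorem~\ref{intch} together with Propositions~\ref{coincide} and~\ref{co+-} give $\tau(\hmcl_+^{even})=\hmcl_-^{even}$ and $\upsilon(\hmcl_-^{even})=\hmcl_-^{even}$ (and likewise in the odd case). Since $\sigma=\nu\circ\upsilon\circ\tau$ and $\Psi\circ\sigma=\Psi$, it suffices to prove $\nu(\hmcl_-^{even})=\hmcl_-^{even}$. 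Here $\nu$ sends the Riccati parameters $(l,\mu,\omega)$ to $(-l,-\mu,-\omega)$, and the substitution $\Phi\mapsto -1/\Phi$ (obtained by composing the central-symmetry substitution $\Phi\mapsto 1/\Phi$ coming from $(B,A)\mapsto(-B,-A)$ with the sign change $\Phi\mapsto-\Phi$ coming from $\omega\mapsto-\omega$) carries solutions of the one Riccati equation to the other, fixes each of the points $\pm i$, and changes the winding index only by a sign, hence preserves its parity (cf.\ Proposition~\ref{ppar}). Therefore $\nu$ preserves $\Sigma_-$ and maps $\hmcl_-^{even}$ to an even component of $\Sigma_-$. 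To identify this image with $\hmcl_-^{even}$ I would use the growth-point curve $G_s=\{A=0,\ B^2=s^2\omega^2+1\}$ (even $s$), which lies in $\hmcl_-^{even}$ and is manifestly $\nu$-invariant, so $G_s\subset\hmcl_-^{even}\cap\nu(\hmcl_-^{even})$; since $\Sigma_-$ is regular at a real growth point of $G_s$ (exactly as invoked in the proof of Proposition~\ref{co+-}), only one irreducible component of $\Sigma_-$ passes through that point, forcing $\nu(\hmcl_-^{even})=\hmcl_-^{even}$. The same argument applies in the odd case and with $-$ replaced by $+$.

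The main obstacle is precisely this identification: the involutions $\sigma$ and $\nu$ are easily seen to send $\hmcl_\pm^{even}$ to \emph{some} even component of $\Sigma_\mp$, respectively $\Sigma_-$, and the delicate point is to rule out that it could be a \emph{different} even component of the possibly reducible surface $\Sigma_\pm$ — recall that whether $\Sigma_\pm$ has only the two components $\hmcl_\pm^{even},\hmcl_\pm^{odd}$ is exactly the open Question~4. The parity of the winding index settles the even/odd alternative (it is locally constant off the real hypersurface $\mathcal S$ and, by Proposition~\ref{ppar}, unchanged across $\mathcal S$, hence constant on each component), but it does not by itself single out one even component among several; this is why the crux is the shared invariant curve $G_s$ together with the regularity of $\Sigma_\pm$ at real growth points, which pins the component down. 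Everything else is a formal transport of Theorem~\ref{irred2} and Proposition~\ref{moni} through the finite $2:1$ map $\Psi$, and once the folding is established the two surfaces $\wt\mcl^{even},\wt\mcl^{odd}$ are, by construction, irreducible components of $\widetilde\Sigma$ containing all real points with $\la+\mu^2>0$.
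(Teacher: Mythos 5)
Your proposal is correct and follows the same route as the paper's proof: transport Theorem~\ref{irred2} and Proposition~\ref{moni} through the degree-two map $\Psi$ and fold the four components $\hmcl_{\pm}^{even}$, $\hmcl_{\pm}^{odd}$ pairwise by the deck involution $\sigma\colon(B,A,r)\mapsto(-B,-A,-r)$, which is the paper's ``sign change $\omega\mapsto-\omega$''. The genuine difference is in what gets proved: the paper disposes of the folding in a single asserted sentence (``the sign change $\omega\mapsto-\omega$ permutes the sets $\Sigma_{\pm}$ and the sets $\hmcl_{\pm}$''), while you supply an actual proof of the component-wise statement $\sigma(\hmcl_{+}^{even})=\hmcl_{-}^{even}$, $\sigma(\hmcl_{+}^{odd})=\hmcl_{-}^{odd}$, and you correctly identify why a proof is needed: since Question~4 is open, $\Sigma_{\pm}$ may a priori have extra components, so knowing that $\sigma$ permutes $\Sigma_{\pm}$ and preserves the parity of the index does not by itself pin down which component the image is. Your mechanism for pinning it down --- the invariant growth curves $G_s$ together with regularity of $\Sigma_{\pm}$ at real growth points, which forces two irreducible components through such a point to coincide --- is precisely the germ-matching argument the paper itself uses to prove Propositions~\ref{coincide} and~\ref{co+-}, so your argument completes the paper's sketch with the paper's own tools; note that you could skip the decomposition $\sigma=\nu\circ\upsilon\circ\tau$ entirely and run the $G_s$-argument directly on $\sigma$, since each $G_s$ is $\sigma$-invariant and $\sigma(\hmcl_{s,+})\subset\Sigma_{-}$. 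Two harmless inaccuracies to fix: $\Psi$ is not a finite morphism of $\cc^3$ (its fiber over the origin is two-dimensional; it is a finite two-to-one covering only over $\{\la+\mu^2\neq0\}$, which is all you need, since the relevant surfaces are determined by their parts over that locus), and in the odd case Theorem~\ref{intch} actually gives $\tau(\hmcl_{+}^{odd})=\hmcl_{+}^{odd}$ and $\upsilon(\hmcl_{+}^{odd})=\hmcl_{-}^{odd}$ (the roles of $\tau$ and $\upsilon$ are swapped relative to the even case), though the composition $\upsilon\circ\tau$ still carries $\hmcl_{+}^{odd}$ to $\hmcl_{-}^{odd}$ exactly as your argument requires.
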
 

{\bf Addendum to Theorem~\ref{irred3}.}
{\it 
    In each of the above  components $\wt\mcl^{even (odd)}$ there exists an open and dense subset 
    of those points for which 

    \begin{itemize}
        \item the monodromy operator of equation~\eqref{heun2*} is a Jordan cell; 

        \item the monodromy eigenfunction $E(z)$ defines a solution~\eqref{phiez} 
            of Riccati equation~\eqref{ricc} meromorphic on $\cc^*$ that has 
            neither zeros, nor poles on the unit circle $S^1=\{|z|=1\}$ and has 
            even (respectively, odd) index along $S^1$.
    \end{itemize}
}

\begin{proof}
    The minimal analytic subset containing the real points under question 
    coincides with  $\Psi(\cup_{\pm}\hmcl_{\pm})$, which follows from definition. 
    The mapping $\Psi$ is polynomial of degree two, and its ramification 
    locus is the surface $\la+\mu^2=0$. The surface $\Sigma$ is the preimage 
    of the surface $\wt\Sigma$, by definition, Proposition \ref{moni} 
    and Remark \ref{rmoni}. The sign change 
    $\omega\mapsto-\omega$ permutes the sets $\Sigma_{\pm}$ and the sets 
    $\hmcl_{\pm}$. This implies that $\Psi$ sends the four irreducible components 
    $\hmcl_{\pm}^{even (odd)}$ to two components $\wt\mcl^{even (odd)}$. 
    The statements of the addendum follow from construction. 
\end{proof}

{\bf Question 5.}
Is it true that the whole surface $\wt\Sigma\subset\cc^3_{(l,\la,\mu)}$ 
is the union of the two above irreducible  components $\wt\mcl^{even (odd)}$?

\section{Acknowledgments}

We are grateful to V.M.Buchstaber for attracting our attention to Heun equations and the spectral curves. We are grateful to him for  helpful discussions and remarks. We are also grateful to S.K.Lando for 
informing us about Vinnikov's paper \cite{vin}.

\end{document}